\newtheorem{theorem}{Theorem}[section]
\newtheorem{proposition}[theorem]{Proposition}
\newtheorem{corollary}[theorem]{Corollary}
\newtheorem{lemma}[theorem]{Lemma}
\theoremstyle{definition}
\newtheorem{definition}[theorem]{Definition}
\newtheorem{remark}[theorem]{Remark}
\newtheorem{example}[theorem]{Example}
\theoremstyle{plain}
\newtheorem{question}[theorem]{Question}
\definecolor{linkblue}{rgb}{0,0,.6}
\definecolor{citered}{rgb}{.7,0,0}
\numberwithin{equation}{section}
\def\C{{\mathbb C}}
\def\P{{\mathbb P}}
\def\R{{\mathbb R}}
\def\mbS{{\mathbb S}} 
\def\CP{\C\P}
\def\T{{\mathbb T}}
\def\Z{{\mathbb Z}}
\newcommand{\De}{\Delta}
\newcommand{\om}{\omega}
\newcommand{\abs}[1]{\left| #1 \right|}
\newcommand{\mf}{{m_{\mathrm{f}}}}
\newcommand{\vungoc}{V\~u Ng\d{o}c}
\begin{document}

\title[Extending compact $\mbS^1$-actions]{Extending compact Hamiltonian $\mbS^1$-spaces to integrable systems with mild degeneracies in dimension four}
\author{Sonja Hohloch \quad $\&$ \quad Joseph Palmer}
\date{\today}

\begin{abstract}
Given any compact connected four dimensional symplectic manifold $(M,\om)$ and smooth function $J\colon M\to \R$
which generates an effective $\mbS^1$-action, we show that there exists a smooth function $H\colon M\to\R$ such that
$(M,\om,(J,H))$ is a completely (Liouville) integrable system of a type we call \emph{hypersemitoric} ---
these are systems for which all singularities are  non-degenerate, except possibly for a finite number of families of degenerate points of a relatively tame type called parabolic (also sometimes called cuspidal).
Such an $(M,\om,J)$ is often referred to as a Hamiltonian $\mbS^1$-space (classified by Karshon in 1999)
and we call any integrable system of the form $(M,\om,(J,H))$ an \emph{extension} of $(M,\om,J)$.
Using this terminology, our main result is that any
Hamiltonian $\mbS^1$-space can be extended to a hypersemitoric integrable system.

We also show that there exist Hamiltonian $\mbS^1$-spaces for which any extension must include at least one degenerate singular
point. 
Parabolic points are among the most common and natural degenerate points,
and 
thus hypersemitoric systems are in this sense
the `nicest' class of systems to which all Hamiltonian $\mbS^1$-spaces can be extended. We also prove several foundational results about these systems, such as the non-existence of loops of hyperbolic-regular
points and some properties about their fibers.\\

\noindent MSC codes. Primary: 53D20, 37J35, 70H06. Secondary: 58D19.
\end{abstract}

\maketitle


\section{Introduction}
\label{sec:intro}

Hamiltonian systems form an important and interesting class of dynamical systems since, on the one hand, many fundamental physical phenomena can be modeled as Hamiltonian systems and, on the other hand, questions of symplectic rigidity and conservation laws are naturally linked to Hamiltonian systems. Examples of Hamiltonian systems are common in mathematics, physics, and the other sciences, such as the n-body problem in celestial mechanics, certain Li\'{e}nard type and Van der Pol type equations
in biology, the Euler-Lagrange equations in calculus of variations, and the interaction of chaotic
and non-chaotic aspects in KAM theory. 

Roughly, if a Hamiltonian system has the maximal possible number of independent conserved quantities then it is called integrable.
Integrable systems form a field with a long tradition at the intersection of dynamical systems, ODEs, PDEs, symplectic geometry, Lie theory, algebraic geometry, classical mechanics, mathematical physics, and so on. Many classical, familiar systems are integrable, such as the spherical pendulum, coupled angular momenta system, Gelfand-Zeitlin systems, and the Lagrange, Euler, and Kovalevskaya spinning tops.


In fact, all of these examples share a special property: they have a circular symmetry.
Integrable systems which display such circular symmetries are not rare but rather a very natural and common occurrence.

The symplectic geometry of Hamiltonian $\mbS^1$-actions has been studied essentially since the invention of symplectic geometry, and the field of integrable systems in classical mechanics has been active for even longer. Yet the interaction between these two fields has not been thoroughly investigated until quite recently. The present paper aims to understand the relationship between them in dimension four.
More precisely, we will show that any effective Hamiltonian $\mbS^1$-action on a compact connected symplectic four dimensional manifold can be extended to a completely integrable Hamiltonian system on this manifold of a relatively ``nice'' type. Moreover, along the way, we prove several interesting properties of integrable systems with underlying $\mbS^1$ symmetries.

\subsection{Integrable systems and \texorpdfstring{$\mbS^1$-actions}{S1 actions}}
To state our results properly, we first need to fix some definitions and conventions: First of all, 

\centerline{\emph{throughout this paper we will assume that all manifolds $M$ are connected.}}

\noindent
Let $(M,\om)$ be a symplectic manifold. Then any smooth function $J\colon M\to\R$ determines a vector field $\mathcal{X}^J$ on $M$ via the equation $\om(\mathcal{X}^J,\cdot) = -\mathrm{d}J$. The function $J$ is then usually referred to as the {\em Hamiltonian}, $\mathcal{X}^J$ as the \emph{Hamiltonian vector field} of $J$, and $\dot{z} = \mathcal{X}^J(z)$ as Hamiltonian system induced by $J$ or $\mathcal{X}^J(z)$.

We are interested in particular in the following type of Hamiltonian systems:

\begin{definition}
\label{def:HamSspace}
We call $(M,\om,J)$ a \emph{Hamiltonian $\mbS^1$-space} if $(M,\om)$ is a compact four dimensional symplectic manifold with Hamiltonian $J:M \to \R$ and the flow of $\mathcal{X}^J$ is periodic of minimal period $2\pi$.
In other words, the Hamiltonian flow of such a $J$ generates an effective Hamiltonian action of $\mbS^1=\R/2\pi \Z$ on $M$.
\end{definition}

Such Hamiltonian $\mbS^1$-spaces were classified up to isomorphism by Karshon \cite{karshon} in terms of a labeled graph encoding information about the fixed points and isotropy groups ($\Z_k$-spheres) of the $\mbS^1$-action --- for the readers' convenience, we will recall the necessary details in Section~\ref{sec:S1-classification}.

This paper aims at extending effective Hamiltonian $\mbS^1$-actions to integrable systems, so we need to fix our notion of integrability: We will work with so-called `Liouville integrability' which is defined as follows.

\begin{definition}
A triple $(M,\om,F=(f_1,\ldots,f_n))$ is a \emph{$2n$-dimensional completely integrable system} (briefly an \emph{integrable system}) if $(M,\om)$ is a $2n$-dimensional symplectic manifold and $F\colon M\to\R^n$, called the \emph{momentum map}, satisfies:
\begin{enumerate}
 \item\label{cond:commute} $\om (\mathcal{X}^{f_i},\mathcal{X}^{f_j})=0$ for all $i,j\in\{1,\ldots, n\}$;
 \item\label{cond:independent} $\mathcal{X}^{f_1}(p), \ldots, \mathcal{X}^{f_n}(p)\in T_p M$ are linearly independent for almost all $p\in M$.
\end{enumerate}
\end{definition}
We say that an integrable system $(M,\om,F)$ is \emph{compact} if $M$ is compact. The points in $M$ at which Condition~\eqref{cond:independent} fails are called \emph{singular points}, and the other points
of $M$ are called \emph{regular points}.

\subsection{Extending \texorpdfstring{$\mbS^1$-actions}{S1 actions} to integrable systems}

Given a four dimensional compact integrable system $(M,\om,(J,H))$ such that $J$ generates an $\mbS^1$-action, it is clear that, by simply forgetting the function $H$, we obtain the Hamiltonian $\mbS^1$-space $(M,\om,J)$.

\begin{definition}
If $(M,\om,J)$ is a Hamiltonian $\mbS^1$-space and $H\colon M\to\R$ is such that $(M,\om,(J,H))$ is an integrable system, then $(M,\om,(J,H))$ is said to be an \emph{extension to an integrable system} of $(M,\om,J)$ and, conversely, $(M,\om,J)$ is the \emph{underlying} Hamiltonian $\mbS^1$-space of $(M,\om,(J,H))$.
\end{definition}

Whereas recovering an underlying Hamiltonian $\mbS^1$-space $(M,\om,J)$ by forgetting $H$ is trivial, the converse is everything but obvious:

\begin{question}\label{question}
 Given a compact four dimensional Hamiltonian $\mbS^1$-space $(M,\om,J)$, when and, if yes, how can we extend it to an integrable system $(M,\om,(J,H))$? What do these integrable systems look like, in particular the function $H: M \to \R$? What are the `nicest possible' resulting integrable systems $(M,\om,(J,H))$?
\end{question}

Looking at this problem from a different angle, one is in fact studying an `intermediate situation' in the sense that one component of the moment map induces an $\mbS^1$-action and the other one an $\R$-action on the manifold. If both components induce $\R$-actions, we are dealing with general integable systems which can exhibit extremely complicated behavior. If, on the other hand, both components induce $\mbS^1$-actions, we are facing an induced Hamiltonian 2-torus action on the underlying manifold which imposes quite a lot of restrictions on the system as a whole.

\begin{definition}
A completely integrable system $(M,\om,F=(f_1,\ldots,f_n))$ is \emph{toric} if each of the $f_1$, \dots, $f_n$ induces an $\mbS^1$-action of minimal period $2 \pi$.
\end{definition}

After classifying Hamiltonian $\mbS^1$-spaces, Karshon~\cite{karshon} studied and solved Question \ref{question} concerning four dimensional toric systems, i.e., she found the necessary and sufficient conditions under which a Hamiltonian $\mbS^1$-space $(M,\om,J)$ extends to a toric integrable system $(M,\om,F=(J,H))$, see Section \ref{sec:S1-toric} for more details.

\subsection{Main results}

As mentioned in the previous subsection, Karshon was already considering Question~\ref{question} in her paper~\cite{karshon} containing the original classification of Hamiltonian $\mbS^1$-spaces, 
where she proved exactly which Hamiltonian $\mbS^1$-spaces can be extended to a toric system. More 
recently\footnote{first announced at Poisson 2014 in a talk by Daniele Sepe.}
Hohloch \& Sabatini \& Sepe \& Symington~\cite{HSSS2} have exactly described which Hamiltonian $\mbS^1$-spaces can be extended to so-called \emph{semitoric systems} (cf.\ Definition \ref{def:semitoric}). 
Semitoric systems are a generalization
of toric systems in dimension four which can include all of the types of singularities that
appear in toric systems and additionally a type of non-degenerate singularity known
as a \emph{focus-focus} singular point.

We now define \emph{hypersemitoric systems}, which represent a substantial generalization of semitoric systems.
In particular, hypersemitoric systems allow all types of non-degenerate singularities, including singularities with hyperbolic components, and certain relatively mild degenerate singular points which naturally occur with hyperbolic-regular singularities in many cases
(see Section~\ref{sec:singularities} for an explanation of non-degenerate and degenerate singular points).
Singularities of hyperbolic-hyperbolic type are not explicitly ruled out in hypersemitoric systems, but cannot appear due to the presence of the
global $\mbS^1$-action. All other types of non-degenerate points can appear.

\begin{definition}\label{def:hypersemitoric}
 An integrable system $(M,\om,F=(J,H))$ is called
 a \emph{hypersemitoric system} if:
 \begin{enumerate}
  \item $J$ is proper and generates an effective $\mbS^1$-action;
  \item all degenerate singular points of $F$ (if any) are of parabolic type (as in Definition~\ref{def:parabolic}).
 \end{enumerate}
\end{definition}


Hypersemitoric systems form a significantly more general class than semitoric systems (cf.~Remark~\ref{rmk:generic}), which are in
turn significantly more abundant than toric ones.

In the first part of this paper we give several examples and prove certain properties of such systems.
For instance, we show that hypersemitoric systems do not admit `loops of hyperbolic-regular values' (see Corollary~\ref{cor:no-loops} for a more precise statement).


In the second part of this paper we show that \emph{any} compact four dimensional Hamiltonian $\mbS^1$-space can be extended to a hypersemitoric system.
That is, we prove:

\begin{theorem}\label{thm:extending}
 Let $(M,\om,J)$ be a $4$-dimensional Hamiltonian $\mbS^1$-space where $(M,\om)$ is a compact symplectic manifold.
 Then there exists a smooth function $H\colon M\to\R$ such that $(M,\om,(J,H))$ is a hypersemitoric system. 
\end{theorem}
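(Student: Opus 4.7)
My plan is to reduce the problem to constructing a single smooth $\mbS^1$-invariant function $H\colon M\to\R$, since any such $H$ automatically satisfies $\om(\mathcal{X}^J,\mathcal{X}^H)=0$ (the $\mbS^1$-invariance of $H$ means $\mathcal{X}^J(H)=0$). What remains is to guarantee that (i) $\mathrm{d}J$ and $\mathrm{d}H$ are linearly independent on an open dense subset of $M$, and (ii) every singular point of $F=(J,H)$ is either non-degenerate or parabolic. By hypothesis $J$ is proper and generates an effective $\mbS^1$-action, so condition~(1) of Definition~\ref{def:hypersemitoric} is automatic.

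The most natural framework for the construction is symplectic reduction. For each regular value $c$ of $J$, the reduced space $M_c^{\mathrm{red}}:=J^{-1}(c)/\mbS^1$ is a compact $2$-dimensional symplectic orbifold whose orbifold points record the $\Z_k$-isotropy data appearing in Karshon's graph, and any $\mbS^1$-invariant $H$ descends to a function $\bar H_c\colon M_c^{\mathrm{red}}\to\R$. A regular point of $J$ is a singular point of $F$ exactly when its image in $M_c^{\mathrm{red}}$ is a critical point of $\bar H_c$, and Morse non-degeneracy of such a critical point corresponds to $F$ being non-degenerate of elliptic-regular type (Morse index $0$ or $2$) or hyperbolic-regular type (index $1$). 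Hence, if each $\bar H_c$ can be arranged to be Morse, then the only potentially problematic singular points of $F$ lie over critical values of $J$ or over the finitely many parameters at which the family $\{\bar H_c\}$ degenerates.

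I would carry out the construction in three stages. First, using Karshon's classification together with the equivariant local normal forms for $\mbS^1$-actions, build $H$ locally near each component of the fixed-point set: near an isolated fixed point with weights $(m,n)$, write $J$ in equivariant Darboux coordinates and choose a quadratic $H$ whose Hessian is independent of that of $J$, producing an elliptic-elliptic singularity when $mn>0$ and a focus-focus singularity when $mn<0$; near a fixed surface $\Sigma$, take $H|_\Sigma$ to be a Morse function on $\Sigma$ and extend it $\mbS^1$-invariantly in the symplectic normal direction; near a $\Z_k$-sphere, use the corresponding slice model. Second, patch these local choices into a single global $\mbS^1$-invariant $H$ via an $\mbS^1$-invariant partition of unity subordinate to a cover adapted to the stratification given by Karshon's graph, and perturb within the space of $\mbS^1$-invariant smooth functions. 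Third, invoke the theory of generic one-parameter families of functions on (orbifold) surfaces: generically, $\bar H_c$ is Morse for all but finitely many $c$, and the degenerate critical points that occur at those exceptional parameters are of birth-death type, which correspond precisely to parabolic singularities of $F$ in the sense of Definition~\ref{def:parabolic}.

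The hard part is controlling what happens at and between the critical values of $J$. The topology and orbifold structure of $M_c^{\mathrm{red}}$ changes as $c$ crosses such a value, and one must show that the family $\{\bar H_c\}$ can be chosen so that the forced transitions produce no degeneracy more severe than a parabolic point; in particular, the absence of hyperbolic-hyperbolic singularities (guaranteed abstractly by the $\mbS^1$-symmetry) must be respected even on the singular levels of $J$. This requires a case-by-case analysis along the vertices and edges of Karshon's graph---isolated fixed points, fixed surfaces, and $\Z_k$-spheres---coupled with a careful choice of local model at each. I expect this stratum-by-stratum verification, and in particular ensuring that the local and global pictures glue into an honest hypersemitoric system whose only degeneracies are parabolic, to be the main technical obstacle.
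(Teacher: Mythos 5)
Your proposal takes a genuinely different route from the paper, but as written it has a real gap at its central step. The paper proves Theorem~\ref{thm:extending} by running Karshon's classification: it first extends every \emph{minimal} model to a hypersemitoric system (Proposition~\ref{prop:minsystems}), then carries the integrable system along a carefully ordered sequence of $\mbS^1$-equivariant blowups (Lemma~\ref{lem:stages}), using semitoric blowups on fixed surfaces to manufacture focus-focus points and the Dullin--Pelayo Hamiltonian--Hopf bifurcation to convert each focus-focus point into an elliptic-elliptic point sitting on a flap large enough to absorb the remaining blowups (Proposition~\ref{prop:large-flap}, Corollary~\ref{cor:simultaneous}). Your route --- build an invariant $H$ by local models and partitions of unity, then perturb so that the induced one-parameter family $\{\bar H_c\}$ on the reduced spaces is Morse except at finitely many birth-death times --- is essentially the strategy the authors themselves sketch in Remark~\ref{rmk:generic} and explicitly defer to future work because of its technical obstructions.

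The gap is your third stage. Cerf's theorem, as you invoke it, concerns generic one-parameter families of functions on a \emph{fixed smooth} manifold. Here the reduced spaces $\hat M_c$ are compact surfaces with orbifold/singular points along $\pi(\mathrm{Non\text{-}free}(J))$, their topology changes as $c$ crosses critical values of $J$, and the perturbations must be taken within the space of $\mbS^1$-invariant functions. The correspondence between Morse data downstairs and Williamson type upstairs (Lemma~\ref{lem:reduction-type}) is only established where the action is free, so precisely at the $\Z_k$-points --- which are the source of the forced degeneracies (Corollary~\ref{cor:forced-degen-2ndversion}) --- your dictionary breaks down and no genericity statement is supplied. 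Moreover, even at free orbits, a birth-death point of the family $\{\bar H_c\}$ does not automatically satisfy condition~\eqref{item:extra-cond} of Definition~\ref{def:parabolic} (the rank-$3$ condition transverse to the reduced space); one needs an additional transversality argument in the $c$-direction, which is exactly what Proposition~\ref{prop:cuspnormalform} and the explicit local models $x^3+tx+y^2$ are used for in the paper's construction. Until you prove a Cerf-type theorem for invariant families on these singular, varying quotients and verify that its birth-death points lift to parabolic points in the four-dimensional sense, the proposal is a plausible program rather than a proof; your own closing paragraph concedes that this stratum-by-stratum verification is outstanding.
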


We sketch the idea of the proof of Theorem~\ref{thm:extending} in Section~\ref{sec:sketch}.
The proof of Theorem~\ref{thm:extending}, given in Section~\ref{sec:proof}, actually gives a slightly more refined result about the properties of the resulting  hypersemitoric system, which we state as Corollary~\ref{cor:refined-extending}.

The resulting integrable system in Theorem~\ref{thm:extending} is not unique, and studying the collection of such systems is an interesting question in its own right, see Remark~\ref{rmk:generic}.


Singular fibers of hyperbolic-regular type always occur in one-parameter families, and in many cases (see Corollary~\ref{cor:no-loops})
the two endpoints of the family correspond to fibers that include degenerate singular points, which is why we allow certain degenerate singularities in hypersemitoric systems.
In fact, there exist systems which cannot be extended to an integrable system
with no degenerate singular points (Corollary~\ref{cor:forced-degen-2ndversion}), so to have any hope
to be able to extend all Hamiltonian $\mbS^1$-spaces to a class of integrable systems, we
must include some type of degenerate points.
Degenerate singularities of parabolic type are one of the simplest classes of degenerate singularities among the typical degenerate singularities discussed in Bolsinov \& Fomenko~\cite{BolFom},
and they are stable under perturbation~\cite{Giacobbe-stable}.
That is, parabolic points cannot be removed from a system by completely integrable
perturbations,
and they are therefore common in nature as well. 
For instance, they appear
in the Kovalevskaya top~\cite{BolRikFom-Ktop} and many other systems from rigid body dynamics, see
the references in~\cite{Bol-parabolic}.

Thus, the motivation of this paper is two-fold:
\begin{itemize}
 \item \textbf{Motivation 1:} non-degenerate singularities and parabolic singularities are the most common ones in nature. The known classifications (toric and semitoric) prohibit the existence of certain non-degenerate singular points (those containing hyperbolic blocks) and parabolic points, but the time is ripe to extend the toric and semitoric techniques to systems including these singularities;\\
 \item \textbf{Motivation 2:} there are Hamiltonian $\mbS^1$-spaces which cannot be extended to either a
  toric or a semitoric system, so to extend all Hamiltonian $\mbS^1$-spaces to a class of integrable systems
  we must consider a more general class. The class of hypersemitoric systems is in some sense the `easiest and smallest' class with this property.
\end{itemize}


\subsection{Background and interactions of this paper with other works}

For many years interactions between the classical field of integrable systems and the
more modern 
field of compact Hamiltonian group actions on symplectic manifolds
have lead to interesting results.
One of the earliest and best known examples of results in this direction is
the classification of effective Hamiltonian $\T^n$-actions on compact symplectic $2n$-manifolds, which
can equivalently be thought of as a classification of compact toric integrable systems:
Atiyah~\cite{Atiyah} and Guillemin \& Sternberg~\cite{GuilStern} showed that if $(M,\om,F)$ is a toric system
then the image $F(M)$ is a convex $n$-dimensional polytope,
and moreover Delzant~\cite{delzant} showed that toric systems are classified up to isomorphism
by this convex polytope, up to the action of the affine group $\mathrm{GL}(n,\R)\ltimes \R^n$. 
Karshon \& Lerman~\cite{karshon-lerman} generalized this to the non-compact case.
Given a fixed compact symplectic four manifold, the $\mbS^1$- and $2$-torus actions on that manifold have also been
classified by Holm \& Kessler~\cite{HolmKessler2019} and Karshon \& Kessler \& Pinsonnault~\cite{KKP-counting2015}.

Another important classification result is that of semitoric integrable
systems, which generalize toric integrable systems in dimension four.
\begin{definition}\label{def:semitoric}
A four dimensional integrable system $(M,\om,F=(J,H))$ is a \emph{semitoric integrable system}, or briefly a \emph{semitoric system}, if:
\begin{enumerate}
 \item $J$ is proper and generates an effective $\mbS^1$-action;
 \item\label{cond:no-hyp} all singular points of $F=(J,H)$ are non-degenerate and do not include hyperbolic blocks
  (i.e.~there are no singular points of hyperbolic-regular, hyperbolic-elliptic, or hyperbolic-hyperbolic type, as described
   in Section~\ref{sec:singularities}).
\end{enumerate}
\end{definition}
About ten years ago, semitoric systems were classified in terms of five invariants by Pelayo \& V\~{u} Ng\d{o}c~\cite{PVN09, PVN11}, generalizing Delzant's \cite{delzant} toric classification. Their original classification has the extra assumption that the systems must be \emph{simple} (see Section \ref{sec:semitoric}), but this assumption has been removed recently by Palmer \& Pelayo \& Tang~\cite{PPT}.
Semitoric systems are much more general than toric systems, and their behavior is much more complicated due to the presence of focus-focus singularities which cannot occur in toric systems.
Note that the semitoric classification includes both compact and non-compact systems.


A semitoric system
naturally comes with the structure of an underlying Hamiltonian $\mbS^1$-action. The relationship between the 
semitoric classification and Karshon's classification of Hamiltonian $\mbS^1$-actions 
on compact 4-manifolds was studied
by Hohloch \& Sabatini \& Sepe~\cite{HSS}.

Suppose that $(M,\om,J)$ is a $4$-dimensional Hamiltonian $\mbS^1$-space where $(M,\om)$ is a compact symplectic manifold.
Combining Theorem~\ref{thm:extending} with the results of Karshon~\cite{karshon} and those of Hohloch \& Sabatini \& Sepe \& Symington~\cite{HSSS2}, one obtains the following: 
 \begin{enumerate} [label=(\arabic*),font=\upshape]
  \item\label{item:combinedKarshon} There exists $H\colon M\to\R$ such that $(M,\om,(J,H))$ is a toric system if and only if each fixed surface (if any exists) has genus zero and each non-extremal level set of $J$ contains at most two non-free orbits of the $\mbS^1$-action (see Karshon~\cite[Proposition 5.21]{karshon}\,);
  \item\label{item:combinedHSSS} There exists $H\colon M\to\R$ such
  that $(M,\om,(J,H))$ is a semitoric system if and only if each fixed surface has genus zero (if any exists) and each non-extremal level set of $J$ contains at most two non-free
  orbits of the $\mbS^1$-action which are not fixed points (see Hohloch $\&$ Sabatini $\&$ Sepe $\&$ Symington~\cite{HSSS2}\,);
  \item\label{item:combinedHP} In all cases, there exists $H\colon M\to\R$ such that $(M,\om,(J,H))$ is a hypersemitoric system (see Theorem~\ref{thm:extending}). 
 \end{enumerate}

The relationship between these results is shown in the diagram in Figure~\ref{fig:diagram}.

\begin{figure}
\centering

\tikzstyle{top-process} = [rectangle, rounded corners, minimum width=4.5
cm, minimum height=2.5cm, text centered, text width=4.5cm, draw=black, fill=blue!5]
\tikzstyle{bottom-process} = [rectangle, rounded corners, minimum width=4.5
cm, minimum height=3cm, text centered, text width=4.5cm, draw=black, fill=blue!5]
\tikzstyle{arrow} = [thick,->,>=stealth]

\begin{tikzpicture}[node distance=2cm]

\node (toric) [top-process] {\textbf{Toric systems}\\[1em]\footnotesize{classified by Delzant~\cite{delzant}  \\[.5em] $(\mbS^1\times\mbS^1)$-action}};

\node (semitoric) [top-process, right of=toric, xshift=3.25cm] {\textbf{Semitoric systems}\\[1em]\footnotesize{classified by Pelayo \& V\~{u} Ng\d{o}c~\cite{PVN09,PVN11}  \\[.1em]
$(\mbS^1\times\R)$-action}};

\node (hypersemitoric) [top-process, right of=semitoric, xshift=3.25cm] {\textbf{Hypersemitoric systems}\\[1em]\footnotesize{not yet classified\\[.1em]
$(\mbS^1\times\R)$-action}};

\node (S1-toric) [bottom-process, below of=toric,yshift=-2.5cm] {Hamiltonian $\mbS^1$-spaces satisfying:\footnotesize{\begin{itemize}[noitemsep, leftmargin=.5cm] \item $g=0$ \item  at most two non-free orbits in each $J^{-1}(j_\mathrm{int})$
\end{itemize}
classified by Karshon\cite{karshon}}};

\node (S1-semitoric) [bottom-process, below of=semitoric,yshift=-2.5cm] {Hamiltonian $\mbS^1$-spaces satisfying:\footnotesize{\begin{itemize}[noitemsep, leftmargin=.5cm] \item  $g=0$ \item at most two non-free, non-fixed orbits in each $J^{-1}(j_{\mathrm{int}})$\end{itemize}
classified by Karshon\cite{karshon}}};

\node (S1-hypersemitoric) [bottom-process, below of=hypersemitoric,yshift=-2.5cm] {All Hamiltonian\\ $\mbS^1$-spaces\\[.5em]
\footnotesize{classified by Karshon\cite{karshon}}};

\draw [draw=none] (toric) -- node{$\subset$} (semitoric);
\draw [draw=none] (semitoric) -- node{$\subset$} (hypersemitoric);

\draw [draw=none] (S1-toric) -- node{$\subset$} (S1-semitoric);
\draw [draw=none] (S1-semitoric) -- node{$\subset$} (S1-hypersemitoric);

\begin{scope}[transform canvas={xshift=1em}]
\draw [arrow] (toric) -- node[anchor = west]{\cite{karshon}} (S1-toric);
\draw [arrow] (semitoric) -- node[anchor = west]{\cite{HSS}} (S1-semitoric);
\end{scope}

\begin{scope}[transform canvas={xshift=-1em}]
\draw [arrow] (S1-toric) -- node[anchor = east]{\cite{karshon}} (toric);
\draw [arrow] (S1-semitoric) -- node[anchor = east]{\cite{HSSS2}} (semitoric);
\draw [arrow] (S1-hypersemitoric) -- node[anchor = west]{Theorem~\ref{thm:extending}} (hypersemitoric);

\end{scope}

\end{tikzpicture}
\caption{Relationships between integrable systems and Hamiltonian $\mbS^1$-spaces:
$g$ refers to the genus of
all fixed surfaces which exist, and $j_{\mathrm{int}}$ refers to an element of the interior
of the image of $J$.
This diagram is commutative in the sense that
the inclusions and projections downwards are
compatible. 
The upwards arrows represent extending, and the 
downwards arrows represent using the classification of the
integrable system to recover the Karshon graph.
Since hypersemitoric systems are not yet symplectically 
classified, there is no downwards arrow from them in the diagram.}
\label{fig:diagram}
\end{figure}
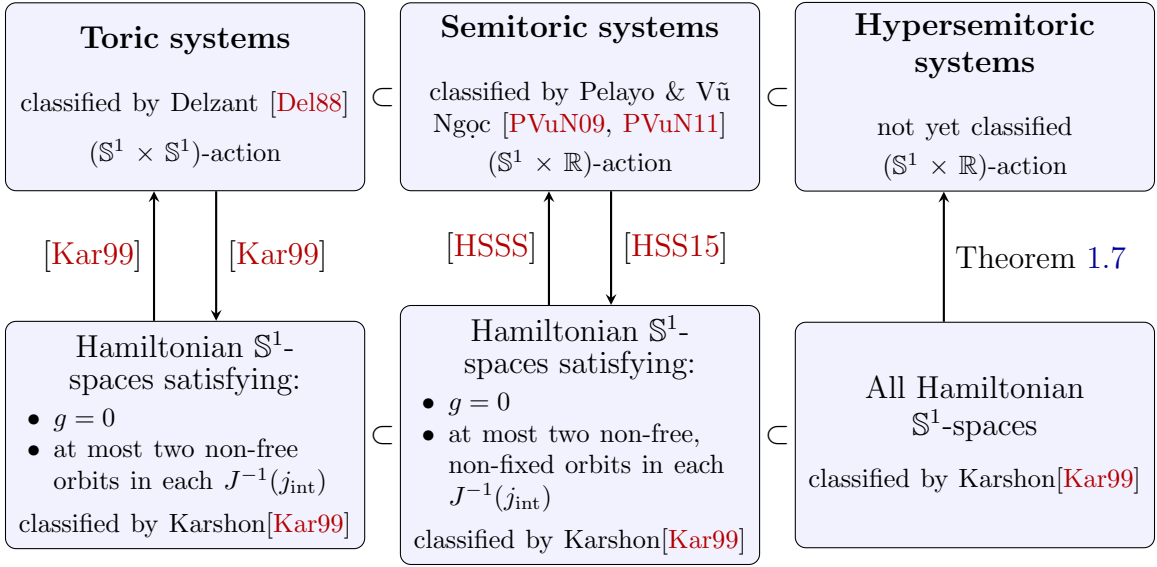

It is natural to consider extending the semitoric classification to hypersemitoric systems.
Pelayo \& V\~{u} Ng\d{o}c~\cite[Section 2.3]{PelVNsteps} discuss the expected difficulty in classifying
systems with hyperbolic singular points.
One of the main extra difficulties that make these systems more challenging, but also more interesting, is that the fibers of the momentum
map are often disconnected. 
The existence of disconnected fibers prevents the application of many standard
techniques, and therefore analyzing these systems is a non-trivial endeavor.
These extra difficulties are unavoidable though,
since hyperbolic singularities, and also disconnected fibers, are a common feature in natural
systems.
For instance, the Lagrange top and the two body problem are two fundamental physical systems
which include an $\mbS^1$-symmetry and also have singularities with hyperbolic components.

\begin{remark}
Following the appearance of the present paper on arXiv, which introduced hypersemitoric systems for the first time, there have been significant developments in the area.
    For details, see the recent survey by Palmer~\cite{joey-survey}; some significant recent milestones have been:
    \begin{itemize}
        \item Gullentops $\&$ Hohloch~\cite{GuHo} studied the appearance, disappearance, and number of hyperbolic-regular singularities by perturbing semitoric systems to hypersemitoric systems.
\item Gullentops~\cite{yannick-thesis} studied certain examples of hypersemitoric systems and the topology of hyperbolic-regular fibers.
        \item Palmer \& Le Floch~\cite{LFP-families2} studied the general theory of and constructed examples of one-parameter families of integrable systems that transition between being toric, semitoric, and hypersemitoric.
        \item A generalization of the semitoric polygon invariant to hypersemitoric systems was introduced and studied in Efstathiou $\&$ Hohloch $\&$ Santos~\cite{EHS-hyp-poly}.
        \item Sepe \& Tolman~\cite{SepeTolman} study the connectedness of fibers in integrable systems which have only non-degenerate singular points. The authors study these systems in all dimensions, but in dimension four such systems generalize semitoric systems, and are a special case of hypersemitoric systems.
    \end{itemize}
\end{remark}

\begin{remark}
In this paper, we extend Hamiltonian $\mbS^1$-actions to hypersemitoric systems, which we view as the most natural and well-behaved class of systems to which all such spaces can be extended. 
A more basic question would be if a Hamiltonian $\mbS^1$-action can be extended to an integrable system at all, with no regard as to how degenerate the system is. 
In fact \emph{any} effective Hamiltonian torus action on a compact manifold can be extended to an integrable system, see the discussion at the end of Section 5 of Bryant et al~\cite{BryantFoulonIvanovMatveevZiller} (which makes use of work of Fomenko~\cite[p 145]{Fomenko-symplgeom} and Bolsinov \& Jovanovi\'c~\cite{BolJov}).
\end{remark}

\begin{remark}
    A logical next step would be to generalize these extension results, such as Theorem~\ref{thm:extending}, to the case in which $M$ is non-compact but $J$ is still proper.
    One of the challenges of this task would be to deal with the potentially infinitely many features of the underlying Hamiltonian $\mbS^1$-space.
    It also remains to investigate if these results hold true in the analytic category or if there are any simplifications in the K\"ahler case.
\end{remark}



\begin{figure}
 \centering
 \input{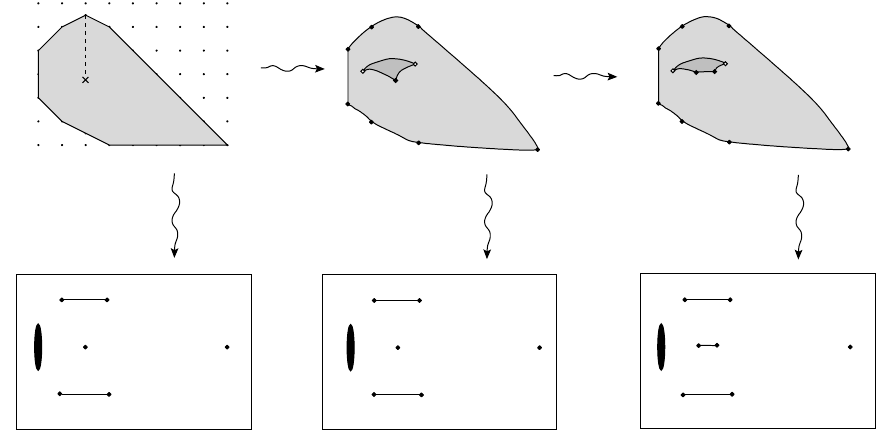_t}
 \caption{Let $p$ be a fixed point in the Hamiltonian $\mbS^1$-space which corresponds to a focus-focus singular point in the extended integrable system. We first use the technique of Dullin \& Pelayo~\cite{dullin-pelayo} to induce a Hamiltonian-Hopf bifurcation which changes the system so that $p$ becomes an elliptic-elliptic point, and then we perform an $\mbS^1$-equivariant blowup at $p$.}
 \label{fig:trick}
\end{figure}

\subsection{Sketch of the proof of Theorem~\ref{thm:extending}}
\label{sec:sketch}

The proof of Theorem~\ref{thm:extending} makes use of results concerning minimal models in Karshon's classification~\cite{karshon}. 
Karshon proved that all Hamiltonian $\mbS^1$-spaces can be obtained from a list of certain \emph{minimal models} by a finite
sequence of $\mbS^1$-equivariant blowups.
We first show that all of the minimal 
models can be extended to hypersemitoric systems (Proposition~\ref{prop:minsystems}).
We then show that these hypersemitoric systems on the minimal models can be 
carried along
to construct a hypersemitoric system on any Hamiltonian $\mbS^1$-space which can be 
obtained from a minimal model via a sequence of
$\mbS^1$-equivariant blowups. 
Since all Hamiltonian $\mbS^1$-spaces can be produced in this way, we then
conclude that all Hamiltonian $\mbS^1$-spaces can be extended to hypersemitoric systems 
and the proof is complete.


The main difficulty is, roughly, that we must show that any hypersemitoric system on a given Hamiltonian $\mbS^1$-space induces a
hypersemitoric system on any $\mbS^1$-equivariant blowup of that Hamiltonian $\mbS^1$-space.
Such blowups occur at fixed points of the $\mbS^1$-action, which are necessarily singular points of the integrable system
which are of elliptic-elliptic, elliptic-regular, or focus-focus type.
If the blowup occurs at an elliptic-elliptic or elliptic-regular point of the integrable 
system then the argument follows similarly to that of Hohloch \& Sabatini \& Sepe \& Symington~\cite{HSSS2},
except that we cannot make use of a polygon invariant in our more general situation since so far such an invariant exists
only for semitoric systems.
Here we compensate for the lack of a polygon invariant by working locally around the fixed point.
For extending \emph{all} Hamiltonian $\mbS^1$-spaces, though, performing blowups only at elliptic-elliptic and elliptic-regular points is insufficient. 
We must also perform blowups at focus-focus points of the system, as explained in the following paragraph.

One of the main new ideas of the present paper is  performing an $\mbS^1$-equivariant blowup at a
focus-focus singular point $p\in M$. This is done by incorporating the technique described by Dullin \& Pelayo~\cite{dullin-pelayo} to
 use a supercritical Hamiltonian-Hopf bifurcation to replace
a neighborhood of the focus-focus singular value in $F(M)$ with a 
triangle of singular values known as a \emph{flap} (see Definition~\ref{def:flap}) while keeping the structure of the integrable
system and leaving the underlying $\mbS^1$-space unchanged. The flap includes two families of elliptic-regular points, one family of hyperbolic-regular
points, one elliptic-elliptic point, and two degenerate orbits of parabolic type. 
Once the Dullin \& Pelayo technique has been used to create the flap, the point $p$
is now an elliptic-elliptic singular point of the new system
and thus a usual toric-type blowup can be performed at $p$. 
This process is shown in Figure~\ref{fig:trick}.
One of the main technical difficulties of the paper is verifying that the flap
may be made large enough so that the entire blowup occurs at points contained `on the flap'.
Furthermore, we cannot proceed purely by induction on the number of blowups and make the flaps
one at a time, since each flap must be carefully positioned relative to all others to insure that none of the flaps created in this process interfere with each other.


\begin{remark}\label{rmk:generic}
 Hypersemitoric systems represent a very general class of systems with
 $\mbS^1$-symmetries. 
 To analyze this more deeply is beyond the scope of the present
 paper (we will consider this in a future project),
 but, given a Hamiltonian $\mbS^1$-space $(M,\om,J)$, it seems
 reasonable to conjecture that
 the set
 \[
  \{H\colon M\to \R \mid (M,\om,(J,H))\textrm{ is a hypersemitoric system}\}
 \] 
 is an open and dense subset of
 \[
  \{H\colon M\to \R \mid (M,\om,(J,H))\textrm{ is integrable}\}.
 \] 
 This seems plausible by the following sketched argument: such an integral $H$ can, roughly, be thought of as a one-parameter
 family of functions on the reduced space. By the work of Cerf~\cite{Cerf}, one parameter
 families of functions on smooth manifolds can be perturbed to be Morse at all but finitely many times, at which
 times the function takes a form which lifts to be a parabolic point.
 Lifting such a perturbed
 function would produce our candidate for a hypersemitoric system.
 One of the technical problems that would need to be overcome, is that
 in this situation the
 reduced space is in general not a smooth manifold but has a finite number
 of singular points.
\end{remark}


\subsection{Outline of paper:} In Section~\ref{sec:prelims} we recall various results we will need throughout the paper.
In Section~\ref{sec:examples} we give some motivating examples.
In Section~\ref{sec:properties} we prove some results about properties
of integrable systems for which one of the integrals generates an
$\mbS^1$-action.
In Section~\ref{sec:extending} we prove Theorem~\ref{thm:extending}.

\subsection{Acknowledgments:} We would like to thank in particular A.~Bolsinov and S.~V\~{u} Ng\d{o}c for helpful remarks
and references, and also various other colleagues for feedback on an earlier version of the paper. 
The first author was partially funded by the \emph{FWO-EoS project} G0H4518N,
and the second author was partially supported by the \emph{FWO senior postdoctoral fellowship} 12ZW320N.

\section{Preliminaries}
\label{sec:prelims}

In this section we briefly recall a number of results from the literature that we need. 
We will cover all of the relevant background partially to fix notation, and partially to provide an overview of the topics relevant for the present paper. These subjects range from relatively classical to extremely recent, and also include a few new results, and thus we believe a modern summary of this material has independent value.

Specifically, this section mainly summarizes the works
of Karshon~\cite{karshon}, Delzant~\cite{delzant}, Pelayo \& V\~u Ng\d{o}c~\cite{PVN09, PVN11},
Hohloch \& Sabatini \& Sepe~\cite{HSS}, Efstathiou \& Giacobbe~\cite{EG-cusps},
Bolsinov \& Guglielmi \& Kudryavtseva~\cite{Bol-parabolic}, and Kudryavtseva \& Martynchuk~\cite{KudMar-circle,KudMar-invariants}.


\subsection{Hamiltonian \texorpdfstring{$\mbS^1$}{S1}-spaces and their Karshon graphs}\label{sec:S1-classification}
Let $(M,\om,J)$ be a Hamiltonian $\mbS^1$-space (cf.\ Definition \ref{def:HamSspace}) and keep in mind that the underlying manifold is by definition compact and four-dimensional. 
Following~\cite{karshon}, we will
construct a labeled graph associated to this space.
\begin{lemma}[{\cite[Lemma 2.1]{karshon}}]\label{lem:karshon_fixedset}
Let $(M,\om,J)$ be a four dimensional compact Hamiltonian $\mbS^1$-space and denote by $M^{\mbS^1}$ the fixed point set of the $\mbS^1$-action.
Then $M^{\mbS^1}$ has finitely many components, each of which is either an isolated
point or a symplectic surface (i.e.~$\omega$ restricts to a symplectic form on the surface), and any such surface, if it exists, is exactly the preimage under $J$ of the maximum or minimum value of $J$.
Moreover, the preimages under $J$ of its maximum and minimum values are each connected.
\end{lemma}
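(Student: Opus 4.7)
The plan is to reduce the statement to local information using the equivariant symplectic normal form at each fixed point, and then to globalize the claims about the extremal fibers using Morse--Bott theory.

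First I observe that $M^{\mbS^1}=\mathrm{Crit}(J)$, since $\om(\mathcal{X}^J,\cdot)=-\mathrm{d}J$ together with the non-degeneracy of $\om$ gives $\mathcal{X}^J(p)=0$ if and only if $\mathrm{d}J(p)=0$. Second, near any fixed point $p$ the equivariant Darboux theorem (the symplectic local normal form for a Hamiltonian $\mbS^1$-action) provides complex symplectic coordinates $(z_1,z_2)$ with $\om=\tfrac{i}{2}\sum_j \mathrm{d}z_j\wedge\mathrm{d}\bar z_j$, action $\theta\cdot(z_1,z_2)=(e^{im_1\theta}z_1,e^{im_2\theta}z_2)$ for integer weights $(m_1,m_2)$, and
\[
J(z_1,z_2)=J(p)+\tfrac{m_1}{2}|z_1|^2+\tfrac{m_2}{2}|z_2|^2.
\]
Effectiveness of the global action forbids the case $m_1=m_2=0$: otherwise $\mathcal{X}^J$ would vanish on a neighborhood of $p$, and applying the same normal form at any boundary fixed point of the maximal open set on which $\mathcal{X}^J\equiv 0$ shows that this set is both open and closed, hence all of $M$, contradicting effectiveness. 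The remaining two cases are read off from the normal form: if both $m_i\neq 0$ then $p$ is an isolated fixed point, while if exactly one weight vanishes then the local fixed set is a $2$-dimensional submanifold on which $\om$ restricts to the symplectic form $\mathrm{d}x_j\wedge\mathrm{d}y_j$ for the non-frozen coordinate $j$. Finiteness of the number of components of $M^{\mbS^1}$ then follows by compactness, since the closed set $M^{\mbS^1}\subset M$ is covered by finitely many normal-form neighborhoods and each such neighborhood meets only a single component.

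For the claim about extremal fibers, the normal form shows that $J$ is a Morse--Bott function whose critical submanifolds all have Morse--Bott index $2\cdot\#\{i:m_i<0\}$, hence always even. In the associated handle decomposition of $M$ by sublevel sets $\{J\leq c\}$, a critical submanifold of index $\lambda$ contributes a $\lambda$-handle (bundled over the submanifold); crucially, only $\lambda=0$ creates a new connected component, and any merging of existing components would require $\lambda=1$, which does not occur. Since $M=\{J\leq\max J\}$ is connected, there must be exactly one index-$0$ critical submanifold, it must occur at the global minimum value, and $J^{-1}(\min J)$ is therefore a single connected critical submanifold. Applying the same reasoning to $-J$ handles $\max J$. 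Finally, any fixed surface $C$ has normal weights $(0,m)$ with $m\neq 0$, so $C$ is locally a minimum when $m>0$ and locally a maximum when $m<0$; uniqueness of the extremal critical submanifolds then forces $C=J^{-1}(\min J)$ or $C=J^{-1}(\max J)$, respectively.

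The main obstacle is the global connectedness statement for the extremal fibers: everything else is essentially local and follows from the equivariant normal form together with compactness. The Morse--Bott handle-decomposition argument I sketched hinges on the even-index property of Hamiltonian $\mbS^1$-spaces, which eliminates the only mechanism by which distinct sublevel-set components could merge; an alternative route would be to invoke Atiyah's connectedness theorem for moment-map fibers as a black box, but the direct argument has the advantage of being self-contained and of clarifying why fixed surfaces can occur only at the extremal values of $J$.
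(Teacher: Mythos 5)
Your proof is correct. The paper does not reprove this lemma but simply imports it from Karshon's classification paper, and your argument --- the equivariant local normal form at fixed points to classify components and establish finiteness, followed by the even-index Morse--Bott handle argument to get connectedness of the extremal fibers and to locate the fixed surfaces there --- is essentially Karshon's own proof of her Lemma 2.1, so there is nothing to add.
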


For $k\in\Z_{>0}$ define $\Z_k := \{\lambda\in\mbS^1\mid k\lambda\in 2\pi\Z\}$. Connected components of the set of points on $M$ with isotropy subgroup $\Z_k$, $k>1$, are homeomorphic to cylinders, and the closure of each such component is an embedded sphere in $M$, called a \emph{$\Z_k$-sphere} (see Figure~\ref{fig:Zk-sphere}) on which the $\mbS^1$-action acts as rotation leaving the points in the closure (`poles') fixed.
Thus, $\Z_k$-spheres connect distinct components of $M^{\mbS^1}$.

\begin{figure}
 \centering
 \input{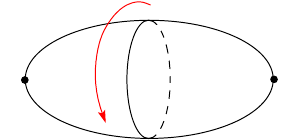_t} 
 \caption{A $\Z_k$-sphere connecting the fixed points $p$ and $q$.}
 \label{fig:Zk-sphere}
\end{figure}

Now we will describe the graph used by Karshon to classify Hamiltonian $\mbS^1$-spaces. 
\begin{itemize}
 \item 
 The \emph{set of vertices} is the set of connected components of $M^{\mbS^1}$ each labeled by the value
of $J$ on that component. The fixed surfaces $\Sigma$ are represented by `fat vertices' (in figures drawn as large black ovals) that are labeled by the value of $J$ and, in addition, by the \emph{normalized symplectic area} of the surface 
\begin{equation}\label{eqn:norm-area}
 A(\Sigma):=\frac{1}{2\pi}\int_{\Sigma}\om 
\end{equation}
and its genus $g$. 
If the genus is 0 we often omit the label in figures, and we often omit the normalized area label as well.
\item
Two vertices are connected by an \emph{edge} if and only if there exists a $\Z_k$-sphere, $k>1$, connecting the two associated fixed points in $M$. Each edge is labeled by its $k$. The horizontal position of the vertices corresponds to their $J$-value (although we usually omit the coordinate axis with the values of $J$). 
\end{itemize}

Notice that our graphs are rotated by $90^\circ$ compared to Karshon's~\cite{karshon} since it is more natural for us to align the $J$-value direction in the graphs with the $J$-coordinate axis of the semitoric polygons.

Given any fixed point $p\in M^{\mbS^1}$, there exist integers $m,n\in\Z$
and complex coordinates $w,z$ around $p$ such that the $\mathbb S^1$-action by $t\in\mbS^1$ is given by $t\cdot (w,z)= (\mathrm{e}^{\mathrm{i} m t}w, \mathrm{e}^{\mathrm{i}n t}z)$
and the symplectic form is locally given by $\frac{\mathrm{i}}{2}(\mathrm{d}w\wedge\mathrm{d}\overline{w}+\mathrm{d}z\wedge\mathrm{d}\overline{z})$.
The integers $m$ and $n$ are called the \emph{weights} of the $\mbS^1$-action at $p$, and
they are also easy to see in the graph: for $k>1$, a fixed point has $-k$ as one
of its weights if and only if it forms the north pole of a $\Z_k$-sphere
and $k$ as one of its weights if and only if it forms the south pole of a $\Z_k$-sphere, where the north pole refers to the fixed point with higher $J$ value and the south pole the fixed point with the lower $J$-value.
The point $p$ has zero as one of its weights if and only if it lies in a fixed surface.
All weights not determined by these rules are either $+1$ or $-1$.
Furthermore,
if $p$ is in the preimage of the maximum value of $J$ then $p$ has two non-positive weights,
if $p$ is in the preimage of the minimum value of $J$ then it has two non-negative weights, and
otherwise $p$ has one positive and one negative weight.

\begin{example}\label{example:CP2_karshon}
Consider the usual action of $\mbS^1$ on $\CP^2$ given by $t\cdot [z_0:z_1:z_2] = [z_0:\mathrm{e}^{\mathrm{i} t}z_1:z_2]$ for $t\in\mbS^1$ with Hamiltonian $J([z_0:z_1:z_2]) = \abs{z_1}^2/(\abs{z_0}^2+\abs{z_1}^2+\abs{z_2}^2)$.
Then $J^{-1}(0) = \{z_1=0\}$ is a sphere which is fixed by the $\mbS^1$-action and $J^{-1}(1)= \{z_0=z_2=0\}$ is a point fixed by the $\mbS^1$-action. 
There are no other fixed points and the action is free away from these sets.
The fixed sphere is represented by a fat vertex at $J=0$ with normalized area $A=1$ (cf.~Equation~\eqref{eqn:norm-area}), the fixed point is represented by a regular vertex
at $J=1$, and there are no edges.
The graph is shown in Figure~\ref{fig:CP2_karshon}.
\end{example}

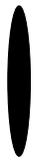
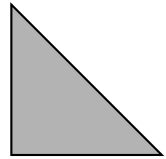
\begin{figure}
\centering
\begin{subfigure}[t]{.48\textwidth}
\centering
\begin{tikzpicture}
 \draw[black,fill=black] (0,0) ellipse (.15cm and 1cm);	
 \draw[black,fill=black] (2,0) circle (.1cm);
\end{tikzpicture}
\caption{The Karshon graph for $\C\P^2$.}
\label{fig:CP2_karshon}
\end{subfigure}\quad
\begin{subfigure}[t]{.48\textwidth}
\centering
\begin{tikzpicture}
\filldraw[thick, fill = gray!60] (0,0) node[anchor = north,color = black]{}
  -- (0,2) node[anchor = south,color = black]{}
  -- (2,0) node[anchor = north,color = black]{}
  -- cycle;
\end{tikzpicture}
\caption{The Delzant polygon of $\CP^2$.}
 \label{fig:CP2_delzant}
\end{subfigure}
\caption{The Karshon graph and Delzant polygon for the standard actions of $\mbS^1$ and $\T^2$ on $\CP^2$. Their geometric relation is described in Section~\ref{sec:S1-toric}.}
\label{fig:CP2_karshondelzant}
\end{figure}

An \emph{isomorphism} between two Hamiltonian $\mbS^1$-spaces $(M_1,\om_1,J_1)$ and $(M_2,\om_2,J_2)$ is
a symplectomorphism $\Psi\colon M_1\to M_2$ such that $J_2 \circ \Psi = J_1$, in which case $\Psi$ is also automatically
equivariant with respect to the $\mbS^1$-actions.
One of the main results of Karshon's work~\cite{karshon} is that the graphs contain all of the information of the isomorphism class
of the associated Hamiltonian $\mbS^1$-space.

\begin{theorem}[{\cite[Theorem 4.1]{karshon}}]
 Two four-dimensional compact Hamiltonian $\mbS^1$-spaces have the same Karshon graph if and only if
 they are isomorphic as Hamiltonian $\mbS^1$-spaces.
\end{theorem}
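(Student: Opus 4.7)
The forward direction is essentially tautological: if $\Psi\colon M_1\to M_2$ is an isomorphism of Hamiltonian $\mbS^1$-spaces, then $\Psi$ sends connected components of $M_1^{\mbS^1}$ to those of $M_2^{\mbS^1}$ preserving $J$-value, genus, and symplectic area (so all vertex labels agree), and it carries $\Z_k$-spheres to $\Z_k$-spheres (so edges match). Thus the two Karshon graphs coincide. So the real content is the converse, and this is what I will outline.

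The plan for the converse is to build an equivariant symplectomorphism by first constructing local models near the $\mbS^1$-fixed set and then gluing across regular levels of $J$ using Moser-type arguments. The graph determines, for each isolated fixed point $p$, the pair of integer weights $(m,n)$: a weight equals $\pm k$ exactly when $p$ is a pole of a $\Z_k$-sphere of sign determined by whether $p$ is the north or south pole, and the remaining weights are $\pm 1$ with signs determined by whether $p$ lies in $J^{-1}(\min J)$, $J^{-1}(\max J)$, or an interior level. By the equivariant Darboux theorem for Hamiltonian $\mbS^1$-actions on a symplectic $4$-manifold, any two such points with matching weights have equivariantly symplectomorphic neighborhoods with $J$-values agreeing up to an additive constant (which the vertex label fixes). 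For each fixed surface $\Sigma$, the equivariant symplectic tubular neighborhood theorem, combined with the fact that a symplectic surface of given genus and normalized area~\eqref{eqn:norm-area} is unique up to symplectomorphism, produces an equivariant isomorphism between tubular neighborhoods that intertwines $J$. Similarly, for each $\Z_k$-sphere the equivariant symplectic neighborhood theorem around the sphere (with the weights and area encoded by the incident vertex and edge data) yields isomorphic invariant neighborhoods.

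Next I would globalize. Choose an invariant almost complex structure compatible with $\om$ on each side and use $J$ as an $\mbS^1$-invariant Morse--Bott function with critical set $M^{\mbS^1}$. Between consecutive critical values of $J$, the action is semi-free in the sense relevant here, and Marsden--Weinstein reduction $M_{\mathrm{red}}(j) = J^{-1}(j)/\mbS^1$ gives a smooth family of symplectic $2$-orbifolds; by Lemma~\ref{lem:karshon_fixedset} the levels are connected. The topology and symplectic structure of these reduced spaces depend only on the graph (they change only when crossing a critical value of $J$, and the change is dictated by the local models around the component of $M^{\mbS^1}$ being crossed). An equivariant gradient-like vector field for $J$ therefore identifies $J^{-1}((a,b))$ with a trivial fiber bundle times a model, and Moser's trick can be applied $\mbS^1$-equivariantly to match the symplectic forms. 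This produces an equivariant symplectomorphism on the preimage of a regular interval that restricts to the previously constructed local isomorphisms on the boundary.

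Finally I would stitch everything together by induction over the critical values of $J$, starting at $J^{-1}(\min J)$ (a single fixed point or fixed surface, by Lemma~\ref{lem:karshon_fixedset}) and successively extending the equivariant symplectomorphism across each critical level using the corresponding local model and an equivariant Moser argument for the attaching collars. The main obstacle is ensuring that the local isomorphisms at different fixed points and $\Z_k$-spheres can be chosen compatibly so that the Moser interpolations on the regular slabs actually glue; this compatibility is guaranteed precisely because the weights at each pole of a $\Z_k$-sphere, the areas of fixed surfaces, and the adjacency data are all recorded in the Karshon graph, so no independent choices are forced on different sides of a critical level. Once all slabs are glued, the resulting global equivariant symplectomorphism intertwines $J_1$ and $J_2$, proving $(M_1,\om_1,J_1)\cong(M_2,\om_2,J_2)$ as Hamiltonian $\mbS^1$-spaces.
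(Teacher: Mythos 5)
First, note that the paper does not prove this statement at all: it is quoted verbatim as Karshon's Theorem~4.1, and its proof occupies a substantial portion of her memoir, so there is no internal argument to compare against. Your forward direction is fine. For the converse, your outline is a reasonable Morse-theoretic strategy, but the steps you present as routine are exactly where the content lies, and as written they contain genuine gaps. The local model near a fixed surface $\Sigma$ is \emph{not} determined by the genus and normalized area alone: the equivariant symplectic neighborhood theorem also requires matching the normal bundle of $\Sigma$, i.e.\ its Euler number, and this integer is not a label of the Karshon graph. One must prove that it is determined by the remaining data (Karshon does this essentially via the Duistermaat--Heckman function, whose slope at the extremal values encodes the Euler number together with corrections coming from the $\Z_k$-spheres); without that argument, the claimed neighborhood isomorphism need not exist. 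The related assertion that the reduced spaces' symplectic structures ``depend only on the graph'' is the same Duistermaat--Heckman computation and likewise needs proof rather than assertion; moreover the reduced spaces are orbifolds along the images of the $\Z_k$-spheres, so the ``smooth family plus equivariant Moser'' step needs separate care there.

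More seriously, your final paragraph is circular: saying the local isomorphisms can be chosen compatibly ``because the weights, areas, and adjacency data are all recorded in the graph, so no independent choices are forced'' is a restatement of the theorem, not an argument for it. Concretely, when you extend an equivariant symplectomorphism across a regular slab $J^{-1}([a,b])$, the identification of the hypersurface $J^{-1}(a)$ with a model principal $\mbS^1$-(orbi)bundle over the reduced space is unique only up to gauge transformations and symplectomorphisms of the base, and you must show that the identifications produced by the local models at the two ends of the slab differ by something isotopic to the identity through such maps; a priori there are discrete ambiguities (twists of the bundle, mapping classes of the reduced surface) that could obstruct gluing, and ruling them out is the technical heart of the classification. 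Karshon's actual route is also different from yours: she first reduces to the equivariant \emph{diffeomorphism} classification of such $\mbS^1$-spaces (building on Audin and Ahara--Hattori) and then connects any two invariant symplectic forms with the same momentum-map data by a path, applying a global equivariant Moser argument, rather than gluing symplectomorphisms level by level. Your sketch is a sensible plan, but it omits the arguments that make the plan work.
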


Karshon completes the classification by describing exactly 
which graphs occur as the invariant of an
$\mbS^1$-space. 
There are constraints on which graphs can occur, and Karshon specifies which graphs can occur by describing an algorithm that produces the set of all admissible graphs (by starting with an explicit list of minimal models and performing blowups).
We discuss this in Section~\ref{sec:min-models}.

\begin{remark}
\emph{Complexity-one spaces} are the higher dimensional analogue of Hamiltonian $\mbS^1$-spaces: they consist of a $2n$-dimensional symplectic manifold carrying a Hamiltonian action of the torus $\mathbb{T}^{n-1}$.
A complexity-one space is called \emph{tall} if all reduced spaces are two-dimensional.
Tall complexity-one spaces are classified by a series of papers by Karshon \& Tolman~\cite{KT-centered,KT-invariants,KT-classify},
which extends the above presented classification of Hamiltonian $\mbS^1$-spaces by including 
additional invariants.
Extending such Hamiltonian torus actions to integrable systems is unfortunately beyond the scope of the present paper and thus will be treated in a future work.
First steps in this direction have been made by Wacheux~\cite{Wacheux-thesis} who studied six dimensional integrable systems where two components of the momentum map are both periodic, and thus generate a Hamiltonian $\mathbb{T}^2$-action.
\end{remark}

\subsection{Classification of toric systems}\label{sec:toric_classification}
Let $(M,\om,F=(f_1,\ldots, f_n))$ be a $2n$-dimensional 
toric system, and recall from Section~\ref{sec:intro} that, in our convention, the underlying manifold $M$ of a  toric system is always compact. 
The flows of $f_1,\ldots, f_n$ are each periodic
of minimal period $2\pi$, and thus induce an effective action of $\T^n = \R^n/2\pi\Z^n$ so that one can interpret it as a Hamiltonian action of $\T^n$ on $M$.
Atiyah~\cite{Atiyah} and Guillemin \& Sternberg~\cite{GuilStern} showed that the image $F(M)$ is a convex $n$-dimensional polytope
and, moreover, that it is the convex hull of the images of the fixed points of the torus action on $M$.
Furthermore, Delzant~\cite{delzant} showed that the polytope $\De:=F(M)$ always satisfies three conditions:
\begin{enumerate}
 \item \emph{simplicity}: exactly $n$ edges meet at each vertex of $\De$ (note that this is automatic if $n=2$, the case we will consider in this paper);
 \item\label{item:rationality} \emph{rationality}: each face of $\De$ admits an integral normal vector (i.e. a normal vector in $\Z^n$);
 \item \emph{smoothness}:  given any vertex, the set of integral inwards pointing normal vectors (from item~\eqref{item:rationality}) of the faces adjacent
  to that vertex can be chosen such that their $\Z$-space is the entire
  lattice $\Z^n$.
\end{enumerate}
Delzant also showed that any $n$-dimensional polytope satisfying these conditions arises as the image of the momentum map for some toric system, and that two toric systems $(M_1,\om_1,F_1)$ and $(M_2,\om_2,F_2)$
have the same momentum map image if and only if there exists a symplectomorphism $\Phi\colon M_1\to M_2$ such that $F_2 \circ \Phi = F_1$,
called an \emph{isomorphism of toric systems}.
Thus, Delzant completed the classification of toric systems up to isomorphism in terms of a convex polytope given by the image of the momentum map. We will call this polytope the \emph{Delzant polytope} of the system, or, in case $n=2$, the \emph{Delzant polygon}.

\begin{example}\label{example:CP2_toric}
 Consider the toric system $(\CP^2,\om_{\mathrm{FS}},F=(J,H))$ where $\om_{\mathrm{FS}}$ is the usual Fubini-Study symplectic form, $J$ is as in Example~\ref{example:CP2_karshon},
 and $H([z_0,z_1,z_2]) =  \abs{z_2}^2/(\abs{z_0}^2+\abs{z_1}^2+\abs{z_2}^2)$. Then the associated Delzant polygon is the triangle with vertices at $(0,0)$, $(0,1)$, and $(1,0)$ drawn in Figure~\ref{fig:CP2_delzant}.
\end{example}

\subsection{Hamiltonian \texorpdfstring{$\mbS^1$}{S1}-spaces and toric systems}
\label{sec:S1-toric}

Let $(M,\om,F=(J,H))$ be a compact toric system with Delzant polygon $\De=F(M)$.
Then $(M,\om,J)$ is a Hamiltonian $\mbS^1$-space with
$\mbS^1$-action associated with the subgroup $\mbS^1\times\{0\}\subset\T^2$ of the torus acting on $M$.
The fixed surfaces (if any) of this action are the preimages of the (closed) vertical edges of $\De$ which
have normalized symplectic area (see Equation~\eqref{eqn:norm-area})
equal to the length of the edge and have always genus zero. The isolated
fixed points of the action are the vertices of $\De$ which are not on vertical edges.
Each $\Z_k$-sphere with $k \in \Z_{>1}$ associated with the action induced by $J$ is the preimage of an edge (including the limiting vertices) of $\De$ of which the slope can be written as $b/k$ for some $b\in\Z$ such that $k$ and $b$ are relatively prime.
Thus, it is straightforward to construct the Karshon graph from
the Delzant polygon, compare the Delzant polygon to the Karshon graph for the standard action on $\CP^2$ in Figure~\ref{fig:CP2_karshondelzant}.
Note that there are Hamiltonian $\mbS^1$-spaces that cannot be obtained from
a Hamiltonian $\T^2$-action in this way, as we will see now.
We call a level set \emph{non-extremal} if it is the preimage of any point in the image of $J$
except for its maximum or minimum values.
\begin{lemma}[{\cite[Proposition 5.21]{karshon}}]\label{lem:toric-extending}
 A Hamiltonian $\mbS^1$-space can be extended to a toric system if
 and only if each fixed surface (if any) of the Hamiltonian $\mbS^1$-space has genus zero
 and each non-extremal level set of $J$ contains at most
 two non-free orbits of the $\mbS^1$-action.
\end{lemma}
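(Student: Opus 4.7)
The plan is to prove both directions using the dictionary between Delzant polygons and Karshon graphs described in Section~\ref{sec:S1-toric}, together with Delzant's classification of toric systems (Section~\ref{sec:toric_classification}) and Karshon's classification of Hamiltonian $\mbS^1$-spaces.

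For the forward direction I would take a toric extension with Delzant polygon $\De = F(M)$ and read both conditions directly off $\De$. By the correspondence of Section~\ref{sec:S1-toric}, fixed surfaces of the $\mbS^1$-action are precisely the preimages of the vertical edges of $\De$, and all such preimages have genus zero. For $j$ strictly between $\min J$ and $\max J$, convexity of $\De$ forces the slice $\De \cap \{J = j\}$ to be a segment meeting $\partial \De$ in exactly two points; the relative interior of the segment lies in the interior of $\De$ and consists of free orbits, while each of the two boundary points gives at most one non-free $\mbS^1$-orbit (a vertex contributes an isolated fixed point; a point in the relative interior of a non-vertical edge contributes either a $\Z_k$-circle orbit or a free orbit, depending on the edge slope). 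This yields the stated bound of two non-free orbits per non-extremal level set.

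For the reverse direction I would construct, from the Karshon graph $G$ of $(M,\om,J)$, a Delzant polygon $\De$ whose induced Karshon graph under the recipe of Section~\ref{sec:S1-toric} coincides with $G$. Delzant's theorem then supplies a toric system with polygon $\De$, and its underlying $\mbS^1$-space has Karshon graph $G$, hence is isomorphic to $(M,\om,J)$ by Karshon's classification; this isomorphism transports the second component of the momentum map to $M$, producing the desired extension. The polygon $\De$ is assembled by placing vertex horizontal coordinates at the $J$-values of the fixed components in $G$, installing closed vertical left/right edges at $\min J$ and $\max J$ with length equal to the normalized area $A(\Sigma)$ whenever $G$ has a fat vertex there, which is allowed precisely because such fixed surfaces have genus zero and hence correspond to a single straight vertical edge; the upper and lower boundaries are then built as piecewise-linear convex curves whose edge slopes $b/k$ (with $\gcd(b,k)=1$ forced by smoothness) realize the $\Z_k$-labels of $G$. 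The hypothesis that each non-extremal $J$-level contains at most two non-free orbits is exactly what permits the non-free data at each interior $J$-value to be split between one upper vertex and one lower vertex, making such a convex upper/lower decomposition feasible.

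The main obstacle will be verifying that the upper and lower piecewise-linear curves can simultaneously achieve convexity, the prescribed sequence of $\Z_k$-slopes, the correct symplectic edge lengths, and the Delzant smoothness condition at every vertex. I would handle this inductively along the fixed points of $J$ in order of increasing $J$-value: at each step the local isotropy weights at the corresponding fixed component in $G$ dictate the primitive edge directions incident to the new vertex on the upper and/or lower boundary, and convexity together with the ``at most two non-free orbits'' hypothesis prevents these directions from conflicting with edges already placed.
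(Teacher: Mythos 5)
This lemma is not proved in the paper at all --- it is quoted verbatim from Karshon~\cite[Proposition 5.21]{karshon} --- so your proposal has to be judged against the argument in that reference rather than against anything in the present text. Your overall strategy (read the forward direction off the Delzant polygon via the dictionary of Section~\ref{sec:S1-toric}; prove the converse by manufacturing a Delzant polygon whose induced Karshon graph equals that of $(M,\om,J)$, then invoke Delzant's existence theorem and Karshon's uniqueness theorem) is the natural one. The forward direction as you wrote it is complete and correct: convexity forces vertical edges of $\De$ to sit at $j_{\min}$ or $j_{\max}$, so fixed surfaces are spheres over extremal levels, and for non-extremal $j$ the slice $\De\cap\{J=j\}$ meets $\partial\De$ in exactly two points, each of whose $F$-preimages is a single $\mbS^1$-orbit (or a fixed point, at a vertex).

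The converse is where the content lies, and your sketch has a genuine gap at the polygon-realization step. Two things are asserted but not established. First, each chain of $\Z_k$-spheres and each interior isolated fixed point occupies a whole $J$-\emph{interval}, and must be assigned to the upper or lower boundary consistently along that entire interval; the hypothesis only bounds what happens in single level sets, so you still need the one-dimensional Helly property (three pairwise-overlapping intervals share a point) to conclude that a valid two-coloring of the chains exists. Second, and more seriously, the graph determines only the denominators $k$ of the edge slopes $b/k$: the integers $b$ and the edge lengths are free parameters that must simultaneously make the upper boundary concave and the lower one convex, satisfy the Delzant condition $\lvert k_1 b_2-k_2 b_1\rvert=1$ at every vertex, give the extremal vertical edges lengths equal to the area labels $A(\Sigma)$, and --- crucially --- make the two boundary curves close up at common endpoints while reproducing the correct total volume. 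A greedy left-to-right induction has no mechanism guaranteeing this global closing-up condition, and ``the local isotropy weights dictate the primitive edge directions'' is not quite true (they dictate only the $k$'s). The way this is actually closed --- and the route compatible with the machinery recalled in Sections~\ref{sec:S1-blowups}--\ref{sec:min-models} of this paper --- is to induct on the number of $\mbS^1$-equivariant blowups needed to reach $(M,\om,J)$ from a minimal model (Theorem~\ref{thm:karshon-minimal}, Lemma~\ref{lem:casesGraphChange}), checking that under the two stated hypotheses each blowup can be realized as a $\T^2$-equivariant corner chop on a polygon already constructed; the hypotheses are exactly what keep the required corner available at each stage. Your plan needs either to adopt that inductive route or to supply the missing existence argument for the polygon directly.
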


Actions without fixed surfaces can be extended:

\begin{lemma}[{\cite[Corollary 5.19]{karshon}}]\label{lem:isolated-points-extend}
 Let $(M,\om,J)$ be a Hamiltonian $\mbS^1$-space. If all fixed points of the $\mbS^1$-action are isolated
 then $(M,\om,J)$ extends to a toric system.
\end{lemma}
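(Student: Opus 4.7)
The plan is to invoke Lemma~\ref{lem:toric-extending}. The hypothesis that every fixed surface has genus zero is vacuous, since a fixed surface would be a positive-dimensional component of the fixed-point set and hence contradict the assumption that all fixed points are isolated. So it remains to show the second condition: every non-extremal level set of $J$ contains at most two non-free orbits of the $\mbS^1$-action.

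To verify this bound, I would use Karshon's minimal-model decomposition of compact Hamiltonian $\mbS^1$-spaces: any such space arises from a minimal one by a finite sequence of $\mbS^1$-equivariant blowups at isolated fixed points, and such blowups preserve the absence of fixed surfaces. Karshon's classification shows that the minimal Hamiltonian $\mbS^1$-spaces without fixed surfaces are manifestly toric --- copies of $\CP^2$ or Hirzebruch surfaces with the given circle action realized as a subgroup of a Hamiltonian $\T^2$-action on the same manifold --- so the non-free orbit bound holds on the minimal model, being immediate from the convexity of the Delzant polygon: a horizontal slice of a convex polygon meets its boundary in an interval bounded by at most two edges, and non-free orbits on the corresponding level come only from vertices on the slice and from edges crossing the slice whose slope has denominator greater than one, each contributing at most one non-free orbit per edge.

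The remaining work is the inductive step: that the non-free orbit bound is preserved under an $\mbS^1$-equivariant symplectic blowup at an isolated fixed point. By the equivariant Darboux theorem, a neighborhood of such a fixed point $p$ is modeled on the standard $\mbS^1$-action on $\C^2$ with integer weights $(a,b)$, and the blowup replaces this neighborhood by the equivariant blowup of $\C^2$ at the origin, attaching an exceptional $\mbS^1$-invariant sphere. The main obstacle is then a local bookkeeping computation: one must check how the $\Z_k$-spheres incident to $p$ before blowup interact with any new $\Z_k$-sphere arising from the exceptional divisor, and verify that on each nearby level set the total number of non-free orbits still does not exceed two. Chasing the weights $(a,b)$ through the blowup, and using the inductive hypothesis that the bound held at levels near $J(p)$ before blowup, this verification is routine and completes the induction, yielding the desired bound for $(M,\om,J)$ and hence, via Lemma~\ref{lem:toric-extending}, the existence of a toric extension.
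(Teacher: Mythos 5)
The paper offers no proof of this lemma at all: it is imported verbatim from Karshon's Corollary 5.19, so there is no in-house argument to compare yours against. Your overall route --- reduce to Lemma~\ref{lem:toric-extending}, note the genus condition is vacuous, and establish the two-orbit bound by induction over Karshon's minimal-model decomposition --- is legitimate and close in spirit to how one would reconstruct Karshon's argument. Two points need attention, one small and one substantive. The small one: you assert that equivariant blowups at isolated fixed points ``preserve the absence of fixed surfaces,'' but a blowup of type~\ref{case:blowup-min11} at an extremal fixed point with weights $\pm 1,\pm 1$ \emph{creates} a fixed surface. What you actually need is the reverse implication: blowups never destroy fixed surfaces (they only shrink them), so if $M$ has none, then the minimal model has none and no blowup of type~\ref{case:blowup-surface} or~\ref{case:blowup-min11} occurs anywhere in the sequence; the minimal model is then one of the toric ones and every blowup is of type~\ref{case:blowup-min} or~\ref{case:interior-blowup}.

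The substantive point is that the inductive step is not ``routine'' in the way you suggest, and the inductive hypothesis ``at levels near $J(p)$'' is not the right input. Consider a blowup of type~\ref{case:interior-blowup} at an interior fixed point $p$ with weights $-n,m$. The new $\Z_{m+n}$-sphere has $m+n\geq 2$, so it contributes a non-free orbit to \emph{every} level in $\,]J(p)-n\lambda,\,J(p)+m\lambda[\,$, whereas the weight-$n$ direction it replaces contributed nothing below $J(p)$ when $n=1$. So if some level $j$ just below $J(p)$ already carried two non-free orbits unrelated to $p$, the post-blowup count would be three and the induction would fail. What rescues the step is the inductive hypothesis applied at the level $J(p)$ itself: the fixed point $p$ is one of the at most two non-free orbits in $J^{-1}(J(p))$, and any $\Z_k$-sphere ($k>1$) not incident to $p$ that crosses levels just below $J(p)$ either crosses $J(p)$ or terminates at a fixed point $q\neq p$ on that level --- and distinct such spheres yield distinct non-free orbits there, since an interior fixed point has one positive and one negative weight and hence is the north pole of at most one sphere. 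Hence at most one ``other'' non-free orbit exists near $J(p)$, and the count after the blowup is at most two. This observation (with its analogue for type~\ref{case:blowup-min} at the extrema) is the actual content of the inductive step and should be written out rather than deferred; with it, your proof closes.
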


\subsection{Equivariant blowups}
\label{sec:S1-blowups}

Not every possible labeled graph can actually be obtained as the Karshon graph
associated to a Hamiltonian $\mbS^1$-space. Following Karshon~\cite{karshon}, we will describe, in terms of minimal models, the set of labeled graphs which do correspond to a Hamiltonian $\mbS^1$-space.
This involves in particular the effect of $\mbS^1$-equivariant blowups and blowdowns on the labeled graph and the notion of \emph{minimal Hamiltonian $\mbS^1$-spaces}, i.e., Hamiltonian $\mbS^1$-spaces which do not admit such blowdowns, and their Karshon graphs. Essential for the present paper is the fact that the set of graphs that can be obtained from Hamiltonian $\mbS^1$-spaces is equal to the set of graphs which can be produced from one of these minimal graphs via a finite sequence of $\mbS^1$-equivariant blowups.

\subsubsection{Symplectic and equivariant blowups and blowdowns}

Let $(M,\om)$ be a symplectic 4-manifold. Intuitively, a `blowup' of $(M,\om)$ amounts to removing an embedded $4$-ball and collapsing the boundary via the Hopf fibration to a 2-sphere. Since a `blowdown' is the inverse operation of a blowup we recall in the following only the construction and definition of (various types of) blowups but not blowdowns.

\begin{definition}\label{def:blowup}
Let $p\in M$, let $U\subset M$ be a neighborhood of $p$, and let $\phi\colon U\to V\subset\C^2$ be a symplectomorphism with $\phi(p)=(0,0)$. 
Then, given any $r>0$ such that the standard ball of radius $r$ in $\C^2$ is contained in $V$, we define the \emph{symplectic blowup at $p$ of size $\lambda := \frac{r^2}{2}$} by removing the preimage of the ball and collapsing the boundary to a 2-sphere via the usual Hopf fibration given by $\mbS^3\to \CP^1$, $(z_0,z_1)\mapsto [z_0:z_1]$ where the coordinates $(z_0,z_1)$ come from the inclusion $\mbS^3\subset\C^2$.
We will denote the manifold obtained by blowing up $M$ at $k$ points by $\mathrm{Bl}^k(M)$.
\end{definition}

The blowup and the blowdown and how to equip the resulting manifold with a symplectic form are described in detail in McDuff $\&$ Salamon~\cite[Section 7.1]{McDuffSalamon}.
Note that the diffeomorphism type of $\mathrm{Bl}^k(M)$ is independent of the choice
of points and sizes of the blowups.

Suppose that $(M,\om, J)$ is a
Hamiltonian $\mbS^1$-space. 
To obtain \emph{$\mbS^1$-equivariant coordinates}
we require that the map $\phi$ from Definition~\ref{def:blowup} is equivariant with respect to the given Hamiltonian $\mbS^1$-action on $(M,\om)$
and a Hamiltonian $\mbS^1$-action
on $\C^2$
given by $\lambda\cdot (z,w) = (\lambda^m z, \lambda^{n} w)$ for some $m,n\in\Z$, where $\lambda\in\mbS^1$.
Notice that such a $\phi$ only 
exists if $p$ is a fixed point of the action. 


\begin{definition}
Let $(M, \om, J)$ be a Hamiltonian $\mbS^1$-space and $p \in M$. 
Taking the blowup with respect to the above mentioned $\mbS^1$-equivariant coordinates and restricting the momentum map to the resulting space yields the \emph{$\mbS^1$-equivariant blowup at $p \in M$ of size $\lambda$}.
\end{definition}

Now consider a toric system and its induced Hamiltonian $\T^2$-action. 
To obtain \emph{$\T^2$-equivariant coordinates}
we require that the map $\phi$ from Definition~\ref{def:blowup} is equivariant with respect to the given Hamiltonian $\T^2$-action on $(M,\om)$ and a standard linear Hamiltonian $\T^2$-action
on $\C^2$ given by $(s,t)\cdot (z,w) = (s^m t^{m'}z, \, s^n t^{n'} w)$ for $(s,t)\in\T^2$ with weights
$m,m',n,n'\in\Z$.


\begin{definition}
Let $(M,\om,F)$ be a toric system and $p\in M$.
Taking the blowup with respect to the above mentioned
$\T^2$-equivariant coordinates and restricting the momentum map to the resulting space yields the \emph{$\T^2$-equivariant blowup at $p \in M$ of size $\lambda$}.
\end{definition}

The isomorphism class of the result of the $\mbS^1$-equivariant and $\T^2$-equivariant blowups described
above is independent of all choices. This can
be seen by simply noting that 
the resulting Karshon graph (for a Hamiltonian $\mbS^1$-space) or Delzant polygon (for a toric system)
is independent of all choices, and using the fact
that these objects classify 
Hamiltonian $\mbS^1$-spaces and toric systems, respectively,
up to isomorphisms (this is the same argument from~\cite[Proposition 6.1]{karshon}).

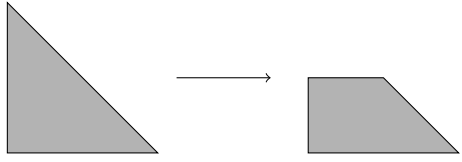
\begin{figure}
\begin{center}
\begin{tikzpicture}

\draw [->] (2.25,1) -- (3.5,1);

\filldraw[draw=black, fill=gray!60] (0,0) node[anchor=north,color=black]{}
  -- (0,2) node[anchor=south,color=black]{}
  -- (2,0) node[anchor=north,color=black]{}
  -- cycle;
  
\filldraw[draw=black, fill=gray!60] (4,0) node[anchor=north,color=black]{}
  -- (4,1) node[anchor=south,color=black]{}
  -- (5,1) node[anchor=south,color=black]{}
  -- (6,0) node[anchor=north,color=black]{}
  -- cycle;  
\end{tikzpicture}
\end{center}
\caption{Performing a blowup at one of the fixed points of $\CP^2$ corresponds to
performing a corner chop on the associated Delzant polygon.}
\label{fig:CP2blowup-delzant}
\end{figure}

\subsubsection{Blowups on 2-dimensional Delzant polytopes}
Now we analyze the impact of a $\T^2$-equivariant blowup of a toric system on its Delzant polygon.

A vector $v\in\Z^2$ is \emph{primitive} if $v = k u$ for $u\in\Z^2$ and $k\in\Z$ implies $k=\pm 1$, i.e., $v$ is not a multiple of a shorter integral vector.
Given primitive vectors $u_1,u_2\in\Z^2$ and $x\in\R^2$ set
\[
 \mathrm{Simp}_x^\lambda(u_1,u_2) := \{x + t_1 u_1 + t_2 u_2 \mid t_1, t_2 >0,\ t_1+t_2<\lambda\}.
\]
which parametrizes a simplex originating from $x$ between $\lambda u_1$ and $\lambda u_2$.
Thus, taking $x$ to be the origin of $\R^2$ and $u_1,u_2$ to be the standard basis vectors of $\Z^2$ yields a right triangle with two sides of length $\lambda$. Other choices of $x$ and primitive vectors $u_1,u_2\in\Z^2$ lead to all translations of images of that triangle under $\mathrm{GL}(2,\Z)$.

\begin{remark}
Let $(M,\om,F)$ be a compact, four dimensional, toric integrable system with Delzant polygon $F(M)=: \De$. Let $p \in M$ be a fixed point of the system and $\lambda>0$ such that the blowup at $p$ of size $\lambda$ is well-defined.  
Let $(M',\om',F')$ be the toric system resulting from a $\T^2$-equivariant blowup at $p\in M$ of size $\lambda>0$. Then the Delzant polygon $F'(M') =: \De'$ is related to $\De$ via
\[
 \De' = \De \setminus \mathrm{Simp}_{F(p)}^\lambda(v_1,v_2),
\]
cf.\ Figure~\ref{fig:CP2blowup-delzant}, where $v_1,v_2\in\Z^2$ are the primitive vectors aligned with the edges emanating from the corner $F(p)$.
This operation is usually referred to as a \emph{corner chop} and is well known in the literature, see for instance Cannas da Silva~\cite[Sections 3.4-3.5]{Can_toric}.

\end{remark}

Now we can reformulate the idea of the blowup entirely in terms of the Delzant polygon.

\begin{remark}
The toric system $(M,\om,F)$ associated to a Delzant polygon $\De$ admits a $\T^2$-equivariant blowup
of size $\lambda>0$ at a fixed point $p\in M$ if and only if $F(p)$ is the only vertex of $\De$ contained in the simplex $\mathrm{Simp}_{F(p)}^\lambda(v_1,v_2)$, where $v_1,v_2$ are the primitive vectors aligned with the edges emanating from $F(p)$.
\end{remark}

\subsubsection{Blowups on Karshon graphs} 
\label{sec:karshon-blowups}

Let $(M,\om,J)$ be a Hamiltonian $\mbS^1$-space on which we perform an $\mbS^1$-equivariant blowup at a fixed point $p \in M$ of (admissible) size $\lambda$. Denote the resulting Hamiltonian $\mbS^1$-space by $(M', \om', J')$ and flag all objects associated with $(M', \om', J')$ by a prime $'$.
Recall that a fixed point of the $\mbS^1$-action is either an isolated fixed point or lies in a fixed surface (which can only be located at the minimum and/or maximum of $J$). Denote by $j_\textrm{min},j_{\textrm{max}}\in\R$ the minimum and maximum values of $J(M)$.

We will now describe how the Karshon graph $\Gamma'$ of $(M', \om', J')$ relates to the Karshon graph $\Gamma$ of $(M, \om, J)$,
see Figure~\ref{fig:blowupCasesB}.


\begin{figure}
 \centering
 \input{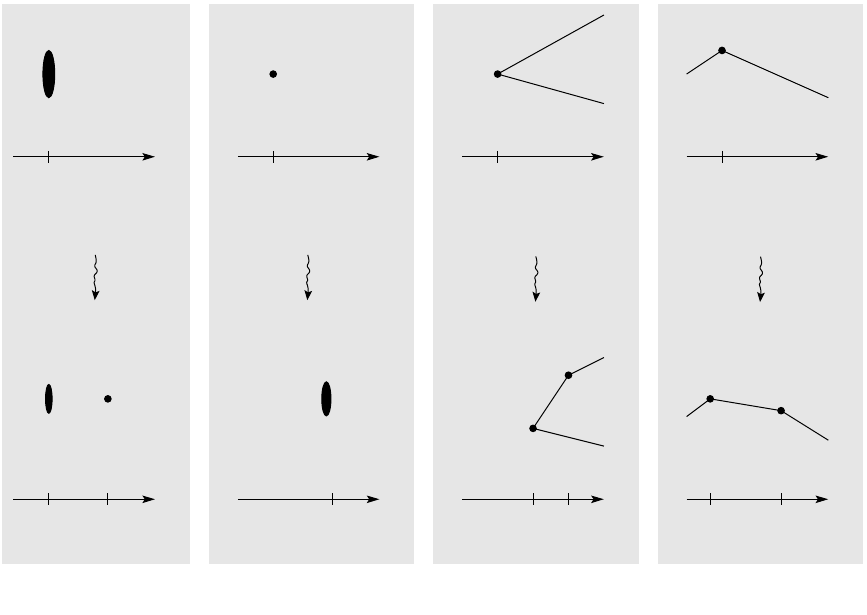_t}
  \caption{The effect of a blowup on a Karshon graph in cases~\ref{case:blowup-surface}-\ref{case:interior-blowup} from Lemma~\ref{lem:casesGraphChange}.}
\label{fig:blowupCasesB}
\end{figure}

\begin{lemma}[{\cite[Section 6.1]{karshon}}]
\label{lem:casesGraphChange}

The effect of an $\mbS^1$-equivariant blowup at a fixed point $p\in M^{\mbS^1}$ of (admissible) size $\lambda$ on the
associated Karshon graph depends on the type of fixed point (isolated or not) and the value of $J(p)$:\\

\begin{enumerate}[label=(B\arabic*)]
\item \label{case:blowup-surface} 
If $\Sigma:=J^{-1}(j_\textrm{min})$ is a fixed surface and $p \in \Sigma$, then the normalized symplectic area $A(\Sigma)$
of $\Sigma$ is reduced by $\lambda$, i.e., $A(\Sigma')= A(\Sigma) - \lambda$. Moreover, there is a new isolated fixed point $p'$ in $\Gamma'$ with $J'(p')= J(p) + \lambda = j_{\mathrm{min}}+\lambda$. Other than this the graph is unchanged, and in particular there are no new edges. 
 
Replacing $j_\textrm{min}$ by $j_{\mathrm{max}}$ works similar, except that $J'(p') = J(p)- \lambda = j_{\mathrm{max}}-\lambda$.\\

\item \label{case:blowup-min11} 
If $p$ is an isolated fixed point with $J(p)=j_{\mathrm{min}}$ and weights both equal to $1$, then the vertex in $\Gamma$ corresponding to $p$ does not exist any more in $\Gamma'$, but is replaced in $\Gamma'$ by a fat vertex of normalized area $A=\lambda$ with genus zero and $J$-value $J=j_{\mathrm{min}}+\lambda$. There is no change in the edge set of the graph.

The case in which $p$ is an isolated fixed point with $J(p)=j_{\mathrm{max}}$ and weights both equal to $-1$ is similar, except that the new fat vertex has $J$-value $J=j_{\mathrm{max}}-\lambda$.\\

\item \label{case:blowup-min} 
If $p$ is an isolated fixed point with $J(p)=j_{\mathrm{min}}$ with weights $n$, $m$ satisfying $0<n<m$, then
 the vertex associated to $p$ in $\Gamma$ is removed and replaced in $\Gamma'$ by two new vertices associated with two fixed points $p_-'$ and $p_+'$ with $J'(p_-') = j_{\mathrm{min}}+n\lambda$ and $J'(p_+') = j_{\mathrm{min}}+m\lambda$. The edge with label $n$
 that was attached to the vertex corresponding to $p$ is now attached to the new vertex corresponding to $p_-'$ and the edge labeled by $m$ is now attached to the new vertex corresponding to $p_+'$. The two new vertices are connected to each other by an edge labeled by $m-n$. Recall that edges with labels equal to $1$ are not drawn in the graph.
 
 The case with $J(p) = j_{\mathrm{max}}$ is a reflection of this situation. That is, if the weights of $p$ are $-n$ and
 $-m$ satisfying $0<n<m$ then the new vertices have $J$-values
 $J'(p_-')=j_\mathrm{max}-n\lambda$ and
 $J'(p_+')=j_\mathrm{max}-m\lambda$, and the edge connecting
 them has label $m-n$.\\

\item \label{case:interior-blowup} 
If $p$ is an isolated fixed point with $J(p) \in\ ]j_{\mathrm{min}}, j_{\mathrm{max}}[ $ with weights $-n$ and $ m$ satisfying $n,m > 0$ then the vertex in $\Gamma$ corresponding to $p$ is removed
 from the graph and replaced in $\Gamma'$ with two new vertices corresponding to two isolated fixed points $p_-'$ and $p_+'$ satisfying $J'(p_-')=J(p)-n\lambda$ and $J'(p_+')=J(p)+m\lambda  $, so that $J'(p_-') \leq J'(p_+')$.
 The edge with label $n$ that was attached to the vertex corresponding to $p$ is now attached to the new vertex associated with $p_-'$. The edge labeled $m$ is now attached to the new vertex corresponding to $p_+'$. The two new vertices are connected to each other by an edge with label $m+n$.

\end{enumerate}
\end{lemma}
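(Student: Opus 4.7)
My approach is to work in local $\mbS^1$-equivariant symplectic coordinates around $p$, perform the blowup explicitly in these coordinates, and then read off the resulting fixed point structure. By the equivariant Darboux theorem there exist $\mbS^1$-equivariant symplectic coordinates $(z,w)$ on a neighborhood $U$ of $p$ identifying $p$ with the origin in $(\C^2, \tfrac{\mathrm{i}}{2}(\mathrm{d}z\wedge\mathrm{d}\overline{z}+\mathrm{d}w\wedge\mathrm{d}\overline{w}))$, in which the action reads $t\cdot(z,w)=(\mathrm{e}^{\mathrm{i}mt}z,\mathrm{e}^{\mathrm{i}nt}w)$ with the weights $(m,n)$ at $p$ as recorded in the Karshon graph, and the moment map reads $J(z,w) = J(p) + \tfrac12(m|z|^2+n|w|^2)$. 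The $\mbS^1$-equivariant blowup of size $\lambda$ at $p$ then removes the ball $\{|z|^2+|w|^2<2\lambda\}$ and collapses its boundary sphere via the Hopf map to the exceptional divisor $E\cong \C\P^1$, on which the induced action is $t\cdot[z:w]=[\mathrm{e}^{\mathrm{i}mt}z:\mathrm{e}^{\mathrm{i}nt}w]$ and the induced symplectic form has normalized area $\lambda$; outside $U$ the system is unchanged.

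Next I read off the local invariants of the action on the blowup. When $m\neq n$ the new fixed points on $E$ are $[1:0]$ and $[0:1]$. Working in the affine charts $(z, w/z)$ and $(z/w, w)$ one computes directly that the weights at $[1:0]$ are $(m,n-m)$ and at $[0:1]$ are $(n, m-n)$. Since the moment map $\widetilde{J}$ of the blowup extends that of the original system, its values at the new fixed points can be read off the preimages of these points under the Hopf map restricted to the bounding sphere: the preimage of $[1:0]$ is the circle $\{(\sqrt{2\lambda}\mathrm{e}^{\mathrm{i}\theta},0)\}$ on which $J \equiv J(p)+m\lambda$, and similarly $\widetilde{J}([0:1])=J(p)+n\lambda$. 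The generic stabilizer of the action on $E$ is $\Z_{|m-n|}$, so when $|m-n|\geq 2$ the divisor $E$ is a $\Z_{|m-n|}$-sphere joining the two new fixed points; when $m=n$ the whole divisor $E$ is pointwise fixed, producing a new fixed surface of area $\lambda$ and genus zero. A pre-existing $\Z_k$-sphere at $p$ tangent to the $z$- (resp.\ $w$-)direction has its proper transform still passing through $[1:0]$ (resp.\ $[0:1]$), since those directions are unchanged in the affine charts.

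The four cases of the lemma are then a direct substitution. In case~\ref{case:blowup-surface} the weights are $(0,1)$: the point $[1:0]$ has weights $(0,1)$ and still lies on the (strict transform of the) fixed surface at $J=j_{\min}$, the point $[0:1]$ at $j_{\min}+\lambda$ has weights $(1,-1)$, and $|m-n|=1$ yields no new edge; combining $[\Sigma]=[\widetilde{\Sigma}]+[E]$ in $H_2$ with $\int_E \widetilde{\omega} = 2\pi\lambda$ gives $A(\widetilde{\Sigma})=A(\Sigma)-\lambda$. In case~\ref{case:blowup-min11} the weights are $(1,1)$, the induced action on $E$ is trivial, and so $E$ is a new fixed surface at $j_{\min}+\lambda$ of area $\lambda$ and genus zero with no edge change. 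In case~\ref{case:blowup-min} with $0<n<m$ the formulas yield $p_+'=[1:0]$ at $j_{\min}+m\lambda$ inheriting the $\Z_m$-edge, $p_-'=[0:1]$ at $j_{\min}+n\lambda$ inheriting the $\Z_n$-edge, and $E$ contributes a new $\Z_{m-n}$-edge between them. Case~\ref{case:interior-blowup} with weights $(m,-n)$, $m,n>0$, is analogous: $p_+'=[1:0]$ at $J(p)+m\lambda$, $p_-'=[0:1]$ at $J(p)-n\lambda$, and the new edge is a $\Z_{m+n}$-sphere. The maximum-value variants are obtained by negating all weights.

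The only real difficulty is bookkeeping: correctly matching the inherited $\Z_k$-spheres to the appropriate new vertex $p_\pm'$, and separately treating the degenerate weight situations $m=n$ (which yields a new fixed surface rather than two new fixed points) and $|m-n|\leq 1$ (in which the exceptional divisor carries no $\Z_k$-label). All geometric content is subsumed in the local normal form at $p$ and the Hopf collapse of the bounding sphere.
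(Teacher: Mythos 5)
Your proof is correct: the paper itself gives no proof of this lemma, deferring entirely to Karshon~\cite[Section 6.1]{karshon}, and your argument --- equivariant local normal form at $p$, explicit Hopf collapse of the bounding sphere of radius $\sqrt{2\lambda}$, and reading off the weights, $J$-values, generic stabilizers, and areas on the exceptional divisor --- is essentially the computation carried out there. The case analysis (including the degenerate situations $m=n$ versus $|m-n|=1$) and the matching of the inherited $\Z_k$-spheres to the correct new vertices $p_\pm'$ all check out.
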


Above we have described the effect of the blowups on the graphs, but if $\lambda$ is too large, 
an $\mbS^1$-equivariant blowup of size $\lambda$ on $(M,\om,J)$ will not be possible. To describe which
blowups on the graphs can actually be realized by blowups on the $\mbS^1$-space (i.e.~to describe which
$\mbS^1$-equivariant blowups are possible on a given $\mbS^1$-space), we must 
first define a partial ordering of the vertices of a Karshon graph (again following
Karshon~\cite{karshon}).
If $v$ and $w$ are vertices of a Karshon graph, then we say that $v<w$ if and only if the $J$-value of $v$ 
is less than the $J$-value of $w$ and one of the following holds:
\begin{itemize}
 \item $v$ or $w$ are extremal, or
 \item $v$ and $w$ are connected by a chain of edges.
\end{itemize}
This partial ordering on the vertices of $\Gamma$ also induces a partial ordering on the vertices of any blowup of $\Gamma$ in the following way:
let $v$ be a vertex of a Karshon graph $\Gamma$ and let $\Gamma_{v,\lambda}$ be a graph obtained by performing a blowup at $v$
of some size $\lambda>0$. 
There is a value $\lambda_0>0$ such that
for all $0<\lambda<\lambda_0$ the partial ordering described above on the vertices of $\Gamma_{v,\lambda}$
is not changed by the size of $\lambda$.
We endow the vertices of $\Gamma_{v,\lambda}$ with this partial ordering,
no matter the size of $\lambda$. 

The following proposition essentially states that the sizes of blowups which are allowed are those for which $\lambda$ is sufficiently small so that the 
ordering on the vertices described above is in agreement with ordering of their $J$-values:
\begin{proposition}[{\cite[Proposition 7.2]{karshon}}]
 An $\mbS^1$-space admits an $\mbS^1$-equivariant blowup of size $\lambda>0$ at a given fixed point if
 and only if the blowup of size $\lambda$ of the corresponding Karshon graph at the corresponding vertex satisfies:
 \begin{enumerate}
  \item for all vertices $v,w$ in the blownup graph, if $v<w$ then $J(v)<J(w)$;
  \item the area label of each fat vertex (if any) is positive.
 \end{enumerate}
\end{proposition}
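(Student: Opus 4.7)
The plan is to prove the two directions of the equivalence separately. The necessity direction follows from basic properties of Hamiltonian $\mbS^1$-spaces, while sufficiency requires combining the equivariant Darboux model at the fixed point with Karshon's classification theorem.

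For necessity, assume the $\mbS^1$-equivariant blowup of size $\lambda>0$ at $p$ exists, producing the Hamiltonian $\mbS^1$-space $(M',\om',J')$ with Karshon graph $\Gamma'$. Condition~(2) is immediate since each fixed surface $\Sigma'$ of $(M',\om',J')$ is a symplectic submanifold, so $A(\Sigma') = \frac{1}{2\pi}\int_{\Sigma'}\om' > 0$. For condition~(1), the partial ordering on $\Gamma'$ is generated either by extremality of a vertex or by chains of $\Z_k$-spheres: extremal vertices correspond to minima or maxima of $J'$ on $M'$, while a $\Z_k$-sphere $S$ connecting $v<w$ has its two poles as the only critical points of $J'|_S$, so $J'$ is strictly monotonic along any path on $S$ between them and forces $J'(v)<J'(w)$.

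For sufficiency, use the equivariant Darboux presentation recalled in Section~\ref{sec:S1-classification} to obtain an $\mbS^1$-equivariant symplectomorphism $\phi\colon U\to V\subset\C^2$ with $\phi(p)=(0,0)$ and a linear $\mbS^1$-action on $\C^2$ of the appropriate weights. The symplectic blowup of size $\lambda$ is then well-defined whenever the closed ball $\overline{B_r}$ of radius $r = \sqrt{2\lambda}$ lies inside $V$. The task is to show that $V$ can be chosen large enough to contain $\overline{B_r}$ precisely when conditions~(1) and~(2) are satisfied. On the geometric side, $V$ can be enlarged until it runs into another fixed component of the $\mbS^1$-action; the possibilities for this are (a) reaching another fixed point along a $\Z_k$-sphere emanating from $p$, or (b) consuming an entire fixed surface containing $p$. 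Condition~(2) will be shown to be equivalent to ruling out~(b), while condition~(1) is equivalent to ruling out~(a).

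The main obstacle is establishing the precise correspondence between graph conditions and geometric obstructions. This requires a case-by-case analysis according to the four cases~\ref{case:blowup-surface}--\ref{case:interior-blowup} of Lemma~\ref{lem:casesGraphChange}, tracking the $J$-value and the partial-order position of each new vertex of the blowup graph. A cleaner alternative, once the correspondence is established combinatorially, is to invoke Karshon's realization theorem: the modified graph $\Gamma'$ (which by assumption is itself a valid Karshon graph) determines a unique Hamiltonian $\mbS^1$-space, and one verifies via the local model at $p$ that this realization agrees with the blowup of $(M,\om,J)$ of size $\lambda$. The hardest part will be ensuring that the local equivariant ball model glues consistently to the rest of the $\mbS^1$-space, which requires controlling how $\Z_k$-spheres and fixed surfaces extend beyond the Darboux chart and relating their global extent to the bounds on $\lambda$ encoded in conditions~(1) and~(2).
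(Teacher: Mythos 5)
This statement is imported verbatim from Karshon~\cite[Proposition 7.2]{karshon}; the paper offers no proof of its own, so there is nothing internal to compare your argument against. Judged on its own terms, your necessity direction is essentially sound: positivity of area labels follows from fixed surfaces being symplectic, and the order relations coming from chains of $\Z_k$-spheres are respected by $J$ because $J$ is monotone along each gradient sphere. One caveat: condition~(1) refers to the partial order \emph{inherited} from the small-$\lambda$ blowup graph (as defined just before the proposition), not to the intrinsic order of the graph $\Gamma_{v,\lambda}$ for the actual $\lambda$; you should check that every inherited relation $v<w$ still corresponds to a geometric chain of spheres or an extremality statement in the blown-up manifold, which is where the content of condition~(1) really lives.

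The sufficiency direction, however, is a plan rather than a proof, and you flag the missing pieces yourself. The central claim --- that the equivariant Darboux neighborhood $V$ can be enlarged until the closed ball of capacity $\lambda$ fits, and that the \emph{only} obstructions to doing so are (a) exhausting the symplectic area of a $\Z_k$-sphere emanating from $p$ and (b) consuming a fixed surface through $p$ --- is precisely the nontrivial content of Karshon's result, and it is asserted, not established. Ruling out all other obstructions to an equivariant symplectic ball embedding of a prescribed capacity is not a local statement; Karshon's argument goes through her uniqueness theorem rather than through a direct embedding. Your ``cleaner alternative'' is the right route to make this rigorous: verify that the modified graph $\Gamma'$ satisfying (1)--(2) is a legal Karshon graph, realize it as a space $(M',\om',J')$, perform the equivariant blow\emph{down} along the exceptional sphere of $M'$, and use the classification theorem to identify the result with $(M,\om,J)$, whence $M$ admits the blowup. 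But this requires actually checking that $\Gamma'$ is obtainable from a minimal model (so that the realization theorem applies) and that the blowdown of $\Gamma'$ recovers $\Gamma$ with the correct labels in each of the cases \ref{case:blowup-surface}--\ref{case:interior-blowup}; as written, neither step is carried out.
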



\begin{remark} 
For the goal of the present paper, case~\ref{case:interior-blowup} of Lemma~\ref{lem:casesGraphChange} is of interest for $m=n=1$, i.e., the situation where a single vertex $p$ with $J(p)=:j$ which is not the end or start point of any edge is replaced by a pair of vertices $p_-'$ and $p_+'$ with $J'(p_-')=j-\lambda$ and $J'(p_+')=j+\lambda$ that are connected by an edge with label $2$.
\end{remark}

\subsubsection{Toric blowups} 
If a fixed point of an integrable system consists only of elliptic components in the sense of Theorem \ref{thm:normal-form} it is said to be a \emph{completely elliptic point}.  Given a completely elliptic fixed point $p$ of an $2n$-dimensional integrable system, the coordinates of the local normal form in Theorem~\ref{thm:normal-form} yield an identification of a neighborhood of $p$ with $\C^n$. This allows to define a blowup at completely elliptic points, referred to as \emph{toric blowups}. Details can be found for instance in Le Floch \& Palmer~\cite[Section 4.2]{LFPfamilies}, in particular for the fact that the resulting system is independent of all choices.

\subsection{Minimal Hamiltonian \texorpdfstring{$\mbS^1$}{S1}-spaces}
\label{sec:min-models}

The exposition of this section closely follows the line of thoughts 
in Karshon~\cite[Section 6]{karshon}.

\begin{definition}
A Hamiltonian $\mbS^1$-space is \emph{minimal} if it does not admit an $\mbS^1$-equivariant blowdown.
\end{definition}

To formulate the list of all minimal Hamiltonian $\mbS^1$-spaces, we need the following definition:

\begin{definition}
\label{def:ruledMfd}
A {\em ruled surface} $(M, \om, J)$ is an $\mbS^2$-bundle $M$ over a closed surface $\Sigma$ endowed with a momentum map $J: M \to \R$ inducing an $\mbS^1$-action that fixes the base, and therefore rotates each fiber.
\end{definition}

Note that this definition of ruled surfaces may differ from the one used in a non-equivariant or complex setting. 
Since the $\mbS^1$-action is by rotation of the sphere component, there exist local
trivializations in which the Hamiltonian function $J$ is given by (an appropriate multiple of) the standard height function on the sphere
component. The $\mbS^1$-action on ruled surfaces fixes two surfaces, both diffeomorphic to the base $\Sigma$, corresponding to the north and south poles of the spheres which are fixed by the $\mbS^1$-action. There are no
other fixed points, and thus the Karshon graph of a ruled surface is composed of two fat vertices. The $J$-values of the fat vertices are $j_0$ and $j_0+s$, for some $s>0$, the area labels are $a>0$ and $a+ns$ (for some $n\in\Z$ such that $a+ns>0$),
and both are labeled with the same genus $g\in\Z_{\geq 0}$.
In the case that $n=0$ the ruled surface is simply the
product $\mbS^2\times\Sigma_g$, where $\Sigma_g$ is a surface of genus $g$.

Now recall that, given a toric system $(M,\om,F)$, any homomorphism $\mbS^1\hookrightarrow\T^2$ induces a Hamiltonian action of $\mbS^1$ on $(M,\om)$.

\begin{theorem}[{Karshon~\cite[Theorem 6.3]{karshon}}] \label{thm:karshon-minimal}
 A Hamiltonian $\mbS^1$-space $(M,\om,J)$ is minimal if and only if either:
 \begin{enumerate}
  \item $(M,\om,J)$ is induced by the standard toric system on $\CP^2$ with some multiple of the Fubini-Study form
   by a homomorphism $\mbS^1\hookrightarrow\T^2$;
  \item $(M,\om,J)$ is induced by the standard toric system on one of the scaled Hirzebruch surfaces by a homomorphism $\mbS^1\hookrightarrow\T^2$;
  \item \label{case:ruledmfd} $(M,\om,J)$ has two fixed surfaces and no other fixed points, in which case it is a ruled surface in the sense of Definition \ref{def:ruledMfd}.
 \end{enumerate}
\end{theorem}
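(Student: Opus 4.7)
The plan is to follow Karshon's approach of classifying minimal Karshon graphs directly, exploiting the description of $\mbS^1$-equivariant blowups on such graphs given in Lemma~\ref{lem:casesGraphChange}. A Hamiltonian $\mbS^1$-space is minimal if and only if its Karshon graph $\Gamma$ admits no inverse of the operations (B1)--(B4). Thus the proof reduces to (i) verifying that each of the three listed families has a graph admitting no blowdown, and (ii) showing that any other minimal graph fits one of these descriptions.

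For the \emph{if} direction, I would proceed by inspection. A ruled surface, as in Definition~\ref{def:ruledMfd}, has a Karshon graph consisting only of two fat vertices (one at $j_{\min}$, one at $j_{\max}$), of equal genus, with no isolated fixed points or edges; none of (B1)--(B4) can be reversed since each reversal requires the presence of an isolated fixed point. For $\CP^2$ or a Hirzebruch surface with an $\mbS^1$-action coming from a homomorphism $\mbS^1 \hookrightarrow \T^2$, I would read off the Karshon graph from the Delzant polygon via the recipe in Section~\ref{sec:S1-toric} and check case by case that each potential blowdown at a vertex is obstructed, either because the partial ordering on vertices would be violated or because an area label would become nonpositive.

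For the \emph{only if} direction, given a minimal Hamiltonian $\mbS^1$-space, I would split into three cases according to the structure of the fixed-point set $M^{\mbS^1}$. \textbf{Case A:} $M^{\mbS^1}$ consists only of surfaces. By Lemma~\ref{lem:karshon_fixedset}, there are exactly two such surfaces, of equal genus $g$, located at $j_{\min}$ and $j_{\max}$. Since $\Z_k$-spheres ($k>1$) must have fixed-point endpoints and none are isolated, the $\mbS^1$-action is free away from these surfaces; a Morse-theoretic argument using $J$ then exhibits $M$ as an $\mbS^1$-equivariant $\mbS^2$-bundle over the base surface of genus $g$, landing in class~\ref{case:ruledmfd}. \textbf{Case B:} all fixed points of the $\mbS^1$-action are isolated. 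By Lemma~\ref{lem:isolated-points-extend}, $(M,\om,J)$ extends to a toric system, giving a Delzant polygon $\De$. Using the classification of smooth compact toric surfaces (each is obtained from $\CP^2$ or a Hirzebruch surface by $\T^2$-equivariant blowups), minimality forces $\De$ to be the polygon of $\CP^2$ or of a Hirzebruch surface, since any extra corner chop in $\De$ corresponds to an admissible $\mbS^1$-equivariant blowdown of $(M,\om,J)$. \textbf{Case C:} $M^{\mbS^1}$ contains both a fixed surface $\Sigma$ and at least one isolated fixed point. In this case one must produce an admissible blowdown, contradicting minimality: take $v$ to be a vertex of $\Gamma$ with smallest $J$-value strictly above $j_{\min}$ (assuming $\Sigma \subset J^{-1}(j_{\min})$), and show via local analysis that $v$ arises as the blowup of a simpler $\mbS^1$-space through either (B1) or (B3), depending on whether $v$ is attached to $\Sigma$ by a $\Z_k$-sphere.

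The main obstacle is Case C, namely ruling out all mixed configurations. The delicate point is not producing a candidate blowdown but verifying its admissibility via Proposition~\ref{...}, i.e.~checking that a suitable size $\lambda>0$ is compatible with the partial ordering on vertices and with positivity of all area labels. This forces a careful inspection of the positions and weights of the isolated fixed points nearest to $\Sigma$, as well as of any $\Z_k$-spheres attached to $\Sigma$, to confirm that a legal inverse of (B1) or (B3) always exists. Once this is handled, the three cases exhaust all minimal graphs and the theorem follows.
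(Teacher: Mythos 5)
The paper does not actually prove this statement --- it is quoted verbatim from Karshon~\cite[Theorem 6.3]{karshon} and used as a black box --- so there is no in-paper argument to compare your route against. Judged on its own terms, your overall strategy (translate minimality into the non-existence of an inverse of the graph operations \ref{case:blowup-surface}--\ref{case:interior-blowup}, then classify the graphs) is the right one, and your Cases A and B are plausible in outline. But Case C contains a genuine error that breaks the trichotomy.

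You claim that any space whose fixed-point set contains both a fixed surface and an isolated fixed point admits an admissible blowdown and hence is not minimal. This is false, and the paper itself contains the counterexample: $\CP^2$ with $J([z_0:z_1:z_2])=\abs{z_1}^2/(\abs{z_0}^2+\abs{z_1}^2+\abs{z_2}^2)$ (Example~\ref{example:CP2_karshon}, Figure~\ref{fig:CP2_karshon}) has a fixed sphere at $J=0$ \emph{and} an isolated fixed point at $J=1$, yet it is minimal and is precisely an instance of item (1) of the theorem (the sub-circle of $\T^2$ has been chosen so that one edge of the Delzant triangle becomes vertical). The same happens for Hirzebruch surfaces with a sub-circle making exactly one edge vertical; these populate item (2). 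Your proposed blowdown ``at the vertex of smallest $J$-value strictly above $j_{\min}$'' fails concretely on $\CP^2$: that vertex sits at $j_{\max}$ with weights $(-1,-1)$, so it is not the exceptional vertex of any of \ref{case:blowup-surface}--\ref{case:interior-blowup} (a reverse of \ref{case:blowup-surface} needs an \emph{interior} isolated vertex with weights $\{+1,-1\}$, and a reverse of \ref{case:blowup-min11} would need a fat vertex, not an isolated one). The correct dividing line in the mixed case is therefore not ``surface plus isolated point $\Rightarrow$ blow down'', but rather whether there exists an isolated fixed point in the \emph{interior} of $J(M)$ not lying on the right configuration of $\Z_k$-spheres; handling the chains of $\Z_k$-spheres attached to the fixed surface is exactly where the vertical-edge $\CP^2$ and Hirzebruch cases survive as minimal models, and this analysis is missing from your sketch. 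A smaller quibble: for blow\emph{downs} the obstruction is never ``an area label becomes nonpositive'' (areas increase under blowdown); it is whether the graph locally has the shape produced by one of \ref{case:blowup-surface}--\ref{case:interior-blowup} and whether the candidate blown-down graph is itself realizable as a Hamiltonian $\mbS^1$-space.
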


\begin{definition}
 The minimal Karshon graphs are those that can be obtained from minimal Hamiltonian $\mbS^1$-spaces.
\end{definition}

Thus, Karshon's classification of Hamiltonian $\mbS^1$-spaces works in terms of `minimal models', i.e., the set of all possible Karshon graphs
is obtained by starting with the minimal Karshon graphs and also including in all graphs that can be obtained from those by a finite sequence of admissible $\mbS^1$-equivariant blowups.
Constructing the Karshon graph of a Hamiltonian $\mbS^1$-space
then defines a bijection between the set of all
Hamiltonian $\mbS^1$-spaces up to isomorphism
and the set of all Karshon graphs generated from the minimal models in this way.


\subsection{Singularities of integrable systems}
\label{sec:singularities}

Let $(M,\om,F=(f_1,\ldots,f_n))$ be a $2n$-dimen{-}sional integrable system. 
A point $p\in M$ is \emph{singular} if
\[
 \mathrm{rank}(p):= \dim( \mathrm{span}_\R \{ \mathcal{X}^{f_1}(p),\ldots,\mathcal{X}^{f_n}(p)\}) = \mathrm{rank}(dF(p))< n.
\]
Referring to Bolsinov $\&$ Fomenko~\cite{BolFom} for details, the space of quadratic forms on $T_p M$ can be endowed with a Lie algebra structure isomorphic to $\mathfrak{sp}(2n,\R)$. A rank zero singular point $p$ is \emph{non-degenerate} if and only if the Hessians of $f_1,\ldots, f_n$ span a so-called {\em Cartan subalgebra}. There is also a notion of non-degeneracy for singular points $p$ with $\mathrm{rank}(p)>0$: Here the idea is, loosely speaking, to take the symplectic quotient with respect to the non-singular part of the momentum map and apply the same condition.

Cartan subalgebras of $T_p M \cong \mathfrak{sp}(2n,\R)$ were classified by Williamson \cite{Wil}. This \emph{pointwise} classification was extended to a \emph{local} classification several special cases by a series of papers, such as Colin de Verdi\`ere-Vey~\cite{colinDeVerdiere-vey}, R\"ussmann~\cite{ruessmann}, Vey~\cite{vey}, Eliasson~\cite{Eliasson-thesis, eliasson-elliptic}, Dufour \& Molino~\cite{dufour-molino}, Miranda \& \vungoc~\cite{miranda-vungoc}, V\~{u} Ng\d{o}c \& Wacheux~\cite{vungoc-wacheux}, and Chaperon~\cite{chaperon}.
Furthermore, this result was extended to an equivariant version by Miranda \& Zung~\cite{miranda-zung}.
See Remark 4.6 and the discussion after Theorem 4.5 in Palmer~\cite{joey-survey} for details of proofs of the following statement in various special cases\footnote{To our knowledge, no complete proof of Theorem~\ref{thm:normal-form} exists in the literature.}.

\begin{theorem}[Local normal form for non-degenerate singularities]
\label{thm:normal-form}
 Let $p\in M$ be a non-degenerate singular point of 
 a $2n$-dimensional completely integrable system $(M,\om,F=(f_1,\ldots,f_n))$.
 Then:
 \begin{enumerate}
  \item there exist local symplectic coordinates $(x_1,\ldots, x_n, \xi_1,\ldots,\xi_n)$
 on an open neighborhood $U\subset M$ and smooth functions
 $q_1,\ldots,q_n\colon U\to\R$ where we have the following possibilities
 for the form of each $q_j$:
 \begin{itemize}
  \item Elliptic component: $q_j = (x_j^2+\xi_j^2)/2$,
  \item Hyperbolic component: $q_j = x_j \xi_j$,
  \item Focus-focus component: $q_j = x_j \xi_{j+1} - x_{j+1} \xi_j$ and $q_{j+1} = x_j \xi_j + x_{j+1} \xi_{j+1}$,
  \item Regular component: $q_j = \xi_j$,
 \end{itemize}
 such that $\{q_i,f_j\}=0$ for all $i,j\in\{0,\ldots,n\}$ and $p$ corresponds to the
 origin in these coordinates;
  \item\label{item:g} if there is no hyperbolic component then the system of equations $\{q_i,f_j\}=0$
 for all possible $i,j$ is equivalent to the existence of a local diffeomorphism
 $g\colon \R^n\to\R^n$ such that
 \[
  g \circ F = (q_1,\ldots, q_n) \circ (x_1,\ldots, x_n, \xi_1,\ldots, \xi_n).
 \]
 \end{enumerate}
\end{theorem}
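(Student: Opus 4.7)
My plan is to establish this in two stages, mirroring the classical development in the cited references. First, at the linear level, I would apply Williamson's pointwise classification to the Hessians of the $f_i$ at $p$; then I would upgrade this to a genuine local normal form on a neighbourhood by an iterative deformation scheme.

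The first stage begins with a reduction to the rank-zero case. Writing $r=\mathrm{rank}(p)$ and reordering so that $df_1(p),\ldots,df_r(p)$ are linearly independent, the Darboux--Carath\'eodory theorem produces local symplectic coordinates in which $f_1,\ldots,f_r$ become $\xi_1,\ldots,\xi_r$. These contribute $r$ regular components $q_j=\xi_j$ of the normal form, and the problem reduces to a rank-zero non-degenerate singularity on a symplectic transversal of dimension $2(n-r)$ where the remaining functions $f_{r+1},\ldots,f_n$ vanish at $p$. By the non-degeneracy assumption, their Hessians span a Cartan subalgebra of $\mathfrak{sp}(2(n-r),\R)$, so Williamson's theorem supplies a linear symplectic change of coordinates together with an invertible linear recombination of these functions that puts the Hessians into the standard block form, one block for each of the elliptic, hyperbolic, and focus-focus types.

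The second stage promotes the linear model to a normal form for the functions themselves. Here I would invoke the cited local results directly: Eliasson for purely elliptic points, Colin de Verdi\`ere--Vey and Vey for the hyperbolic case, and Miranda--Zung (or Chaperon) for the general combination of block types. The common mechanism is a Moser-type homotopy between the true functions and their quadratic parts, reducing at each step to a cohomological equation on the symplectic transversal and adjusting the candidate symplectomorphism by its time-one flow; convergence in $C^\infty$ is delivered by a Borel-summation plus flat-perturbation argument.

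For part $(2)$, when there are no hyperbolic blocks the joint flows of $q_1,\ldots,q_n$ are rotations of elliptic oscillators and focus-focus loops, so the local leaves of the foliation induced by $F$ coincide with the joint level sets of $(q_1,\ldots,q_n)$. Consequently there exist smooth $\tilde{g}_i$ with $q_i=\tilde{g}_i(f_1,\ldots,f_n)$, and the map $g:=(\tilde{g}_1,\ldots,\tilde{g}_n)$ is a local diffeomorphism since, by non-degeneracy, $d(q_1,\ldots,q_n)|_p$ and $dF(p)$ have the same kernel. The principal obstacle is the second stage: controlling the cohomological equation uniformly across mixed-type blocks, especially focus-focus blocks whose $\mathfrak{so}(2)$-type rotational symmetry sits on top of the contracting directions, is where the technical heart of Eliasson's thesis and the Miranda--Zung paper resides, and any self-contained treatment would essentially reproduce their argument.
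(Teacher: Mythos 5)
Your outline matches how the paper treats this result: Theorem~\ref{thm:normal-form} is recalled from the literature without proof, resting on exactly the chain you describe --- Williamson's pointwise classification of Cartan subalgebras of $\mathfrak{sp}(2n,\R)$ at the linear level, the Darboux--Carath\'eodory reduction of the rank-$r$ case to rank zero, and the nonlinear normal forms of Eliasson, Colin de Verdi\`ere--Vey, Miranda--Zung, et al., with the connectedness of local fibers in the absence of hyperbolic blocks yielding part~(2). Since the technical core genuinely lives in those references (as you acknowledge), your proposal is a correct account of the same route the paper relies on.
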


The number of elliptic, hyperbolic, focus-focus, and regular components locally classifies a non-degenerate singular point and is referred to as its \emph{Williamson type}.
For instance, if $\mathrm{dim}(M)=4$, each non-degenerate singular point is of exactly one of the following six types:
\begin{itemize}
 \item 
 rank 0: 
 elliptic-elliptic,
 focus-focus,
 hyperbolic-hyperbolic,
 hyperbolic-elliptic;
 \item
 rank 1: 
 elliptic-regular,
 hyperbolic-regular.
\end{itemize}

%
%
%

\begin{remark}\label{rmk:morse}
 Suppose that $M = \Sigma_1\times\Sigma_2$ where $\Sigma_1$
 and $\Sigma_2$ are surfaces. For $i\in\{1,2\}$ 
 let $\pi_i\colon M\to \Sigma_i$ be the projection map,
 let $\om_i$ be a symplectic form on $\Sigma_i$, and let
 $f_i\colon \Sigma_i\to\R$ be a Morse function.
 Then $(M,\om_1\oplus\om_2,F=(f_1\circ\pi_1,f_2\circ\pi_2))$ is
 an integrable system and all singular points of this system are non-degenerate.
 Moreover, $p=(p_1,p_2)$ is a singular point of $F$ if and only if at least one of the $p_i$
 is a critical point of the corresponding $f_i$, and the Williamson
 type of $p$ is determined by the Morse indices of $p_1$ and $p_2$.
 This is because given Morse charts $U_i\subset \Sigma_i$ around $p_i$ with coordinates putting
 each $f_i$ into the standard form (as in the Morse lemma) the
 product $U_1\times U_2\subset M$ forms one of the charts
 discussed in Theorem~\ref{thm:normal-form}.
 Regular points of one of the Morse functions correspond to regular components in the local normal form,
 index 1 critical points correspond to hyperbolic components, and index 0 or 2
 critical points correspond to elliptic components.
 For instance, if $p_1\in\Sigma_1$ is an index 1 critical point with Morse coordinates
 $a,b$ then locally $f_1 = a^2-b^2 = (a-b)(a+b)$ so taking $x = a-b$
 and $\xi = a+b$ gives the coordinates for a hyperbolic block from Theorem~\ref{thm:normal-form}, $q_1=x\xi$.
 Alternatively, one can compare the eigenvalues of the Hessians of the Morse functions with the eigenvalues of the block diagonal Hessian of $F$, which is not the way that we will approach this in the present paper, but is closer to the pointwise classification of Williamson.
\end{remark}

\subsection{Integrable systems and singular Lagrangian fibrations}
\label{sec:fibration}

Let the triple $(M,\om, F)$ be a $2n$-dimensional completely integrable system.
A connected component of a fiber of $F\colon M\to \R^n$ is called \emph{singular}
if it contains a singular point of the integrable system and called
\emph{regular} otherwise. Every regular
component of a fiber of $F$ is a Lagrangian submanifold of $M$. If $M$ is compact then, according to the Arnold-Liouville theorem, all regular components are diffeomorphic to $n$-tori.
Let $B$ be the topological space obtained as the quotient space of $M$ by identifying two points if and only if they are in the same component of the same level set of $F$ and denote by $\tau\colon M\to B$ the associated quotient map.
The map $\tau\colon M\to B$ is in fact a so-called \emph{singular
Lagrangian fibration} of $M$ with \emph{base} $B$ since the union of the fibers of $\tau$ which are Lagrangian submanifolds forms an open dense set in $M$. Thus, we say that the singular Lagrangian fibration $\tau\colon M\to B$ is \emph{induced} by the momentum map $F$.

In the case that all fibers of $F$ are connected, the base $B$ can be naturally
identified with the image of the momentum map $F(M)$. This is the case for toric and semitoric systems. For more general classes of integrable
systems, such as hypersemitoric systems, there is a natural
surjection $B\to F(M)$, but this is not necessarily a bijection.

The next result follows from Bolsinov $\&$ Fomenko~\cite[Proposition 1.16]{BolFom}.

\begin{lemma}\label{lem:image_imm_curve}
Let $(M,\om,F=(J,H))$ be a four dimensional integrable system 
and let $M^{\textrm{HR}}\subset M$ be the set
of hyperbolic-regular singular points. 
If $C\subset M$ is a connected component of $M^{\mathrm{HR}}$
then $F|_C$ is an immersion whose image is a smooth immersed submanifold of dimension 1.
Thus, for any $p\in C$ there exists a set $U\subset C$ which is an
open (as a subset of $C$) neighborhood of $p$ such that
$F(U)$ is a one-dimensional submanifold of $\R^2$.
\end{lemma}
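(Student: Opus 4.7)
The plan is to apply the local normal form for non-degenerate singularities (Theorem~\ref{thm:normal-form}) at an arbitrary point $p\in C$ and read off the local structure of $C$ and of $F|_C$. Since the two statements of the lemma are local (the first follows from the second by covering $C$ with such neighborhoods), I would focus on producing the neighborhood $U\subset C$ of $p$ with $F(U)$ a $1$-dimensional submanifold of $\R^2$.

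First, by Theorem~\ref{thm:normal-form}, there exist symplectic coordinates $(x_1,\xi_1,x_2,\xi_2)$ on a neighborhood $V$ of $p$ centered at $p$, together with model functions $q_1=x_1\xi_1$ and $q_2=\xi_2$, such that $\{q_i,J\}=\{q_i,H\}=0$. A direct computation shows $\mathcal{X}^{q_2}=\partial_{x_2}$ is nowhere vanishing and $\mathcal{X}^{q_1}=\xi_1\partial_{\xi_1}-x_1\partial_{x_1}$ vanishes exactly on $\tilde C:=\{x_1=\xi_1=0\}$, so $\tilde C$ is the locus in $V$ at which the rank of $(q_1,q_2)$ drops, and each of its points is a non-degenerate hyperbolic-regular singular point of the model system. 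By connectedness and the fact that $C$ consists of hyperbolic-regular points, for $V$ small enough we have $C\cap V=\tilde C$, a $2$-dimensional smooth submanifold of $V$ parametrized by $(x_2,\xi_2)$.

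Next, I would show that $F$ restricted to $\tilde C$ depends only on $\xi_2$. Because $F$ Poisson-commutes with both $q_1$ and $q_2$, appropriate refinements of Theorem~\ref{thm:normal-form} for hyperbolic components (e.g.\ R\"ussmann's theorem, or the Miranda--Zung / Bolsinov--Fomenko treatment cited in the statement) allow us to write $F=(\tilde f_1(q_1,q_2),\tilde f_2(q_1,q_2))$ on a possibly smaller neighborhood of $p$, with $D(\tilde f_1,\tilde f_2)$ of rank equal to $\mathrm{rank}(p)=1$ at the origin. Restricting to $\tilde C=\{q_1=0\}$ gives
\[
F(0,0,x_2,\xi_2)=(\tilde f_1(0,\xi_2),\,\tilde f_2(0,\xi_2))=:\gamma(\xi_2),
\]
so the image $F(\tilde C)$ is traced out by the single smooth curve $\gamma$. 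The rank-$1$ condition translates to $\gamma'(0)\neq 0$, so $\gamma$ is an immersion on a neighborhood of $0\in\R$, and thus there is an open neighborhood $U\subset C$ of $p$ such that $F(U)=\gamma(I)$ for some open interval $I$, which is a $1$-dimensional embedded submanifold of $\R^2$. This establishes the local statement, and the global assertion that $F|_C$ has rank $1$ with image a $1$-dimensional immersed submanifold follows by covering $C$ with such neighborhoods.

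The main obstacle is justifying the factorization $F=(\tilde f_1,\tilde f_2)(q_1,q_2)$ in the hyperbolic setting, since part (\ref{item:g}) of Theorem~\ref{thm:normal-form} explicitly excludes hyperbolic components. Either one appeals to the deeper normal form results for hyperbolic-regular points (which is exactly what Bolsinov--Fomenko~\cite[Proposition 1.16]{BolFom} provides, and is why the lemma is attributed to them), or one circumvents it by noting that $F$ is invariant along the flow of $\mathcal{X}^{q_2}=\partial_{x_2}$, hence constant in $x_2$ on $\tilde C$, and then argues by the implicit function theorem that the rank-$1$ image of $F|_{\tilde C}$ depends only on $\xi_2$; either route reduces the problem to the one-parameter analysis above.
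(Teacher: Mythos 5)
Your argument is essentially correct, but it is worth knowing that the paper does not actually prove this lemma: it is stated as a direct consequence of Bolsinov--Fomenko, Proposition 1.16, with no argument given. So your proposal is a genuinely self-contained reconstruction rather than a parallel of the paper's proof. Your route --- normal form chart, identification of the singular locus with $\{x_1=\xi_1=0\}$, and the observation that $F$ restricted to that locus depends only on $\xi_2$ with nonvanishing derivative --- is exactly the content of the cited proposition, and you correctly diagnose the one genuine obstacle, namely that item (2) of Theorem~\ref{thm:normal-form} excludes hyperbolic components, so the factorization $F=\varphi(q_1,q_2)$ is not available (the fibers of $q_1=x_1\xi_1$ are locally disconnected, so $F$ need only be constant on local leaves). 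Your fallback via the commutation relations is the right move, and buys you a proof that does not rely on any unstated strengthening of the normal form for the rank computation.

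Two points deserve to be made explicit. First, the claim that the rank-one direction of $dF$ along $\tilde C$ is realized by $\partial_{\xi_2}$ does not need the implicit function theorem: from $\{q_1,f\}=0$ one gets the identity $-x_1\partial_{x_1}f+\xi_1\partial_{\xi_1}f=0$ on all of $V$, and differentiating it in $x_1$ and in $\xi_1$ and evaluating at $x_1=\xi_1=0$ yields $\partial_{x_1}f=\partial_{\xi_1}f=0$ along $\tilde C$; combined with $\partial_{x_2}f=0$ (from $\{q_2,f\}=0$) and $\mathrm{rank}\,dF=1$ at each point of $\tilde C$, this forces $\partial_{\xi_2}F\neq 0$ there, so $\gamma$ is an immersion on the whole parameter interval, not just at the center of the chart. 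Second, the identification $C\cap V=\tilde C$ is asserted by "connectedness," but the inclusion $C\cap V\subseteq\tilde C$ (no hyperbolic-regular points of $F$ off the model singular locus) is not a formal consequence of the commutation relations $\{q_i,f_j\}=0$ alone; it uses the fact that the normal form identifies the singular foliations of $F$ and of $(q_1,q_2)$ near the orbit of $p$, which is precisely the extra content supplied by the Bolsinov--Fomenko/Miranda--Zung statements you cite. Finally, note that as literally stated the lemma's phrase "$F|_C$ is an immersion" is an abuse, since $C$ is two-dimensional and your computation shows $F|_C$ has constant rank $1$; the substantive claim, which you do prove, is that the image is locally a one-dimensional submanifold.
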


\begin{remark}
 Concerning Lemma~\ref{lem:image_imm_curve}, notice that the connected components 
 of $F(M^{\mathrm{HR}})$ are not always embedded curves in $\R^2$.
 This is because given two components $C$ and $C'$ of $M^{\mathrm{HR}}$
 the curves $F(C)$ and $F(C')$ may pass through each other in $\R^2$, although 
 their images $\tau(C)$ and $\tau(C')$ cannot intersect in the base of the fibration induced by $F$.
\end{remark}

\subsection{Semitoric systems and marked polygons}
\label{sec:semitoric}

Due to Theorem~\ref{thm:normal-form}, semitoric systems (as in Definition~\ref{def:semitoric}) can have singular points
of three types: 
elliptic-elliptic, focus-focus, and elliptic-regular.
A semitoric system $(M,\om,F=(J,H))$ is \emph{simple} if there is
at most one focus-focus point in each fiber of $J$.
Simple semitoric systems were classified
by Pelayo \& V\~{u} Ng\d{o}c \cite{PVN09, PVN11} in terms of five invariants: the number of focus-focus points,
the semitoric polygon, the height invariant, the Taylor series invariant, and the
twisting index invariant.
This classification was extended to non-simple systems by Palmer $\&$ Pelayo $\&$ Tang~\cite{PPT}.
We will focus our attention on the first three invariants, which were already developed in \vungoc\ \cite{VN2007} before the full classification and which extended nearly unchanged to the non-simple case. 
As in Le Floch $\&$ Palmer~\cite{LFPfamilies}, we will collect these three invariants together
into a single invariant called the \emph{marked semitoric polygon invariant}.

Given a semitoric system $(M,\om, F=(J,H))$ there are at most finitely
many singular points of focus-focus type,  according to \vungoc\ \cite[Corollary 5.10]{VN2007}. Denote their number by $\mf\in\Z_{\geq 0}$ and the set of focus-focus singular points by $M^{\mathrm{FF}}=\{p_1,\ldots, p_\mf\}\subset M$.
The images $F(p_1),\ldots,$ $F(p_\mf)\in\R^2$
of these points lie, due to Theorem \ref{thm:normal-form}, in the interior of the momentum map image $F(M)$. Without loss of generality, we may assume that their images are in lexicographic order in $\R^2$ with respect to the `coordinates' induced by $(J,H)$, i.e.\ $J(p_1)\leq \ldots \leq J(p_\mf)$ and if $J(p_\ell)=J(p_{\ell'})$ with $\ell<\ell'$ then $H(p_\ell)\leq H(p_{\ell'})$.
Given a point $c\in\R^2$ and $\epsilon=\pm 1$, let
$\ell_c^\epsilon\subset \R^2$ be the closed ray starting at $c$ directed along a vector in the positive
$H$-direction if $\epsilon=1$ and the negative $H$-direction if $\epsilon=-1$.
Given $\vec{\epsilon}\in\{\pm 1\}^{\mf}$ let $\vec{\ell}^{\vec{\epsilon}}=\cup_{j=1}^\mf \ell_{F(p_j)}^{\epsilon_j}$.
Let $M' = M\setminus F^{-1}(\vec{\ell}^{\vec{\epsilon}})$.
Then, as in \vungoc~\cite[Theorem 3.8]{VN2007}, there exists a homeomorphism $g\colon F(M)\to\R^2$ preserving the first
component which is 
smooth on $F(M)\setminus\vec{\ell}^{\vec{\epsilon}}$
such that each component of $g\circ F|_{M'}\colon M'\to\R^2$ generates an effective $\mbS^1$-action.
We will call such a $g$ a \emph{straightening map}.
The closure of the image of this toric momentum map is a polygon as sketched in Figure~\ref{fig:straightening-poly}. It is unique up to the freedom
in the choices of $\vec{\epsilon}$ and $g$, which we will below encode in a group action on the triple
\begin{equation}\label{eqn:markedpoly_noquotient}
 \big(\De := g\circ F(M), g\circ (F(p_1),\ldots,F(p_\mf)),\vec{\epsilon}\,\big).
\end{equation}

\begin{remark}
Symington developed a similar technique to obtain a polygon, called an almost toric base diagram, from an almost toric fibration, see~\cite{Sym}.
\end{remark}

\begin{remark}
Notice that we have allowed for the case of multiple focus-focus points in the same fiber of $F$, in which case the ordering
of the labeling of the focus-focus points is not unique. This will not cause any problems in constructing a unique
invariant because changing the order of points in the same fiber does not change the resulting marked semitoric polygon.
However, the non-uniqueness of the ordering of the labels
does cause complications if the Taylor series labels are
included (which are not relevant in the present paper), in which case the focus-focus points in
the same fiber have a cyclic ordering as in Palmer $\&$ Pelayo $\&$ Tang~\cite{PPT}.
\end{remark}

\begin{figure}
 \centering
 \input{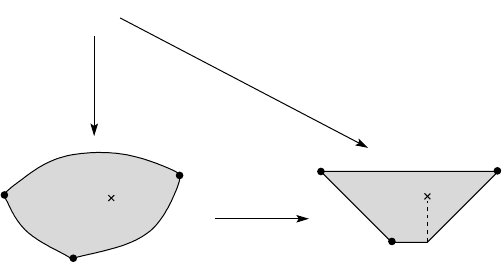_t} 
 \caption{The momentum map image of a semitoric system can be `straightened out' to recover a polygon.}
 \label{fig:straightening-poly}
\end{figure}

For $k=1,2$ let $\pi_k\colon \R^2\to\R$ denote the projection onto the $k^{\mathrm{th}}$ coordinate
and let $s\in\Z_{\geq 0}$.
In general,  we call a triple
\[
 \big(\De,\vec{c} = (c_1,\ldots,c_s) ,\vec{\epsilon} = (\epsilon_1,\ldots, \epsilon_s) \big)
\]
a \emph{marked weighted polygon} if $\De\subset\R^2$ is a polygon, the $\vec{c}$ is a list of points in lexocographic order satisfying 
$c_i\in\mathrm{int}(\De)$ for all $i$, and $\varepsilon_i\in\{\pm 1\}$ for all $i$. Thus, the triple from Equation~\eqref{eqn:markedpoly_noquotient}
obtained from a semitoric system is an example of a marked weighted polygon.
For $j\in\R$, let
\begin{equation}\label{eqn:Tt}
 T = \begin{pmatrix} 1 & 0 \\ 1 & 1 \end{pmatrix} \,\,\,\textrm{ and }\,\,\, \mathfrak{t}_j \colon \R^2\to\R^2, \quad (x,y)\mapsto \begin{cases}(x, y + x - j) & \textrm{ if } x\geq j, \\ (x, y) & \textrm{ otherwise.} \end{cases}
\end{equation}
Let $\mathcal{T}$ be the group generated by powers of $T$ and vertical translations. This is in fact the subgroup of the group of integral affine maps on the plane which preserve the first component. Let $G_s := \{\pm 1\}^s$.
Then $\mathcal{T}\times G_s$ acts on the set of marked weighted polygons by
\[
 (\tau,\vec{\epsilon'})\cdot (\De, \vec{c}, \vec{\epsilon}\,) = \big(\sigma(\De), (\sigma(c_1), \ldots, \sigma(c_s)), (\epsilon_1' \epsilon_1,\ldots, \epsilon_s' \epsilon_s)\big)
\]
where $\sigma = \tau \circ \mathfrak{t}_{\pi_1(c_1)}^{u_1}\circ \cdots \circ \mathfrak{t}_{\pi_1(c_s)}^{u_s}$ and
$u_k = \epsilon_k (1-\epsilon_k')/2$.
Denote by $[\De,\vec{c}, \vec{\epsilon}\,]$ the orbit of $(\De,\vec{c}, \vec{\epsilon}\,)$ under this action.

This group action represents exactly the effect of the choice of straightening map $g$
and cut directions $\vec{\epsilon}$ on the
triple in Equation~\eqref{eqn:markedpoly_noquotient} above: if $(\De,\vec{c},\vec{\epsilon}\,)$ is
the result of that construction for one choice of $g$ and $\vec{\epsilon}$ then the set of all
possible triples produced in this way is exactly the orbit $[\De,\vec{c},\vec{\epsilon}\,]$.
This orbit is the \emph{marked semitoric polygon invariant} of the system $(M,\om,F)$.

\begin{remark}
Note that the marked semitoric polygon invariant contains more information than the semitoric polygon invariant 
introduced in \vungoc~\cite{VN2003} and used in the classification by Pelayo $\&$ \vungoc~\cite{PVN09, PVN11}, since it also
includes the marked points corresponding to the focus-focus values of the system.
The \emph{height invariant} of the semitoric system is encoded in the marked semitoric polygon
as the vertical distance from $c_k$ to the bottom of $\De$. It does not depend on the choice of representative.
\end{remark}

The Taylor series and twisting index invariants are not encoded in the marked
semitoric polygon, so the marked semitoric polygon is not a complete invariant of semitoric systems. The Taylor series, developed by \vungoc~\cite{VN2003}, and extended to the case of multiple focus-focus points in the same fiber by Pelayo $\&$ Tang~\cite{PT}, describes the semi-local (i.e.~in a neighborhood of the fiber) structure around a focus-focus singular point, and the twisting index, introduced in Pelayo $\&$ \vungoc~\cite{PVN09}, roughly, takes into account an additional degree
of freedom when gluing the neighborhood of a focus-focus fiber into the global system.
Since they are not related to the structure of the underlying Hamiltonian $\mbS^1$-space, these two invariants are not be as important as the other three for this paper.

In the remainder of this section, we will describe which marked weighted polygons are obtained from semitoric systems, summarizing results from Pelayo $\&$ \vungoc~\cite{PVN11} adapted to the case of marked polygons that may have multiple marked points in the same vertical line $J=\mathrm{const}$.
We say that a polygon is \emph{rational} if the slope of each 
non-vertical edge is rational.
Let $q$ be a vertex of a rational polygon and let $v,w\in\Z^2$ be the
primitive vectors directing the edges adjacent to $q$. Then we say
that $q$ satisfies:
\begin{itemize}
 \item the \emph{Delzant condition} if $\mathrm{det}(v,w)=1$;
 \item the \emph{hidden Delzant condition for $m$ cuts} if $\mathrm{det}(v,T^m w)=1$;
 \item the \emph{fake condition for $m$ cuts} if $\mathrm{det}(v,T^m w)=0$,
\end{itemize}
where in each case $\mathrm{det}(v,w)$ denotes the determinant
of the matrix with first column $v$ and second column $w$.
The orbit $[\De,\vec{c},\vec{\epsilon}\,]$ can be obtained from a simple
semitoric system if and only if one, and hence all, representatives
$(\De,\vec{c},\vec{\epsilon}\,)$ satisfy the following three conditions:
\begin{enumerate}
 \item $\De$ is rational and convex;
 \item each point of $\partial \De\cap \left(\cup_k \ell^{\epsilon_k}_{c_k}\right)$ is a vertex of $\De$ and satisfies either the fake or hidden
  Delzant condition for $m$ cuts (in which case it is known as a \emph{fake corner} or \emph{hidden corner} respectively) where
  $m>0$ is the number of distinct $k$ such that the vertex in question lies on a ray $\ell^{\epsilon_k}_{c_k}$;
 \item each other vertex of $\De$ satisfies the Delzant condition (and is known
  as a \emph{Delzant corner)}.
\end{enumerate}
Such marked semitoric polygons are known as \emph{marked Delzant semitoric polygons}.
We will represent these by drawing each ray $\ell^{\epsilon_k}_{c_k}$ as a
dotted line (known as a \emph{cut}) and the image under $g\circ F$ of each focus-focus
point will be indicated by a small cross, see Figure~\ref{fig:CP2-5blowups-polygon} for an example.

\subsection{Hamiltonian \texorpdfstring{$\mbS^1$}{S1}-spaces and semitoric systems}
\label{sec:HSS}

In this section we briefly recall the results of Hohloch $\&$ Sabatini $\&$ Sepe~\cite{HSS}.
Given a compact semitoric system $(M,\om, F=(J,H))$ 
the Karshon graph of the underlying Hamiltonian $\mbS^1$-space $(M,\om,J)$ can be obtained from
any representative of the associated marked semitoric polygon $[\De,\vec{c}, \vec{\epsilon}\,]$
much in the same way as obtaining the Karshon graph of a toric system
from the associated Delzant polygon.
Let $(\De,\vec{c},\vec{\epsilon}\,)$ be any representative of $[\De,\vec{c}, \vec{\epsilon}\,]$.
\begin{itemize}
 \item 
 The fixed surfaces of the $\mbS^1$-action are the preimages of the closure of the vertical edges of $\De$ (if any). The value of the normalized symplectic area of a fixed surface equals the length of the associated edge. The genus is always zero. Thus, for each vertical edge of $\De$ the Karshon graph includes a fat vertex labeled by $g=0$ and normalized area equal to the length of the edge.
 \item
 The isolated fixed points of the $\mbS^1$-action are the focus-focus points and also the preimages of any vertices of $\De$ which lie not on vertical edges and are not fake corners.
 Thus, the Karshon graph includes a regular vertex for each Delzant corner, each hidden Delzant corner, and each focus-focus singular point.
 \item
 The $\Z_k$-spheres of the $\mbS^1$-action are determined by the edges of the polygon: Let $e_1, \ldots, e_m$ with $m\geq1$ be a collection of adjacent edges of $\De$ such that the vertex joining $e_\ell$ to $e_{\ell+1}$, for $\ell=1,\ldots, m-1$, is a fake vertex and the remaining two endpoints (of $e_1$ and $e_m$) are not fake.
Then each of these edges has slope $b_\ell / k$ for certain distinct integers $b_\ell$ with $1 \leq \ell \leq m$ and a common integer $k> 0$, such that $b_\ell$ and $k$ are relatively prime. Moreover, the closure of the preimage of the union $e_1\cup\cdots\cup e_m$ is a $\Z_k$-sphere, which, if $k>1$, is represented by an edge labeled by $k$ in the Karshon graph between the vertices corresponding to the endpoints of the piecewise linear curve consisting of $e_1\cup\cdots\cup e_m$.
\end{itemize}

\begin{example}
\label{ex:CP2-45blowups}
We now produce a Hamiltonian $\mbS^1$-space on $\CP^2$ blown up five times which cannot be extended to a toric system but can be extended to a semitoric system, cf.\ Figure \ref{fig:CP2-blowupExample}. This construction will depend on two parameters, $\lambda_1\in\ ]0,1/3[$ and $\lambda_2\in\ ]0,\lambda_1[$.
Starting with $\CP^2$ with the Fubini-Study symplectic form and the (usual) $\mbS^1$-action as in Example~\ref{example:CP2_karshon}, perform two $\mbS^1$-equivariant blowups of the same size $\lambda_1$ on the fixed surface.
This produces two new fixed points, and next we perform one blowup of size
$\lambda_2$ at each of these fixed points. This gives us the Karshon graph as in Figure~\ref{fig:CP2-4blowups-karshon},
which can be extended to the toric system corresponding to the Delzant polygon in Figure~\ref{fig:CP2-4blowups-delzant}.
Now, performing another $\mbS^1$-equivariant blowup of size $\lambda_1$ on the fixed surface produces the Hamiltonian $\mbS^1$-space corresponding
to the Karshon graph in Figure~\ref{fig:CP2-5blowups-karshon} which cannot be extended to a toric system according to Lemma~\ref{lem:toric-extending},
but can be extended to a semitoric system. A representative of the semitoric polygon of such a semitoric system
is shown in Figure~\ref{fig:CP2-5blowups-polygon}.
Figures~\ref{fig:CP2-4blowups-delzant} and~\ref{fig:CP2-5blowups-polygon} represent choices of extensions for the $\mbS^1$-spaces represented by Figures~\ref{fig:CP2-4blowups-karshon} and~\ref{fig:CP2-5blowups-karshon} below them, respectively.
Later, in Example~\ref{ex:CP2_6blowups} we will perform another blowup on this graph, to obtain the Karshon graph shown in Figure~\ref{fig:CP2-6blowups}, and explain that the resulting $\mbS^1$-space cannot be lifted to a semitoric system, but we
show that it can be lifted to a hypersemitoric system.
\end{example}

\begin{figure}
\centering
\begin{subfigure}[t]{.45\textwidth}
\centering
\begin{tikzpicture}[scale = .6]
\foreach \x in {0,1,...,8}{
      \foreach \y in {0,1,...,8}{
        \node[draw,circle,inner sep=.25pt,fill] at (\x,\y) {};
      }
    }
\filldraw[thick, fill = gray!60] (1,1) node[anchor = north,color = black]{}
  -- (3,0) node[anchor = south,color = black]{}
  -- (8,0) node[anchor = north,color = black]{}
  -- (3,5) node[anchor = south,color = black]{}
  -- (1,6) node[anchor = south,color = black]{}
  -- (0,6) node[anchor = south,color = black]{}
  -- (0,2) node[anchor = south,color = black]{}
  -- cycle;
\end{tikzpicture}
\caption{The Delzant polygon of $\mathrm{Bl}^4(\CP^2)$.}
\label{fig:CP2-4blowups-delzant}
\end{subfigure}\quad
\begin{subfigure}[t]{.45\textwidth}
\centering
\begin{tikzpicture}[scale=.6]
\foreach \x in {0,1,...,8}{
      \foreach \y in {0,1,...,8}{
        \node[draw,circle,inner sep=.25pt,fill] at (\x,\y) {};
      }
    }
\filldraw[thick, fill = gray!60] (1,1) node[anchor = north,color = black]{}
  -- (3,0) node[anchor = south,color = black]{}
  -- (8,0) node[anchor = north,color = black]{}
  -- (3,5) node[anchor = south,color = black]{}
  -- (2,5.5) node[anchor = south,color = black]{}
  -- (1,5) node[anchor = south,color = black]{}
  -- (0,4) node[anchor = south,color = black]{}
  -- (0,2) node[anchor = south,color = black]{}
  -- cycle;
\draw [dashed] (2,3) -- (2,5.5);
\draw (2,3) node {$\times$}; 
\end{tikzpicture}
\caption{A semitoric polygon of $\mathrm{Bl}^5(\CP^2)$.}
\label{fig:CP2-5blowups-polygon}
\end{subfigure}\\[1em]
\begin{subfigure}[t]{.45\textwidth}
\centering
\begin{tikzpicture}[scale=.6]
 \draw[black,fill=black] (0,0) ellipse (.15cm and 2cm);	
 \draw (1,1.5) -- (3,1.5) node[midway,above]{2};
 \draw[black,fill=black] (1,1.5) circle (.1cm);
 \draw[black,fill=black] (3,1.5) circle (.1cm);
 \draw (1,-1.5) -- (3,-1.5) node[midway,above]{2};
 \draw[black,fill=black] (1,-1.5) circle (.1cm);
 \draw[black,fill=black] (3,-1.5) circle (.1cm);
 \draw[black,fill=black] (8,0) circle (.1cm);
\end{tikzpicture}
\caption{The Karshon graph of the Hamiltonian $\mbS^1$-space induced by the toric system from Figure~\ref{fig:CP2-4blowups-delzant}.}
\label{fig:CP2-4blowups-karshon}
\end{subfigure}\quad
\begin{subfigure}[t]{.45\textwidth}
\centering
\begin{tikzpicture}[scale = .6]
 \draw[black,fill=black] (0,0) ellipse (.15cm and 1cm);	

 \draw (1,1.5) -- (3,1.5) node[midway,above]{2};
 \draw[black,fill=black] (1,1.5) circle (.1cm);
 \draw[black,fill=black] (3,1.5) circle (.1cm);

 \draw[black,fill=black] (2,0) circle (.1cm);
 
 \draw (1,-1.5) -- (3,-1.5) node[midway,above]{2};
 \draw[black,fill=black] (1,-1.5) circle (.1cm);
 \draw[black,fill=black] (3,-1.5) circle (.1cm);

 \draw[black,fill=black] (8,0) circle (.1cm);
\end{tikzpicture}
\caption{The Karshon graph of the Hamiltonian $\mbS^1$-space induced by the semitoric system from Figure~\ref{fig:CP2-5blowups-polygon}.}
\label{fig:CP2-5blowups-karshon}
\end{subfigure}
\caption{The Delzant polygons, semitoric polygons, and Karshon graphs of $\CP^2$ blown up four and five times as in Example~\ref{ex:CP2-45blowups} with parameters $\lambda_1 = 1/4$ and $\lambda_2=1/8$.
The Delzant and semitoric polygons are drawn over the lattice $\frac{1}{8}\Z^2\subset \R^2$. 
Since the Delzant polygon for $\CP^2$ is the convex hull of $(0,0)$, $(1,0)$, and $(0,1)$, performing the four blowups of the specified sizes
produces the Delzant polygon with vertices at $(1/8, 1/8)$, $(0, 2/8)$, $(0,6/8)$, $(1/8, 6/8)$, $(3/8,5/8)$, $(1,0)$, and $(3/8, 0)$
shown in Figure~\ref{fig:CP2-4blowups-delzant}. Performing one more blowup produces a semitoric system
with the semitoric polygon shown in Figure~\ref{fig:CP2-5blowups-polygon}.} 
\label{fig:CP2-blowupExample}
\end{figure}
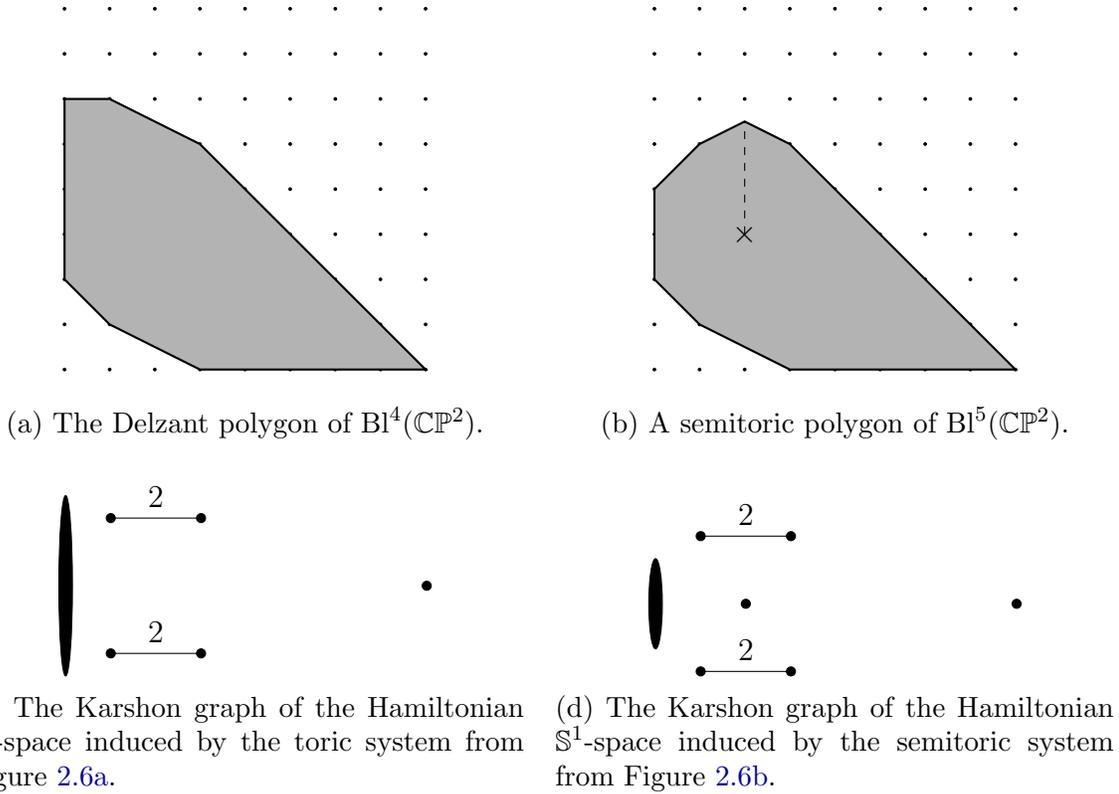

\subsection{Blowups of semitoric systems}
In Section \ref{sec:S1-blowups}, we already encountered blowups for Karshon graphs, Delzant polygons, and toric systems. We will now describe two types of blowups of semitoric systems.

\subsubsection{Toric blowups} 
This type of blowup is motivated by the blowup for toric systems described in Section \ref{sec:S1-blowups}:

\begin{definition}
 Let $(M,\om,F=(J,H))$ be a semitoric system and let $p\in M$ be an elliptic-elliptic
singular point.
Then one can perform a \emph{toric blowup of $(M,\om,F=(J,H))$ at $p$ of size $\lambda>0$} 
if there exists a straightening map $g\colon \R^2\to\R^2$ as in Section~\ref{sec:semitoric}
such that 
\begin{enumerate}
 \item 
 $g\circ F(p)$ is a Delzant corner of $\De := g\circ F(M)$,
 \item
 $g\circ F(p)$ is the unique vertex of $\De$ contained in
$\mathrm{Simp}_{F(p)}^\lambda(v_1,v_2)$, where $v_1,v_2$ are primitive vectors
directing the edges of $\De$ adjacent to $g\circ F(p)$,
\item
$\mathrm{Simp}_{F(p)}^\lambda(v_1,v_2)$ does not intersect any of the cuts or marked points in $\De$.
\end{enumerate}
\end{definition}

The result of a such a toric blowup is a semitoric system which has all of the same invariants as the original system except that its marked polygon is the result of performing a corner chop on the marked polygon invariant of the original system. This process amounts to performing a usual $\T^2$-equivariant blowup with respect to the toric momentum map $g\circ F$. It can be shown that the result of this operation does not depend on any of the choices made, see for instance Le Floch $\&$ Palmer~\cite[Section 4.3]{LFPfamilies}.

\begin{remark}
 Note that we only need to find \emph{one} representative of the marked semitoric polygon
 invariant which admits a corner chop at the vertex corresponding to $p$ in order to perform
 a toric blowup. Also note that given any elliptic-elliptic point there is at least
 one representative such that this point corresponds to a Delzant corner and thus for
 sufficiently small size $\lambda>0$ a toric blowup can always be performed.
\end{remark}

\subsubsection{Wall chops of marked polygons}
\label{sec:semitoric-blowup}
In a semitoric system $(M,\om,F)$, with $F=(J,H)$, a blowup can also be performed at
an elliptic-regular point $p$ if it lies in a surface $\Sigma$ which is fixed by the $\mbS^1$-action generated
by $J$. Such blowups, often referred to as \emph{semitoric blowups}, and their effect on the semitoric system, are described in detail in the upcoming work by Hohloch $\&$ Sabatini $\&$ Sepe $\&$ Symington~\cite{HSSS2}\footnote{first announced at Poisson 2014 in a talk by Daniele Sepe.}, but here we will
simply describe \emph{their effect on the marked semitoric polygon invariant}, and then use
the result of Pelayo $\&$ \vungoc~\cite{PVN09,PVN11} which states there exists a system (actually infinitely many) associated to the resulting marked semitoric polygon.
In the context of almost toric manifolds,
this type of blowup already appeared in the work of Symington~\cite[Section 5.4]{Sym}, and, furthermore, Auroux described this operation in a very explicit example in the context of Lagrangian fibrations when studying wall crossing phenomenon~\cite[Example 3.1.2]{Auroux2009}.
Such blowups are also discussed in the lecture notes by Evans~\cite[Section 9.1]{Evans-notes}.
Sometimes the corresponding operation on the polygon is also called a semitoric blowup, but to avoid confusion and to be consistent with more recent works, such as Le Floch \& Palmer~\cite{LFP-families2}, we call the operation on the polygon a \emph{wall chop} and reserve the term semitoric blowup for the operation on the semitoric system itself.

For any set $B\subset \R^2$ define 
\[
 \partial^+ B := \{(x,y)\in\partial B\mid y\geq y'\textrm{ for all }y'\textrm{ such that }(x,y')\in\partial B\}
\] 
and call it the \emph{upper boundary}
of $B$. Analogously define the \emph{lower boundary} $\partial^- B$.
Note that $\partial^+ B \cup \partial^- B$ is not in general the entire boundary of $B$, but if $B$ is convex specifying $\partial^+ B$ and $\partial^- B$ nevertheless completely determines $B$. In fact, in this case $B$ is equal to the convex hull of $\partial^+ B\cup \partial^-B$.

\begin{definition}\label{def:semitoric_blowup}
Suppose that $[\De,\vec{c},\vec{\epsilon}\,]$ is a marked Delzant semitoric
polygon, and let $j_\mathrm{min},j_\mathrm{max}\in\R$ be such that $\pi_1(\De)= [j_\mathrm{min},j_\mathrm{max}]$. Suppose that $e_{\mathrm{max}}:=\De\cap\pi_1^{-1}(j_\mathrm{max})$ is a vertical edge of $\De$ of length $A>0$. Let $\lambda>0$ be any real number such that $\lambda<A$ and $\lambda < j_{\mathrm{max}}-j_{\mathrm{min}}$.
We now let $(\De',\vec{c'},\vec{\epsilon'} \,)$ be
any marked weighted polygon such that:
\begin{enumerate}
  \item $\De'$ is the unique convex polygon with $\partial^-\De' = \partial^-\De$
   and $\partial^+\De' = \mathfrak{t}_{j_{\mathrm{max}}-\lambda}^{-1} (\partial^+\De)$,
   where $\mathfrak{t}_j$ is as in Equation~\eqref{eqn:Tt}.
  \item $\vec{c'}$ is a list of points in $\mathrm{int}(\De')$ in lexicographic order which has one entry more than $\vec{c}$ such that $\pi_1(\vec{c'})$, the list of $x$-coordinates of the marked points, is equal to the list $\pi_1(\vec{c})$ with one additional value, namely $j_{\mathrm{max}}-\lambda$.
  \item Let the new point be the $\ell^{\mathrm{th}}$ entry in the list. Then set 
  $\vec{\epsilon'} = (\epsilon_1,\ldots,\epsilon_{\ell-1}, 1, \epsilon_{\ell},\ldots,\epsilon_{s}).$
\end{enumerate}
The resulting marked Delzant semitoric polygon $[\De',\vec{c'}, \vec{\epsilon'} \,]$ is called
a \emph{wall chop of $[\De,\vec{c},\vec{\epsilon} \,]$ of size $\lambda$ on $e_{\mathrm{max}}$}.
The case of performing the wall chop on $e_\mathrm{min}:=\De\cap\pi_1^{-1}(j_\mathrm{min})$ proceeds analogously.
\end{definition}


\begin{remark}\label{rmk:stblowup-system}
Note that in Definition~\ref{def:semitoric_blowup} the change in the top boundary 
of $\De'$ is designed so that the new upward cut intersects the top boundary at either
a fake corner or a hidden Delzant corner.
Thus, the orbit $[\De', \vec{c'},\vec{\epsilon'}\,]$
is indeed a marked Delzant semitoric polygon, and thus
there exists infinitely many semitoric systems with this as its marked semitoric polygon invariant, guaranteed by the semitoric classification theorem of Pelayo \& \vungoc~\cite{PVN09,PVN11}. The resulting marked polygon, forgetting about the height of each marked point, does not depend on the choice of representative for $[\De,\vec{c},\vec{\varepsilon}\,]$, since different choices produce different representatives of the same class
$[\De',\vec{c'},\vec{\varepsilon'}\,]$ because $\mathfrak{t}_j$, $\mathfrak{t}_i$, and $T$ all commute.
The height of each marked point in the wall chop can be freely chosen, representing another degree of freedom when performing a wall chop, but it does not change the Karshon graph of the underlying $\mbS^1$-space of the resulting marked polygon.
\end{remark}

\begin{remark}
 We have only specified how this operation, the wall chop, acts on marked semitoric polygon invariants,
 and do not discuss any details of the corresponding operation on semitoric systems themselves, the semitoric blowup.
 Thus the resulting system of the wall chop discussed below in Lemma~\ref{lem:semitoric-blowup}
 is not unique.
 Furthermore we make no attempt to describe
 an operation on the semitoric system itself, instead depending on a description of
 its effect on the invariants since we need no more detailed version for the present paper.
 A description of the semitoric blowup operation on $(M,\om,(J,H))$ will be discussed in detail in the upcoming~\cite{HSSS2}.
\end{remark}

\begin{example}
Performing a wall chop of size $\lambda = 1/4$ on the fixed surface of the polygon in Figure~\ref{fig:CP2-4blowups-delzant} 
results in the polygon in Figure~\ref{fig:CP2-5blowups-polygon}. 
\end{example}

Definition~\ref{def:semitoric_blowup} was made with the following statement in mind.

\begin{lemma}\label{lem:semitoric-blowup}
 Let $(M,\om,F=(J,H))$ be a semitoric system and let $[\De,\vec{c}, \vec{\epsilon}\,]$ be its marked semitoric polygon invariant. 
 Let $[\De',\vec{c'}, \vec{\epsilon'}\,]$ be a choice of wall chop of $[\De,\vec{c}, \vec{\epsilon}\,]$ of size $\lambda$ on the vertical edge $e$. 
Then there exists a semitoric system $(M',\om',(J',H'))$ such that:
 \begin{enumerate}
 \item The semitoric polygon invariant of $(M',\om',(J',H'))$ is $[\De',\vec{c'}, \vec{\epsilon'}\,]$;
 \item $(M',\om',J')$ is the $\mbS^1$-equivariant blowup of $(M,\om,J)$ at a point $p\in F^{-1}(e)$ of size $\lambda$.
 \end{enumerate}
\end{lemma}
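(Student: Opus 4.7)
The plan is to combine the Pelayo--V\~{u} Ng\d{o}c classification of semitoric systems (Section~\ref{sec:semitoric}) with Karshon's classification of Hamiltonian $\mbS^1$-spaces (Section~\ref{sec:S1-classification}). First, I would use the semitoric classification to exhibit a semitoric system $(M',\om',(J',H'))$ whose marked polygon invariant is $[\De',\vec{c'},\vec{\epsilon'}\,]$, giving part~(1) directly. Second, I would identify the underlying Hamiltonian $\mbS^1$-space $(M',\om',J')$ with the $\mbS^1$-equivariant blowup of $(M,\om,J)$ at some $p\in F^{-1}(e)$ of size $\lambda$ by comparing Karshon graphs and applying Karshon's classification theorem~\cite[Theorem 4.1]{karshon}.

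For the first step, Remark~\ref{rmk:stblowup-system} records that, by construction, $[\De',\vec{c'},\vec{\epsilon'}\,]$ is again a marked Delzant semitoric polygon, so the Pelayo--V\~{u} Ng\d{o}c classification~\cite{PVN09,PVN11} guarantees the existence of (in fact infinitely many) semitoric systems with this invariant; any of them may be taken as $(M',\om',(J',H'))$.

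For the second step, take $e=e_{\max}$ without loss of generality (the case $e=e_{\min}$ is entirely symmetric). I would read off the Karshon graph of $(M',\om',J')$ from any representative $(\De',\vec{c'},\vec{\epsilon'}\,)$ of its marked polygon using the dictionary of Section~\ref{sec:HSS}, and compare it with the Karshon graph obtained from $(M,\om,J)$ via the effect described in case~\ref{case:blowup-surface} of Lemma~\ref{lem:casesGraphChange} (a blowup at a point of $F^{-1}(e_{\max})$ of size $\lambda$). Since $\partial^-\De'=\partial^-\De$ and the shear $\mathfrak{t}_{j_{\max}-\lambda}^{-1}$ is trivial for $x<j_{\max}-\lambda$ and preserves $x$-coordinates (hence $J$-values), all $\mbS^1$-space data associated with the region $\{x<j_{\max}-\lambda\}$ of $\De$ passes unchanged to $\De'$. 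A direct computation of the shear at the rightmost edge shows that its length decreases from $A$ to $A-\lambda$ (so the fat vertex at $J=j_{\max}$ loses $\lambda$ units of normalized area), while the new marked point at $J$-value $j_{\max}-\lambda$ produces a new isolated fixed point of the $\mbS^1$-action (a focus-focus point of $F'$) at that $J$-value. The new top corner of $\De'$ at $x=j_{\max}-\lambda$ is either a fake or a hidden Delzant corner, as arranged in Definition~\ref{def:semitoric_blowup}; verifying that its contribution to the Karshon graph is consistent with case~\ref{case:blowup-surface} reduces to a direct check using the primitive edge vectors and the matrix $T$ from~\eqref{eqn:Tt}.

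With the Karshon graphs matched, Karshon's theorem delivers an $\mbS^1$-equivariant symplectomorphism realizing $(M',\om',J')$ as the $\mbS^1$-equivariant blowup of $(M,\om,J)$ at any chosen $p\in F^{-1}(e_{\max})$ --- the result being independent of the choice of $p$ on the fixed surface --- and this yields part~(2). The main technical obstacle I anticipate is precisely the verification referred to in the previous paragraph: handling the situation in which $\partial^+\De$ already has a vertex at $x=j_{\max}-\lambda$, so that the shear could create a new edge direction interacting non-trivially with the existing ones. Tracing through the possible configurations of the primitive edge vectors and confirming that no spurious Karshon vertex (beyond the one focus-focus fixed point) is introduced is where the proof requires the most care.
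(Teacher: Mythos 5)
Your proposal is correct and follows essentially the same route as the paper: existence of $(M',\om',(J',H'))$ from the Pelayo--V\~{u} Ng\d{o}c classification via Remark~\ref{rmk:stblowup-system}, then matching the Karshon graph of the new system (read off through the dictionary of Section~\ref{sec:HSS}) with the graph produced by case~\ref{case:blowup-surface} of Lemma~\ref{lem:casesGraphChange}, and concluding with Karshon's classification theorem. The paper's proof is in fact terser than yours --- it simply asserts the Karshon-graph comparison rather than tracing the shear through the edge vectors --- so the extra verification you flag as the main technical obstacle is exactly the detail the paper leaves implicit.
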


\begin{proof}
First of all, notice that by Remark~\ref{rmk:stblowup-system} the result of performing a wall chop on a marked Delzant semitoric polygon is another marked Delzant semitoric polygon.
Let $[\De,\vec{c}, \vec{\epsilon}\,]$ be the marked polygon invariant of $(M,\om,F)$ and let $[\De',\vec{c'}, \vec{\epsilon'}\,]$ be a choice of wall chop of $[\De,\vec{c}, \vec{\epsilon}\,]$.
As recalled in Section \ref{sec:HSS}, from the work by Hohloch $\&$ Sabatini $\&$ Sepe~\cite{HSS}, we obtain now the Karshon graphs underlying $[\De,\vec{c},\vec{\epsilon}\,]$
and $[\De',\vec{c'},\vec{\epsilon'}\,]$: the Karshon graph of the new marked semitoric polygon is related to the Karshon graph of the original one by reducing the area label on the fat vertex corresponding to $\Sigma$ by $\lambda$ and adding a new isolated fixed point with $J$-value label given by
\[ 
J = \begin{cases}
j_{\mathrm{max}}-\lambda, & \textrm{ if } \Sigma = J^{-1}(j_{\mathrm{max}}),\\
j_{\mathrm{min}}+\lambda, & \textrm{ if } \Sigma = J^{-1}(j_\mathrm{min}).
\end{cases}
\]
This is exactly Case~\ref{case:blowup-surface} from Section~\ref{sec:karshon-blowups}
describing the effect of a blowup at a point in a fixed surface on the Karshon graph.
Thus, the underlying $\mbS^1$-space of any system $(M',\om',F')$ associated to the marked Delzant semitoric polygon
$[\De',\vec{c'},\vec{\epsilon'}\,]$ is $\mbS^1$-equivariantly symplectomorphic to the
$\mbS^1$-equivariant blowup of $(M,\om,J)$, since they have the same Karshon graph.
This also proves the last statement of the lemma, since the resulting Karshon graph is the same for any such system.
Moreover, the conditions for a marked Delzant semitoric polygon to admit a wall chop
of size $\lambda$ at $p$ are exactly the same as the conditions for the associated
Hamiltonian $\mbS^1$-space to admit a blowup of size $\lambda$ at $p$.
\end{proof}

\subsection{Symplectic reduction and integrable systems}
\label{sec:reduction}

Given a Hamiltonian action of an abelian compact Lie group $T$ on a symplectic
manifold $(M,\om)$ with momentum map $\mu\colon M\to \mathfrak{t}^*\cong \R^{\mathrm{dim}(T)}$, 
where $\mathfrak{t}^*$ is the dual of the Lie algebra $\mathfrak t$ of $T$,
the \emph{symplectic reduction} or \emph{symplectic quotient at level $j\in \mu(M)$} is defined
to be
\[
 (M/\!\!/T)_j := \mu^{-1}(j)/T.
\]
Let $\pi_j\colon \mu^{-1}(j)\to (M/\!\!/T)_j$ be the projection on the quotient, and denote by $i_j\colon \mu^{-1}(j)\to M$ the inclusion map.
The Marsden-Weinstein-Meyer Theorem~\cite{marsden-weinstein, meyer}
describes symplectic reduction for general compact Lie groups
$G$. 
We will use the following version specialized to the situation that the Lie group is a torus.

\begin{theorem}[{\cite[Proposition III.2.15]{Audin-torus}}]
Let $T$ be a compact abelian Lie 
group acting on $(M,\om)$ in a Hamiltonian fashion with momentum map $\mu\colon M\to \mathfrak{t}^*$. 
If $T$ acts freely on $\mu^{-1}(j)$ then $(M/\!\!/T)_j$
is a smooth manifold and there exists a symplectic form $\om_j$ on $(M/\!\!/T)_j$
satisfying $\pi_j^*\om_j = i_j^*\om$.
\end{theorem}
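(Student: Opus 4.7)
The plan is to establish the statement in three stages: first showing that $\mu^{-1}(j)$ is a smooth embedded submanifold, then that the quotient $(M/\!\!/T)_j$ is a smooth manifold, and finally constructing the form $\om_j$ and checking it is symplectic.

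For the first two stages, I would begin by showing that $j$ is a regular value of $\mu$. A standard moment-map identity says that the image of $d\mu_p$ is the annihilator (in $\mathfrak{t}^*$) of the Lie algebra of the stabilizer $T_p$ of $p$; freeness of the $T$-action on $\mu^{-1}(j)$ forces $T_p$ to be trivial, hence $d\mu_p$ is surjective for every $p \in \mu^{-1}(j)$. The preimage theorem then gives that $\mu^{-1}(j)$ is an embedded submanifold of codimension $\dim T$. Since $T$ is compact, its action is proper, and since it is free on $\mu^{-1}(j)$, the quotient theorem for proper free actions produces a smooth manifold structure on $(M/\!\!/T)_j = \mu^{-1}(j)/T$ for which $\pi_j$ is a surjective submersion (in fact a principal $T$-bundle), with $\dim(M/\!\!/T)_j = \dim M - 2\dim T$.

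The heart of the argument is the construction of $\om_j$, which requires the pointwise identification of the kernel of $i_j^*\om$ with the tangent space to the $T$-orbit. Let $V_p := T_p(T\cdot p) = \{\xi_M(p) \mid \xi \in \mathfrak{t}\}$, where $\xi_M$ denotes the fundamental vector field. The moment-map equation $\iota_{\xi_M}\om = \mathrm{d}\langle\mu,\xi\rangle$ together with non-degeneracy of $\om$ yields
\begin{equation*}
 T_p(\mu^{-1}(j)) = \ker \mathrm{d}\mu_p = V_p^{\om},
\end{equation*}
the symplectic orthogonal of $V_p$. Since $T$ is abelian, the moment-map equation also implies $\om(\xi_M,\eta_M) = \langle \mathrm{d}\mu(\xi_M),\eta\rangle = \langle \mathrm{d}\langle\mu,\xi\rangle(\eta_M)^{\vee},\cdot\rangle = 0$, so $T$-orbits are isotropic and $V_p \subset V_p^{\om} = T_p(\mu^{-1}(j))$. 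Taking symplectic orthogonals of $T_p(\mu^{-1}(j)) = V_p^{\om}$ gives $T_p(\mu^{-1}(j))^{\om} = V_p$, so the kernel of $i_j^*\om$, which equals $T_p(\mu^{-1}(j)) \cap T_p(\mu^{-1}(j))^{\om}$, is exactly $V_p$.

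With this identification in hand, I define $\om_j$ by the formula $(\om_j)_{\pi_j(p)}(\bar u,\bar v) := (i_j^*\om)_p(u,v)$, where $u,v \in T_p(\mu^{-1}(j))$ are any lifts of $\bar u,\bar v$; independence of the choice of lifts is exactly the kernel condition just proved, independence of the choice of $p$ in the fiber uses $T$-invariance of $\om|_{\mu^{-1}(j)}$ (which follows from $T$-invariance of $\om$, ensured by the existence of a moment map). Smoothness follows by using local smooth sections of the principal bundle $\pi_j$, closedness follows from $\pi_j^* \mathrm{d}\om_j = \mathrm{d}\, i_j^*\om = i_j^* \mathrm{d}\om = 0$ and the injectivity of $\pi_j^*$ on forms (since $\pi_j$ is a surjective submersion), and non-degeneracy is again immediate from the kernel identification. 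The expected main obstacle is this kernel computation; once one has both $T_p(\mu^{-1}(j)) = V_p^{\om}$ (from the moment-map equation plus freeness) and $V_p \subset V_p^{\om}$ (from abelianness), the rest of the argument is formal.
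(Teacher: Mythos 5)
This statement is quoted in the paper from Audin (and ultimately Marsden--Weinstein and Meyer); the paper gives no proof of its own, so there is nothing to compare against. Your argument is the standard and correct proof of the reduction theorem: regularity of $j$ from freeness, the quotient manifold theorem, the identification $\ker(i_j^*\om)_p = V_p = T_p(\mu^{-1}(j))^{\om}$, and the descent of the form. The only blemish is the garbled intermediate display in your isotropy computation; the clean way to say it is that for abelian $T$ the momentum map is $T$-invariant, so $\om(\xi_M,\eta_M) = \langle \mathrm{d}\mu(\eta_M),\xi\rangle = 0$, which gives $V_p\subset V_p^{\om}$ as you claim.
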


More generally, if $T$ does not act freely on $\mu^{-1}(j)$ then $(M/\!\!/T)_j$ is
a type of singular space called a stratified symplectic space, but it still inherits 
smooth and symplectic
structures on the set of points $\pi_j(x)\in(M/\!\!/T)_j$ 
such that $T$ acts freely on the fiber 
$\pi_j^{-1}(\pi_j(x))$.
For a detailed study of singular reduction and stratified symplectic
spaces, see Cushman $\&$ Bates~\cite{cushman-bates}, Sjamaar $\&$ Lerman~\cite{sjamaar-lerman}, and Alonso~\cite{jaume-thesis}.

Now let $(M,\om,J)$ be a Hamiltonian $\mbS^1$-space.
Consider the quotient 
\[\hat{M} := M/\mbS^1\]
 with quotient map $\pi\colon M\to\hat{M}$ and let 
\[\hat{M}_j := (J^{-1}(j))/\mbS^1\]
 be the symplectic quotient for $j\in J(M)$.
Denote by $\textrm{Non-free}(J)\subset M$ the set of points on which the $\mbS^1$-action generated by $J$ does not act freely, i.e.\ the points in $\textrm{Non-free}(J)\subset M$ are those with non-trivial stabilizer.
Then $\hat{M}$ inherits a smooth structure away from $\mathrm{sing}(\hat{M}):=\pi(\textrm{Non-free}(J))$. Set $\mathrm{smooth}(\hat{M}) := \hat{M}\setminus \mathrm{sing}(\hat{M})$. Note that $\hat{M}_j = \pi(J^{-1}(j))\subset \hat{M}$ and let $\hat{H}_j := \hat{H}|_{\hat{M}_j}$.
We now describe the relationship between the Morse classification of the
critical points of $\hat{H}_j$ on (the smooth parts of) $\hat{M}_j$
and the classification of singular points of the integrable system $(M,\om,F)$.


\begin{lemma}\label{lem:reduction-type}
 Let $(M,\om,(J,H))$ be an integrable system such
 that $J$ generates an $\mbS^1$-action, 
 let $p \in M$ be such that $\mbS^1$ acts freely on $p$, and
 let $c = \pi(p) \in \hat{M}_j$ where $j = J(p)$.
 Then $c\in \mathrm{smooth}(\hat{M})$.
 Furthermore:
 \begin{enumerate}[label = \textup{(\alph*)}]
  \item \label{item:reduction-a} $c$ is a regular point of $\hat{H}_j$ 
  if and only if
 $p$ is a regular point
 of $(J,H)$, 
  \item \label{item:reduction-b} $c$ is a non-degenerate critical point
 of $\hat{H}_j$ with index $0$ or $2$ 
 if and only if $p$ is an elliptic-regular singular point of $(J,H)$,
  \item \label{item:reduction-c} $c$ is a non-degenerate critical point
 of $\hat{H}_j$ with index $1$ if and only if $p$ is
 a hyperbolic-regular singular point of $(J,H)$.
 \end{enumerate}
 In particular, the last two items imply that $c$ is non-degenerate in the sense of Morse theory
 if and only if $p$ is non-degenerate in the sense of integrable systems.
\end{lemma}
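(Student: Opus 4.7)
The plan is to pass to local coordinates adapted to the free $\mbS^1$-action near $p$, identify the reduced system there, and then match the Williamson classification of rank-$1$ singular points of $(J,H)$ at $p$ with the Morse classification of critical points of $\hat H_j$ at $c$. Since the action is free at $p$ it is free on a neighborhood of the orbit through $p$, so by the Marsden--Weinstein--Meyer theorem recalled above one immediately has $c\in\mathrm{smooth}(\hat M)$. By equivariant Darboux (equivalently, cotangent bundle reduction for a principal $\mbS^1$-bundle) I would choose symplectic coordinates $(\theta,I,x,\xi)$ on a neighborhood $U$ of $p$ with $\theta\in\mbS^1$,
\[
\om = d\theta\wedge dI + dx\wedge d\xi, \qquad J = I + j_0,
\]
for some constant $j_0\in\R$; the relation $\{J,H\}=0$ forces $\partial_\theta H\equiv 0$, so $H=H(I,x,\xi)$, and on the reduced chart $\hat U_j\subset\hat M_j$ one has $\hat H_j(x,\xi)=H(j-j_0,x,\xi)$.

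For item (a), since $dJ=dI$ never vanishes, $dF=(dJ,dH)$ drops rank at $p$ if and only if $dH(p)\in\mathrm{span}\,dJ(p)$, equivalently $(\partial_xH,\partial_\xi H)(p)=0$, equivalently $d\hat H_j(c)=0$. This gives the bijective correspondence between regular points of $(J,H)$ and regular points of $\hat H_j$.

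For items (b) and (c), I would invoke the Williamson criterion at a rank-$1$ point: the line $L=\R\cdot\mathcal{X}^J(p)\subset T_pM$ is isotropic, the symplectic slice $W:=L^\om/L$ is canonically symplectomorphic to $T_c\hat M_j$, and after replacing $H$ by $H-\alpha J$ so that $dH(p)=0$, the Hessian $\mathrm{Hess}_p H$ descends to a quadratic form $Q$ on $W$ which agrees with $\mathrm{Hess}_c\hat H_j$ under this identification. By definition, $p$ is non-degenerate as a singular point of $(J,H)$ if and only if $Q$ is non-degenerate, and the Williamson type is read off the signature of $Q$: definite (positive or negative) gives elliptic-regular, while indefinite non-degenerate gives hyperbolic-regular. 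Translating back: Morse non-degeneracy of $\hat H_j$ at $c$ is equivalent to Williamson non-degeneracy at $p$, Morse index $0$ or $2$ corresponds to $Q$ definite (elliptic-regular), and Morse index $1$ corresponds to $Q$ indefinite non-degenerate (hyperbolic-regular).

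The main technical step I expect to be the most delicate is the symplectic identification $W\cong T_c\hat M_j$ together with the compatibility $Q=\mathrm{Hess}_c\hat H_j$; in the explicit Darboux chart above this is a short computation, but it is the one place where one has to keep careful track of all the identifications. An alternative route that avoids the abstract slice construction is to argue directly via Theorem~\ref{thm:normal-form}: starting from the elliptic-regular or hyperbolic-regular local model for $(J,H)$, which is $\mbS^1$-invariant, one sees by inspection that it reduces at $j$ to the standard Morse local form of $\hat H_j$ at $c$ with the appropriate index; conversely, given a Morse chart for $\hat H_j$ around $c$, one lifts it through the principal $\mbS^1$-bundle $J^{-1}(j)\to\hat M_j$ and combines it with action-angle coordinates for the free $\mbS^1$-action to obtain an equivariant chart realizing the desired elliptic-regular or hyperbolic-regular local normal form for $(J,H)$.
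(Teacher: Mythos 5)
Your argument is correct, and while the setup (reduction to $\mathrm{smooth}(\hat M)$, the rank computation for item (a)) matches the paper, your treatment of items (b) and (c) takes a genuinely different route. The paper outsources the equivalence ``$c$ is Morse non-degenerate $\iff$ $p$ is non-degenerate'' to a cited lemma of Hohloch--Palmer, and then distinguishes index $\{0,2\}$ from index $1$ by a topological argument on the fibers: the connected component of $F^{-1}(F(p))$ through an elliptic-regular orbit is a circle, whereas near a hyperbolic-regular point the fiber looks like two planes meeting in a line, and this dichotomy is detected in the reduced space by whether the level set of $\hat H_j$ through $c$ is a point or not. You instead work entirely linear-algebraically on the symplectic slice $L^{\om}/L\cong T_c\hat M_j$, identifying the Hessian of $H-\alpha J$ with $\mathrm{Hess}_c\hat H_j$ and reading the Williamson type off the signature; this yields non-degeneracy and the index correspondence in one stroke, is self-contained (no external lemma), and sidesteps the compactness of the fiber component that the paper's topological argument quietly uses. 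The one step you rightly flag as delicate --- that $\mathcal{X}^J(p)\in\ker d^2(H-\alpha J)(p)$ so the Hessian descends to $L^{\om}/L$ and agrees with $\mathrm{Hess}_c\hat H_j$ --- does go through (differentiate $\{J,H-\alpha J\}=0$ at $p$ using $d(H-\alpha J)(p)=0$), and is transparent in your Darboux chart since $J=I+j_0$ is linear there. Both approaches are valid; yours is closer to the pointwise Williamson picture the paper mentions but deliberately avoids in Remark~\ref{rmk:morse}.
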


\begin{proof}
Since the $\mbS^1$-action is proper, smooth, and free at $p$ the statement $c\in\mathrm{smooth}(\hat{M})$ is a standard result from the theory of group actions. Moreover, keep in mind that $J$ inducing a free $\mbS^1$-action at $p$ implies $\mathrm{d}J(p) \neq 0$.
 
 \ref{item:reduction-a}: $c$ is a regular point of $\hat{H}_j$ if and only if $\mathrm{d}\hat{H}_j(c)\neq 0$ if and only if $\mathrm{d}H(p)$ and $\mathrm{d}J(p)$ are linearly independent, which is the definition of $p$ being a regular point of $(J,H)$.
 
\ref{item:reduction-b} $\&$ \ref{item:reduction-c}: According to Hohloch $\&$ Palmer~\cite[Lemma 2.4]{HohPal}\footnote{In the published version of~\cite{HohPal} there is a small error in Lemma 2.4, it requires that $\mathrm{d}J(p)\neq 0$ but it should actually require that the $\mbS^1$-action generated by $J$ acts freely on $p$.}, $c$ is a non-degenerate critical point of $\hat{H}_j$ in the sense of Morse theory if and only if $p$ is a non-degenerate singular point of $(J,H)$. Due to $\mathrm{d}J(p) \neq 0$, the point $p$ is not a fixed point of $(J,H)$. Thus, Theorem~\ref{thm:normal-form} only leaves two possibilities, namely $p$ is elliptic-regular or $p$ hyperbolic-regular.
On the other hand, again due to Hohloch \& Palmer~\cite[Lemma 2.4]{HohPal}, if $p$ is an elliptic-regular or hyperbolic-regular point then $c$ must be a non-degenerate
critical point of $\hat{H}_j$ in the sense of Morse theory.
We conclude that $c$ is a non-degenerate point of $\hat{H}_j$ if and only if $p$ is either an elliptic-regular or hyperbolic-regular
point of $(J,H)$.

All that remains is to associate the index of the critical point $c$ with the type of the singular point $p$. 
We can do this by examining the topology of the fibers.
First, notice that if $p$ is an elliptic-regular point then the local normal
form (cf.~Theorem~\ref{thm:normal-form}) sends a neighborhood of $p$ in the level set $F^{-1}(F(p))$
diffeomorphically to an open set in $\R^1$. Since all of the points in the orbit of $p$
under the $\mbS^1$-action are also elliptic-regular, these local charts cover the connected
component of $F^{-1}(F(p))$ which contains $p$,
and thus this component of $F^{-1}(F(p))$ is a compact one dimensional manifold,
i.e. it is diffeomorphic to $\mbS^1$. If $p$ is a hyperbolic regular point,
the local normal form shows that nearby $p$ the level set $F^{-1}(F(p))$ is homeomorphic to two
planes intersecting at a line, and therefore in this case the connected component of $F^{-1}(F(p))$
containing $p$ cannot be homeomorphic to $\mbS^1$.
Thus, in our situation $p$ is an elliptic-regular point if and only if 
the connected component of $F^{-1}(F(p))$ which
contains $p$ is homeomorphic to $\mbS^1$.
Now we are prepared to prove the claim.
Supposing that $p$ is an elliptic-regular or hyperbolic-regular point, $p$ is an elliptic-regular point if
and only if the connected component of $F^{-1}(F(p))$ which
contains $p$ is homeomorphic to $\mbS^1$, if and only if the connected component of $\hat{H}_j^{-1}(\hat{H}_j(c))$
which contains $c$ is a single point, if and only if the index of $c$ is 0 or 2 (since we already know that $c$ is a non-degenerate
critical point in this situation).
Therefore, we also conclude that $p$ is hyperbolic regular if and only if $c$ has index 1.
\end{proof}


\subsection{Parabolic degenerate points}
\label{sec:parabolic_points}
Now we  motivate and define the type of degenerate points that appear in the definition of hypersemitoric systems. These degenerate singularities were referred to by Colin De Verdiere~\cite{CdV} as `the simplest non-Morse [i.e.~degenerate] example' of singular points in integrable systems, and moreover, we will see that they naturally occur in many systems with hyperbolic-regular points.

\begin{definition}[{Bolsinov \& Guglielmi \& Kudryavtseva~\cite[Definition 2.1]{Bol-parabolic}}]\label{def:parabolic}
Let $(M,\om,F)$ be a $4$-dimensional integrable system and $p\in M$ a singular point such that $\mathrm{d}f_1(p)\neq 0$ where $(f_1,f_2)=g\circ F$ for some local diffeomorphism $g$ of $\R^2$ defined in a neighborhood of $F(p)$. Define 
$$ \tilde{f_2} := \tilde{f}_{2,p} := (f_2)|_{f_1^{-1}(f_1(p))} \colon f_1^{-1}(f_1(p))\to \R.$$
The point $p$ is a \emph{parabolic degenerate singular point}, briefly a {\em parabolic point}, if:
 \begin{enumerate}
  \item \label{item:rnk1} $p$ is a critical point of $\tilde{f_2}$,
  \item \label{item:rankD2f} $\text{rank}(\mathrm{d}^2\tilde{f_2}(p))=1$,
  \item \label{item:v3} there exists $v\in\text{ker}(\mathrm{d}^2\tilde{f_2}(p))$ such that 
  \[
 v^3(\tilde{f_2}) := \frac{\mathrm{d}^3}{\mathrm{d}t^3} \tilde{f_2}(\gamma(t))|_{t=0}
\]
 is nonzero, where $\gamma\colon \ ]-\varepsilon,\varepsilon [ \ \to f_1^{-1}(f_1(p))$ is a curve satisfying $\gamma(0)=p$ and $\dot{\gamma}(0) = v$.  
  \item \label{item:extra-cond} $\mathrm{rank}\big(\mathrm{d}^2(f_2 - k f_1)(p)\big)=3$, where $k\in\R$ is determined by $\mathrm{d}f_2(p) = k\mathrm{d}f_1(p)$.
 \end{enumerate}
 We call the image of a parabolic singular point a \emph{parabolic singular value of $F$}, briefly a {\em parabolic value}.
\end{definition}

\begin{remark}\label{rmk:v3}
That the definition of $v^3(\tilde{f_2})$ does not depend on the choice of the the curve $\gamma$ is proven in Bolsinov $\&$ Guglielmi $\&$ Kudryavtseva~\cite[Remark 2.1]{Bol-parabolic}.
\end{remark}

Intuitively, a parabolic degenerate point can be thought of as a singular point where the rank of all relevant operators is as maximal as possible without the point being non-degenerate:
in particular, to motivate items (2) and (4) in the definition note that for a degenerate point $\text{rank}(\mathrm{d}^2\tilde{f_2}(p))<2$, and that $\mathrm{rank}\big(\mathrm{d}^2(f_2 - k f_1)(p)\big)\leq 3$ is implied by conditions (1)-(3).
Regarding item (3), there is degeneracy up to order two so item (3) is a way of requiring something of order (3) to be non-degenerate.

Being a parabolic point is invariant under the following changes of the involved integrals:

\begin{proposition}[{Bolsinov \& Guglielmi \& Kudryavtseva~\cite[Proposition 7.1]{Bol-parabolic}}]
\label{prop:parabolic-welldef}
 Let $p\in M$ be a parabolic point of $(M,\om,(f_1,f_2))$ in the
 sense of Definition~\ref{def:parabolic} (thus, in particular, $\mathrm{d} f_1(p)\neq 0$) and let $(\mathcal{J},\mathcal{H}) = g(f_1,f_2)$ where $g$ is a local diffeomorphism of $\R^2$ such that $\mathrm{d}\mathcal{J}(p)\neq 0$. Then $p$ is also parabolic with respect to $(\mathcal{J},\mathcal{H})$, i.e.~$(\mathcal{J}, \mathcal{H})$ satisfies conditions \eqref{item:rnk1}-\eqref{item:extra-cond} of Definition~\ref{def:parabolic}.
\end{proposition}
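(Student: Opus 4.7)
The plan is to reduce the proposition to a chain-rule computation of $\mathrm{d}^2(\mathcal{H} - k'\mathcal{J})(p)$ in terms of the invariantly defined $M := \mathrm{d}^2(f_2 - kf_1)(p)$ and $df_1(p)$, and then verify each of the four parabolic conditions for $(\mathcal{J},\mathcal{H})$ separately using this formula together with the hypotheses on $(f_1, f_2)$. First I would write $\mathcal{J} = g_1(f_1, f_2)$ and $\mathcal{H} = g_2(f_1, f_2)$ and apply the chain rule at $p$: using $df_2(p) = k\, df_1(p)$, one obtains $d\mathcal{J}(p) = (\partial_1 g_1 + k\,\partial_2 g_1)\, df_1(p)$ and the analogous formula for $d\mathcal{H}(p)$. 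Since $d\mathcal{J}(p)\neq 0$, the factor $\partial_1 g_1 + k\,\partial_2 g_1$ is nonzero, so $k' := (\partial_1 g_2 + k\,\partial_2 g_2)/(\partial_1 g_1 + k\,\partial_2 g_1)$ is well defined and satisfies $d\mathcal{H}(p) = k'\, d\mathcal{J}(p)$, immediately giving condition~\eqref{item:rnk1} for $(\mathcal{J}, \mathcal{H})$. In particular $K := \ker df_1(p) = \ker d\mathcal{J}(p)$, so the level sets of $f_1$ and $\mathcal{J}$ through $p$ are tangent there, and setting $h(u,v) := g_2(u,v) - k'\, g_1(u,v)$ yields $\mathcal{H} - k'\mathcal{J} = h(f_1, f_2)$ with $\partial_1 h + k\,\partial_2 h = 0$ and $\partial_2 h = \det(Dg)/(\partial_1 g_1 + k\,\partial_2 g_1)\neq 0$ at $(f_1(p), f_2(p))$.

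Next, an invariant chain-rule computation, meaningful despite the individual matrices of second partials of $f_1$ and $f_2$ at $p$ being non-intrinsic, would exploit the constraint $\partial_1 h = -k\,\partial_2 h$ to combine them into $M$ and produce the identity
\[
  \mathrm{d}^2(\mathcal{H} - k'\mathcal{J})(p) \;=\; \lambda\, M \;+\; \mu\, df_1(p)\otimes df_1(p),
\]
with $\lambda := \partial_2 h\neq 0$ and $\mu := \partial_1^2 h + 2k\,\partial_1\partial_2 h + k^2\,\partial_2^2 h$. Restricting to $K\times K$ annihilates the rank-one term, so $\mathrm{d}^2\tilde{\mathcal{H}}(p) = \lambda\cdot\mathrm{d}^2\tilde{f_2}(p)$, immediately giving condition~\eqref{item:rankD2f} and showing these two Hessians share the same kernel inside $K$. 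For condition~\eqref{item:v3} I would Taylor-expand $\mathcal{H}(\gamma(t)) = h(f_1(p), f_2(\gamma(t)))$ along a curve $\gamma$ in the common level set $f_1^{-1}(f_1(p)) = \mathcal{J}^{-1}(\mathcal{J}(p))$ with $\dot\gamma(0) = v$ in the shared kernel: the first two $t$-derivatives of $f_2\circ\gamma$ at $0$ vanish (the second because $v\in\ker\mathrm{d}^2\tilde{f_2}(p)$), so only the $\partial_2 h$-term contributes and $v^3\tilde{\mathcal{H}}(p) = \lambda\, v^3\tilde{f_2}(p)\neq 0$.

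The hard part will be condition~\eqref{item:extra-cond}, where I must show that the rank-three property of $M$ survives the rank-one perturbation by $\mu\, df_1(p)\otimes df_1(p)$. The key auxiliary fact, which I would extract from conditions~\eqref{item:rankD2f} and~\eqref{item:extra-cond} for $(f_1, f_2)$, is that $\ker M\subset K$ and that any $z$ satisfying $Mz = df_1(p)$ (which exists by the previous inclusion) also satisfies $df_1(p)(z) = 0$. This can be verified in coordinates diagonalizing $M|_K = \mathrm{diag}(\nu, 0, 0)$ with $\nu\neq 0$ (by~\eqref{item:rankD2f}) and with $df_1(p)$ dual to the first basis vector: rank $M = 3$ then forces at least one of the entries $M_{13}, M_{14}$ to be nonzero, and both conclusions read off directly from rows $3$ and $4$ of the relevant linear systems. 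Given these two facts, the verification of rank $Q = 3$ for $Q := \lambda M + \mu\, df_1(p)\otimes df_1(p)$ is short: if $w\in\ker Q$ had $df_1(w)\neq 0$, then $Mw = -(\mu\, df_1(w)/\lambda)\,df_1(p)$ combined with $df_1(z) = 0$ would force $df_1(w) = 0$, a contradiction; hence $\ker Q\subset K$, on which $Q$ agrees with $\lambda M$ and therefore has the same one-dimensional kernel as $M$. This yields condition~\eqref{item:extra-cond} for $(\mathcal{J}, \mathcal{H})$ and completes the proof.
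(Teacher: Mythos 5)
First, a remark on the comparison itself: the paper does not prove Proposition~\ref{prop:parabolic-welldef} at all — it is imported verbatim from Bolsinov--Guglielmi--Kudryavtseva~\cite[Proposition 7.1]{Bol-parabolic} — so your argument can only be judged on its own merits. Judged that way, the core reduction is sound: the chain-rule identity $\mathrm{d}^2(\mathcal{H}-k'\mathcal{J})(p)=\lambda M+\mu\,\mathrm{d}f_1(p)\otimes \mathrm{d}f_1(p)$ with $\lambda=\partial_2 h=\det(Dg)/(\partial_1g_1+k\,\partial_2g_1)\neq0$ is correct (both sides are well defined because the relevant differentials vanish at $p$), and it disposes of conditions~\eqref{item:rnk1} and~\eqref{item:rankD2f} cleanly. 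Your linear algebra for~\eqref{item:extra-cond} also checks out: $\ker M\subset K:=\ker\mathrm{d}f_1(p)$ because otherwise $\operatorname{rank}M=\operatorname{rank}(M|_{K\times K})=1$; any $z$ with $Mz=\mathrm{d}f_1(p)$ lies in $K$; and then $\mathrm{d}f_1(w)=(Mz)(w)=(Mw)(z)=0$ for $w\in\ker Q$ forces $\ker Q=\ker M$, so $\operatorname{rank}Q=3$.

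The genuine gap is in condition~\eqref{item:v3}, and it stems from the assertion that $\gamma$ runs in ``the common level set $f_1^{-1}(f_1(p))=\mathcal{J}^{-1}(\mathcal{J}(p))$.'' These are different hypersurfaces in general: $\mathcal{J}=g_1(f_1,f_2)$ depends on $f_2$, so $\mathcal{J}^{-1}(\mathcal{J}(p))$ is the $F$-preimage of a curve in $\R^2$ that is typically transverse to $\{u=f_1(p)\}$ away from $F(p)$; the two level sets share only the tangent space $K$ at $p$. Consequently $f_1$ is not constant along the curve $\gamma\subset\mathcal{J}^{-1}(\mathcal{J}(p))$ that Definition~\ref{def:parabolic} prescribes for $v^3\tilde{\mathcal{H}}$, the identity $\mathcal{H}(\gamma(t))=h(f_1(p),f_2(\gamma(t)))$ is false, and the $\partial_1h$-term does not simply drop out: using $\partial_1h=-k\,\partial_2h$ one gets $v^3\tilde{\mathcal{H}}=\lambda\,\tfrac{\mathrm{d}^3}{\mathrm{d}t^3}(f_2-kf_1)(\gamma(t))\big|_{t=0}$, but this third derivative is now taken along a curve in $\mathcal{J}^{-1}(\mathcal{J}(p))$ whereas $v^3\tilde f_2$ is defined via a curve in $f_1^{-1}(f_1(p))$. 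Since $v$ need not lie in the full kernel of $M$ (only in $\ker(M|_{K\times K})$, which is two-dimensional), the curve-dependent term $3M(v,\ddot\gamma(0))$ does not automatically vanish, so the two quantities are not a priori equal. The repair is to observe that for $v\in\ker(M|_{K\times K})$ this term depends on $\ddot\gamma(0)$ only through $\mathrm{d}f_1(\ddot\gamma(0))$, and that $\mathrm{d}^2\mathcal{J}(p)(v,v)=(\partial_1g_1+k\,\partial_2g_1)\,\mathrm{d}^2f_1(p)(v,v)$ on $K$, so the constraints $\tfrac{\mathrm{d}^2}{\mathrm{d}t^2}(\mathcal{J}\circ\gamma)|_0=0$ and $\tfrac{\mathrm{d}^2}{\mathrm{d}t^2}(f_1\circ\gamma)|_0=0$ pin $\mathrm{d}f_1(\ddot\gamma(0))$ to the same value $-\mathrm{d}^2f_1(p)(v,v)$ in both situations. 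With that supplement your conclusion $v^3\tilde{\mathcal{H}}=\lambda\,v^3\tilde f_2\neq0$ holds, but as written the step does not go through.
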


Sometimes parabolic singular points are also called {\em cuspidal singular points}. Parabolic/ cuspidal points were studied for instance by Efstathiou $\&$ Giacobbe~\cite{EG-cusps}, Bolsinov \& Guglielmi \& Kudryavtseva~\cite{Bol-parabolic}, Kudryavtseva \& Martynchuk~\cite{KudMar-circle,KudMar-invariants}, and Bolsinov $\&$ Fomenko~\cite{BolFom}. We used the definition by Bolsinov \& Guglielmi \& Kudryavtseva~\cite{Bol-parabolic} since it is the best adapted to our situation.

\begin{remark}
The complete symplectic invariants of parabolic points and parabolic orbits are described in
the analytic case by Bolsinov $\&$ Guglielmi $\&$ Kudryavtseva~\cite{Bol-parabolic}
and later extended to the smooth case by Kudryavtseva \& Martynchuk~\cite{KudMar-invariants}. These invariants are non-trivial and were found to be encoded in the affine structure of the base of the Lagrangian fibration near the parabolic values. 
\end{remark}

Parabolic points do not admit a \emph{symplectic} normal form, but they do admit a \emph{smooth} normal form:

%

\begin{proposition}[{Kudryavtseva \& Martynchuk~\cite[Theorem 3.1]{KudMar-circle}}]\label{prop:cuspnormalform}
 Let $p\in M$ be a parabolic singular point of an integrable system $(M,\om, F=(f_1,f_2))$ for which $\mathrm{d}f_1(p)\neq 0$.
 Then there exists a neighborhood $U$ of $p$ equipped with coordinates $(x,y,t,\theta)$ centered
 at $p$, and a local diffeomorphism $g=(g_1,g_2)$ of $\R^2$ around the origin with
 $g_1(x_1,x_2) = \pm x_1 + \mathrm{const}$ and $\frac{\mathrm{d}}{\mathrm{d}x_2}(g_2)\neq 0$ such that
 \[
  g\circ F|_U = (t, x^3+tx+y^2). 
 \]
\end{proposition}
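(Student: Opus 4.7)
The plan is to first reduce the problem to a normal-form question about a one-parameter family of functions on $\R^2$, and then to invoke the classical theory of versal unfoldings of the $A_2$ (cusp) singularity.

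First I would use the condition $\mathrm{d}f_1(p)\neq 0$ together with the Carath\'eodory--Jacobi--Lie theorem (a relative version of Darboux's theorem) to construct symplectic coordinates $(x,y,t,\theta)$ centered at $p$ on a neighborhood $U$ such that $\om = \mathrm{d}x\wedge\mathrm{d}y + \mathrm{d}t\wedge\mathrm{d}\theta$ and $f_1 = \pm t + \mathrm{const}$. Replacing $f_1$ by $g_1\circ f_1$ with $g_1(x_1,x_2)= \pm x_1 + \mathrm{const}$ as allowed in the statement, we may assume $f_1 = t$ outright. Since $\{f_1,f_2\}=0$, the function $f_2$ is invariant under the Hamiltonian flow $\partial_\theta$ of $f_1$, so $f_2 = f_2(x,y,t)$ does not depend on $\theta$. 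The problem therefore reduces to showing that a smooth function $\varphi(x,y,t)$ on a neighborhood of the origin in $\R^3$ satisfying conditions \eqref{item:rnk1}--\eqref{item:extra-cond} of Definition~\ref{def:parabolic} (with $f_1=t$, $f_2=\varphi$) can be brought into the form $x^3+tx+y^2$ by a smooth change of $(x,y)$-coordinates that may depend smoothly on $t$, post-composed with a smooth function $g_2$ of $(t,\varphi)$ with $\partial_\varphi g_2\neq 0$.

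For fixed $t$, view $\varphi_t(x,y):=\varphi(x,y,t)$ as a smooth family of functions on the $(x,y)$-plane, and analyze $\varphi_0$ at the origin. Conditions~\eqref{item:rnk1} and~\eqref{item:rankD2f} say that the origin is a critical point of $\varphi_0$ whose Hessian has rank exactly one. By the Morse lemma with parameters (splitting lemma), after a smooth change of the $(x,y)$-coordinates we can assume $\varphi_0(x,y) = \pm y^2 + \psi(x)$ for some smooth $\psi$ with $\psi(0)=\psi'(0)=\psi''(0)=0$. Condition~\eqref{item:v3} then forces $\psi'''(0)\neq 0$, so up to rescaling and a sign we have $\psi(x)=x^3+O(x^4)$, and then $\psi(x)=x^3$ after a further smooth change of $x$-coordinate (Tougeron's theorem / finite determinacy of the $A_2$ singularity). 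Thus $\varphi_0$ has an isolated $A_2$ cusp singularity, and we may assume $\varphi_0(x,y) = x^3 + y^2$ after swallowing a sign into $g_2$.

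Next I would interpret the unfolding $\varphi(x,y,t)$ of $\varphi_0$ in the sense of singularity theory. The local algebra of the $A_2$ cusp $x^3+y^2$ is $\R[[x,y]]/(x^2,y) \cong \R\cdot 1 \oplus \R\cdot x$, so a universal unfolding requires exactly one parameter beyond the constant term; the normal form is precisely $x^3 + sx + y^2$ depending on the unfolding parameter $s$. Condition~\eqref{item:extra-cond} of Definition~\ref{def:parabolic} is precisely the transversality (versality) condition ensuring that the monomial $x$ appears non-trivially in the Taylor expansion of $\varphi(x,y,t)-\varphi(0,0,t)$ in $t$ modulo the gradient ideal of $\varphi_0$. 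By the fundamental theorem on versal unfoldings (a consequence of the Malgrange preparation theorem), two versal unfoldings of the same singularity are equivalent via a smooth change of the source coordinates (depending on the parameter) and a smooth change of target. Applying this to $\varphi(x,y,t)$ and to the model $x^3+sx+y^2$ gives a smooth family of changes of $(x,y)$-coordinates, a smooth reparametrization $s=s(t)$ with $s'(0)\neq 0$, and a smooth $g_2(t,\varphi)$ with $\partial_\varphi g_2\neq 0$, realizing the desired normal form. Replacing $t$ by $s(t)$ is allowed since this is absorbed into the permitted freedom $g_1(x_1,x_2)=\pm x_1+\mathrm{const}$ up to a final smoothing adjustment.

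The main obstacle is the last paragraph: pinning down exactly how condition~\eqref{item:extra-cond} translates into infinitesimal versality, and verifying that the preparation-theorem machinery can be carried out with target changes that depend only on $(t,\varphi)$ and not on $(x,y)$ separately, so that $g_1$ retains the form $\pm x_1+\mathrm{const}$ and $g_2$ depends nontrivially on $x_2$. This is precisely the delicate point addressed in~\cite{KudMar-circle}, and handling it rigorously would occupy the bulk of the actual proof.
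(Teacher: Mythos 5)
First, note that the paper does not prove this proposition: it is quoted verbatim from Kudryavtseva \& Martynchuk~\cite[Theorem 3.1]{KudMar-circle} (analytic case in Bolsinov \& Guglielmi \& Kudryavtseva~\cite[Proposition 2.1]{Bol-parabolic}), so there is no in-paper argument to compare against. Your overall strategy --- flow-box coordinates with $f_1=t$, $\theta$-independence of $f_2$ from $\{f_1,f_2\}=0$, the splitting lemma to isolate $\pm y^2+\psi(x)$, finite determinacy of $A_2$ to get $\psi(x)=x^3$, and then the versal-unfolding theorem --- is the standard singularity-theoretic route and is essentially how the cited sources proceed. Your identification of condition~\eqref{item:extra-cond} with infinitesimal versality is also correct: with $f_1=t$ and $\varphi_0$ normalized so that $\varphi_{xx}(0)=\varphi_{xy}(0)=0$, $\varphi_{yy}(0)\neq 0$, the Hessian of $f_2-kf_1$ has determinant proportional to $-\varphi_{xt}(0)^2$ on its nontrivial $3\times 3$ block, so rank $3$ is exactly the non-vanishing of the coefficient of $x$ in $\partial_t\varphi|_{t=0}$ modulo the Jacobian ideal of $x^3+y^2$.

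The genuine gap is in your final sentence of the versality step: the unfolding theorem produces $x^3+s(t)x+y^2$ where $s$ is a diffeomorphism germ of $(\R,0)$ with $s'(0)\neq 0$, and a nonlinear $s$ \emph{cannot} be absorbed into $g_1(x_1,x_2)=\pm x_1+\mathrm{const}$, which only permits affine reparametrizations of $t$. The statement forces the unfolding parameter to be (an affine image of) $f_1$ itself, so an extra normalization is required: after possibly replacing $t$ by $-t$ so that $s'(0)>0$, rescale the target by $\mu(t)=(t/s(t))^{3/2}$ (smooth and positive near $0$) and the source by $x\mapsto\mu(t)^{-1/3}x$, $y\mapsto\mu(t)^{-1/2}y$; this preserves $x^3$ and $y^2$ and converts $s(t)x$ into $tx$, with the factor $\mu(t)$ absorbed into $g_2(x_1,x_2)$, which is allowed to depend on $x_1$. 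Without some such argument the proof does not deliver the stated normal form. A secondary, smaller point: you should check that the left change produced by the Thom--Mather machinery really has the fibered form $g_2(t,\varphi)$ rather than depending on $(x,y)$ separately --- you flag this yourself, and it is indeed where the careful bookkeeping of~\cite{KudMar-circle} lives, particularly in the smooth (non-analytic) category.
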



The analytic case of Proposition~\ref{prop:cuspnormalform} was first proved by Bolsinov \& Guglielmi \& Kudryavtseva~~\cite[Proposition 2.1]{Bol-parabolic}
before being extended to the smooth case in Kudryavtseva \& Martynchuk~\cite[Theorem 3.1]{KudMar-circle}.


\begin{example}
\label{ex:parabolic}
The origin is a parabolic point for the integrable system given by the local normal form $F\colon \R^4\to\R^2$, $(x,y,t,\theta)\mapsto (t, x^3+tx+y^2)$ equipped with the symplectic form $\om=\mathrm{d}x\wedge\mathrm{d}y + \mathrm{d}t\wedge\mathrm{d}\theta$.
\end{example}

Here it is important to keep in mind that the coordinates $(x, y, t, \theta)$ in Proposition~\ref{prop:cuspnormalform} are in general not canonical, i.e.\ the symplectic form does not always take the standard form as in Example~\ref{ex:parabolic} after change of coordinates.


\subsection{Parabolic points in presence of an effective \texorpdfstring{$\mbS^1$-action}{S1 action}}
\label{sec:parabolic_points_action}

Now we want to show in two steps that for compact systems $(M, \omega, (J,H))$ where $J$ induces an effective $\mbS^1$-action we always have $\mathrm{d}J\neq 0$ at parabolic points, i.e., we can take the local diffeomorphism $g$ from Definition~\ref{def:parabolic} to be the identity.

\begin{lemma}\label{lem:dJ-nonzero-interior}
Let $(M,\om, F=(J,H))$ be a compact integrable system for which the flow of $J$ generates an effective $\mbS^1$-action and let $p\in M$ be a rank 1 singular point. 
Then, if $J(p)\in\mathrm{interior}(J(M))$, we have $\mathrm{d}J(p)\neq 0$. Therefore any rank 1 singular point $q$ with $F(q)\in\mathrm{interior}(F(M))$ has $\mathrm{d}J(q)\neq 0$.
\end{lemma}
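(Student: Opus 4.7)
The plan is to argue by contradiction: suppose $\mathrm{d}J(p) = 0$. Since $J$ generates an effective Hamiltonian $\mbS^1$-action, this is equivalent to $p$ being a fixed point of the action. Lemma~\ref{lem:karshon_fixedset} describes the fixed point set in the four-dimensional case: it decomposes into isolated points and symplectic surfaces, and any fixed surface is exactly $J^{-1}(j_{\min})$ or $J^{-1}(j_{\max})$. Since $J(p)$ lies in the interior of $J(M)$, $p$ cannot belong to a fixed surface, so $p$ must be an isolated fixed point of the $\mbS^1$-action.

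Next I would invoke the equivariant local model near an isolated fixed point recalled in Section~\ref{sec:S1-classification}: there exist complex coordinates $(w,z)$ centered at $p$ and integer weights $m, n$ such that the action reads $t\cdot(w,z) = (\mathrm{e}^{\mathrm{i}mt}w,\, \mathrm{e}^{\mathrm{i}nt}z)$. Isolation of $p$ forces both weights to be nonzero, since otherwise (say $m=0$) the complex line $\{z=0\}$ would be fixed pointwise, contradicting isolation. Because $(M,\om,(J,H))$ is integrable we have $\{J, H\} = 0$, so $H$ is preserved by the flow of $\mathcal{X}^J$ and hence is $\mbS^1$-invariant. With both weights nonzero, imposing $\mbS^1$-invariance on a linear expression $a w + b z + \bar a \bar w + \bar b \bar z$ forces $a = b = 0$; therefore the linear part of the Taylor expansion of $H$ at $p$ vanishes, i.e.~$\mathrm{d}H(p) = 0$. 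Together with $\mathrm{d}J(p)=0$ this gives $\mathrm{rank}(\mathrm{d}F(p)) = 0$, contradicting the hypothesis that $p$ is a rank $1$ singular point. This proves $\mathrm{d}J(p)\neq 0$.

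For the second assertion, the plan is simply to note that the projection $\pi_1\colon \R^2 \to \R$ onto the first coordinate is an open map and hence sends the interior of $F(M)$ into the interior of $\pi_1(F(M)) = J(M)$. Consequently, any $q\in M$ with $F(q) \in \mathrm{interior}(F(M))$ automatically satisfies $J(q) \in \mathrm{interior}(J(M))$, and the first part of the statement applies.

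The argument is essentially a routine reduction to Karshon's structure of the fixed point set. The only step that requires any care is verifying that at an isolated fixed point there are no nontrivial linear $\mbS^1$-invariants, which is where the interior hypothesis really bites (ruling out a zero weight); this is immediate once one writes the equivariant normal form and imposes invariance on the linear Taylor terms of $H$.
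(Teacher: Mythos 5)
Your proof is correct, but the mechanism by which you reach the contradiction differs from the paper's. The paper also begins with ``suppose $\mathrm{d}J(p)=0$, so $p$ is a fixed point,'' but then observes that, since $\{J,H\}=0$, the flow of $\mathcal{X}^H$ preserves the fixed-point set $M^{\mbS^1}$; because $p$ has rank $1$ and $\mathrm{d}J(p)=0$ one must have $\mathrm{d}H(p)\neq 0$, so the $\mathcal{X}^H$-orbit of $p$ is a nontrivial curve of fixed points, making $p$ non-isolated. Lemma~\ref{lem:karshon_fixedset} then places $p$ on a fixed surface, hence at an extremum of $J$ --- contradiction. You instead use Lemma~\ref{lem:karshon_fixedset} in the opposite direction (interior fixed point $\Rightarrow$ isolated), and then invoke the equivariant linearization at an isolated fixed point to show that both weights are nonzero and hence that the $\mbS^1$-invariant function $H$ has no nontrivial linear term, forcing $\mathrm{d}H(p)=0$ and contradicting rank $1$. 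Both arguments are sound and both ultimately rest on the same structural fact about fixed surfaces sitting at the extremes of $J$; the paper's version is slightly more economical in that it needs only the commutation of the flows and no local normal form, whereas yours isolates cleanly \emph{why} the interior hypothesis matters (it kills the possibility of a zero weight). Your treatment of the second assertion via openness of the projection $\pi_1$ matches the intent of the paper's one-line deduction and is correct.
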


\begin{proof}
 Assume that $\mathrm{d}J(p)=0$. Then $p$ is a fixed
 point of the $\mbS^1$-action generated by $J$ and all points in the orbit of $p$ under the flow generated by $H$ are also fixed points of the flow generated by $J$. Since $p$ is a rank 1 point it is not fixed by the flow generated by $H$, so $p$ is a non-isolated fixed point of the $\mbS^1$-action. 
 By Lemma~\ref{lem:karshon_fixedset}, this means that $p$ belongs to a fixed surface of the $\mbS^1$-action and $J(p)$ is in the boundary of the interval $J(M)$, and thus $F(p)$ is in the boundary of $F(M)$.
\end{proof}

\begin{corollary}
\label{cor:para-dJ}
 Let $(M,\om,(J,H))$ be a compact integrable system such that $J$
 generates an $\mbS^1$-action and let $p\in M$ be a parabolic point. Then $\mathrm{d}J(p)\neq 0$.
 Therefore, the local diffeomorphism $g$ from Definition~\ref{def:parabolic} can be taken to be the identity.
 That is, one can work directly with the given integrals $(J,H)$.
\end{corollary}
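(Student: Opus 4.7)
The approach is to show that parabolic points necessarily have their image in the interior of $F(M)$, then apply Lemma~\ref{lem:dJ-nonzero-interior} to conclude $dJ(p)\neq 0$, and finally use Proposition~\ref{prop:parabolic-welldef} to transfer the parabolic property back to $(J,H)$.

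First I would observe that any parabolic point $p$ has $\mathrm{rank}(dF(p))=1$. Indeed, by Definition~\ref{def:parabolic} there is a local diffeomorphism $g$ of $\R^2$ with $(f_1,f_2)=g\circ F$ and $df_1(p)\neq 0$, while item~\eqref{item:rnk1} of that definition says $p$ is a critical point of $\tilde f_2$, which means $df_2(p)$ vanishes on $\ker df_1(p)$ and hence is proportional to $df_1(p)$. Consequently $\mathrm{rank}(dF(p))=\mathrm{rank}(d(f_1,f_2)(p))=1$.

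Next, I would apply the smooth normal form (Proposition~\ref{prop:cuspnormalform}) to $(f_1,f_2)$: there exist a neighborhood $U$ of $p$ with coordinates $(x,y,t,\theta)$ and a local diffeomorphism $G$ of $\R^2$ such that $G\circ F|_U=(t,\, x^3+tx+y^2)$. The key observation is that this model is open at the origin of $\R^4$: for any sufficiently small $(t_0,h_0)\in\R^2$, the continuous function $x\mapsto x^3+t_0 x$ vanishes at $x=0$ and, by the intermediate value theorem on $[-\epsilon,\epsilon]$, takes every value in an interval around $0$ of size comparable to $\epsilon^3$ provided $|t_0|\lesssim \epsilon^2$; hence a small solution $x$ of $x^3+t_0 x=h_0$ exists, and together with $y=0$ and any $\theta_0$ gives a preimage of $(t_0,h_0)$. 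Thus $G(F(U))$, and therefore $F(U)$, contains a neighborhood of $F(p)$, so $F(p)\in\mathrm{interior}(F(M))$.

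Since $p$ is a rank $1$ singular point with $F(p)$ in the interior of $F(M)$, Lemma~\ref{lem:dJ-nonzero-interior} gives $dJ(p)\neq 0$. Finally, writing $(J,H)=g^{-1}(f_1,f_2)$ and invoking Proposition~\ref{prop:parabolic-welldef} with the diffeomorphism $g^{-1}$ (whose first component has non-vanishing differential at $F(p)$ precisely because $dJ(p)\neq 0$), we conclude that the conditions of Definition~\ref{def:parabolic} hold directly for $(J,H)$; that is, the local diffeomorphism in that definition may be chosen to be the identity. The main obstacle I anticipate is the openness verification of the model map: the case $t_0<0$ is technically the most delicate because $x\mapsto x^3+t_0 x$ is not monotonic on small intervals, but continuity together with the intermediate value theorem still produces a small solution, so the difficulty is elementary rather than conceptual.
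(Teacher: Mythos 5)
Your proof is correct and follows essentially the same route as the paper: use the normal form of Proposition~\ref{prop:cuspnormalform} to see that $F(p)$ lies in the interior of $F(M)$, then apply Lemma~\ref{lem:dJ-nonzero-interior}. You merely fill in details the paper leaves implicit (the rank-one check, the openness of the model map via the intermediate value theorem, and the explicit appeal to Proposition~\ref{prop:parabolic-welldef} for the final claim), all of which are carried out correctly.
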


\begin{proof}
The image of the parabolic point in the normal form given in Proposition~\ref{prop:cuspnormalform} is the point $(0,0)$ and $g^{-1}(0,0)$ is an interior point of $F(M)$,
 so $F(p)$ lies in the interior of $F(M)$.
 By Lemma~\ref{lem:dJ-nonzero-interior} this implies that $\mathrm{d}J(p)\neq 0$.
\end{proof}

Note that Corollary \ref{cor:para-dJ} applies in particular to hypersemitoric systems.

%



\subsection{Whereabouts of parabolic points in integrable systems}

Considering $\theta$ in Proposition~\ref{prop:cuspnormalform} as a parameter, we see that parabolic points come in one parameter families in $M$, which project to a single point in $F(M)$.

Furthermore, in a neighborhood of a parabolic point there are two surfaces of 
non-degenerate singular points which meet at the parabolic point, one
of hyperbolic-regular points and one of elliptic-regular points. 
In the momentum map image this appears as a curve of images of elliptic-regular points
which meets at the image of the parabolic point with a curve of images of hyperbolic-regular points. An example with two parabolic values is sketched in Figure~\ref{fig:flap}.


\begin{definition}
 \label{def:flap}
A \emph{flap} in an integrable system $(M, \omega, F)$ is an open set $S\subset M$ which is a connected component of $F^{-1}(F(S))$ with the following property: there is a connected open region in $F(S)$ bounded by three smooth curves such that
\begin{itemize}
    \item two of the curves have the property that, other than their endpoints, they are each the image of a family of elliptic-regular singular points in $S$,
    \item the other curve, with its endpoints removed, is the image of a family of hyperbolic-regular singular points in $S$,
    \item the point where the two curves of elliptic-regular values meet is the image of an elliptic-elliptic singular point in $S$,
    \item each of the other two corners of the region is the image of a parabolic singular orbit in $S$, 
    \item there are no other singular point in $S$.
\end{itemize}
\end{definition}

Outside of the set $S$ anything can be happening, but this does not interact with the behavior of the flap. In Figure~\ref{fig:flap} we sketch the simplest situation, the one in which $S = F^{-1}(F(S))$ (and therefore the number of components of each fiber is always 1 or 2) and $F(S)$ lies in the interior of $F(M)$.
In such a flap, the fibers above the interior points of the triangle are the disjoint union of 2 Lagrangian tori, outside of the triangle they are a single torus, above the elliptic-regular values they are the disjoint union of a torus and a circle, above the elliptic-elliptic value it is the disjoint union of a torus and a point, above the hyperbolic-regular values they are double tori (cf.\ Figure~\ref{fig:double-torus}), and above the two degenerate values they are cuspidal tori (cf.\ Figure~\ref{fig:cusp-torus}).
Throughout the paper, we sometimes call the image $F(S)$ a flap as well, refering to the flap $S$.
A detailed discussion of the topological properties of parabolic singularities in particular in the case of flaps can be found in Efstathiou $\&$ Giacobbe~\cite{EG-cusps} where they appear under the name \emph{cuspidal} singular points.



\begin{figure}
 \centering
 \includegraphics[width = 280pt]{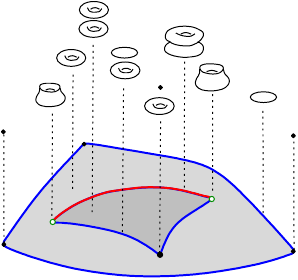}
 \caption{A flap in the momentum map image. Elliptic-elliptic values are marked by black dots and parabolic values by green punctured dots. Hyperbolic-regular values are sketched in red and elliptic-regular ones in blue. Regular values are painted gray. A look at the fibers shows when cases overlap.}
 \label{fig:flap}
\end{figure}

%
%
%
%

The name {\em flaps} is motivated by their topological form in the base space $B$ of the singular Lagrangian fibration induced by $F$ (as discussed in Section~\ref{sec:fibration}).
The region around this triangle in $B$ can be obtained by gluing the triangle to the rest of the base space along the curve of hyperbolic-regular points and parabolic degenerate points, so it is like a flap glued onto the momentum map image. Example~\ref{ex:dullin-pelayo} and Example \ref{ex:LFP} have such flaps,
as does the system described in Example~\ref{ex:CP2_6blowups}.

\begin{figure}
\centering
\begin{subfigure}[t]{.48\textwidth}
\centering
\input{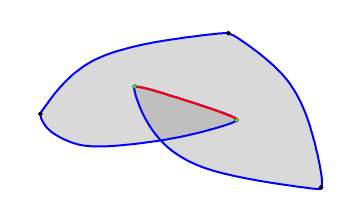_t}
\caption{The types of singular values in the momentum map image.}
\end{subfigure}\quad
\begin{subfigure}[t]{.48\textwidth}
\centering
\input{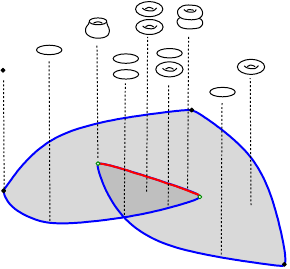_t}
\caption{The fibers over the momentum map image.}
\end{subfigure}
 \caption{A momentum map image of an integrable system which includes a pleat (or `swallow tail') like the system from Le Floch $\&$ Palmer~\cite[Section 6.6]{LFPfamilies}. Elliptic-elliptic values (EE) are marked by black dots and parabolic values (D as in `degenerate') by green punctured dots. Hyperbolic-regular values (HR) are sketched in red and elliptic-regular ones (ER) in blue. Regular values are painted gray. A look at the fibers shows when cases overlap.}
 \label{fig:LFP-system}
\end{figure}

\begin{remark}
Parabolic points also appear in so-called \emph{pleats} (which are also called {\em swallowtails}) as sketched in Figure \ref{fig:LFP-system} (see also Le Floch $\&$ Palmer~\cite[Section 6.6]{LFPfamilies}). The explicit definition can be found in Efstathiou $\&$ Giacobbe~\cite{EG-cusps}. Pleats were studied from a global viewpoint by Efstathiou \& Sugny~\cite{EfstSugny-swallowtail}.
\end{remark}

\begin{remark}
Parabolic points are also very common in physical systems, see for instance Bolsinov $\&$ Rikhter $\&$ Fomenko ~\cite{BolRikFom-Ktop} and the references in Bolsinov $\&$ Guglielmi $\&$ Kudryavtseva~\cite{Bol-parabolic}.
\end{remark}


\subsection{Parabolic points and reduction}
\label{sec:para-reduction}

Parabolic points locally admit a natural $\mbS^1$-action,
and the coordinates from Proposition~\ref{prop:cuspnormalform}
can actually be extended to a tubular neighborhood of the entire
orbit of the parabolic point.
The following result was first obtained in the analytic case by Bolsinov $\&$ Guglielmi $\&$ Kudryavtseva~\cite[Proposition 3.1]{Bol-parabolic}
and later extended to the smooth case by Kudryavtseva \& Martynchuk~\cite[Theorem 3.1]{KudMar-circle}.

\begin{proposition}[{Kudryavtseva \& Martynchuk~\cite[Theorem 3.1]{KudMar-circle}}]
Let $(M,\om,F)$ be a compact integrable system and let $p\in M$ be a parabolic point.
Let $\Lambda_p$ be the connected component of $F^{-1}(F(p))$ which contains $p$.
Then coordinates $(x,y,t,\theta)$ around $p$ from Proposition~\ref{prop:cuspnormalform} can be extended to a tubular
neighborhood of $\Lambda_p$, taking $\theta\in\R/2\pi\Z$, such that in these coordinates
the symplectic form is given by
\[
\om = A(x,y,t)\mathrm{d}x\wedge\mathrm{d}y + \mathrm{d}t\wedge\mathrm{d}\theta +B(x,y,t) \mathrm{d}t\wedge\mathrm{d}x + C(x,y,t) \mathrm{d}t\wedge\mathrm{d}y
\]
for some functions $A(x,y,t), B(x,y,t), C(x,y,t)$ such that $A(x,y,t)>0$.
\end{proposition}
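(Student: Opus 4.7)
The plan is to extend the local normal form at $p$ along the Hamiltonian flow of $J$ (equivalently of the first integral $t$, after applying the local diffeomorphism $g$ supplied by Proposition~\ref{prop:cuspnormalform}) around the singular fiber $\Lambda_p$, and then to exploit the special form of $F$ to determine the shape of $\om$. First I would use Proposition~\ref{prop:cuspnormalform} to fix coordinates $(x,y,t,\theta)$ on a neighborhood $U$ of $p$ in which $F$ takes the model form $(t, x^3+tx+y^2)$ (relabeling so that $g$ is the identity). By Corollary~\ref{cor:para-dJ}, $\mathrm{d}J(p)\neq 0$, and after an integral-affine change in the target, we may assume the first model integral agrees with $J$. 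In particular the Hamiltonian vector field $\mathcal{X}^t$ is nonvanishing along $\Lambda_p$ (since rank $1$ holds everywhere on $\Lambda_p$), and it is tangent to $\Lambda_p$ because $\{t, x^3+tx+y^2\}=0$.

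Second, I would propagate the coordinates $(x,y,\theta)$ from a small transverse slice $S_p \subset U$ at $p$ by flowing along $\mathcal{X}^t$. The $t$-coordinate is constant under this flow, while $\theta$ is taken to be the flow parameter. Because the other integral $H = x^3+tx+y^2$ and the $(x,y)$ variables are preserved setwise by the $t$-flow (as functions with vanishing Poisson brackets with $t$), the pushforward of $(x,y,t,\theta)$ gives well-defined coordinates on the saturation $\bigcup_{s\in\R} \varphi^t_s(U)$. The main issue here is periodicity: I would argue that the flow of $\mathcal{X}^t$ closes up on $\Lambda_p$ by combining compactness of $\Lambda_p$ (inherited from compactness of $M$) with the cuspidal-torus topology of the fiber (as described in Section~\ref{sec:para-reduction} and Efstathiou \& Giacobbe~\cite{EG-cusps}); on the smooth part the flow is free and proper, hence defines a locally free $\mbS^1$-action, and by rescaling $t \mapsto \alpha t$ the minimal period can be normalized to $2\pi$. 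This rescaling preserves the form of the polynomial normal form up to a harmless constant, which can be absorbed into $g$.

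Third, I would compute $\om$ in these coordinates. By construction $\mathcal{X}^t = \partial_\theta$ and $\iota_{\partial_\theta}\om = -\mathrm{d}t$; hence the coefficient of $\mathrm{d}t\wedge\mathrm{d}\theta$ equals $1$ and the only other terms involving $\mathrm{d}\theta$ must vanish (otherwise $\iota_{\partial_\theta}\om$ would have an $\mathrm{d}x$ or $\mathrm{d}y$ component). Thus
\[
\om = A\,\mathrm{d}x\wedge\mathrm{d}y + \mathrm{d}t\wedge\mathrm{d}\theta + B\,\mathrm{d}t\wedge\mathrm{d}x + C\,\mathrm{d}t\wedge\mathrm{d}y + D\,\mathrm{d}x\wedge\mathrm{d}\theta + E\,\mathrm{d}y\wedge\mathrm{d}\theta,
\]
and the contraction computation forces $D=E=0$. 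Moreover $\mbS^1$-invariance $\mathcal{L}_{\partial_\theta}\om=0$ shows that $A,B,C$ are independent of $\theta$. Finally, nondegeneracy of $\om$ on $TM$ combined with the structure above reduces to nondegeneracy of $A\,\mathrm{d}x\wedge\mathrm{d}y$ on the $(x,y)$-slice, hence $A\neq 0$ everywhere; by continuity and $A(0,0,0)>0$ at $p$ (which may be arranged by swapping $y\mapsto -y$ if needed, a symmetry of the normal form), we conclude $A>0$.

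The main obstacle is the second step, namely showing that the local flow parameter $\theta$ globalizes to a genuine $2\pi$-periodic coordinate on a tubular neighborhood of $\Lambda_p$. Away from the singular stratum of the fiber, $\mathcal{X}^t$ defines a free $\mbS^1$-action by a standard action-angle argument; the subtlety is at the singular points of $\Lambda_p$ themselves (the orbit through $p$, where the cuspidal direction degenerates), where one must check that the propagated coordinates patch smoothly across this orbit. This is exactly the content of Kudryavtseva \& Martynchuk's smooth extension~\cite{KudMar-circle} of the Bolsinov--Guglielmi--Kudryavtseva analytic argument, and it can be handled by verifying that the transverse slices identified via the $\mathcal{X}^t$-flow differ by a smooth symplectomorphism of the cuspidal model, which one pins down using the Moser trick adapted to the $\mbS^1$-equivariant setting.
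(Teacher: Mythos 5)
The paper does not prove this proposition: it is imported verbatim from Kudryavtseva \& Martynchuk~\cite[Theorem 3.1]{KudMar-circle} (with the analytic case due to Bolsinov, Guglielmi \& Kudryavtseva~\cite[Proposition 3.1]{Bol-parabolic}), so there is no in-paper argument to compare yours against. Judged on its own, your third step is correct and complete: once one has coordinates on a tubular neighborhood of $\Lambda_p$ in which $\theta\in\R/2\pi\Z$ is the time parameter of $\mathcal{X}^t$, the identity $\iota_{\partial_\theta}\om=-\mathrm{d}t$ kills the $\mathrm{d}x\wedge\mathrm{d}\theta$ and $\mathrm{d}y\wedge\mathrm{d}\theta$ terms and normalizes the $\mathrm{d}t\wedge\mathrm{d}\theta$ coefficient to $1$, Cartan's formula gives $\mathcal{L}_{\partial_\theta}\om=0$ so that $A,B,C$ are $\theta$-independent, and $\om\wedge\om=2A\,\mathrm{d}x\wedge\mathrm{d}y\wedge\mathrm{d}t\wedge\mathrm{d}\theta$ forces $A\neq 0$, with the sign fixed by an orientation flip. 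That is the easy half.

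The genuine gap is your second step, and it is not a technicality: it is essentially the entire content of the theorem. You need that, after renormalizing the first integral, the flow of $\mathcal{X}^t$ is $2\pi$-periodic on a whole tubular neighborhood of $\Lambda_p$ --- i.e.\ that there is a free Hamiltonian $\mbS^1$-action near the cuspidal orbit whose momentum is the first normal-form integral --- and that the transverse coordinates $(x,y)$ propagated by this flow close up with trivial return map, smoothly across the singular orbit. ``Free and proper, hence a locally free $\mbS^1$-action'' is not an argument: free and proper $\R$-flows need not be periodic. Compactness of the one-dimensional parabolic stratum does give periodicity of the single orbit through $p$, but you then need the period to extend smoothly and constantly to nearby orbits on both sides of the cusp (where the fiber topology changes), and you must eliminate the monodromy of the Poincar\'e return map on the slice; this is exactly where Kudryavtseva \& Martynchuk do their work, and your closing paragraph defers precisely this point back to their paper, which makes the proposal circular as a standalone proof. (In the setting where the present paper applies the proposition, $J$ generates a global $\mbS^1$-action and periodicity comes for free, but the proposition as stated assumes only a compact integrable system.)
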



There is a local Hamiltonian $\mbS^1$-action in a neighborhood of $p$ with Hamiltonian $J(x,y,t,\theta)=t$ 
and Hamiltonian vector
field $\frac{\partial}{\partial \theta}$, and if the original integrable system admits a global 
Hamiltonian $\mbS^1$-action it
must equal this one up to sign. Notice that this $\mbS^1$-action is free in a neighborhood
of the parabolic orbit.
In a neighborhood of the orbit of $p$, performing symplectic reduction with respect to this local
$\mbS^1$-action
at some level $t$ yields a disk with coordinates $(x,y)$ and symplectic
form $\om_t := A(x,y,t)\mathrm{d}x\wedge\mathrm{d}y$. 
On this disk the other Hamiltonian reduces to a function
\begin{equation}\label{eqn:birth-death}
 f_t(x,y) = x^3+tx+y^2.
\end{equation}
We can think of this as a family of functions on the disk parameterized by $t$.
The graph of $f_t$ and its level sets for various values of $t$ can be seen in
Figure~\ref{fig:cerf}.
Notice:
\begin{itemize}
 \item if $t<0$ then $f_t$ has two non-degenerate critical points (of index 1 and 0). The point $(x,y)=(\sqrt{-t/3},\ 0)$ is the index 1 critical point and it lies on the level set $f_t^{-1}\left(-2\left(\frac{-t}{3}\right)^{3/2}\right)$ which traces out a curve with a loop as sketched in Figure~\ref{fig:cerf} on the right. We call the region enclosed by this loop the \emph{teardrop region}.
 \item if $t>0$ then $f_t$ has no critical points;
 \item if $t=0$ then $f_t$ has exactly one critical point, which is degenerate.
\end{itemize}
This family parameterizes the process of two non-degenerate critical
points coming together and annihilating as $t$ increases, or being born as $t$ decreases,
and thus it is called a \emph{birth-death singularity}.
In fact, Equation~\eqref{eqn:birth-death} is the typical such bifurcation in Morse theory, cf.\ Cerf~\cite{Cerf}.

\begin{figure}
 \centering
 \includegraphics[width = 320pt]{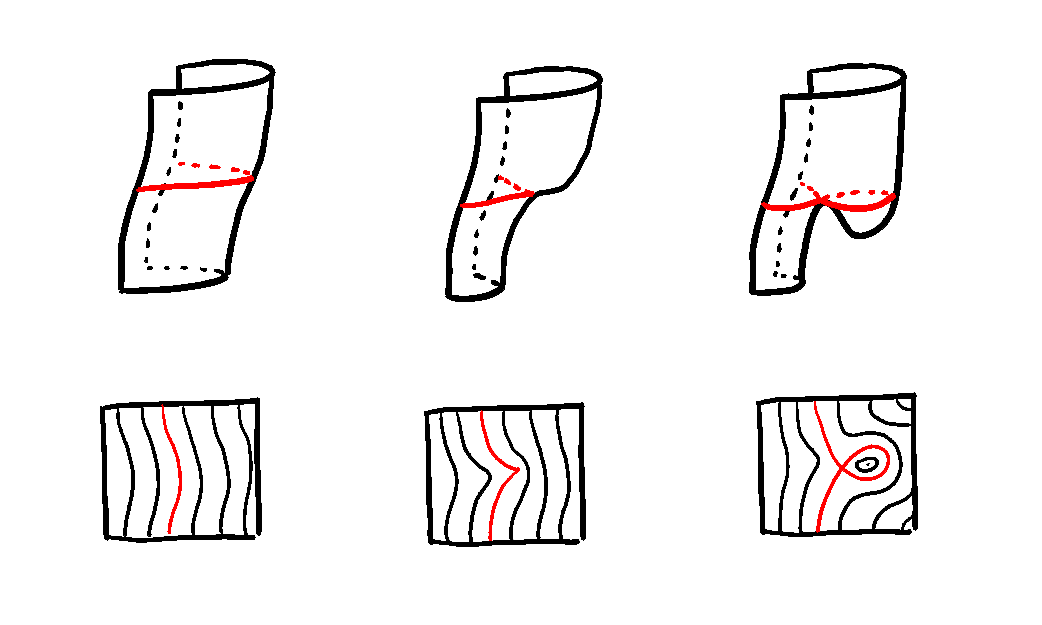}
 \caption{The function $f_t(x) = x^3+tx+y^2$ and its level sets for $t=1$ (left), $t=0$ (middle), and $t=-1$ (right).
 The \emph{teardrop region} is the region enclosed by the red curve in the right figure.}
 \label{fig:cerf}
\end{figure}

\begin{remark}
In an integrable system with a flap, this one-parameter family of functions on the reduced space at level $J^{-1}(j)$ produces a teardrop region which appears, grows, shrinks, and disappears as $j$ increases.
\end{remark}

%


\section{Examples of hypersemitoric systems}
\label{sec:examples}

This section focuses on intuitive but essential examples of hypersemitoric systems (cf.~Definition~\ref{def:hypersemitoric}) with which we
\begin{itemize}
 \item 
 explain intuitively a method how to obtain a hypersemitoric system from a semitoric system by means of replacing a focus-focus point in a semitoric system by a flap;
 \item
 give two explicit examples, Examples~\ref{ex:CP2_6blowups} and~\ref{ex:S2T2}, for the original motivation to introduce the class of hypersemitoric systems, namely being able to extend an effective Hamiltonian $\mbS^1$-action that cannot be extended to a semitoric system to the `next easiest and natural class of integrable system.' The $\mbS^1$-space in Example~\ref{ex:CP2_6blowups} does not extend to a semitoric system because of the presence of three $\Z_2$-spheres in a single level set of $J$, while the $\mbS^1$-space in Example~\ref{ex:S2T2} does not extend since the fixed surfaces have nonzero genus. We explicitly show how each of these examples can be extended to a hypersemitoric system.
\end{itemize}


One way to obtain examples of hypersemitoric systems is the following technique by Dullin \& Pelayo~\cite{dullin-pelayo}:

\begin{figure}
 \centering
 \begin{subfigure}[t]{.45\textwidth}
  \centering
  \input{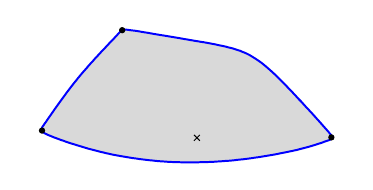_t}
  \caption{}
 \end{subfigure}\qquad
\begin{subfigure}[t]{.45\textwidth}
\centering
\input{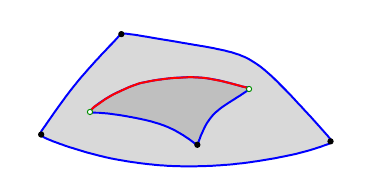_t}
\caption{}
\end{subfigure}
 \caption{Dullin $\&$ Pelayo~\cite{dullin-pelayo} consider a semitoric systems with a focus-focus singular point (sketched in (a)) and turn the focus-focus point into a flap (sketched in (b)). For details on the fibers of a flap, see also Figure~\ref{fig:flap}.}
 \label{fig:triangle}
\end{figure}

\begin{example}
\label{ex:dullin-pelayo}
Dullin \& Pelayo~\cite{dullin-pelayo} take a semitoric system $(M,\om,(J,H))$ that has at least one focus-focus point and then perturb $H$ near a focus-focus point $p\in M$ to produce a new integrable system in which $p$ is no longer focus-focus but instead is the elliptic-elliptic point of a flap, as sketched in Figure~\ref{fig:triangle}. More precisely, they define a function $G\colon M\to\R$ supported in a neighborhood of $p$ working in coordinates of the local normal form. 
Then they replace the original integrable system $(J,H)$ by $(J, \widetilde{H} = H + G)$ and obtain a new system with a flap as desired. Eventually, they give in \cite[Section 8]{dullin-pelayo} the following explicit example on $M=\mbS^2\times\R^2$ with $\omega$ being the product of the standard symplectic forms: Inducing coordinates $(x,y,z)$ on $\mbS^2$ via the inclusion $\mbS^2\subset\R^3$
and using coordinates $(u,v)$ on $\R^2$, they set
\begin{align*}
& J (x,y,z,u,v):= \frac{u^2+v^2}{2}+z,\\
& H (x,y,z,u,v):= \frac{xu+yu}{2}, \\
& G_\gamma (x,y,z,u,v):= \gamma z^2,
\end{align*}
where $0\leq \gamma \leq 1$. Note that they work in fact with a globally defined $G_\gamma$.
Define $\widetilde{H}_\gamma := H + G_\gamma$, i.e., one has $\widetilde{H}_0=H$.

Then $(M,\om,(J,H))$ is the so-called \emph{coupled spin oscillator} as in \vungoc\ \cite{VN2007},
and $(M,\om,(J,\widetilde{H}_\gamma))$ transitions from the coupled spin oscillator at $\gamma=0$ into a system  at $\gamma>\frac{1}{2}$ which has hyperbolic singularities.
\end{example}

Note that Dullin $\&$ Pelayo's~\cite{dullin-pelayo} technique does not change $J$ or $(M,\om)$ and thus does not change the underlying Hamiltonian $\mbS^1$-space.
Moreover, if the original system was a hypersemitoric system then this operation produces a hypersemitoric system, since the $\mbS^1$-action is preserved and the only new singular
points introduced are either non-degenerate or parabolic (see~\cite[Remark 6.4]{dullin-pelayo}).

There is also an example where a system first displays a flap and eventually a pleat:

\begin{example}[Le Floch $\&$ Palmer~\cite{LFPfamilies}]
\label{ex:LFP}
On the first Hirzebruch surface, Le Floch $\&$ Palmer~\cite[Section 6.6]{LFPfamilies} give an explicit example of a parameter-dependent integrable system $(J,H_t)$ with $0\leq t\leq 1$ which transitions from being toric for $t=0$, to having a flap of singular values for $t\approx \frac{1}{2}$, to having a pleat for $t\approx 1$.
\end{example}

\begin{example}
 Gullentops~\cite{yannick-thesis} gives explicit examples of integrable systems with hyperbolic singularities by perturbing a
 toric integrable system on $\mbS^2\times\mbS^2$ blown up four times (whose associated Delzant polygon is an octagon).
\end{example}

\begin{remark}
 Note that Dullin \& Pelayo's~\cite{dullin-pelayo} technique can produce many examples of hypersemitoric systems from semitoric systems, but not all hypersemitoric systems can be formed this way: For instance hypersemitoric systems can have fixed surfaces which are not spheres, in which case they could never come from a semitoric system via this technique.
\end{remark}

Now we give two examples that illustrate the idea of our proof of Theorem~\ref{thm:extending} in Section~\ref{sec:extending}. They focus on systems where the given or underlying Hamiltonian $\mbS^1$-spaces cannot be extended to semitoric systems, thus motivating the introduction of hypersemitoric systems.

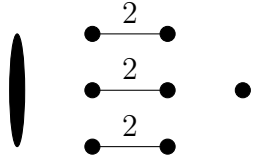
\begin{figure}[h]
\begin{tikzpicture}
 \draw[black,fill=black] (0,0) ellipse (.1cm and .75cm);	
 \draw (1,.75) -- (2,.75) node[midway,above]{2};
 \draw[black,fill=black] (1,.75) circle (.1cm);
 \draw[black,fill=black] (2,.75) circle (.1cm);
 \draw (1,0) -- (2,0) node[midway,above]{2};
 \draw[black,fill=black] (1,0) circle (.1cm);
 \draw[black,fill=black] (2,0) circle (.1cm);
 \draw (1,-.75) -- (2,-.75) node[midway,above]{2};
 \draw[black,fill=black] (1,-.75) circle (.1cm);
 \draw[black,fill=black] (2,-.75) circle (.1cm);
 \draw[black,fill=black] (3,0) circle (.1cm);
\end{tikzpicture}
 \caption{A Hamiltonian $\mbS^1$-space which cannot be obtained from an $\mbS^1$-action underlying a toric or semitoric system.}
 \label{fig:CP2-6blowups}
\end{figure}

\begin{example}
\label{ex:CP2_6blowups}
We will produce a Hamiltonian $\mbS^1$-space on $\CP^2$ blown up six times which cannot be extended
to a semitoric system but can be extended to a hypersemitoric system:
consider the Hamiltonian $\mbS^1$-space on $\mathrm{Bl}^5(\CP^2)$ described in Example~\ref{ex:CP2-45blowups} whose Karshon graph is shown in Figure~\ref{fig:CP2-5blowups-karshon}.
Now, using the notation from Example~\ref{ex:CP2-45blowups}, we perform an additional blowup of size $\lambda_2$ at the isolated fixed point which is not at the maximum value of $J$. This yields the Karshon graph in Figure~\ref{fig:CP2-6blowups} which has three edges at the same time passing through a level set $J^{-1}(j)$. In order to underly a semitoric system, each edge in the Karshon graph must be associated to a unique edge (or chain of edges) from the top or bottom boundary of the semitoric polygon (cf.\ Hohloch $\&$ Sabatini $\&$ Sepe~\cite{HSS}), so there cannot be three edges in the Karshon graph which intersect in a single level set $J^{-1}(j)$. Thus this system cannot be extended to a semitoric system.

We describe how to produce a hypersemitoric system to which this Hamiltonian $\mbS^1$-space can be extended. The idea of the procedure is sketched in Figure~\ref{fig:trick}.
Start with the semitoric system $(J,H)$ on $\mathrm{Bl}^5(\CP^2)$ and use the technique described in Example~\ref{ex:dullin-pelayo} around the unique focus-focus point to create a new integrable system $(J,\widetilde{H})$. The point $p\in\mathrm{Bl}^5(\CP^2)$ that was focus-focus in $(J,H)$ was turned into an elliptic-elliptic singular point with hyperbolic-regular points and two parabolic degenerate points nearby, i.e., the image of $(J,\widetilde{H})$ now contains a flap as sketched in Figure~\ref{fig:triangle}.
Notice that $(J,H)$ and $(J,\widetilde{H})$ have the same underlying Hamiltonian $\mbS^1$-space
since the manifold, symplectic form, and $J$ remain all unchanged.
Since $p$ is an elliptic-elliptic singular point of $(J,\widetilde{H})$ we
can perform a toric blowdown of size $\lambda_1$ on $p$ (assuming $\lambda_1$ is sufficiently small), 
producing a new integrable system on $\mathrm{Bl}^6(\CP^2)$ which is a hypersemitoric system and has the desired underlying Hamiltonian $\mbS^1$-space.

Notice that in the general case (i.e.~in the proof of Theorem~\ref{thm:extending}) one difficulty
that we must deal with is to make sure that the flap can be made sufficiently large to accommodate 
the required blowups. This is non-trivial and is the content of Proposition~\ref{prop:large-flap}.
\end{example}

\begin{figure}
\centering
\begin{subfigure}[t]{.40\textwidth}
\centering
\begin{tikzpicture}
\filldraw[thick, fill = white, color = white] (0,0) node[anchor = north,color = black]{}
  -- (0,3) node[anchor = south,color = black]{}
  -- (3,3) node[anchor = south,color = black]{}
  -- (3,0) node[anchor = north,color = black]{}
  -- cycle;
 \draw[black,fill=black] (0,1.5) ellipse (.15cm and 1.5cm);	
 \draw[black,fill=black] (3,1.5) ellipse (.15cm and 1.5cm);	
\draw (-0.2,1.5) node[anchor=east]{$g=1$};
\draw (3.2,1.5) node[anchor=west]{$g=1$}; 
\end{tikzpicture}
\caption{There are no $\Z_k$-spheres with $k>1$ in the Karshon graph and the existing fixed points occur in the fixed surfaces, which are both tori.}
\label{fig:S2T2-karshon}
\end{subfigure}\quad
\begin{subfigure}[t]{.56\textwidth}
\centering
\begin{tikzpicture}
\filldraw[thick, fill = gray!60] (0,0) node[anchor = north,color = black]{}
  -- (0,3) node[anchor = south,color = black]{}
  -- (3,3) node[anchor = south,color = black]{}
  -- (3,0) node[anchor = north,color = black]{}
  -- cycle;
\draw [thick] (0,2.25) -- (3,2.25);
\draw [thick] (0,.75) -- (3,.75);
\draw (0,.75) node[anchor=east]{HE};
\draw (0,2.25) node[anchor=east]{HE};
\draw (3,.75) node[anchor=west]{HE};
\draw (3,2.25) node[anchor=west]{HE};
\draw (1.5,.75) node[anchor=south]{HR};
\draw (1.5,2.25) node[anchor=south]{HR};
\draw (0,0) node[anchor=north east]{EE};
\draw (0,3) node[anchor=south east]{EE};
\draw (3,3) node[anchor=south west]{EE};
\draw (3,0) node[anchor=north west]{EE};
\draw[black,fill=black] (0,0) circle (.05cm);
\draw[black,fill=black] (0,.75) circle (.05cm);
\draw[black,fill=black] (0,2.25) circle (.05cm);
\draw[black,fill=black] (0,3) circle (.05cm);
\draw[black,fill=black] (3,0) circle (.05cm);
\draw[black,fill=black] (3,.75) circle (.05cm);
\draw[black,fill=black] (3,2.25) circle (.05cm);   
\draw[black,fill=black] (3,3) circle (.05cm);
\end{tikzpicture}
\caption{The momentum map image with labeling HR (hyperbolic-regular), HE (hyperbolic-elliptic), and EE (elliptic-elliptic). The boundary consists entirely of elliptic-regular points, except for the marked rank zero points.}
\label{fig:S2T2-image}
\end{subfigure}
\caption{The Karshon graph and momentum map image for the system from Example~\ref{ex:S2T2} defined on $\mbS^2\times\T^2$.}
\label{fig:S2T2}
\end{figure}
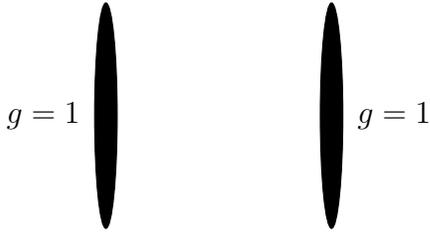
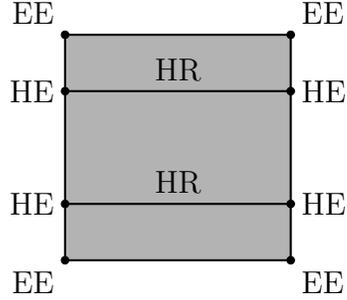

\begin{example}\label{ex:S2T2}
Consider the Hamiltonian $\mbS^1$-space given by $M=\mbS^2\times\T^2$ equipped with the direct sum of the usual symplectic forms and Hamiltonian $J$ given by the usual height function on the sphere. Thus, the $\mbS^1$-action rotates the sphere fixing the poles and does not effect the torus.
Then $M^{\mbS^1}$ is the disjoint union of two copies of $\T^2$. There are no $\Z_k$ spheres with $k>1$, so the Karshon graph is as in Figure~\ref{fig:S2T2-karshon}. 
In toric or semitoric systems the components of the fixed point set of the underlying $\mbS^1$-space are always isolated points or embedded spheres (cf.\ Hohloch $\&$ Sabatini $\&$ Sepe~\cite{HSS}), so this cannot be extended to a semitoric system. 
Consider the torus presented as the surface of revolution of the circle $x^2+(y-2)^2=1$ around the $x$-axis, and consider the function
$h(x,y,z)=z$ restricted to this surface. Then $h$ is the usual Morse function on $\T^2$, i.e.~the (unperturbed) height function when the torus is standing `on its end' which has four critical points: one of index 0, two of index 1, and one of index 2.
Let $\pi_{\T^2}\colon M\to\T^2$ be the projection and define $H := h \circ \pi_{\T^2}\colon M\to\R$.

Then, as already shown in Remark~\ref{rmk:morse}, $(M,\om,(J,H))$ is an integrable system with
no degenerate points.
By considering the index of the critical points of the Morse functions, we deduce that this integrable system has elliptic-elliptic, elliptic-regular,
hyperbolic-regular, and hyperbolic-elliptic points.
Since $J$ is proper and generates an effective $\mbS^1$-action $(J,H)$ is thus a hypersemitoric system. The image of $(J,H)$ with the types of points is shown in Figure~\ref{fig:S2T2-image}.
\end{example}


 \section{Properties of integrable systems with \texorpdfstring{$\mbS^1$}{S1}-actions}
\label{sec:properties}

In this section we will make use of the local normal form theorem (Theorem~\ref{thm:normal-form})
which implies that in a neighborhood $U$ of a rank 0 singular point $p$ of an integrable system $(M,\om,F=(J,H))$ there are 
coordinates $\psi\colon U\to\R^4$, with coordinate functions $\psi = (x,\xi,y,\eta)$, such that there are functions $f_1,f_2\colon U\to\R$ depending on the singularity type of $p$ satisfying $\{f_i,J\}=\{f_i,H\}=0$ for $i\in\{1,2\}$.
Note that the presence of hyperbolic blocks in the singularities prevents the general existence of a local diffeomorphism $g$ of $\R^2$ with $g\circ (J,H) = (f_1,f_2)$, cf.\ item~\eqref{item:g} of Theorem~\ref{thm:normal-form}. Since $J$ Poisson commutes with each $f_i$ (i.e.\ $f_1$ and $f_2$ are invariant under the flow of $\mathcal{X}^J$) the flow of $\mathcal{X}^J$ stays on the level sets of $(f_1,f_2)$ --- which will be sufficient for our proofs in the following.

\subsection{Systems with \texorpdfstring{$\mbS^1$}{S1}-actions}
\label{sec:properties_S1action}

In this section we prove some results about
 integrable systems in which one of the two integrals generates an $\mbS^1$-action.

The following proposition is probably well-known to experts,
but for the convenience of the reader we include a short proof here.
Item~\eqref{item:no_HH} also follows from the work of Zung~\cite{ZungI,ZungII}, in which he
classifies the local symmetries of non-degenerate singular points without depending on the local normal form theorem (Theorem~\ref{thm:normal-form}). There he notes in particular that hyperbolic-hyperbolic
singularities do not admit a local $\mbS^1$-action, see Zung~\cite[Theorem 6.1]{ZungI}.

\begin{proposition}\label{prop:HE-and-HH}
 Let $(M,\om,F=(J,H))$ be an integrable system such that $J$ generates an effective $\mbS^1$-action. 
 Then:
 \begin{enumerate}
  \item\label{item:HE_fixed} If $p\in M$ is a singular point of hyperbolic-elliptic type then $p$ is
   a non-isolated fixed point of the $\mathbb{S}^1$-action, and thus it lies in a fixed surface
   of the $\mbS^1$-action;
  \item\label{item:no_HH} $(M,\om,F)$ has no singular points of hyperbolic-hyperbolic type.
 \end{enumerate}
\end{proposition}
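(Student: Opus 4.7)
The plan is to analyze the linearized $\mbS^1$-action at the rank-zero singular point $p$ using the local normal form (Theorem~\ref{thm:normal-form}) and then exploit the periodicity of the $\mbS^1$-flow to constrain the hyperbolic directions. Theorem~\ref{thm:normal-form} provides local symplectic coordinates $(x_1,x_2,\xi_1,\xi_2)$ around $p$ and smooth functions $q_1,q_2$ satisfying $\{q_i,J\}=0$, where $q_1=x_1\xi_1$ in both cases, $q_2=(x_2^2+\xi_2^2)/2$ for item (1), and $q_2=x_2\xi_2$ for item (2). Because $\mathrm{d}J(p)=0$ and $\mathrm{d}q_i(p)=0$, the relations $\{q_i,J\}=0$ translate at the quadratic level to $\mathrm{Hess}_p(J)$ commuting with $\mathrm{Hess}_p(q_1),\mathrm{Hess}_p(q_2)$ in $\mathfrak{sp}(T_pM,\om_p)$. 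Non-degeneracy of $p$ guarantees that $\mathrm{Hess}_p(q_1),\mathrm{Hess}_p(q_2)$ span a Cartan subalgebra, whose centralizer in $\mathfrak{sp}(T_pM,\om_p)$ is itself, so
\[
\mathrm{Hess}_p(J) = a\,\mathrm{Hess}_p(q_1) + b\,\mathrm{Hess}_p(q_2)
\]
for some $a,b\in\R$.

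Next, the periodicity of the $\mbS^1$-action forces $\exp\bigl(2\pi\,\mathrm{lin}_p(\mathcal{X}^J)\bigr)=\mathrm{Id}$, so the eigenvalues of $\mathrm{lin}_p(\mathcal{X}^J)$ must lie in $\mathrm{i}\Z$. The linearized Hamiltonian vector field of a hyperbolic block $x\xi$ has real eigenvalues $\pm 1$, while that of an elliptic block $(x^2+\xi^2)/2$ has eigenvalues $\pm\mathrm{i}$. For item (2) the eigenvalues of $\mathrm{lin}_p(\mathcal{X}^J)$ are $\pm a,\pm b$, all real, so periodicity forces $a=b=0$ and hence $\mathrm{Hess}_p(J)=0$; this contradicts non-degeneracy, which requires $\mathrm{Hess}_p(J)$ and $\mathrm{Hess}_p(H)$ to be linearly independent (and in particular both nonzero) in the $2$-dimensional Cartan subalgebra, yielding item (2).

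For item (1), the real eigenvalues $\pm a$ force $a=0$, while the imaginary ones $\pm\mathrm{i}b$ force $b\in\Z$, with $b\neq 0$ by the same non-degeneracy argument. Thus the linearized $\mbS^1$-action on $T_pM$ is a nontrivial rotation of integer weight $b$ in the elliptic plane $(x_2,\xi_2)$ and acts trivially on the hyperbolic plane $(x_1,\xi_1)$, so its fixed subspace is $2$-dimensional. Invoking the standard fact (a consequence of Bochner's linearization theorem) that the fixed set of a smooth $\mbS^1$-action near a fixed point is a smooth submanifold whose tangent space equals the linearized fixed subspace, I obtain a $2$-dimensional submanifold $S$ of fixed points through $p$. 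The tangent space $T_pS$ is the $\om_p$-orthogonal complement of the symplectic rotation plane $(x_2,\xi_2)$ and is therefore itself symplectic, so $S$ is a symplectic fixed surface. Hence $p$ is non-isolated and lies in a fixed surface, proving item (1).

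The step I expect to be the main obstacle is passing from the linearized fixed subspace to an actual submanifold of fixed points in item (1): the eigenvalue bookkeeping is essentially routine, but confirming that the local geometric fixed set matches the linearized one requires invoking smooth linearization of the $\mbS^1$-action at the fixed point, which is an external input that needs to be cited precisely.
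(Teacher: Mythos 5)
Your proof is correct, but it takes a genuinely different route from the paper's. The paper argues dynamically with the nonlinear normal form: it picks explicit nearby points (e.g.\ $\psi^{-1}(0,0,Y,0)$ on the hyperbolic axes), observes that the $J$-flow is trapped in the one-dimensional common level set of the commuting normal-form functions, and concludes that a non-fixed such point would have an orbit limiting onto $p$, contradicting $2\pi$-periodicity; it then invokes Lemma~\ref{lem:karshon_fixedset} to rule out the resulting non-isolated fixed sets (a non-surface component for hyperbolic-hyperbolic, and membership in a fixed surface for hyperbolic-elliptic). You instead work entirely at the linearized level: writing $\mathrm{Hess}_p(J)=a\,\mathrm{Hess}_p(q_1)+b\,\mathrm{Hess}_p(q_2)$ via the self-centralizing property of the Cartan subalgebra and using that a $2\pi$-periodic linear flow has spectrum in $\mathrm{i}\Z$, which kills the hyperbolic coefficient(s). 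Both arguments are sound. Yours is arguably sharper and generalizes immediately (any hyperbolic block forces a zero weight, in any dimension), and it pinpoints the weights $(0,b)$ at a hyperbolic-elliptic point; the price is two external inputs you correctly flag — the fact that the centralizer of the Cartan subalgebra is itself (standard for the semisimple algebra $\mathfrak{sp}(4,\R)$, implicit in Williamson's theory as recalled in Bolsinov--Fomenko) and Bochner's linearization theorem to pass from the linearized fixed subspace to an actual fixed surface (alternatively, once you know the weights are $(0,b)$ you could cite Karshon's local model for fixed points, which the paper recalls in Section~\ref{sec:S1-classification}, to conclude that $p$ lies in a fixed surface). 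The paper's argument is more self-contained given Theorem~\ref{thm:normal-form} and Lemma~\ref{lem:karshon_fixedset}, but is a pointwise dynamical argument tailored to dimension four.
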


\begin{proof}
Keep in mind that $J\colon M\to\R$ generates an effective global $\mbS^1$-action and let $p\in M$ be a singular point of rank 0 of $(J,H)$. Then the local normal form theorem (cf.\ Theorem \ref{thm:normal-form}) implies that in a neighborhood $U$ of $p$ there exist symplectic coordinates 
  \[
   \psi=(x,\xi, y, \eta)\colon U\to\R^4
  \]
such that $\psi(p)=(0,0,0,0)$ and functions $f_1,f_2\colon U\to\R$ of a certain form (depending on the type of $p$) such that $\{J|_U,f_i\}=0$ for $i\in\{1,2\}$ in $U$.
Since $p$ is a rank zero singular point of $(J,H)$, in particular, we have $dJ(p)=0$ so $p$ is a fixed point of the $\mbS^1$-action.

\eqref{item:HE_fixed} If $p\in M$ is of hyperbolic-elliptic type then, up to reordering the integrals, we may work with the functions
  \[ f_1 = (x^2+\xi^2)/2\quad \textrm{and} \quad f_2= y\eta\]
  defined on $U$.
  Consider the point $\widetilde{p} = \psi^{-1}(0,0,Y,0)\in U$ for some sufficiently small $Y \in \R$ and let $\phi_t^J$ be the flow of $\mathcal{X}^J$ and $\phi^1_t$ the flow of $f_1$.
  Since $\phi_t^J$ preserves $f_1$ and $f_2$ it follows that $\phi^J_t(\widetilde{p}) \in \psi^{-1}\{(0,0,z,0)\mid z\in\R\}$ for small enough $t$. 
  Thus, if the action on $\widetilde{p}$ is non-trivial then either the forward or backward flow
  has to approach the fixed point $p$ contradicting the fact that the flow is
  periodic. Thus, for all sufficiently small $Y$ the point $\psi^{-1}(0,0,Y,0)$ is fixed by the
  $\mbS^1$-action so $p$ is not an isolated fixed point of the $\mbS^1$-action,
  and so by Lemma~\ref{lem:karshon_fixedset} it must lie on a fixed surface.

 \eqref{item:no_HH} If $p\in M$ is of hyperbolic-hyperbolic type then we may take
  \[ f_1 = x\xi \quad \textrm{and} \quad f_2 = y\eta.\]
Let $p'=\psi^{-1}(X,0,0,0)\in U$ for some sufficiently small $X$. Then 
\[
 \mathrm{d}J(p')\in\mathrm{span}\{\mathrm{d}f_1(p'),\mathrm{d}f_2(p')\}=\mathrm{span}\{\mathrm{d}\xi\}
\]
so $\mathcal{X}^J\in\mathrm{span}\{\partial_ x \}$.
If $p'$ is not fixed by the $\mbS^1$-action then either forward or backward flow of $\mathcal{X}^J$ must approach the fixed 
point $p$. This contradicts the fact that the flow of $\mathcal{X}^J$ is periodic, so $p'$ must be a fixed point
of the $\mbS^1$-action.
Thus, following similar reasoning we see that for all sufficiently small $X,\Xi,Y,P\in\R$ the points $\psi^{-1}(X,0,0,0)$, $\psi^{-1}(0,\Xi,0,0)$, 
$\psi^{-1}(0,0,Y,0)$, and $\psi^{-1}(0,0,0,P)$ are all fixed by the $\mbS^1$-action, so $p$ is not an isolated fixed point and
also the component of $M^{\mbS^1}$ containing $p$ is not a surface.
This contradicts Lemma~\ref{lem:karshon_fixedset}.
\end{proof}

Let $M^{\textrm{HR}},M^{\textrm{HE}},M^{\mathrm{D}}\subset M$ be the set
of hyperbolic-regular singular points, hyperbolic-elliptic singular points, 
and degenerate singular points, respectively.

\begin{lemma}\label{lem:no_vert_tangent}
 Let $(M,\om,(J,H))$ be an 
 integrable system such that $J$ generates an effective $\mbS^1$-action
 and let $C\subset M$ be a connected component of $M^{\mathrm{HR}}$.
 Then $F(C)\subset\R^2$ does not have any vertical tangencies.
\end{lemma}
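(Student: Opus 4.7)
The strategy is to translate the vertical-tangency condition at $p \in C$ into the statement that $p$ is a fixed point of the $\mathbb{S}^1$-action generated by $J$, and then to rule this out by comparing the linearized picture given by the local normal form with the constraint that $\mathcal{X}^J$ generates a periodic flow.

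Fix $p \in C$ and apply Theorem~\ref{thm:normal-form} to obtain symplectic coordinates $(x, \xi, y, \eta)$ on a neighborhood $U$ of $p$ (with $p=0$) together with local functions $q_1 = x\xi$ (hyperbolic block) and $q_2 = \eta$ (regular block) satisfying $\{q_i, J\} = \{q_i, H\} = 0$ for $i = 1, 2$. Locally $C \cap U = \{x = \xi = 0\}$, so $T_p C = \mathrm{span}(\partial_y, \partial_\eta)$. The relation $\{q_2, J\} = -\partial_y J \equiv 0$ on $U$ gives $\partial_y J \equiv 0$, and the relation $\{q_1, J\} = -x\partial_x J + \xi\partial_\xi J \equiv 0$ on $U$, after differentiating in $x$ (respectively in $\xi$) and evaluating at $p$, forces $\partial_x J(p) = \partial_\xi J(p) = 0$. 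Combining these computations with $\omega(\mathcal{X}^J, \cdot) = -dJ$ and $\omega = dx \wedge d\xi + dy \wedge d\eta$ yields
\[
  \mathcal{X}^J(p) = -\partial_\eta J(p)\,\partial_y, \qquad dF_p(\partial_y) = 0, \qquad dF_p(\partial_\eta) = \bigl(\partial_\eta J(p),\ \partial_\eta H(p)\bigr).
\]
By Lemma~\ref{lem:image_imm_curve} the latter vector is nonzero and spans the tangent line to the $1$-dimensional immersed submanifold $F(C)$ at $F(p)$, so $F(C)$ has a vertical tangency at $F(p)$ if and only if $\partial_\eta J(p) = 0$, which in turn is equivalent to $\mathcal{X}^J(p) = 0$, i.e.~to $p$ being a fixed point of the $\mathbb{S}^1$-action.

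Thus the proof is reduced to showing that a hyperbolic-regular point cannot be a fixed point of the action. Assuming the contrary, differentiating the two commutation relations a second time and evaluating at $p$ gives in addition $\partial_x^2 J(p) = \partial_\xi^2 J(p) = \partial_x\partial_\eta J(p) = \partial_\xi\partial_\eta J(p) = 0$; consequently the linearization $L$ of $\mathcal{X}^J$ at $p$ in the coordinates $(x,\xi,y,\eta)$ is block-diagonal with $(x,\xi)$-block $\mathrm{diag}(-\partial_x\partial_\xi J(p),\ \partial_x\partial_\xi J(p))$ and $(y,\eta)$-block of zero diagonal with the single off-diagonal entry $-\partial_\eta^2 J(p)$. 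Because $\mathcal{X}^J$ generates a $2\pi$-periodic flow, $\exp(2\pi L) = \mathrm{Id}$, which forces $L$ to be semisimple with all eigenvalues in $i\mathbb{Z}$. The real eigenvalues $\pm\partial_x\partial_\xi J(p)$ must therefore vanish, and semisimplicity of the remaining block forces $\partial_\eta^2 J(p) = 0$; altogether $L = 0$. Applying Bochner's equivariant linearization theorem, the $\mathbb{S}^1$-action must then be trivial on a neighborhood $V$ of $p$, so $\mathcal{X}^J \equiv 0$ on $V$; this contradicts the integrability requirement that $\mathcal{X}^J$ and $\mathcal{X}^H$ be linearly independent almost everywhere on $M$.

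I expect the main technical step to be the careful bookkeeping of the partial derivatives of $J$ at $p$ that are forced to vanish by iterating the commutation relations $\{q_i, J\} = 0$; apart from this, the only external input is Bochner's equivariant linearization theorem, applied in the standard way to pass from triviality of the linearization to triviality of the action on a neighborhood of $p$.
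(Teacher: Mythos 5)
Your proof is correct, but the decisive step is handled quite differently from the paper's. Both arguments first reduce a vertical tangency at $p$ to the statement that $\mathrm{d}J(p)=0$, i.e.\ that the hyperbolic-regular point $p$ would be fixed by the $\mbS^1$-action: the paper gets this from the Bolsinov--Fomenko formula for the tangent direction to $F(C)$, while you get it from the normal-form bookkeeping (and you could even sidestep the identification of $T_pC$ entirely, since the tangent line to $F(C)$ at $F(p)$ is contained in the one-dimensional image of $\mathrm{d}F_p$, which your computation shows is spanned by $(\partial_\eta J(p),\partial_\eta H(p))$). From there the paper invokes Lemma~\ref{lem:dJ-nonzero-interior}: a rank-one point with $\mathrm{d}J(p)=0$ is a non-isolated fixed point, hence lies on a fixed surface by Lemma~\ref{lem:karshon_fixedset}, hence $J(p)$ is extremal, contradicting the fact that hyperbolic-regular values lie in the interior of $F(M)$; this route is global and uses compactness of $M$. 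Your route is purely local: you show the isotropy representation of $\mbS^1$ at $p$ would have to be trivial (the Hessian bookkeeping forcing $L=0$ is correct, as is using $\exp(2\pi L)=\mathrm{Id}$ to kill both the real eigenvalues and the nilpotent part), and Bochner's theorem then trivializes the action on an open set, contradicting almost-everywhere independence of $\mathcal{X}^J$ and $\mathcal{X}^H$ (or effectiveness, given that $M$ is connected). What your approach buys is independence from compactness and from Karshon's structure theory, at the cost of importing Bochner's linearization theorem and of the identification $M^{\mathrm{HR}}\cap U=\{x=\xi=0\}$ in the normal-form chart, which is standard (it is implicit in the Bolsinov--Fomenko result behind Lemma~\ref{lem:image_imm_curve}) but deserves a sentence, since with hyperbolic blocks the normal form only provides the commutation relations $\{q_i,f_j\}=0$ and not a diffeomorphism $g$ with $g\circ F=(q_1,q_2)$.
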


\begin{proof}
 Suppose that $p\in M^{\mathrm{HR}}$ and let $C\subset M^{\mathrm{HR}}$ be
 the connected component of $M^{\mathrm{HR}}$ containing $p$.
 By Lemma~\ref{lem:image_imm_curve}, $F(C)\subset\R^2$ is a one-dimensional 
 immersed submanifold.
 Since $\mathrm{rank}(p)=1$ there are $a, b \in \R$ such that $b\mathcal{X}^J(p)-a\mathcal{X}^H(p)=0$. According to Bolsinov $\&$ Fomenko~\cite[Proposition 1.16]{BolFom}, the tangent vector to the curve $F(C)$ at $F(p)$ is then given by $(a,b)$. 
 Now assume that $F(C)$ has a vertical tangent in $p$. Then we must have $a=0$ and $b\neq 0$, and therefore $\mathcal{X}^J(p)=0$ and thus $\mathrm{d}J(p)=0$. 
Since $p$ is a rank 1 singular point, by Lemma~\ref{lem:dJ-nonzero-interior} we see that $\mathrm{d}J(p)=0$ implies that $F(p)\in \partial(F(M))$,
but the image of hyperbolic-regular points must lie in the interior of the momentum map image according to the local normal form in Theorem \ref{thm:normal-form}. Thus $F(C)$ cannot have a vertical tangent in $p$.
\end{proof}

We know that the image of a connected component of $M^{\mathrm{HR}}$ is a smooth immersed curve by Lemma~\ref{lem:image_imm_curve}, and Lemma~\ref{lem:no_vert_tangent} implies that in integrable systems $(M,\om,(J,H))$ such that $J$ generates an effective
$\mbS^1$-action it is actually embedded (and is a graph over $J$).
The following result is illustrated in Figure~\ref{fig:noloops}.

\begin{corollary}\label{cor:no-loops}
If $C\subset M$ is a connected component of $M^{\mathrm{HR}}$ then
$F(C)$ is homeomorphic to an open interval with distinct endpoints and each endpoint is either an element of $F(M^{\textrm{HE}})$ or $F(M^{\textrm{D}})$.
In particular,  $F(C)$ is not homeomorphic to a loop.
Furthermore, $F(C)$ is not a curve which connects the image of a fixed
surface back to itself.
\end{corollary}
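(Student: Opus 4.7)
The plan is to use the $\mbS^1$-symmetry to show that $F(C)$ is an injective graph over an open interval in the $J$-axis, whose endpoints are then identified by a compactness argument together with the local normal forms at the possible limit points.

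First I would show that the $\mbS^1$-action is locally free on $C$: the argument from the proof of Lemma~\ref{lem:no_vert_tangent} combined with Lemma~\ref{lem:dJ-nonzero-interior} gives $\mathrm{d}J(p)\neq 0$ for every $p\in C$. Hence $C$ is a smooth $2$-dimensional $\mbS^1$-invariant submanifold, and the orbit space $N:=C/\mbS^1$ is a connected $1$-dimensional manifold (or orbifold, which does not affect what follows). The $\mbS^1$-orbits are the fibers of $F|_C$ by Lemma~\ref{lem:image_imm_curve}, so $F|_C$ descends to an immersion $\tilde{F}\colon N\to\R^2$ with image $F(C)$. By Lemma~\ref{lem:no_vert_tangent} the composition $\pi_1\circ\tilde{F}$ with $\pi_1(j,h)=j$ is an immersion into $\R$. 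If $N\cong \mbS^1$, this would force a nowhere-critical smooth map from a compact $1$-manifold to $\R$, contradicting the extreme value theorem; hence $N\cong\R$ and $\pi_1\circ\tilde{F}$ is strictly monotonic. It follows that $F(C)$ is an injective graph over the open interval $J(C)=:(j_-,j_+)\subset J(M)$, so it is homeomorphic to an open interval and in particular is not a loop.

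Next I would identify the endpoints. Pick a sequence $p_n\in C$ with $J(p_n)\to j_+$; compactness of $M$ yields a subsequential limit $p^*\in\overline{C}\setminus C$ with $J(p^*)=j_+$. Applying Theorem~\ref{thm:normal-form} at $p^*$ rules out regular, elliptic-regular, elliptic-elliptic, and focus-focus points, whose local normal forms preclude nearby hyperbolic-regular points; hyperbolic-regular points would place $p^*\in C$ by local connectedness of $M^{\mathrm{HR}}$; and Proposition~\ref{prop:HE-and-HH} excludes hyperbolic-hyperbolic points. Thus $p^*\in M^{\mathrm{HE}}\cup M^{\mathrm{D}}$. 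Using the local normal form at a hyperbolic-elliptic point (Theorem~\ref{thm:normal-form}) or a parabolic point (Proposition~\ref{prop:cuspnormalform}), only one branch of hyperbolic-regular values can terminate at $F(p^*)$, so the limit as $j\to j_+$ exists and equals $F(p^*)\in F(M^{\mathrm{HE}})\cup F(M^{\mathrm{D}})$; the case $j\to j_-$ is symmetric. The two endpoints are distinct because $j_-<j_+$.

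For the final claim, by Lemma~\ref{lem:karshon_fixedset} every fixed surface has constant $J$-value equal to $j_{\min}$ or $j_{\max}$, so its image under $F$ lies in a single vertical line in $\R^2$. Since the two endpoints of $F(C)$ have different $J$-coordinates, they cannot both lie in the image of the same fixed surface. The hardest part of this plan is verifying uniqueness of the limit point: the graph $h(j)$ could in principle have several accumulation points as $j\to j_\pm$, and ruling this out requires unpacking the local model of the singular Lagrangian fibration near each possible limit type (hyperbolic-elliptic or parabolic) to confirm that the family of hyperbolic-regular points terminates cleanly.
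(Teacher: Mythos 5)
Your argument is correct and follows essentially the same route as the paper's proof: Lemma~\ref{lem:no_vert_tangent} makes $F(C)$ an injective graph over $J$ (hence an embedded interval, not a loop), the local normal forms together with Proposition~\ref{prop:HE-and-HH} force the limit points to be hyperbolic-elliptic or degenerate, and the injectivity of $\pi_1$ on $F(C)$ rules out returning to the same fixed surface. The one caveat is that your appeal to Proposition~\ref{prop:cuspnormalform} at a degenerate endpoint presupposes that the degenerate point is parabolic, which is guaranteed for hypersemitoric systems but not for the general $\mbS^1$-invariant systems the corollary covers; this only affects the ``clean termination'' refinement you flag at the end, which the stated conclusions do not actually require and which the paper's own proof likewise leaves implicit.
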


\begin{proof}
By Lemma~\ref{lem:no_vert_tangent}, $F(C)$ does not have any vertical tangent so it cannot 
include a self-intersection and cannot form a loop. Thus, $F(C)$ is homeomorphic to an interval.
Due to the local normal forms result, Theorem~\ref{thm:normal-form}, given any point $p\in M$
which is regular or singular of type elliptic-elliptic, elliptic-regular, or focus-focus, there exists
a neighborhood of $p$ which does not include any hyperbolic-regular points.
Thus, the end points of the interval $F(C)$ must be the image of the only other possible points
in $M$, either hyperbolic-elliptic or degenerate.
Finally, $\pi_1\colon F(C)\to J(C)$ is injective by Lemma~\ref{lem:no_vert_tangent}, 
so $F(C)$ cannot connect the image of a fixed surface back to itself (as in Figure~\ref{fig:noloopsb}).
\end{proof}

\begin{figure}
\centering
\begin{subfigure}[t]{.30\textwidth}
\centering
\input{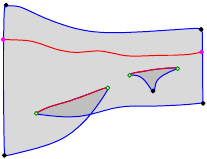_t}
\caption{Possible.}
\end{subfigure}\quad
\begin{subfigure}[t]{.30\textwidth}
\centering
\input{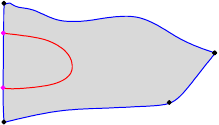_t}
\caption{Impossible.}
\label{fig:noloopsb}
\end{subfigure}\quad
\begin{subfigure}[t]{.30\textwidth}
\centering
\input{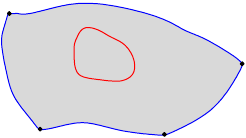_t}
\caption{Impossible.}
\end{subfigure}
 \caption{
The nature of the relationship between the existence of hyperbolic-regular singularities and 
degenerate singularities in the presence of a global $\mbS^1$-action has already been considered
by Dullin $\&$ Pelayo in early versions of~\cite{dullin-pelayo}.  
 (a) is a possible momentum map image of an integrable system with a global $\mbS^1$-action. This example includes a flap, a pleat, and a curve of hyperbolic-regular points whose two endpoints  are both hyperbolic-elliptic points; we have seen such behaviors in the systems from Section~\ref{sec:examples}.
 (b) is not possible for an integrable system with a global $\mbS^1$-action since a family of hyperbolic-regular points cannot connect a fixed surface to itself, cf.\ Corollary \ref{cor:no-loops}. (c) is not possible for an integrable system with a global $\mbS^1$-action since such a system cannot have a loop of hyperbolic-regular values, cf.\ Corollary \ref{cor:no-loops}.}
 \label{fig:noloops}
\end{figure}

\begin{corollary}\label{cor:forced-degen}
 Let $(M,\om,(J,H))$ be an integrable system such that $J$ generates an effective $\mbS^1$-action.
 If the $\mathbb{S}^1$-action has strictly less than two fixed surfaces and $(J,H)$ has a singularity of hyperbolic-regular type, 
 then $(J,H)$ has at least one degenerate singular point. 
\end{corollary}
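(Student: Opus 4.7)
The plan is to argue by contradiction, leveraging the structural results we have already established in Corollary~\ref{cor:no-loops} and Proposition~\ref{prop:HE-and-HH}. Suppose that $(J,H)$ has no degenerate singular points. Let $p \in M$ be a hyperbolic-regular singular point (which exists by hypothesis) and let $C \subset M^{\mathrm{HR}}$ be the connected component containing $p$. By Corollary~\ref{cor:no-loops}, $F(C)$ is homeomorphic to an open interval with two distinct endpoints, each of which lies in $F(M^{\mathrm{HE}}) \cup F(M^{\mathrm{D}})$.

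Since we are assuming that there are no degenerate singular points, $M^{\mathrm{D}} = \emptyset$, so both endpoints of $F(C)$ must lie in $F(M^{\mathrm{HE}})$. That is, there exist hyperbolic-elliptic singular points $q_1, q_2 \in M$ such that $F(q_1)$ and $F(q_2)$ are the two endpoints of $F(C)$. By part~\eqref{item:HE_fixed} of Proposition~\ref{prop:HE-and-HH}, each $q_i$ lies in a fixed surface $\Sigma_i$ of the $\mbS^1$-action.

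The key step is now to deduce that $\Sigma_1 \neq \Sigma_2$, so that the system admits at least two fixed surfaces. If, instead, $\Sigma_1 = \Sigma_2 =: \Sigma$, then both endpoints of $F(C)$ would lie in $F(\Sigma)$, meaning that $F(C)$ would connect the image of a fixed surface back to itself. This is explicitly ruled out by the final statement of Corollary~\ref{cor:no-loops} (which in turn follows from Lemma~\ref{lem:no_vert_tangent}, since $F(\Sigma)$ lies in a single vertical line $J = \mathrm{const}$ and $F(C)$ has no vertical tangencies). Therefore $\Sigma_1$ and $\Sigma_2$ are distinct fixed surfaces of the $\mbS^1$-action, contradicting the hypothesis that there are strictly fewer than two fixed surfaces.

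I do not anticipate any real obstacle here: the corollary is essentially a direct application of the structural results on hyperbolic-regular families and hyperbolic-elliptic points that have just been proved, and the only thing to check carefully is that ``no degenerate points'' together with the endpoint description of Corollary~\ref{cor:no-loops} really forces both endpoints to be hyperbolic-elliptic (as opposed to, say, a limit that escapes to the boundary of $M$), which is automatic since $F(C)$ is described there as having two \emph{distinct endpoints} both lying in $F(M^{\mathrm{HE}}) \cup F(M^{\mathrm{D}})$.
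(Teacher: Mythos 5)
Your proof is correct and follows essentially the same route as the paper: both use Corollary~\ref{cor:no-loops} to identify the two endpoints of the hyperbolic-regular family as images of hyperbolic-elliptic or degenerate points, Proposition~\ref{prop:HE-and-HH} to place hyperbolic-elliptic points on fixed surfaces, and the "no return to the same fixed surface" part of Corollary~\ref{cor:no-loops} to conclude that two distinct fixed surfaces would be needed. The only difference is cosmetic — you phrase it as a contradiction while the paper argues directly.
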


\begin{proof}
 Recall that hyperbolic-regular values come in one-parameter families.
 By Corollary~\ref{cor:no-loops} this family terminates in two points, each of which are either the image of a degenerate or hyperbolic-elliptic point.
 By Lemma~\ref{prop:HE-and-HH}, hyperbolic-elliptic points always lie in a fixed surface. Thus, if there is at most one fixed surfaces, not more than one end point can be the image of hyperbolic-elliptic points since a single family of hyperbolic-regular points cannot connect the same fixed surface to itself, cf.\ Corollary~\ref{cor:no-loops}.
\end{proof}

Corollary~\ref{cor:forced-degen} is the reason why we need to allow certain degenerate singular points when we want to extend all possible Hamiltonian $\mbS^1$-spaces.
In particular, any $\mathbb{S}^1$-action which has less than two fixed surfaces and cannot be extended to a semitoric system must have at least one degenerate point in any extension:
 
 \begin{corollary}\label{cor:forced-degen-2ndversion}
  Let $(M,\om,J)$ be a Hamiltonian $\mbS^1$-space which has strictly less than two fixed surfaces and has three or more $\Z_k$-spheres 
  passing through a single level set of $J$.
  If $H\colon M\to\R$ is such that $(M,\om,(J,H))$ is an integrable system, then $(M,\om,(J,H))$
 has at least one degenerate singular point.
 \end{corollary}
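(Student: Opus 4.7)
I propose to argue by contradiction: suppose that $(M,\om,(J,H))$ has no degenerate singular points, and split into two cases according to whether $(J,H)$ admits a hyperbolic-regular singular point.

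If $(J,H)$ has at least one hyperbolic-regular point, then Corollary~\ref{cor:forced-degen} applies directly: since $(M,\om,J)$ has strictly fewer than two fixed surfaces by hypothesis, $(J,H)$ must have a degenerate singular point, contradicting the assumption.

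Otherwise, $(J,H)$ has no hyperbolic-regular points. Proposition~\ref{prop:HE-and-HH}\eqref{item:no_HH} rules out hyperbolic-hyperbolic points, and any hyperbolic-elliptic point would be approached by hyperbolic-regular points: in the local normal form $q_1 = x_1\xi_1$, $q_2 = (x_2^2 + \xi_2^2)/2$ of Theorem~\ref{thm:normal-form}, any point with $x_1 = \xi_1 = 0$ and $(x_2,\xi_2)\neq 0$ is hyperbolic-regular. Hence $(J,H)$ also has no hyperbolic-elliptic points, and combined with the absence of degenerate points, every singular point of $(J,H)$ is non-degenerate without a hyperbolic block. Thus $(M,\om,(J,H))$ is a semitoric system in the sense of Definition~\ref{def:semitoric}.

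I would then apply item~\ref{item:combinedHSSS} from the introduction (the Hohloch $\&$ Sabatini $\&$ Sepe $\&$ Symington~\cite{HSSS2} characterization): since $(M,\om,J)$ admits a semitoric extension, every non-extremal level set of $J$ must contain at most two non-free, non-fixed orbits of the $\mbS^1$-action. The three $\Z_k$-spheres (with $k > 1$) through a common level $J^{-1}(j)$ intersect $J^{-1}(j)$ in three distinct orbits, each having isotropy $\Z_k$ (so non-free and not a fixed point). Since $J$ restricted to each $\Z_k$-sphere attains its minimum and maximum exactly at the two poles, such a non-polar orbit exists only for $j$ strictly between the pole values of that sphere; in particular $j$ lies strictly between the minimum and maximum values of $J$, so $J^{-1}(j)$ is non-extremal. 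This yields three non-free, non-fixed orbits in a non-extremal level set of $J$, the desired contradiction. I expect the semitoric case to be the main obstacle: the delicate points are to simultaneously exclude hyperbolic-elliptic singularities via the local normal form and to verify that $J^{-1}(j)$ is genuinely non-extremal.
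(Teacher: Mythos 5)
Your argument is correct and is precisely the one the paper intends: the corollary is stated there without a separate proof, as an instance of the preceding observation that an $\mbS^1$-space with fewer than two fixed surfaces which cannot be extended to a semitoric system (here because the three $\Z_k$-spheres in one level set violate the Hohloch--Sabatini--Sepe--Symington criterion of item~\ref{item:combinedHSSS}) must acquire a degenerate point in any extension, via Corollary~\ref{cor:forced-degen} and Proposition~\ref{prop:HE-and-HH}. Your extra care in excluding hyperbolic-elliptic points (by exhibiting nearby hyperbolic-regular points from the local normal form) and in verifying that the relevant level set is non-extremal simply fills in details the paper leaves implicit.
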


In particular, this implies that the Hamiltonian $\mbS^1$-space on $\CP^2$ blown up six times from
Example~\ref{ex:CP2_6blowups}, sketched in Figure~\ref{fig:CP2-6blowups}, cannot be extended to an integrable system with no degenerate points.
This justifies why we need to allow degenerate points in hypersemitoric systems, since our
goal is to show that all Hamiltonian $\mbS^1$-spaces can be extended to such a system.

\subsection{Fibers of hypersemitoric systems and isotropy}
\label{sec:fibers_isotropy}
In this section we will discuss how the topology of the fibers which contain hyperbolic-regular
points is related to the isotropy of the $\mbS^1$-action in certain cases.
A common fiber which contains hyperbolic-regular points is the \emph{double torus}, which
is homeomorphic to two tori glued along an $\mbS^1$, 
or, equivalently, to a figure eight (i.e.~an immersion
of $\mbS^1$ with a single transverse self-intersection) crossed with $\mbS^1$, as in Figure~\ref{fig:double-torus}.
This fiber occurs in every example in Section~\ref{sec:examples}, and in particular
this is the type of fiber that contains the hyperbolic-regular points produced
by the technique of Dullin \& Pelayo~\cite{dullin-pelayo}.

The fibers of integrable systems for which one of the integrals
generates an $\mbS^1$-action can be more complicated than this, though.
Another possibility, for instance, is the \emph{curled torus}, see Figure~\ref{fig:curled-torus}.
A curled torus fiber is homeomorphic to a figure eight crossed with the interval
$[0,1]$ modulo the relation $(x,0)\sim(\phi(x),1)$,
where $\phi$ is a map from the figure eight to itself which switches the top
and bottom teardrops (for instance rotation by $\pi$). 

\begin{figure}
\centering
\begin{subfigure}[t]{.30\textwidth}
\centering
\input{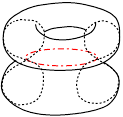_t}
\caption{The double torus.}
\label{fig:double-torus}
\end{subfigure}\quad
\begin{subfigure}[t]{.30\textwidth}
\centering
\input{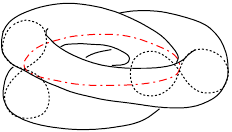_t}
\caption{The curled torus.}
\label{fig:curled-torus}
\end{subfigure}\quad
\begin{subfigure}[t]{.30\textwidth}
\centering
\input{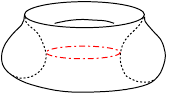_t}
\caption{The cuspidal torus.}
 \label{fig:cusp-torus}
\end{subfigure}
\caption{Some fibers which include hyperbolic-regular singular points.}
\label{fig:hyp-fibers}
\end{figure}

\begin{proposition}
 Let $(M,\om,F=(J,H))$ be an 
 integrable system such that $J$ generates an effective $\mbS^1$-action 
 and
 consider a fiber $F^{-1}(c)$. Then
 \begin{enumerate}
  \item if $F^{-1}(c)$ is a double torus then the $\mbS^1$-action generated by $J$ acts freely
   on $F^{-1}(c)$;
  \item if $F^{-1}(c)$ is a curled torus then the $\mbS^1$-action generated by $J$ acts freely
   on the regular points of $F^{-1}(c)$ and acts with isotropy subgroup $\Z \slash 2 \Z$ on the
   hyperbolic-regular points of $F^{-1}(c)$.
 \end{enumerate}
\end{proposition}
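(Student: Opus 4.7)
The plan is to localize to a neighborhood of the singular circle $C\subset F^{-1}(c)$ consisting of hyperbolic-regular points, exhibit this neighborhood as an X-shape bundle over $C$, and read off the $\mbS^1$-stabilizer from the monodromy of this bundle.

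First I would show that $C$ is a single $\mbS^1$-orbit with uniform isotropy $\Z/k$ for some $k\geq 1$. Near a point $p\in C$ the local normal form (Theorem~\ref{thm:normal-form}) provides coordinates $(x,\xi,y,\eta)$ with $q_1=x\xi$ and $q_2=\eta$; the Poisson-commutation relations $\{J,q_1\}=\{J,q_2\}=0$ force $J=G(x\xi,\eta)$ for some smooth $G$, so
\[
\mathcal{X}^J \;=\; G_u\bigl(x\,\partial_x-\xi\,\partial_\xi\bigr)\;+\;G_v\,\partial_y.
\]
Restricted to $C=\{x=\xi=0\}$ this becomes $G_v(0,\eta)\,\partial_y$, which is non-zero by Lemma~\ref{lem:dJ-nonzero-interior} (the image of a hyperbolic-regular point lies in the interior of $F(M)$, so $\mathrm{d}J(p)\neq 0$). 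Hence $\mathcal{X}^J$ is nowhere vanishing and everywhere tangent to the compact connected $1$-manifold $C$, so $C$ is a single closed $\mbS^1$-orbit with constant isotropy $\Z/k$.

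Second, I would determine $k$ from the global topology of the fibre. Locally near $p$ the fibre equals $P_1\cup P_2$ with $P_1=\{x=0\}$, $P_2=\{\xi=0\}$, two smooth planes meeting transversely along $C$, so the transverse slice to $C$ is an X-shape with two sheets. An $\mbS^1$-invariant tubular neighborhood of $C$ in $F^{-1}(c)$ is a bundle over $C$ with this X-shape fibre, whose monodromy $\mu$ coincides with the time-$(2\pi/k)$ action of $\mbS^1$ on the transverse slice; in particular $\mu^k=\mathrm{id}$. The topology of $F^{-1}(c)\setminus C$ then pins down $\mu$. For a double torus, $F^{-1}(c)\setminus C$ is the disjoint union of two cylinders and the two local sheets extend into distinct components, so $\mu$ preserves each sheet; since parallel transport around $C$ inside a cylinder $C\times(0,1)$ preserves the transverse interval coordinate (recall the two rays of each local sheet correspond to approaching $C$ from the two distinct boundary circles of the cylinder), we conclude $\mu=\mathrm{id}$ and $k=1$. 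For a curled torus, $F^{-1}(c)\setminus C$ is a single cylinder and the defining identification $(x,0)\sim(\phi(x),1)$ with $\phi$ swapping the two loops shows that going once around $C$ exchanges the two local sheets, so $\mu$ has order $2$ and $k=2$.

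Finally, I would propagate the isotropy conclusion off $C$. The set of points whose isotropy contains any given subgroup of $\mbS^1$ is closed, so every orbit-type stratum adjacent to $C$ has isotropy contained in $\Z/k$. In the double torus case, $k=1$ immediately forces trivial isotropy throughout $F^{-1}(c)$. In the curled torus case, the only candidate non-trivial element is $\pi\in\mbS^1$, whose action on the transverse slice equals $\mu$; since $\mu$ swaps sheets, $\pi$ cannot fix any regular point $q$, and connectedness of the regular part of a curled torus then gives trivial isotropy on all regular points. The step I expect to be the main obstacle is the second one: rigorously producing an $\mbS^1$-equivariant tubular neighborhood of $C$ inside the \emph{singular} surface $F^{-1}(c)$ and matching its abstract monodromy with the loop-swap $\phi$ of the curled torus definition, which will likely require a slice-theorem argument tailored to the non-manifold points along $C$, together with a careful comparison between the local normal form bundle structure and the global gluing used to build the double and curled torus fibres.
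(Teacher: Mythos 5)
Your overall strategy --- read the isotropy of the singular circle $C$ off the first-return map of the $\mbS^1$-flow on a transverse X-slice, then compare with nearby free orbits --- is essentially the paper's one-paragraph argument (the period of $\mathcal{X}^J$ at a hyperbolic-regular point of a double torus agrees with that of nearby points, is half of it for a curled torus, and free orbits are dense). However, the step that extracts $k$ from the monodromy is genuinely incomplete in both cases. In the double-torus case you argue ``the two sheets go to distinct components, so $\mu$ preserves each sheet, hence $\mu=\mathrm{id}$ and $k=1$''; but the curled-torus monodromy \emph{also} preserves each smooth local sheet setwise (rotation by $\pi$ maps each branch of the figure eight to itself while swapping its two rays --- that is exactly what exchanges the two teardrops), so sheet-preservation alone distinguishes nothing. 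The correct invariant is whether $\mu$ fixes each of the four rays individually, equivalently whether $F^{-1}(c)\setminus C$ has four or two ends accumulating on $C$; your parenthetical about the two boundary circles of a cylinder shows you have this picture, but it needs to be the actual argument. More seriously, even with the ray-permutation pinned down, you have only shown that its order divides $k$: nothing yet excludes $k=2$ for the double torus or $k=4,6,\dots$ for the curled torus. To close this you must note that a finite-order orientation-preserving diffeomorphism of a half-open interval is the identity, so a larger $k$ would force some nontrivial element of $\Z/k$ to fix a two-dimensional neighborhood of $C$ inside the fiber pointwise --- and then rule \emph{that} out. Density of free orbits in $M$ does not suffice by itself, since non-free points are permitted to form two-dimensional sets ($\Z_m$-spheres, fixed surfaces); one needs, e.g., that isotropy strata of a Hamiltonian $\mbS^1$-action are symplectic and hence cannot contain an open piece of a Lagrangian fiber, or the paper's shortcut of comparing periods on a full open neighborhood in $M$ rather than only within the fiber.

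A smaller but real issue: the identity $J=G(x\xi,\eta)$ is precisely what the paper warns cannot be assumed in the presence of hyperbolic blocks (item (2) of Theorem~\ref{thm:normal-form} requires no hyperbolic component, and smooth invariants of the hyperbolic flow are not all functions of $x\xi$). You do not need it: the weaker fact that the flow of $\mathcal{X}^J$ preserves the joint level sets of $(q_1,q_2)$, hence preserves $C$, together with Lemma~\ref{lem:dJ-nonzero-interior} to see $\mathcal{X}^J\neq 0$ there, already gives that $C$ is a single periodic orbit.
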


\begin{proof}
First suppose that $F^{-1}(c)$ is a double torus fiber.
Using the local normal form around any of the hyperbolic-regular points we can see that the period of the flow
of $\mathcal{X}^J$ is equal for all points in a neighborhood of the hyperbolic-regular point in question. Similarly,
the flow of a hyperbolic-regular point in a curled torus fiber will have exactly half of the period 
of the flow of the regular points in an open set around that point. Since an effective Hamiltonian $\mbS^1$-action is free
on a dense set this proves the claim.
\end{proof}


More complicated fibers are also possible.
See Gullentops~\cite{yannick-thesis} and the references therein for a detailed discussion of the possible
fibers in integrable systems with hyperbolic-regular singularities. 
In the following Example \ref{ex:Efst}
there are fibers which contain multiple curves of hyperbolic-regular points.
Note that the system also includes degenerate points which
are not of parabolic type, and thus this system is 
not a hypersemitoric system.


\begin{example}[Martynchuk \& Efstathiou~\cite{Mart-Efst-seifert}, Section 4.2]
\label{ex:Efst}
 Consider $M=\mbS^2\times\mbS^2$ with coordinates $((x_1,y_1,z_1),(x_2,y_2,z_2))$ obtained from inclusion
 as the product of unit spheres in $\R^3\times\R^3$, the standard product symplectic form $\om = \om_{\mbS^2}\oplus\om_{\mbS^2}$, and the integrable system
 $F=(J,H)\colon \mbS^2\times\mbS^2\to\R^2$ given by
 \[
  J = z_1 + 2 z_2, \qquad H = \mathrm{Re}\left( (x_1+\mathrm{i}y_1)^2 (x_2-\mathrm{i}y_2) \right).
 \]
 Let $N=(0,0,1)$ denote the north pole and $S=(0,0,-1)$ denote the south pole of each sphere.
 $J$ generates an effective $\mbS^1$-action with four fixed points given by
 \[
  A = (S,S), \quad B = (N,S), \quad C = (S,N), \quad \textrm{ and } \quad D = (N,N),
 \]
 and two $\Z_2$-spheres: one connecting $A$ to $C$, consisting of points of the form $(S,(x_2,y_2,z_2))$, and
 one connecting $B$ to $D$, consisting of points of the form $(N, (x_2,y_2,z_2))$. Thus the Karshon
 graph of $(M,\om,J)$ is as shown in Figure~\ref{fig:Efst-Karshon}.
 The integrable system $(M,\om,(J,H))$ is not a hypersemitoric system since the points $A$ and $D$ are degenerate points
 that are not parabolic since parabolic points are always in the interior of the momentum map. The image of $F$ is shown in Figure~\ref{fig:Efst-image}.
 Given any $c\in\R^2$ on the open interval connecting $F(A)$ to $F(B)$ or $F(C)$ to $F(D)$ the fiber $F^{-1}(c)$
 is a curled torus, and given any $c\in\R^2$ on the open interval connecting $F(B)$ to $F(C)$ the 
 fiber $F^{-1}(c)$ is two curled torus fibers glued along a regular orbit. 
 Comparing Figure~\ref{fig:Efst-Karshon} to Figure~\ref{fig:Efst-image} we see that the two $\Z_2$-spheres get mapped
 to the same points in the image in the region between $F(B)$ and $F(C)$.
\end{example}

\begin{figure}
\begin{subfigure}[t]{.48\textwidth}
\centering\centering\raisebox{20pt}{
\begin{tikzpicture}[scale=.8]
 \draw (-3,.7) -- (1,.7) node[midway,above]{2};
 \draw[black,fill=black] (-3,.7) circle (.1cm);
 \draw[black,fill=black] (1,.7) circle (.1cm);
 \draw (-1,-.7) -- (3,-.7) node[midway,above]{2};
 \draw[black,fill=black] (-1,-.7) circle (.1cm);
 \draw[black,fill=black] (3,-.7) circle (.1cm);
\end{tikzpicture}}
\caption{The Karshon graph.}
\label{fig:Efst-Karshon}
\end{subfigure}
\begin{subfigure}[t]{.48\textwidth}
\centering
\begin{tikzpicture}[scale=.8]
 \draw[bend left=60, fill = gray!40] (-3,0) to (3,0);
 \draw[bend left=60, fill = gray!40] (3,0) to (-3,0);
 \draw[black,fill=black] (-3,0) circle (.1cm) node[left]{F(A)};
 \draw (-3,0) -- (-1,0);
 \draw[black,fill=black] (-1,0) circle (.1cm) node[above]{F(B)};
 \draw (-1,0) -- (1,0);
 \draw[black,fill=black] (1,0) circle (.1cm) node[above]{F(C)};
 \draw (1,0) -- (3,0);
 \draw[black,fill=black] (3,0) circle (.1cm) node[right]{F(D)};
\end{tikzpicture}
\caption{A sketch of the image $F(M)$.}
\label{fig:Efst-image}
\end{subfigure}\quad
\caption{The Karshon graph and momentum map image for the system described
in Example~\ref{ex:Efst}. In the momentum map image, the images of the fixed points
are indicated with dots, and the curves inside of the interior of the image
are the image of hyperbolic-regular points.}
\label{fig:Efst-example}
\end{figure}
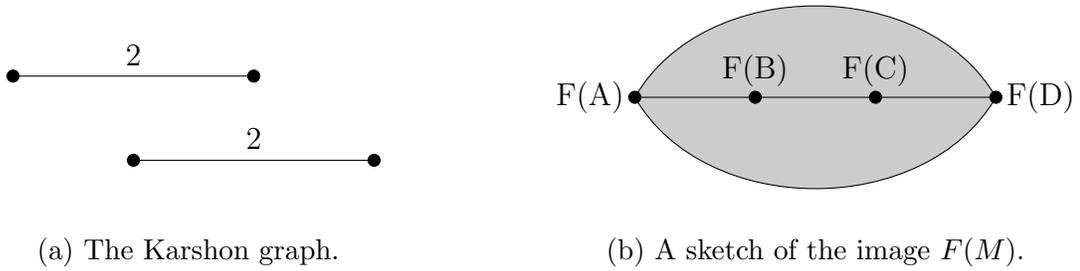


\section{All Hamiltonian \texorpdfstring{$\mathbb{S}^1$}{S1}-spaces can be extended to hypersemitoric systems}
\label{sec:extending}

In this section we prove Theorem~\ref{thm:extending}, which states that any Hamiltonian $\mbS^1$-space can be
extended to a hypersemitoric system.
We prove this making use of Karshon's classification of minimal models.


\subsection{Preparations for the proof}
\label{sec:preparations}
First we will show that all of the minimal Hamiltonian $\mbS^1$-spaces described by Karshon admit an 
extension to a hypersemitoric system.

\begin{proposition}\label{prop:minsystems}
 If $(M,\om,J)$ is a four-dimensional compact Hamiltonian $\mbS^1$-space which does not admit an $\mbS^1$-equivariant
 blowdown then there exists an $H\colon M\to\R$ such that $(M,\om,(J,H))$ is a hypersemitoric system
 with no degenerate singular points.
\end{proposition}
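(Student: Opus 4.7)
The plan is to invoke Karshon's classification of minimal models (Theorem~\ref{thm:karshon-minimal}) and treat each of its three classes separately: scaled $\CP^2$, scaled Hirzebruch surfaces, and ruled surfaces with two fixed surfaces. The first two classes already carry an ambient toric structure and will require essentially no work; the substantive case is the ruled surface one.

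For the first two classes, by hypothesis $(M,\om,J)$ is obtained from a compact toric system $(M,\om,(f_1,f_2))$ via a homomorphism $\mbS^1\hookrightarrow\T^2$, so $J=af_1+bf_2$ for some coprime integers $a,b$. I would use B\'ezout to pick $c,d\in\Z$ with $ad-bc=1$ and set $H:=cf_1+df_2$. Since the transformation $(f_1,f_2)\mapsto(J,H)$ lies in $\mathrm{SL}(2,\Z)$, the pair $(J,H)$ is still a toric momentum map for the same $\T^2$-action (in a new basis), so $(M,\om,(J,H))$ is a toric integrable system. Because all singularities of a toric system are non-degenerate and purely elliptic, it is automatically hypersemitoric with no degenerate points.

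For the ruled-surface case, let $\pi\colon M\to\Sigma$ be the underlying $\mbS^2$-bundle. Since the $\mbS^1$-action rotates each fiber, $\pi$ is $\mbS^1$-invariant. I would pick any Morse function $h\colon\Sigma\to\R$ and set $H:=h\circ\pi$. Then $H$ is $\mbS^1$-invariant, giving $\{J,H\}=0$, and a local computation near any free orbit shows that $dJ$ has a nontrivial component in the fiber direction while $dH=\pi^*dh$ lives in the base directions, so $dJ$ and $dH$ are linearly independent wherever $dh(\pi(p))\neq 0$, establishing integrability. The singular set of $(J,H)$ then splits into (a) non-fixed points $p$ with $\pi(p)\in\mathrm{Crit}(h)$ and (b) fixed points $p\in\Sigma_\pm=M^{\mbS^1}$.

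For the type~(a) singular points, I would apply Lemma~\ref{lem:reduction-type}: identifying the reduced space $\hat M_{J(p)}$ with $\Sigma$ via $\pi$ identifies $\hat H_{J(p)}$ with $h$, and the Morse non-degeneracy of $h$ forces $p$ to be elliptic-regular or hyperbolic-regular according to the Morse index of $\pi(p)$. For type~(b), I would use an $\mbS^1$-equivariant Darboux chart $(w_1,w_2,x_1,x_2)$ centered at $p$ in which $\Sigma_\pm=\{w_1=w_2=0\}$, the $\mbS^1$-action rotates $(w_1,w_2)$, and $\pi$ is the projection $(w_1,w_2,x_1,x_2)\mapsto(x_1,x_2)$; then $J=\tfrac{c}{2}(w_1^2+w_2^2)+\mathrm{const}$ and $H=h(x_1,x_2)$, so $\mathrm{Hess}_p J$ and $\mathrm{Hess}_p H$ live in complementary symplectic $2$-planes and together span a Cartan subalgebra, yielding an elliptic-regular, elliptic-elliptic, or elliptic-hyperbolic singular point depending on whether $\pi(p)$ is regular, extremal, or a saddle of $h$. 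In every case $p$ is non-degenerate, so $(M,\om,(J,H))$ is hypersemitoric with no degenerate singular points. The main technical step will be producing the equivariant Darboux chart in which $\pi$ still realizes as the projection onto $(x_1,x_2)$; this should follow from the equivariant tubular neighborhood theorem applied to $\Sigma_\pm\subset M$, but the accompanying verification that the Hessian blocks decouple with the correct signatures is the one place where genuine local computation is required.
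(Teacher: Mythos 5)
Your proposal is correct and follows the paper's overall strategy exactly: split into Karshon's three classes of minimal models, dispose of the $\CP^2$ and Hirzebruch cases via the ambient toric structure (the paper simply says these extend to toric systems "by definition"; your B\'ezout completion of the primitive weight vector to an $\mathrm{SL}(2,\Z)$ basis is a clean way to make that explicit), and for ruled surfaces take $H=h\circ\pi$ for a Morse function $h$ on the base. Where you diverge is in verifying non-degeneracy of the singular points of $(J,H)$ on a ruled surface. The paper exploits the local structure recorded after Definition~\ref{def:ruledMfd}: around any point there is a trivialization $U\cong\mbS^2\times U_\Sigma$ in which $J$ is (a multiple of) the height function on the sphere factor, so $F|_U=(g\circ\pi_{\mbS^2},h\circ\pi_\Sigma)$ is a product of two Morse functions and Remark~\ref{rmk:morse} gives non-degeneracy of \emph{all} singular points — free orbits and fixed surfaces alike — in one stroke. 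You instead split into free orbits (handled by reduction via Lemma~\ref{lem:reduction-type}, which is fine) and fixed points (handled by a Hessian computation in an equivariant Darboux chart), and you correctly flag the existence of a chart simultaneously normalizing $\om$, the action, \emph{and} the projection $\pi$ as the remaining technical burden. That burden is heavier than it needs to be: the Cartan-subalgebra condition at a rank-zero point is a pointwise linear-algebra condition on $(\mathrm{Hess}_pJ,\mathrm{Hess}_pH,\om_p)$, and the $\om$-orthogonal splitting $T_pM=T_p\Sigma_\pm\oplus(T_p\Sigma_\pm)^{\om}$ together with $dJ\equiv 0$ on $\Sigma_\pm$ and the invariance of $H$ already forces the two Hessians to be supported on complementary symplectic planes, with $\mathrm{Hess}_pJ$ definite on the normal plane; so no full normal form is needed. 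Alternatively, citing the paper's local trivialization of $J$ as the height function makes your whole case~(b) a special case of the product-of-Morse-functions observation. Either way the argument closes, so the only real difference is that your route costs an extra case analysis that the paper's trivialization avoids.
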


\begin{proof}
 Due to Theorem~\ref{thm:karshon-minimal} there are three classes of minimal Hamiltonian $\mbS^1$-spaces.
 The first two classes are induced by toric actions,
 so by definition these extend to toric systems which are in particular hypersemitoric systems.
 
 The remaining minimal models are spaces $(M,\om,J)$ with two fixed surfaces and no interior points, 
 which are thus $\mbS^2$-bundles over a closed surface $\Sigma$ with an $\mbS^1$-action
 that fixes $\Sigma$ and rotates each fiber (i.e.~ruled surfaces). Denote by $\pi_\Sigma\colon M\to\Sigma$ the projection map. Given such a space, let $f\colon\Sigma\to\R$ be any Morse function on $\Sigma$ and define $H\colon M\to\R$ by $H = f\circ\pi_\Sigma$.
 Notice that $H$ is automatically invariant under the $\mbS^1$-action generated by $\mathcal{X}^J$, and therefore $J$ and $H$ Poisson commute. Since $f$ is Morse we conclude that $H$ and $J$ are linearly independent almost everywhere and thus $(M,\om,F=(J,H))$ is an integrable system.
 
We will now show that $(M,\om,F=(J,H))$ is a hypersemitoric system. First recall from Theorem~\ref{thm:karshon-minimal} that the $\mbS^1$-action on the ruled surfaces is rotation of the $\mbS^2$, which locally is thus generated by the height function on $\mbS^2$. This can also be seen in the explicit realization of these minimal models described in~\cite[proof of Lemma 6.15]{karshon}.
Thus, $J$ is locally of the following form: around any point $p\in M$ there is a
 neighborhood $U$ of the form $U=\mbS^2\times U_{\Sigma}$ where $U_\Sigma\subset\Sigma$
 is an open set and $J|_U = g\circ\pi_{\mbS^2}$, where
 $g$ is the usual height function on $\mbS^2$ and $\pi_{\mbS^2}\colon U\to\mbS^2$ is the projection map.  
 Thus, $F|_U = (g\circ\pi_{\mbS^2},f\circ\pi_\Sigma)$.
 Since $g$ and $f$ are both Morse (cf.\ Remark~\ref{rmk:morse}) this implies that all singular points of $F=(J,H)$ in $U$ are non-degenerate, and thus all singular points of $F$ are non-degenerate, so the system is a hypersemitoric system.
\end{proof}

\begin{remark}
 The image of the hypersemitoric system constructed on a ruled surface in the proof of Proposition~\ref{prop:minsystems} 
 will always be a rectangle, and the images of the hyperbolic-regular points of the system will be horizontal
 lines across the rectangle (as in Figure~\ref{fig:S2T2-image}). So there are two vertical boundary components in the image
 corresponding to the two fixed surfaces in these Hamiltonian $\mbS^1$-spaces, but even though these two edges of the image have the same length this does not mean that the corresponding fixed surfaces
 have the same symplectic area (in fact, they often do not). This is because the hypersemitoric system we construct is not toric so there is no relationship between the length of the edges and the symplectic volume of the corresponding submanifolds.
\end{remark}

\begin{remark}\label{rmk:product-top-bdry} 
In Proposition~\ref{prop:minsystems} we show 
that given a Hamiltonian $\mbS^1$-space with $M=\mbS^2\times\Sigma_g$
where $\Sigma_g$ is a surface of genus $g$ and $J$ rotates the sphere, we can take a Morse function $f$ on $\Sigma$
to produce a hypersemitoric system $(M,\om,(J,H=f\circ \pi_\Sigma))$.
Notice that, making use of action-angle coordinates (and the normal form around elliptic points), we may furthermore choose $f$ so that
$\mathcal{X}^H$ has $2\pi$-periodic flow in a preimage of a neighborhood
of the upper boundary of $F(M)$, and therefore on that neighborhood $(J,H)$
forms an (open) toric system.
This will be useful for us when performing gluing
in the proof of Lemma~\ref{lem:stblowups-hst}.
\end{remark}

Let $\Gamma$ be a Karshon graph. In the following, we call a {\em connected component} of $\Gamma$
briefly a \emph{component of $\Gamma$}.
The \emph{maximum component} 
of $\Gamma$ is the component of $\Gamma$ which contains the vertex corresponding to the maximum
value of the momentum map
and the \emph{minimum component} 
of $\Gamma$ is the component of $\Gamma$ which contains the vertex corresponding to the minimum	
value of the momentum map.

 Recall that the four possible cases of the effect of $\mbS^1$-equivariant blowups on a Karshon graph are described Section~\ref{sec:karshon-blowups} and numbered as type~\ref{case:blowup-surface}-\ref{case:interior-blowup}.

\begin{lemma}\label{lem:stages}
 Let $\Gamma$ be the Karshon graph of a Hamiltonian $\mbS^1$-space $(M,\om,J)$ and 
 let $\Gamma_\mathrm{min}$ denote the minimal Karshon graph from which $\Gamma$ can be obtained 
 by a sequence of $\mbS^1$-equivariant blowups. Then one can in fact obtain $\Gamma$ from $\Gamma_\mathrm{min}$ as follows:
 \begin{itemize}
  \item \textbf{Stage 1:} Perform a sequence of blowups on isolated fixed points in the maximum and minimum components to obtain a new graph $\Gamma'$ from $\Gamma_\mathrm{min}$. All blowups performed in this stage are of types~\ref{case:blowup-min11}, \ref{case:blowup-min}, or~\ref{case:interior-blowup};
  \item \textbf{Stage 2:} Perform a sequence of blowups on points that lie in fixed surfaces to obtain a new graph $\Gamma''$ from $\Gamma'$. All blowups performed in this stage are of type~\ref{case:blowup-surface};
  \item \textbf{Stage 3:} Perform a sequence of blowups on isolated fixed points that correspond to the components of $\Gamma''$ which are not the maximum or minimum components to obtain the desired graph $\Gamma$ from $\Gamma''$. All blowups performed in this stage are of type~\ref{case:interior-blowup}.
 \end{itemize}
\end{lemma}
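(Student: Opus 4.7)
The plan is to take an arbitrary sequence of $\mbS^1$-equivariant blowups $B_1, \ldots, B_N$ producing $\Gamma$ from $\Gamma_\mathrm{min}$ and reorder it by a bubble-sort argument, keeping the chosen blowup sizes fixed. First I would classify each blowup by its type and the position of its target vertex in the intermediate graph: $B_k$ belongs to Stage~2 iff it is of type~\ref{case:blowup-surface}; $B_k$ belongs to Stage~1 iff it is of one of the types~\ref{case:blowup-min11}, \ref{case:blowup-min}, or~\ref{case:interior-blowup} and its target lies in the maximum or minimum component of the current graph; and $B_k$ belongs to Stage~3 iff it is of type~\ref{case:interior-blowup} and its target lies in a component which is neither the maximum nor the minimum component. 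This exhausts every case, since~\ref{case:blowup-surface} always acts at a point on a fixed surface (necessarily in an extremal component), and~\ref{case:blowup-min11}, \ref{case:blowup-min} occur at isolated fixed points at the extrema and hence in the max/min components.

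The main step is to show that any adjacent pair $B_i, B_{i+1}$ with $\mathrm{stage}(B_i) > \mathrm{stage}(B_{i+1})$ can be swapped, producing a valid sequence with the same endpoint and the same sizes. The crucial observation is that in such an inverted configuration the two blowups act at distinct vertices and the target of $B_{i+1}$ is not created by $B_i$: a Stage~2 blowup creates only a new isolated vertex in a brand-new non-extremal component, while a Stage~3 blowup only affects vertices in a non-extremal component, so in either case $B_i$ cannot produce a target for a Stage~1 (or, in the Stage~3 versus Stage~2 case, a Stage~2) blowup $B_{i+1}$. Moreover, none of the cases~\ref{case:blowup-surface}--\ref{case:interior-blowup} alters the component decomposition of the graph away from its own target vertex, so the stage classification of every other blowup in the sequence is preserved under the swap, and the admissibility condition from Karshon's partial-ordering criterion is local at the target vertex and therefore unaffected by a blowup at a disjoint vertex. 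Iterating this swap strictly decreases the number of stage inversions, so after finitely many steps the sequence is grouped as required.

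The hard part will be verifying the disjointness-of-targets claim thoroughly for each of the finitely many combinations of blowup types which produce a stage inversion. This reduces to a straightforward but slightly tedious case analysis based on the descriptions in Lemma~\ref{lem:casesGraphChange} of which vertices each blowup creates and in which component they live: in particular, the isolated vertex produced by~\ref{case:blowup-surface} belongs to its own new component with no new edges, while~\ref{case:interior-blowup} creates vertices only within the same component as its target, so the interactions between the cases are quite rigid. Once this case analysis is complete, preservation of size admissibility and of the stage classification follow essentially for free, and the reordering into the three stages is achieved.
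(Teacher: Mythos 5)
Your proposal is correct, and it arrives at the result by a genuinely different route than the paper. The paper's proof works top--down: it compares $\Gamma$ with $\Gamma_\mathrm{min}$ and directly \emph{constructs} the staged sequence, first defining $\Gamma'$ by making the extremal components agree, then $\Gamma''$ by blowups of type~\ref{case:blowup-surface} on fat vertices, and finally observing that the remaining discrepancy is confined to non-extremal components, each of which begins as a single isolated vertex (since type~\ref{case:blowup-surface} is the only move that creates a new component). You instead take an \emph{arbitrary} admissible sequence realizing $\Gamma$ and reorder it by transposing adjacent stage-inverted pairs; the justification is the same underlying commutation principle the paper invokes in one sentence (``blowups in different components of the Karshon graph do not interact''), but your version makes precise exactly which pairs commute and why the stage labels of the remaining blowups are unaffected. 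Your route is more work but arguably more honest, since the paper's claim that one can ``perform blowups in any order'' is exactly the commutation statement you are proving; the paper's route is shorter and yields for free the observation about single-vertex islands that is used later in the proof of Theorem~\ref{thm:extending}.

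One point in your argument should be tightened: the admissibility criterion (Karshon's Proposition 7.2, recalled in Section~\ref{sec:karshon-blowups}) is \emph{not} local at the target vertex, because the partial order compares every new vertex against the extremal vertices of the whole graph. For example, if $B_{i+1}$ is of type~\ref{case:blowup-min} at the maximum, the new extremal vertex at $j_{\mathrm{max}}-n\lambda$ must dominate \emph{all} vertices of the graph, including those in the component touched by $B_i$. The swap still goes through, but for a slightly different reason: the graph obtained after both blowups is order-independent, so the second blowup of the swapped sequence is admissible because its resulting graph already satisfies Karshon's conditions; and the first blowup of the swapped sequence is admissible because its resulting graph is obtained from that final graph by undoing $B_i$, which replaces the vertices created by $B_i$ with a single vertex whose $J$-value is sandwiched between theirs (or deletes an isolated vertex and enlarges an area label), so every ordering and positivity constraint it must satisfy is implied by one already known to hold. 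With that substitution for the ``locality'' claim, your proof is complete.
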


Before we prove Lemma~\ref{lem:stages}, let us briefly put it into context:

\begin{remark}
 Lemma~\ref{lem:stages} provides the formal guideline for the order that we will perform a sequence of blowups on a given Karshon graph in the proof of Theorem~\ref{thm:extending}. The exact blowups that we perform are described in the proof of Theorem~\ref{thm:extending}. An example of performing blowups in this order will be described in Section~\ref{sec:follow-ex}.
\end{remark}

\begin{proof}[Proof of Lemma~\ref{lem:stages}]
 This lemma is based on the fact that blowups in different components of Karshon graph
 do not interact with each other. As long as this principle is observed one can perform blowups in any order --- thus also the preferred one given in the statement of the lemma we are proving --- without obtaining different spaces in the end.
 
Starting with $\Gamma_\mathrm{min}$ we obtain a new Karshon graph $\Gamma'$ by performing blowups on only the
maximum and minimum components of $\Gamma_\mathrm{min}$ to make them agree with the maximum and minimum components of $\Gamma$
(except possibly for the area labels on the fat vertices, if any).
This is Stage 1.
 For Stage 2, we perform blowups of type~\ref{case:blowup-surface} on the fat vertices 
 of $\Gamma'$ to produce a new graph $\Gamma''$ which now has the same area labels on its fat vertices
 as $\Gamma$ and has the same number of components as $\Gamma$.
 Now the maximum and minimum components of $\Gamma''$ agree with those
 of $\Gamma$, and thus $\Gamma''':=\Gamma$ is obtained from $\Gamma''$ by
 a sequence of blowups of type~\ref{case:interior-blowup} on the components
 of $\Gamma''$ which are not its maximum or minimum components.
 This is Step 3.

 Notice that the only type of blowup from~\ref{case:blowup-surface}--\ref{case:interior-blowup}
 which changes the number of components of the Karshon graph is
 type~\ref{case:blowup-surface}, which produces a new component containing a single vertex.
 Thus, at the start of Step 3, as described in the statement of the lemma,
 all of the non-extremal components of $\Gamma''$ can only be single isolated vertices,
 since all non-extremal components of the minimal models are single vertices
 and the only new components that we produced in Steps 1 and 2 were also 
 components of a single vertex.
\end{proof}

\begin{figure}
\centering
\begin{subfigure}[t]{.38\textwidth}
\centering
\begin{tikzpicture}[scale = .4]
\foreach \x in {0,1,...,12}{
        \draw[black!15!white,black!15!white] (\x,-2.5) -- (\x,2.5);}
 \draw[black,fill=black] (0,0) ellipse (.15cm and 1.75cm);

 \draw (3,0) -- (12,0) node[midway,above]{\footnotesize{3}};

 \draw[black,fill=black] (3,0) circle (.1cm);
 \draw[black,fill=black] (12,0) circle (.1cm);

 \draw (-1,-2.5) rectangle (13,2.5);
\end{tikzpicture}
\caption{$\Gamma_{\mathrm{min}}$.}
\label{fig:Gmin}
\end{subfigure}
\begin{subfigure}[t]{.48\textwidth}
\centering
\scalebox{.9}{\input{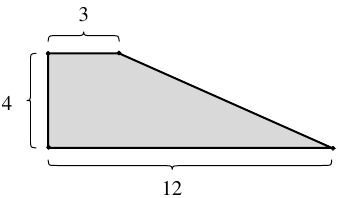_t}}
  \caption{The Delzant polygon for $(M_\mathrm{min},\om_\mathrm{min},F_\mathrm{min})$.}
  \label{fig:hirz-example-toric1}
\end{subfigure}\\[1em]
\begin{subfigure}[t]{.38\textwidth}
\centering
\begin{tikzpicture}[scale=.4]
\foreach \x in {0,1,...,12}{
        \draw[black!15!white,black!15!white] (\x,-2.5) -- (\x,2.5);}

 \draw[black,fill=black] (0,0) ellipse (.15cm and 1.75cm);	

 \draw (3,0) -- (9,0) node[midway,above]{\footnotesize{3}};
 \draw (9,0) -- (11,0) node[midway,above]{\footnotesize{2}};

 \draw[black,fill=black] (3,0) circle (.1cm);
 \draw[black,fill=black] (9,0) circle (.1cm);
 \draw[black,fill=black] (11,0) circle (.1cm);

 \draw (-1,-2.5) rectangle (13,2.5);
\end{tikzpicture}
\caption{$\Gamma'$.}
\label{fig:Gprime}
\end{subfigure}
\begin{subfigure}[t]{.48\textwidth}
\centering
\scalebox{.9}{\input{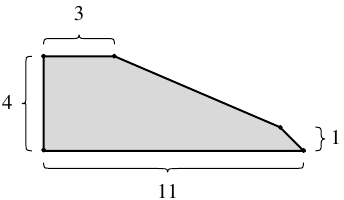_t}}
  \caption{The Delzant polygon for $(M',\om',F')$.}
  \label{fig:hirz-example-toric2}
\end{subfigure}\\[1em]
\begin{subfigure}[t]{.38\textwidth}
\centering
\begin{tikzpicture}[scale=.4]
\foreach \x in {0,1,...,12}{
        \draw[black!15!white,black!15!white] (\x,-2.5) -- (\x,2.5);}
 \draw[black,fill=black] (0,0) ellipse (.1cm and .5cm);	

 \draw[black,fill=black] (1,1.5) circle (.1cm);
 \draw[black,fill=black] (1,0) circle (.1cm);
 \draw[black,fill=black] (1,-1.5) circle (.1cm);

 \draw (3,0) -- (9,0) node[midway,above]{\footnotesize{3}};
 \draw (9,0) -- (11,0) node[midway,above]{\footnotesize{2}};

 \draw[black,fill=black] (3,0) circle (.1cm);
 \draw[black,fill=black] (9,0) circle (.1cm);
 \draw[black,fill=black] (11,0) circle (.1cm);
 \draw (-1,-2.5) rectangle (13,2.5);
 \end{tikzpicture}
\caption{$\Gamma''$.}
\label{fig:Gprimeprime}
\end{subfigure}
\begin{subfigure}[t]{.48\textwidth}
\centering
\scalebox{.9}{\input{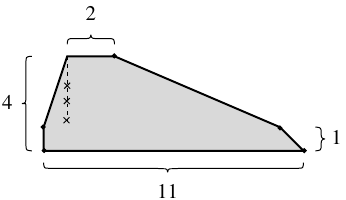_t}}
  \caption{The semitoric polygon for $(M'',\om'',F'')$. }
  \label{fig:hirz-example-semitoric}
\end{subfigure}\\[1em]
\begin{subfigure}[t]{.38\textwidth}
\centering
\begin{tikzpicture}[scale=.4]
\foreach \x in {0,1,...,12}{
        \draw[black!15!white,black!15!white] (\x,-2.5) -- (\x,2.5);}
 \draw[black,fill=black] (0,0) ellipse (.1cm and .5cm);	

 \draw[black,fill=black] (1,1.5) circle (.1cm);
 \draw[black,fill=black] (1,0) circle (.1cm);
 \draw[black,fill=black] (1,-1.5) circle (.1cm);

 \draw (3,0) -- (9,0) node[midway,above]{\footnotesize{3}};
 \draw (9,0) -- (11,0) node[midway,above]{\footnotesize{2}};

 \draw[black,fill=black] (3,0) circle (.1cm);
 \draw[black,fill=black] (9,0) circle (.1cm);
 \draw[black,fill=black] (11,0) circle (.1cm);
 \draw (-1,-2.5) rectangle (13,2.5);
 \end{tikzpicture}
\caption{$\Gamma''$ again.}
\label{fig:Gprimeprimeagain}
\end{subfigure}
\begin{subfigure}[t]{.48\textwidth}
\centering
\scalebox{.9}{\input{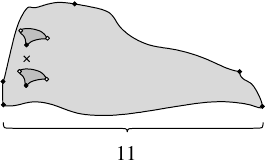_t}}
\caption{The momentum map image of $(M'',\om'',\hat{F}'')$.}
 \label{fig:hirz-example-hypersemi1}
\end{subfigure}\\[1em]
\begin{subfigure}[t]{.38\textwidth}
\centering
\begin{tikzpicture}[scale = .4]
\foreach \x in {0,1,...,12}{
        \draw[black!15!white,black!15!white] (\x,-2.5) -- (\x,2.5);}
 \draw[black,fill=black] (0,0) ellipse (.1cm and .5cm);	

 \draw[black,fill=black] (.5,1.5) circle (.1cm);
 \draw[black,fill=black] (1.5,1.5) circle (.1cm);
 \draw (.5,1.5) -- (1.5,1.5) node[midway,above]{\scriptsize{2}};

 \draw[black,fill=black] (1,0) circle (.1cm);

 \draw[black,fill=black] (.5,-1.5) circle (.1cm);
 \draw[black,fill=black] (1,-1.5) circle (.1cm);
 \draw[black,fill=black] (1.75,-1.5) circle (.1cm);
 \draw (.5,-1.5) -- (1,-1.5) node[midway,above]{\footnotesize{2}};
 \draw (1,-1.5) -- (1.75,-1.5) node[midway,above]{\footnotesize{3}};

 \draw (3,0) -- (9,0) node[midway,above]{\footnotesize{3}};
 \draw (9,0) -- (11,0) node[midway,above]{\footnotesize{2}};

 \draw[black,fill=black] (3,0) circle (.1cm);
 \draw[black,fill=black] (9,0) circle (.1cm);
 \draw[black,fill=black] (11,0) circle (.1cm);

 \draw (-1,-2.5) rectangle (13,2.5);
\end{tikzpicture}
\caption{$\Gamma$.}
\label{fig:G}
\end{subfigure}
 \begin{subfigure}[t]{.48\textwidth}
\centering
\scalebox{.9}{\input{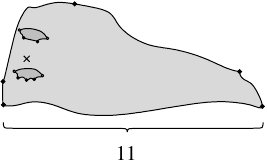_t}}
  \caption{The momentum map image of $(M,\om,F)$.}
  \label{fig:hirz-example-hypersemi2}
\end{subfigure}
\caption{The Karshon graphs (left) and corresponding Delzant polygons, semitoric polygons, and momentum map images (right) for the systems discussed in the example
 from Section~\ref{sec:follow-ex}.
 The Karshon graphs are drawn over lines representing
integer values of the momentum map so the momentum map labels can
be easily read off of the graph.}
\label{fig:big-example}
\end{figure}


\subsection{Following the proof on an example}
\label{sec:follow-ex}

In this section we will sketch the idea of the proof of Theorem~\ref{thm:extending} by illustrating it on a specific example. Let $\Gamma$ denote the graph sketched in Figure~\ref{fig:G}.
We will demonstrate how a hypersemitoric system can be constructed which has $\Gamma$ as
the Karshon graph of its underlying Hamiltonian $\mbS^1$-space. 
The entire process is sketched in Figure~\ref{fig:big-example}.
The Karshon graphs in 
Figures~\ref{fig:Gmin}, \ref{fig:Gprime}, \ref{fig:Gprimeprime}, \ref{fig:Gprimeprimeagain}, and~\ref{fig:G}
correspond to the systems shown in Figures~\ref{fig:hirz-example-toric1},
 \ref{fig:hirz-example-toric2}, \ref{fig:hirz-example-semitoric}, \ref{fig:hirz-example-hypersemi1}, and~\ref{fig:hirz-example-hypersemi2}, respectively.
The eventual proof of Theorem~\ref{thm:extending} in Section~\ref{sec:proof} will follow the same idea
in the general setting, showing that this
process works for all possible Karshon graphs $\Gamma$.

\subsubsection*{Let us now have a closer look at the various steps of the procedure:}
Given $\Gamma$, we know that there is a minimal graph $\Gamma_{\mathrm{min}}$, shown in Figure~\ref{fig:Gmin}, from which $\Gamma$ can be obtained by a sequence of blowups.
The graph $\Gamma_{\mathrm{min}}$ here is the Karshon graph of
the third Hirzebruch surface with the standard $\mbS^1$-action (with scaling such that it has the Delzant polygon as shown in Figure~\ref{fig:hirz-example-toric1}), and can thus
be extended to a toric system.
The idea is to perform the sequence of blowups on $\Gamma_\mathrm{min}$ in three
stages as discussed in Lemma~\ref{lem:stages}, while showing that the property
of being able to be extended to a hypersemitoric system is preserved.

Let $(M_\mathrm{min},\om_\mathrm{min},F_\mathrm{min})$ be the toric
system associated with the minimal model $\Gamma_{\mathrm{min}}$ of $\Gamma$, so the image of $F$ is the polygon
shown in Figure~\ref{fig:hirz-example-toric1} with vertices at $(0,0)$,
$(0,4)$, $(3,4)$, and $(12,0)$.

\emph{Stage 1:} Performing a toric blowup of size $\lambda_1=1$ at the far right fixed point produces the toric system corresponding to the Delzant polygon shown in Figure~\ref{fig:hirz-example-toric2}.
Denote this system by $(M',\om',F')$.
The underlying Hamiltonian $\mbS^1$-space has Karshon graph $\Gamma'$ as
shown in Figure~\ref{fig:Gprime}.

\emph{Stage 2:} Next we perform three wall chops (corresponding to semitoric blowups)
all of the same size $\lambda_2= \lambda_3= \lambda_4=1$ on the fixed 
surface at the minimum of the momentum map. This produces three new focus-focus
points. Let $(M'',\om'',F'')$ be a semitoric system associated to the resulting
marked polygon invariant. A representative
of the semitoric polygon of this system is shown on the left in Figure~\ref{fig:hirz-example-semitoric}
and the Karshon graph for the underlying Hamiltonian $\mbS^1$-space is the graph $\Gamma''$ shown
in Figure~\ref{fig:Gprimeprime}.

\emph{Stage 3:} In order to perform a toric blowup on the three new focus-focus points
we will now use the technique described in Dullin \& Pelayo~\cite{dullin-pelayo}
to transform them into elliptic-elliptic points on small flaps without changing the underlying 
$\mbS^1$\-space (i.e.~performing a supercritical Hamiltonian-Hopf bifurcation).  
The momentum map image of the resulting system $(M'',\om'',\hat{F}''=(J'',\hat{H}''))$ is shown
on the right in Figure~\ref{fig:hirz-example-semitoric}. 
Since the technique of Dullin \& Pelayo 
does not change 
the underlying Hamiltonian $\mbS^1$-space $(M'',\om'',J'')$ (c.f.~\cite[Theorem 1.2]{dullin-pelayo}), the associated
Karshon graph shown in Figure~\ref{fig:Gprimeprimeagain} is still equal to
the graph $\Gamma''$ in Figure~\ref{fig:Gprimeprime}.
Now two of the focus-focus points have been replaced by elliptic-elliptic points, at
which we can perform toric blowups.

Now we perform a toric blowup of size $\lambda_5=1/2$ on the top flap
and a toric blowup of size $\lambda_6=1/2$ on the bottom flap. Finally, we perform
one more blowup of size $\lambda_7=1/4$ on one of the two new elliptic-elliptic points formed
on the bottom flap to produce a hypersemitoric system $(M,\om,F)$. 
Throughout this process we have now performed the sequence of blowups
used to obtain $\Gamma$ from $\Gamma_\mathrm{min}$ on this integrable system,
so the Karshon graph associated to $(M,\om,J)$ is $\Gamma$, as desired.
The proof that these flaps can always be made large enough to admit blowups of the desired
size is the content of Section~\ref{sec:large-flap}.
This completes the example.

\subsection{Semitoric blowups of hypersemitoric systems}
\label{sec:stblowups-hst}

Operations like blowups and blowdowns on toric and semitoric systems can be done by mainly manipulating the associated momentum polytopes since these systems are wholly or sufficiently determined by this invariant according to the classifications of Delzant and Pelayo $\&$ \vungoc. One of the difficulties with hypersemitoric systems is that there is not yet any similar classification. Thus, when we would like to perform a semitoric blowup on a minimal hypersemitoric system in the proof of Theorem~\ref{thm:extending} we cannot work with a polygon invariant to define the
semitoric blowup as we did in Section~\ref{sec:semitoric-blowup}, since there is not yet any such invariant established.

The strategy we will employ follows from the observation that the minimal
Hamiltonian $\mbS^1$-spaces on ruled surfaces can be obtained by gluing a Hirzebruch
surface with a product manifold $\mbS^2\times\Sigma_g$, where $\Sigma_g$ is a surface
of genus $g$.
Thus, we may perform the wall chops (corresponding to semitoric blowups) on the Hirzebruch surface, which
is a toric (and hence also semitoric) integrable system, before gluing it with
$\mbS^2\times\Sigma_g$, to obtain the desired Hamiltonian $\mbS^1$-space. 
This technique allows us to be sure that the hyperbolic-regular points do not interact
with the wall chops.

Roughly, the following lemma explains that a wall chop can be performed without changing the
structure of the preimage of a neighborhood of the bottom boundary of the momentum map image,
which is the region we will use for the gluing, cf.\ Figure~\ref{fig:gluing}.

\begin{lemma}\label{lem:bottom-boundary}
 Let $(M,\om,F)$ be a semitoric system and let $[\De,\vec{c}_i,(+1)_{j=1}^s]$ be its marked
 polygon invariant. 
 Assume that $\De$ has no vertices along the interior of its bottom boundary.
 Suppose that $[\De',(c_1,\ldots,c_{\ell-1},c,c_\ell),(+1)_{j=1}^{s+1}]$ is a
 wall chop of $[\De,\vec{c}_i,(+1)_{j=1}^s]$ and let $(M',\om',F')$ be a semitoric system
 having $[\De',(c_1,\ldots,c_{\ell-1},c,c_\ell),(+1)_{j=1}^{s+1}]$ as its marked polygon invariant.
 Let $g$ and $g'$ be the straightening maps such that $\De = g\circ F$ and $\De' = g'\circ F'$, as in Section~\ref{sec:semitoric}.
 Then there exist open sets $U\subset M$ and $U'\subset M'$ and a symplectomorphism $\phi\colon U\to U'$
 such that
  \begin{itemize}
   \item $F(U)$ is an open neighborhood of $\partial^-(F(M))$ and $F'(U')$ is an open neighborhood
    of $\partial^-(F'(M'))$, as subsets of $F(M)$ and $F'(M')$ respectively,
   \item $g\circ F(U)$ and $g'\circ F'(U')$ do not intersect the cuts or marked points 
    in $\De=g\circ F(M)$ and $\De'=g'\circ F'(M')$,
   \item $\phi$ is equivariant with respect to the $\T^2$-action induced by the toric momentum
    maps $g\circ F|_{U}$ and $g'\circ F'|_{U'}$.
  \end{itemize}
\end{lemma}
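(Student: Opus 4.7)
The plan is to exploit the key feature of semitoric blowups: by Definition~\ref{def:semitoric_blowup}, only the upper boundary $\partial^+\De$ is modified, while $\partial^-\De' = \partial^-\De$. Moreover, the new marked point and new cut introduced by the blowup both lie on or above the straight line containing $\partial^+ \De$ after the operation, so they stay well away from the lower boundary. Combined with the assumption that $\De$ has no vertices along the interior of its bottom boundary, this means we can simultaneously work near $\partial^-\De = \partial^-\De'$ in a region that is toric for both systems.

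First I would choose an open set $V\subset \R^2$ that is a neighborhood (inside $\De\cap \De'$) of $\partial^-\De$, chosen small enough that:
\begin{itemize}
 \item $V$ does not meet any of the marked points or upward cuts $\ell^{+1}_{c_k}$ in $\De$;
 \item $V$ does not meet any of the marked points or upward cuts $\ell^{+1}_{c'_k}$ in $\De'$;
 \item every vertex of $\De$ or $\De'$ lying in $\overline{V}$ is a Delzant corner lying on $\partial^-\De$ (this is possible by the no-interior-vertices assumption).
\end{itemize}
Then define $U := F^{-1}(g^{-1}(V))$ and $U' := (F')^{-1}((g')^{-1}(V))$. On these sets, the straightening maps $g$ and $g'$ are smooth diffeomorphisms onto $V$, so $\mu := g\circ F|_U$ and $\mu':= g'\circ F'|_{U'}$ are genuine smooth momentum maps for effective Hamiltonian $\T^2$-actions on $(U,\om|_U)$ and $(U',\om'|_{U'})$, respectively, with common image $V$.

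The remaining task is to build a $\T^2$-equivariant symplectomorphism $\phi\colon U \to U'$ with $\mu'\circ\phi = \mu$. This is the Delzant-type uniqueness statement for (open) symplectic toric manifolds with momentum map image $V$. I would invoke the non-compact classification of Karshon \& Lerman~\cite{karshon-lerman}, which classifies such Hamiltonian $\T^n$-manifolds up to equivariant symplectomorphism by their momentum map image (a unimodular local polytope). Since $\mu(U) = \mu'(U') = V$ as rational polygonal regions with matching Delzant corners, this directly yields $\phi$.

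The main obstacle is ensuring that the open toric pieces $(U,\om,\mu)$ and $(U',\om',\mu')$ genuinely fall under the scope of such a classification near the boundary of $V$ inside $\De$ (i.e.\ the ``top'' of $V$ which is the interior of the polygon). One can circumvent any delicate boundary issues by shrinking $V$ to a slightly smaller neighborhood $V_0\Subset V$ with smooth upper boundary transverse to the $\T^2$-action, so that both $\mu^{-1}(V_0)$ and $(\mu')^{-1}(V_0)$ are honest (open) symplectic toric manifolds in the sense of~\cite{karshon-lerman}, and then renaming $V_0$ as $V$. Alternatively, one can construct $\phi$ directly by hand: use the equivariant Darboux/normal form near each Delzant corner of $\partial^- V$, the standard product model along each fixed-surface stratum in $\partial V$, and action-angle coordinates on the free part, then glue these local models using an equivariant partition of unity subordinate to a cover of $V$, exactly as in the existence half of Delzant's construction. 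Either approach delivers the required $\phi$.
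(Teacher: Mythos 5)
Your proposal is correct and follows essentially the same route as the paper: both observe that the semitoric blowup leaves the polygon unchanged near the common bottom boundary, take preimages of a small neighborhood of $\partial^-\De$ avoiding all cuts and marked points to obtain two open toric systems with the same momentum image, and then invoke the Karshon--Lerman classification of (proper) open toric manifolds by their momentum image to produce the $\T^2$-equivariant symplectomorphism. The paper simply takes the neighborhood to be convex so that properness onto a convex open set is immediate, which disposes of the boundary concerns you raise without needing your fallback hand-gluing argument.
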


\begin{proof}
 By the description of wall chops of polygons in Section~\ref{sec:semitoric-blowup}, notice that $\De$ and
 $\De'$ are equal as sets in a neighborhood of their common bottom boundary.
 Let $\tilde{V}\subset \R^2$ be a convex open neighborhood of the bottom boundary
 (for instance, $\tilde{V}$ could be
 an $\varepsilon$-neighborhood of the bottom boundary in $\R^2$ for sufficiently small $\varepsilon>0$),
 sufficiently small such that $\tilde{V}\cap \De = \tilde{V}\cap \De'$.
 Since we have chosen representatives where all cuts are upwards
 by taking $\tilde{V}$ small enough we may assume that $V$ does not
 intersect any cuts or marked points in $\De$ or $\De'$ .
 Let $U = (g\circ F)^{-1}(V)$ and let $U' = (g'\circ F')^{-1}(V)$.
 Now $(U,\om|_U,(g\circ F)|_U)$ and $(U',\om'|_{U'},(g'\circ F')|_{U'})$ 
 are open toric systems, since
 $(g\circ F)|_U$ and $(g'\circ F')|_{U'}$ each induce a $\T^2$-action.
 By Karshon \& Lerman~\cite[Proposition 6.5]{karshon-lerman} 
 open toric systems are classified
 by their momentum map image if the momentum map is proper onto a convex open set. 
 Since $(g\circ F)|_U$ and $(g'\circ F')|_{U'}$ are proper onto $\tilde{V}$
 they are isomorphic as open toric systems, which means that there exists 
 a $\T^2$-equivariant  symplectomorphism $\phi$ between them, as in the statement.
 The other two points of the theorem are automatic from the choice of $V$ and the construction
 of $U$ and $U'$.	
\end{proof}

The next lemma is of importance for the proof of Theorem~\ref{thm:extending}.

\begin{lemma}\label{lem:stblowups-hst}
 Let $(M_{\mathrm{min}}, \om_{\mathrm{min}}, J_{\mathrm{min}})$ be a minimal
 Hamiltonian $\mbS^1$-space which is a ruled surface as
 in Theorem~\ref{thm:karshon-minimal}, and let $(M,\om,J)$ be a Hamiltonian $\mbS^1$-space obtained by taking a 
 sequence of $k\in\Z_{>0}$ $\mbS^1$-equivariant blowups of the $\mbS^1$-space $(M_{\mathrm{min}}, \om_{\mathrm{min}}, J_{\mathrm{min}})$
 on fixed surfaces (i.e.~those of type~\ref{case:blowup-surface} in Lemma \ref{lem:casesGraphChange}).
 Then there exists $H\colon M\to \R$ such that $(M,\om,(J,H))$ is a hypersemitoric system
 with exactly $k$ focus-focus points and no degenerate points.
\end{lemma}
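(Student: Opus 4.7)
My approach is to perform the blowups one at a time by induction on $k$, strengthening the inductive hypothesis to include the existence of an open $\T^2$-invariant toric collar around each fixed surface. The base case $k=0$ is Proposition~\ref{prop:minsystems} combined with Remark~\ref{rmk:product-top-bdry}: I would select the Morse function on $\Sigma_g$ used in the proof of Proposition~\ref{prop:minsystems} to be rotationally symmetric near its extrema, so that $\mathcal{X}^{H_0}$ has $2\pi$-periodic flow in neighborhoods of both $J_{\min}^{-1}(j_{\min})$ and $J_{\min}^{-1}(j_{\max})$, giving open $\T^2$-invariant collars $V_\pm^{(0)}$ of the two fixed surfaces.

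For the inductive step, suppose $(M^{(k)}, \om^{(k)}, (J^{(k)}, H^{(k)}))$ has been constructed, is hypersemitoric with exactly $k$ focus-focus points and no degenerate points, and admits such toric collars $V_\pm^{(k)}$. Let $p$ be a point in, say, the lower fixed surface at which one wishes to perform a blowup of type~\ref{case:blowup-surface} of size $\lambda$. Shrinking $V_-^{(k)}$ if necessary, assume $p$ and the $\lambda$-ball used for the blowup lie in $V_-^{(k)}$. By Karshon and Lerman's classification of open toric systems~\cite{karshon-lerman}, $V_-^{(k)}$ is $\T^2$-equivariantly symplectomorphic to an open toric piece of a compact toric system $(N, \om_N, F_N)$; I would choose $N$ so that its Delzant polygon extends the image of $V_-^{(k)}$ and admits a semitoric blowup of size $\lambda$ on its bottom vertical edge at the image of $p$. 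Apply Lemma~\ref{lem:semitoric-blowup} to $N$ to obtain a semitoric system $(\widetilde{N}, \widetilde{\om}, (\widetilde{J}, \widetilde{H}))$ with one new focus-focus point, whose underlying $\mbS^1$-space is the $\mbS^1$-equivariant blowup of $N$ at $p$. A $\T^2$-invariant open neighborhood $\widetilde{V}_-$ of the new lower fixed surface of $\widetilde{N}$ forms the replacement piece, and I would glue it into $M^{(k)} \setminus (\text{inner part of } V_-^{(k)})$ via Lemma~\ref{lem:bottom-boundary} applied along the outer toric collar, obtaining $(M^{(k+1)}, \om^{(k+1)}, (J^{(k+1)}, H^{(k+1)}))$.

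The main obstacle is constructing the auxiliary compact toric $N$ whose Delzant polygon extends the momentum image of $V_-^{(k)}$ and admits the required semitoric blowup, and then certifying that the resulting $\mbS^1$-space is genuinely the $\mbS^1$-equivariant blowup of $M^{(k)}$ at $p$. The first part is a constructive exercise with Delzant polygons: any convex rational open polygonal region with a bottom vertical edge extends to a full Delzant polygon by capping it with a sufficiently tall trapezoid, and Delzant's theorem then produces $N$. The second part follows from the last statement of Lemma~\ref{lem:semitoric-blowup} together with the fact that the gluing region is $\T^2$-equivariantly symplectomorphic on both sides by Lemma~\ref{lem:bottom-boundary}, so Karshon graphs are preserved under the gluing. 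No new degenerate singular points arise because the only singular points introduced by a semitoric blowup are elliptic-regular, elliptic-elliptic, and focus-focus---all non-degenerate---so $(M^{(k+1)}, \om^{(k+1)}, (J^{(k+1)}, H^{(k+1)}))$ is hypersemitoric with exactly $k+1$ focus-focus points and no degenerate points. Finally, the toric collar near the non-blown-up fixed surface persists automatically since the blowup was supported inside $V_-^{(k)}$, and a new toric collar near the blown-up fixed surface can be taken inside $\widetilde{V}_-$, completing the inductive step.
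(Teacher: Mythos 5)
There is a genuine gap, and it is at the very first step: the $\T^2$-invariant toric collars $V_\pm^{(0)}$ around the two fixed surfaces do not exist when the base $\Sigma_g$ has genus $g\geq 1$, which is exactly the case for which this lemma is needed (for $g=0$ the minimal model is already toric). A neighborhood of the fixed surface $J_{\min}^{-1}(j_{\min})\cong\Sigma_g$ is of the form $(\text{disk around a pole of }\mbS^2)\times\Sigma_g$, and for $(J,H)$ with $H=f\circ\pi_{\Sigma}$ to be toric there, $f$ would have to generate a circle action on all of $\Sigma_g$; no Morse function on a closed surface does this, and for $g\geq 2$ the surface admits no circle action at all. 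You have misapplied Remark~\ref{rmk:product-top-bdry}: the collar it provides sits over a neighborhood of the \emph{upper boundary} of $F(M)$ in the $H$-direction, i.e.\ around $\mbS^2\times\{p_{\max}\}$ where $p_{\max}$ is the maximum of $f$ --- not around the $J$-extremal fixed surfaces. Being "rotationally symmetric near the extrema of $f$" makes $H$ periodic near $\mbS^2\times\{p_{\max}\}$, which is a completely different submanifold from $\{S\}\times\Sigma_g$. Consequently the appeal to Karshon--Lerman and to Lemma~\ref{lem:semitoric-blowup} in your inductive step has no toric structure to act on, and even if one forced the surgery through, the replacement piece $\widetilde V_-$ would come from a compact toric manifold $N$ and hence carry a \emph{spherical} fixed surface, so the glued space would have the wrong genus label on its Karshon graph and would not be the blowup of $M^{(k)}$.

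This obstruction is precisely what the paper's proof is engineered to avoid, and the fix is a different decomposition rather than a patch of your induction. The paper performs \emph{all} $k$ semitoric blowups at once on an auxiliary Hirzebruch surface whose Delzant polygon has vertical edges of lengths $a-\varepsilon$ and $a-\varepsilon+ns$ (so the fixed surfaces there are spheres and the semitoric polygon machinery of Lemma~\ref{lem:semitoric-blowup} applies), and only afterwards installs the genus: it glues this semitoric system to the hypersemitoric system on $\mbS^2\times\Sigma_g$ from Proposition~\ref{prop:minsystems} (with fixed-surface area $\varepsilon$) along neighborhoods of the \emph{bottom boundary} of one momentum image and the \emph{top boundary} of the other --- regions where both systems genuinely are toric, by Lemma~\ref{lem:bottom-boundary} and Remark~\ref{rmk:product-top-bdry}. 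This "horizontal" gluing cuts each fixed surface into two pieces and reassembles them as a connected sum $\mbS^2\#\Sigma_g\cong\Sigma_g$ with the correct area, which is how the positive genus and the $k$ focus-focus points coexist in the final system. If you want to keep an inductive flavor, the induction must be run entirely on the Hirzebruch side before the single gluing, not on the ruled surface itself.
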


\begin{figure}[h]
 \centering
 \input{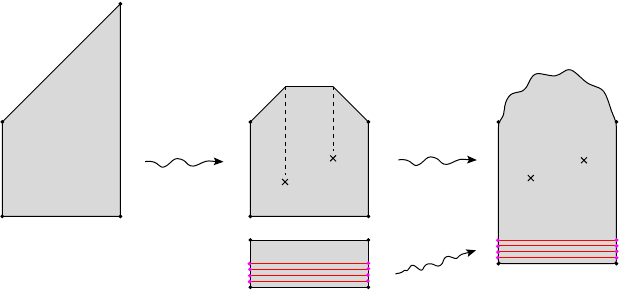_t}
 \caption{The strategy to obtain a hypersemitoric system with the desired underlying Hamiltonian $\mbS^1$-space in the proof
  of Lemma~\ref{lem:stblowups-hst}. We start with a Hirzebruch surface, perform the desired wall chops on the Hirzebruch surface, and then glue it to a system on $\mbS^2\times\Sigma_g$, where $\Sigma_g$ is a surface of genus $g$, to obtain the desired genus of the fixed surfaces.}
 \label{fig:gluing}
\end{figure}

\begin{proof}
Let $\Gamma$ denote the Karshon graph of $(M,\om,J)$.
We will construct a hypersemitoric system whose underlying
Hamiltonian $\mbS^1$-space also has $\Gamma$ as its Karshon graph, and the result
will follow.

The Karshon graph for $(M_{\mathrm{min}}, \om_{\mathrm{min}}, J_{\mathrm{min}})$ consists of two fat vertices a distance of $s>0$ apart (for which we assume without loss of generality that the $J$-values are $0$ and $s$), both with genus $g > 0$, and area label $a>0$ for the vertex at $J=0$ and $a+ns>0$ for the vertex at $J=s$, for some $n\in\Z$.

On the other hand, for some small $\varepsilon>0$, consider the $n^{\mathrm{th}}$ Hirzebruch surface with Delzant polygon given by the convex hull of $(0,0)$, $(s,0)$, $(s,a-\varepsilon+ns)$, $(0,a-\varepsilon)$.
The Karshon graph of the underlying Hamiltonian $\mbS^1$-space of this toric system has two
vertices at $J$-values $0$ and $s$, which have area labels $a-\varepsilon$ and $a-\varepsilon +ns$,
and are both labeled with genus $0$.

By assumption, $(M,\om, J)$ can be obtained from $(M_{\mathrm{min}}, \om_{\mathrm{min}}, J_{\mathrm{min}})$
by a sequence of $\mbS^1$-equivariant blowups of type~\ref{case:blowup-surface} of sizes $\lambda_1^{\mathrm{left}},\ldots,\lambda_{k_{\mathrm{left}}}^{\mathrm{left}}$
on the left fixed surface and of sizes
$\lambda_1^{\mathrm{right}},\ldots,\lambda_{k_{\mathrm{right}}}^{\mathrm{right}}$ on the right fixed surface.
Notice that this implies that $\sum_{i=1}^{k_{\mathrm{left}}}\lambda_i^{\mathrm{left}}-a>0$
and $\sum_{i=1}^{k_{\mathrm{right}}}\lambda_i^{\mathrm{right}}-(a+ns)>0$. Assume $\varepsilon>0$ to be smaller than both of these values.
Now, let $(M',\om',F')$ be a semitoric system whose polygon invariant is obtained from the polygon invariant of the Hirzebruch surface by performing wall chops of sizes $\lambda_1^{\mathrm{left}},\ldots,\lambda_{k_{\mathrm{left}}}^{\mathrm{left}}$
on the left vertical wall and sizes
$\lambda_1^{\mathrm{right}},\ldots,\lambda_{k_{\mathrm{right}}}^{\mathrm{right}}$
on the right vertical wall.
Then the Karshon graph $\Gamma'$ of the underlying $\mbS^1$-space of $(M',\om',F')$ is exactly the same as $\Gamma$ except 
for the labels on the fat vertices: the area labels on $\Gamma'$ are smaller than those of $\Gamma$ by $\varepsilon$,
and both have genus $0$ instead of genus $g$.

We will now change this system by gluing another system along its bottom to obtain the desired Karshon graph. Notice that by Lemma~\ref{lem:bottom-boundary} this system is still toric in a neighborhood of the bottom boundary.

By Proposition~\ref{prop:minsystems}, there exists a hypersemitoric system
on $(M_\varepsilon,\om_\varepsilon, F_{\varepsilon})$
with $M_{\varepsilon}=\mbS^2\times\Sigma_g$, where $\Sigma_g$ is a surface of genus $g$, whose Karshon graph consists of two fat vertices
at $J$-values $0$ and $s$, which both are labeled
with area $\varepsilon$ and genus $g$.
Moreover, as in Remark~\ref{rmk:product-top-bdry} we may assume that the
system is toric in the preimage under $F_{\varepsilon}$ of a neighborhood 
of the top boundary of $F(M)$.
To complete the proof, we will glue this system to the bottom
of the system described above, as sketched in Figure~\ref{fig:gluing}.
This is essentially the simplest case of symplectic gluing, we
will describe it briefly here, but the full details and the general
situation can be found in Gompf~\cite{Gompf-gluing}.

The system $(M',\om',F')$ is toric on the preimage $U'\subset M'$ of a neighborhood of the bottom boundary of $F'(M')$, 
and in particular we can choose 
\[
U'=F^{-1}(\{(x,y)\in\R^2\mid 0\leq x \leq s, 0 \leq y<R\})
\]
for a suitable $R>0$.
Denote by $\mathbb{D}_R\subset \C$ the open disk of radius $R$ and consider $\mbS^2\times\mathbb{D}_{R}$ with integrable system $F'':=(z,\rho)$ where $z$ is the height coordinate on $\mbS^2$ (appropriately scaled)
and $\rho$ is the radial coordinate on $\mathbb{D}_R$.
Then this system $F''$ on $\mbS^2\times\mathbb{D}_{R}$ has the same moment map image as
the open toric system on $U'$ inherited from $(M',\om',F')$,
and thus by Karshon \& Lerman~\cite[Proposition 6.5]{karshon-lerman} they are $\mathbb{T}^2$-equivariantly symplectomorphic.
Similarly, the local model for the preimage of the top boundary of $F_{\varepsilon}(M_{\varepsilon})$
is $U_{\varepsilon} =  \mbS^2\times\mathbb{D}_{r}$ with $F_{\varepsilon}=(z_{\varepsilon},-\rho_{\varepsilon})$
where $z_{\varepsilon}$ is the height coordinate on $\mbS^2$ (appropriately scaled)
and $\rho_{\varepsilon}$ is the radial coordinate on $\mathbb{D}_{r}$.
These two regions, after removing the center of each disk, may be 
embedded into a region
modeled by $\mbS^1\times \,\, ]-r,R[ \, \times\mbS^2$ with momentum map
$F=(z,\rho)$ where $z$ is as before and $\rho\in \, ]-r,R[$.
This can be used to glue the regions together, and therefore glue
together the integrable systems $(M',\om',F')$ and $(M_{\varepsilon},\om_{\varepsilon},F_{\varepsilon})$,
to obtain a new integrable system which is also a hypersemitoric system.
This process does not introduce any new $\Z_k$-spheres, and the fixed surfaces in the new
system are the connected sum of the fixed surfaces of the original two systems,
and thus they are surfaces of genus $g$ with areas $a$ and $a+ns$. Therefore,
this system has the desired Karshon graph.
\end{proof}


\subsection{The size of flaps related to Hamiltonian-Hopf bifurcations}
\label{sec:large-flap}

For the `model proof' in Section~\ref{sec:follow-ex} to work in full generality we need to show that the necessary flaps (cf.\ Definition \ref{def:flap}) can be made large enough to contain the required blowups as in {\em Stage 3} in Section \ref{sec:follow-ex}. The idea worked out in this section is as follows: first, we produce small flaps using Dullin $\&$ Pelayo's technique (cf.\ Example \ref{ex:dullin-pelayo}) and then we prove that we can enlarge each small flap to the necessary size.



\subsubsection{Some notation on $\mbS^1$-equivariant blowups of $\mbS^1$-spaces}
\label{sec:blowupNotation}

A vertex of a Karshon graph is \emph{isolated} if it is a non-fat vertex
which is not connected to any edges. Note that in this case the absolute value of both weights of the fixed point associated with this vertex is one. For instance, vertices associated to focus-focus points are always isolated.

Let $(M,\om,J)$ be a Hamiltonian $\mbS^1$-space, $\Gamma$ the associated Karshon graph, and let $v_0$ be an isolated vertex in $\Gamma$ which is not at the maximum or minimum value of $J$.
Now set $\Gamma_0 := \Gamma$ and suppose that $\Gamma_1$ is a Karshon graph obtained from $\Gamma_0$ by performing an $\mbS^1$-equivariant blowup at $v_0$. Similarly, let $\Gamma_2,\ldots,\Gamma_k$ be Karshon graphs such that for each $1\leq\ell\leq k$, $\Gamma_\ell$ is obtained from $\Gamma_{\ell-1}$ by an $\mbS^1$-equivariant blowup. Further suppose that, at each step, the $\mbS^1$-equivariant blowup is performed on one of the {\em new} vertices created by this process. Note that therefore all blowups will effect the same connected component of the graph. Thus, $\Gamma_k$ is equal to $\Gamma_0$ except that the vertex $v_0$ is replaced by a more complicated component. 
In such a situation we say that $\Gamma_k$ is obtained from $\Gamma$ by a \emph{sequence of blowups at $v_0$}.

For $\ell\in\{0,\ldots,k-1\}$ let $(M_\ell,\om_\ell,J_\ell)$ be the Hamiltonian $\mbS^1$-space associated to $\Gamma_\ell$.
In the remainder of this section, we will introduce an open 
set $S\subset M$ which we will call a \emph{blowup set} 
of this sequence of blowups. 
The blowup set will be important for us because when performing the sequence
of $\mbS^1$-equivariant blowups at $v_0$ the symplectic manifold $M$ is only changed on
 the set $\overline{S}$, the closure of the set $S$.
That is, there is a natural symplectomophism from an open dense subset of $M_k$
onto $M\setminus \overline{S}$.
Thus, any structure (such
as an integrable system) on $M$ can be passed automatically to $M_k$ away from $S$, and it is
only around $S$ that we must worry about adapting the structure to the new manifold.

Now we will define the set $S$ which can roughly be thought of in the following way: for each blowup an equivariantly embedded ball is removed and $S$ is the union of the images of these balls mapped back to $M$. There is an example in Figure~\ref{fig:blowup-set}.
Performing an $\mbS^1$-equivariant blowup on $\Gamma_\ell$ corresponds to an $\mbS^1$-equivariant blowup on $(M_\ell,\om_\ell, J_\ell)$ which (as discussed in Section~\ref{sec:S1-blowups}) is performed by removing an $\mbS^1$-equivariantly embedded ball from $(M_\ell,\om_\ell,J_\ell)$ and collapsing the boundary. 
To make this more precise, for any $r>0$ let $B^4(r)$ denote the ball of radius $r$, and let $\phi_\ell\colon B^4(r_\ell)\hookrightarrow M_{\ell}$ be a choice of such
an $\mbS^1$-equivariant embedding (for some $r_\ell>0$) and let 
\[
 S_\ell := \phi_\ell(B^4(r_\ell))\subset M_\ell.
\]
Then, $M_{\ell+1}$ is defined as the quotient space
\[
 M_{\ell+1} = \left(M_\ell \setminus S_\ell \right) / \sim_\ell,
\]
where $\sim_\ell$ is the identity relation on $M_\ell \setminus \overline{S_\ell}$
and identifies two points of $\partial(S_\ell)\cong \mbS^3$
if and only if they are in the same fiber of a choice of Hopf fibration.
Thus, for each $0 < \ell \leq k$ the quotient map 
\begin{equation}\label{eqn:blowupmap}
 \psi_\ell\colon M_{\ell-1}\setminus S_{\ell-1} \to M_\ell
\end{equation}
is a continuous map and $\psi_1^{-1}\circ\ldots\circ\psi_\ell^{-1}(S_\ell)\subset M$.
Now, define $S\subset M$ by 
 \begin{equation}\label{eqn:blowupset}
  S := S_{0, \dots, k}:= S_0 \cup \left(\bigcup_{\ell=1}^k (\psi^{-1}_1\circ\ldots\circ\psi^{-1}_\ell)(S_\ell)\right).
 \end{equation}
\begin{definition}\label{def:blowupset}
The set $S= S_{0, \dots, k}\subset M$ from the above construction is called a 
\emph{blowup set at $p$} associated to the sequence of blowups used to obtain $\Gamma_k$ from $\Gamma = \Gamma_0$,
and for each $\ell$ the set $\Sigma_\ell := \left(\partial(S_{\ell-1})/\sim_{\ell-1} \right)\subset M_\ell$ is called the
\emph{exceptional divisor}.
\end{definition}
Note that the blowup set depends on the choice of $\mbS^1$-equivariant embeddings
$\phi_\ell$ used to perform the blowups, and there are different $\mbS^1$-equivariant embeddings
which produce isomorphic $\mbS^1$-spaces when using those embeddings to
perform $\mbS^1$-equivariant blowups.

\begin{figure}
\begin{center}
\begin{subfigure}[t]{.90\textwidth}

\begin{center}
\begin{tikzpicture}[scale = .5]
\filldraw[draw=black, fill=gray!60] (0,0) node[anchor=north,color=black]{}
  -- (0,3) node[anchor=south,color=black]{}
  -- (3,3) node[anchor=north,color=black]{}
  -- (6,0) node[anchor=north,color=black]{}
  -- cycle;
  
\draw[black,fill=black] (0,0) circle (.07cm);
\draw[black,fill=black] (0,3) circle (.07cm);
\draw[black,fill=black] (3,3) circle (.07cm);
\draw[black,fill=black] (6,0) circle (.07cm);

\draw [->] (6.15,1) -- (6.75,1);

\begin{scope}[xshift = 200pt]

\filldraw[draw=black, fill=gray!60] (0,0) node[anchor=north,color=black]{}
  -- (0,3) node[anchor=south,color=black]{}
  -- (2,3) node[anchor=north,color=black]{}
  -- (4,2) node[anchor=north,color=black]{}
  -- (6,0) node[anchor=north,color=black]{}
  -- cycle;

\draw[black,fill=black] (0,0) circle (.07cm);
\draw[black,fill=black] (0,3) circle (.07cm);
\draw[black,fill=black] (2,3) circle (.07cm);
\draw[black,fill=black] (4,2) circle (.07cm);
\draw[black,fill=black] (6,0) circle (.07cm);

\draw [->] (6.15,1) -- (6.75,1);

\begin{scope}[xshift = 200pt]
  
\filldraw[draw=black, fill=gray!60] (0,0) node[anchor=north,color=black]{}
  -- (0,3) node[anchor=south,color=black]{}
  -- (2,3) node[anchor=north,color=black]{}
  -- (3,2.5) node[anchor=north,color=black]{}
  -- (4.5,1.5) node[anchor=north,color=black]{}
  -- (6,0) node[anchor=north,color=black]{}
  -- cycle;

\draw[black,fill=black] (0,0) circle (.07cm);
\draw[black,fill=black] (0,3) circle (.07cm);
\draw[black,fill=black] (2,3) circle (.07cm);
\draw[black,fill=black] (3,2.5) circle (.07cm);
\draw[black,fill=black] (4.5,1.5) circle (.07cm);
\draw[black,fill=black] (6,0) circle (.07cm);

\end{scope}
\end{scope}
\end{tikzpicture}
\end{center}
\caption{A sequence of blowups on the first Hirzebruch surface represented by corner
chops on the associated Delzant polygon. The images of the elliptic-elliptic
points (i.e.~the vertices of the polygon) are marked by black dots for better visibility.}
\label{fig:blowup_seta}
\end{subfigure}\quad
\begin{subfigure}[t]{.90\textwidth}
\begin{center}
\begin{tikzpicture}

\filldraw[draw=none, fill=gray!60] (0,0) node[anchor=north,color=black]{}
  -- (0,3) node[anchor=south,color=black]{}
  -- (3,3) node[anchor=north,color=black]{}
  -- (6,0) node[anchor=north,color=black]{}
  -- cycle;

\filldraw[draw=gray!90, fill=gray!90] (2,3) node[anchor=north,color=black]{}
  -- (3,3) node[anchor=south,color=black]{}
  -- (4.5,1.5) node[anchor=south,color=black]{}
  -- (3,2.5) node[anchor=north,color=black]{}
  -- cycle;  

\filldraw[draw=black, fill=none] (0,0) node[anchor=north,color=black]{}
  -- (0,3) node[anchor=south,color=black]{}
  -- (3,3) node[anchor=north,color=black]{}
  -- (6,0) node[anchor=north,color=black]{}
  -- cycle;
  
\draw (2,3) -- (3,2.5);
\draw (3,2.5) -- (4.5,1.5);

\draw[black,fill=black] (2,3) circle (.03cm);
\draw[black,fill=black] (3,3) circle (.03cm);
\draw[black,fill=black] (4.5,1.5) circle (.03cm);
\draw[black,fill=black] (3,2.5) circle (.03cm);

\draw [dashed] (2,-.5) -- (2,3.5);
\draw [dashed] (4.5,-.5) -- (4.5,3.5);


\draw (2, -.5) node[anchor=north]{$j=a$};
\draw (4.5, -.5) node[anchor=north]{$j=b$};

\end{tikzpicture}
\end{center}
\caption{The Delzant polygon for the first Hirzebruch surface with the image of the blowup set
for the blowups in Figure~\ref{fig:blowup_seta} shaded in. Notice that the shaded region is \emph{not}
a triangle, it is a four-sided polygon (we have marked its vertices with black dots).
The $j$ parameter from Lemma~\ref{lem:reduced-blowup-set} parameterizes the image of the blowup
set in the reduced space, and for each fixed value of $j$ the image in the reduced space is 
homeomorphic to a disk.}
\label{fig:blowup_setb}
\end{subfigure}
\end{center}
\caption{The blowup set of a sequence of blowups is essentially the set
of points removed during the blowing up process, as is shown in this example on
the first Hirzebruch surface.}
\label{fig:blowup-set}
\end{figure}


\subsubsection{Two technical lemmas}
\label{sec:technicalLemmas}

The following two statements formulate and prove technical results that will be needed in the proof of Proposition~\ref{prop:large-flap}.
As in Section~\ref{sec:reduction}, we denote the quotient of $M$ by the $\mbS^1$-action by $\hat{M}=M/\mbS^1$ with
quotient map $\pi\colon M\to \hat{M}$, and we denote the
symplectic quotient at level $J=j$ by $\hat{M}_j=J^{-1}(j)/\mbS^1$. Furthermore, we let $\mathrm{smooth}(\hat{M})$ be those points
in $\hat{M}$ which correspond to orbits in $M$ on which $\mbS^1$ acts freely (and thus $\mathrm{smooth}(\hat{M})$ inherits a smooth structure) and we
set $\mathrm{sing}(\hat{M}) = \hat{M}\setminus\mathrm{smooth}(\hat{M})$.

Given an integrable system $(M,\om,(J,H))$ where $J$ generates an
$\mbS^1$-action, the intuition behind the following lemma is that, as long as one stays away from the
non-free orbits of the $\mbS^1$-action,
it is not difficult to change $H$ by prescribing its behavior on the quotient $\hat{M}$.



\begin{lemma}\label{lem:reduction-extending}
Let $(M,\om,(J,H))$ be a completely integrable system such that $J$ generates an effective $\mbS^1$-action. 
Then 
\begin{enumerate}
 \item 
 There exists a function $\hat{H}\colon \hat{M}\to\R$ such that $\hat{H}\circ \pi = H$ and such that the restriction $\hat{H}|_{\mathrm{smooth}(\hat{M})}: \mathrm{smooth}(\hat{M}) \to \R$ is smooth.
 \item
 Let $\hat{U}\subset \hat{M}$ be an open neighborhood of $\mathrm{sing}(\hat{M})$. Furthermore, let $\hat{H}'\colon \hat{M}\to\R$ be a function which is smooth on $\mathrm{smooth}(\hat{M})$ and equal to $\hat{H}$ on $\hat{U}$, and assume that $\mathrm{d}\hat{H}'$ is non-zero almost everywhere on $\hat{M}$. 
 Then, setting $H':=\hat{H}'\circ\pi$, the tuple $(M,\om, (J,H'))$ forms a completely integrable system.
\end{enumerate}
\end{lemma}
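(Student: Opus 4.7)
The plan is to handle parts (1) and (2) separately, treating part (2) by checking its three constituent conditions in turn.

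For part (1), I would start from the observation that the Poisson commutation $\{J,H\}=0$ forces $H$ to be preserved by the flow of $\mathcal{X}^J$, and hence constant on the orbits of the $\mbS^1$-action generated by $J$. The prescription $\hat{H}([p]) := H(p)$ therefore defines an unambiguous function $\hat{H}\colon\hat{M}\to\R$ satisfying $\hat{H}\circ\pi = H$. On $\mathrm{smooth}(\hat{M})$ the quotient $\pi$ restricts to a smooth principal $\mbS^1$-bundle, so smoothness of $\hat{H}|_{\mathrm{smooth}(\hat{M})}$ is inherited from smoothness of $H$ by descent along a submersion.

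For part (2), I would check in turn that $H' := \hat{H}'\circ\pi$ is smooth on $M$, that $\{J,H'\}=0$, and that $\mathrm{d}J$ and $\mathrm{d}H'$ are linearly independent almost everywhere on $M$. For smoothness, decompose $M = V_1 \cup V_2$ with $V_1 := \pi^{-1}(\hat{U})$ and $V_2 := \pi^{-1}(\mathrm{smooth}(\hat{M}))$; this is indeed a cover because $\hat{U} \supseteq \mathrm{sing}(\hat{M})$. On $V_1$ one has $H' = \hat{H}'\circ\pi = \hat{H}\circ\pi = H$ (smooth by hypothesis), while on $V_2$ smoothness follows because $\pi|_{V_2}$ is a smooth submersion and $\hat{H}'|_{\mathrm{smooth}(\hat{M})}$ is smooth. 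The Poisson commutativity is then automatic: $H'$ is $\mbS^1$-invariant by construction, so $\mathcal{X}^J(H') = 0$, equivalently $\{J,H'\}=0$.

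The main obstacle, and the point at which the hypothesis $\mathrm{d}\hat{H}'\neq 0$ almost everywhere enters, is the rank-two condition. My plan is to invoke Lemma~\ref{lem:reduction-type} to translate the rank condition at a point $p$ on a free $\mbS^1$-orbit into the statement that $\pi(p)$ is a regular point of the restricted Hamiltonian $\hat{H}'|_{\hat{M}_{J(p)}}$ on the reduced space. Since the free locus is open and dense in $M$, it suffices to check this regularity for almost every point in its $\pi$-image. On $\pi^{-1}(\hat{U})$ the identity $H' = H$ combined with integrability of the original system $(M,\om,(J,H))$ gives the rank-two condition immediately. Outside $\pi^{-1}(\hat{U})$ the point lies in $V_2 \subseteq \pi^{-1}(\mathrm{smooth}(\hat{M}))$, where the hypothesis that $\mathrm{d}\hat{H}'$ is almost everywhere nonzero on $\hat{M}$, interpreted as nondegeneracy of $\hat{H}'$ on the reduced spaces $\hat{M}_j$, provides the needed regularity. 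The delicate step is precisely this measure-theoretic comparison between nondegeneracy of $\hat{H}'$ on $\hat{M}$ and nondegeneracy of its restriction to the two-dimensional reduced spaces $\hat{M}_j$; the plan is to use that $J$ descends to a smooth function $\hat{J}$ on $\mathrm{smooth}(\hat{M})$ with nonvanishing differential away from the non-free locus, together with a fibered Sard-type argument to conclude that the singular set of $(J,H')$ has measure zero in $M$.
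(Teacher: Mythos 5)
Most of your proposal tracks the paper's proof: part (1) is the same descent argument, the Poisson commutativity of $H'$ with $J$ via $\mbS^1$-invariance is the same, and your two-chart smoothness check for $H'$ (using $H'=H$ on $\pi^{-1}(\hat{U})$ and the submersion $\pi$ over $\mathrm{smooth}(\hat{M})$) just makes explicit what the paper dispatches with ``the conditions are local.'' Your translation of the rank condition at a free point $p$ into regularity of $\hat{H}'|_{\hat{M}_{J(p)}}$ at $\pi(p)$ is also exactly the mechanism the paper uses (it states it as: $\mathrm{d}H'\wedge\mathrm{d}J(p)=0$ forces the induced differential on the reduced space to vanish).

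The genuine gap is in how you propose to close the argument. You correctly flag that there are two readings of the hypothesis ``$\mathrm{d}\hat{H}'\neq 0$ almost everywhere'' --- the full differential on the three-dimensional quotient $\hat{M}$ versus the differentials of the restrictions $\hat{H}'_j$ to the two-dimensional reduced spaces $\hat{M}_j$ --- and you propose to bridge them with a ``fibered Sard-type argument.'' No such bridge exists: if $\hat{H}'$ coincides on an open set $W\subset\mathrm{smooth}(\hat{M})\setminus\hat{U}$ with a nonconstant function of $\hat{J}$ alone, then $\mathrm{d}\hat{H}'\neq 0$ on $W$ while $\mathrm{d}\hat{H}'_j$ vanishes identically on $W\cap\hat{M}_j$ for every $j$, so every point of $\pi^{-1}(W)$ is a singular point of $(J,H')$ and integrability fails on a set of positive measure. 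Sard's theorem controls critical \emph{values}, not the locus where $\mathrm{d}\hat{H}'$ is proportional to $\mathrm{d}\hat{J}$, so it cannot rule this out. The step only works if the nondegeneracy hypothesis is read on the reduced spaces (equivalently, as $\mathrm{d}\hat{H}'\wedge\mathrm{d}\hat{J}\neq 0$ almost everywhere), which is how the paper uses it --- and under that reading no measure-theoretic argument is needed at all: at a free point $p$ with $j=J(p)$ one has the pointwise equivalence $\mathrm{d}H'\wedge\mathrm{d}J(p)=0 \iff \mathrm{d}\hat{H}'_j(\pi(p))=0$ (since $\mathrm{d}H'$ annihilates $\mathcal{X}^J$ and $\mathrm{d}J(p)\neq 0$), the non-free locus is null, and $\pi$-preimages of null sets are null. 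Replace the Sard step by this pointwise identity and your proof is complete.
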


\begin{proof}
The integral $H$ descends to a map $\hat{H}$ on $\hat{M}$ since $H$ is constant under the flow of $J$, which generates the $\mbS^1$-action, and
$\hat{H}$ is smooth on $\mathrm{smooth}(\hat{M})$ since the action of $\mbS^1$ is free on $\mathrm{smooth}(\hat{M})$.

Next suppose that $\hat{U}$ and $\hat{H}'$ are as in the statement
and let $H':=\hat{H}'\circ\pi$. The conditions for the function $H'$ to form an integrable system with $J$ are all local, so there is nothing to check in $\pi^{-1}(\hat{U})$
since $H=H'$ in that set. 
By definition, the function $H':=\hat{H}'\circ\pi$ is constant on the orbits of the $\mbS^1$-action since they are the orbits of the flow of $J$, so $H'$ and $J$ Poisson commute.
For any $p\in M$, note that $\mathrm{d}H'\wedge \mathrm{d}J(p) = 0$ implies that $\mathrm{d}\hat{H}'(\pi(p))=0$. By assumption, $\mathrm{d}\hat{H}'$ is non-zero almost everywhere in $\hat{M}$, so $\mathrm{d}H'$ and $\mathrm{d}J$ are linearly independent almost everywhere in $M$.
\end{proof}

Denote by $\mathbb{D}_r := \{z\in\C\mid |z|^2<r^2\}$ the standard disk of radius $r>0$. 
The purpose of the following statement is to show that the blowup set of a sequence
of blowups descends in the reduced space to a set which can be parameterized as a disk
times an open interval, in which the radius of the disk varies along the interval
and shrinks to a point at each endpoint of the blowup set,
see Figure~\ref{fig:blowup-set}.
This result will enable us to work locally on the disk when editing integrable systems
near a blowup set in Proposition~\ref{prop:large-flap}.

\begin{lemma}\label{lem:reduced-blowup-set}
Let $(M,\om,J)$ be a Hamiltonian $\mbS^1$-space and let $p\in M$ be a fixed point with weights $m$ and $-n$ (where $m,n \in\Z_{> 0}$) and set $j_p:=J(p)$.
Let $S\subset M$ be a blowup set at $p$ as in Definition~\ref{def:blowupset}.
Let $\hat{S}:=S/\mbS^1$ and $\hat{M}:=M/\mbS^1$. 
Then there exist $a,b\in\R$ such that $J(S)= \ ]a,b[$. Moreover, for each $j\in\,]a,b[$ there exists $R_j>0$ so that $\hat{S} = \rho(\mathcal{A})$ 
  where
 \[
  \mathcal{A} := \mathcal{A}(a,b,\{R_j\}_{j\in]a,b[}) := \{(z,j)\in\C\times\, ] a,b [\,\,\, \mid z\in\mathbb{D}_{R_j}\}
 \]
 and $\rho\colon \mathcal{A} \to \hat{M}$ is a continuous map satisfying:
 \begin{enumerate}
  \item $\rho(z,j)\in J^{-1}(j)$ for all $(z,j)\in\mathcal{A}$;
  \item $\hat{S}\,\cap\,\mathrm{smooth}(\hat{M}) = \hat{S}\setminus \{\pi(p)\}$
   and the restriction of $\rho$ to $\mathcal{A}\setminus\{(0,j_p)\}$ is a diffeomorphism onto $\hat{S}\setminus\{\pi(p)\}$;
  \item the assignment $j\mapsto R_j$ is continuous and piecewise smooth, and  
  \[
   \lim_{\substack{j\to a\\ j>a}}R_j = \lim_{\substack{j\to b\\ j<b}} R_j = 0.
  \]
    \end{enumerate}
\end{lemma}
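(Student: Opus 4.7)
The plan is to proceed by induction on $k$, where $S = S_{0,\ldots,k}$ corresponds to a sequence of $k+1$ blowups at $p$.

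\textbf{Base case ($k=0$, a single blowup at $p$):} I would use the defining $\mbS^1$-equivariant symplectic chart $\phi_0\colon B^4(r_0)\hookrightarrow M$ with $\phi_0(0)=p$, in which the $\mbS^1$-action reads $t\cdot(w,z) = (e^{\mathrm{i}mt}w,e^{-\mathrm{i}nt}z)$ and the Hamiltonian is $J\circ\phi_0 = j_p + (m\abs{w}^2 - n\abs{z}^2)/2$. This immediately yields $J(S_0) = (j_p - nr_0^2/2,\, j_p + mr_0^2/2) =: (a,b)$. For each $j = j_p + c\in (a,b)$, the $\mbS^1$-invariant $W = w^n z^m$ serves as a coordinate on the reduced slice $(J^{-1}(j)\cap B^4(r_0))/\mbS^1$, and a direct computation from the constraints $m\abs{w}^2 - n\abs{z}^2 = 2c$ and $\abs{w}^2 + \abs{z}^2 < r_0^2$ shows the slice is an open disk of radius $R_j$ that is smooth in $j$ with $R_j \to 0$ at both endpoints. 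Defining $\rho$ by sending $(W, j)$ to the unique $\mbS^1$-orbit in $S_0\cap J^{-1}(j)$ with that invariant value gives conditions (1)--(3).

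\textbf{Inductive step:} Assuming the result for sequences of at most $k$ blowups, consider a sequence of $k+1$ blowups at $p$. The first blowup produces $S_0$, handled by the base case, along with the blown-up manifold $M_1$. In $M_1$ the remaining $k$ blowups constitute one or two sub-sequences starting at the new fixed points $p'_+, p'_-$ on the exceptional divisor $\Sigma_1\cong\mbS^2$, with $J$-values $j_p + m\lambda_0$ and $j_p - n\lambda_0$, where $\lambda_0 = r_0^2/2$. The inductive hypothesis applied in $M_1$ produces disk-bundle parameterizations of their blowup sets, which pull back via $\psi_1^{-1}\colon M_1\setminus\Sigma_1 \to M\setminus\overline{S_0}$ to regions of $M$ adjacent to $\partial S_0$; together with $S_0$ these make up $S$, and the interval $(a,b)$ is the union of the individual $J$-ranges.

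\textbf{Main obstacle:} The principal work is to assemble these parameterizations into a single continuous map $\rho\colon \mathcal{A}(a,b,\{R_j\})\to\hat{S}$ with piecewise smooth $R_j$. The interface is $\partial S_0\subset M$, which $\psi_1$ identifies $\mbS^1$-equivariantly with $\Sigma_1\subset M_1$ via the weighted Hopf fibration; in the reduced picture, the arc $\pi(\partial S_0)\subset\hat{M}$ is identified with the arc $\pi_1(\Sigma_1)\subset\hat{M}_1$, parameterized by $J$. The $\mbS^1$-invariants defining the disk coordinates on each side agree up to a smooth reparameterization on this interface, giving continuity of the combined $R_j$, with possible loss of smoothness precisely at the $J$-values of the intermediate fixed points $p'_\pm$ and their descendants. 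The boundary behavior $R_j\to 0$ as $j\to a^+$ or $j\to b^-$ is inherited from the base case applied to the outermost balls in the sequence --- those whose $J$-ranges attain $a$ and $b$.
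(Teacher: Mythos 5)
Your overall strategy coincides with the paper's: induct on the number of blowups, compute the base case in the equivariant Darboux chart, and glue in the reduced spaces. The base case is fine --- the paper parameterizes the reduced slice by the real slice $\{z\in\R_{>0}\}$ rather than by the invariant $w^nz^m$, but these are equivalent --- and your formula for $J(S_0)$ matches case~\ref{case:interior-blowup}. The substantive divergence is in the inductive step: you peel off the \emph{first} blowup and recurse on the children $p'_\pm$ inside $M_1$, whereas the paper peels off the \emph{last} blowup and keeps the inductive hypothesis rooted at $p$ in $M$. The paper's choice is not cosmetic: the sub-blowup-sets rooted at $p'_\pm$ sit at fixed points one of whose weights has absolute value $m+n\geq 2$, so they contain the non-free $\Z_{m+n}$-orbits of the exceptional sphere; for these, item (2) of the lemma as stated is false and the intermediate reduced spaces $\hat{M}_{1,j}$ have orbifold points exactly where you need to glue. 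Your induction hypothesis therefore has to be reformulated before it can be invoked at $p'_\pm$, while the paper never needs the statement for weights other than those of $p$.

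The genuine gap is in your description of the interface, which is where all the work of the inductive step lives. You assert that ``the arc $\pi(\partial S_0)\subset\hat{M}$ is identified with the arc $\pi_1(\Sigma_1)\subset\hat{M}_1$.'' But $\pi(\partial S_0)$ is not an arc: $\partial S_0\cong\mbS^3$ has two-dimensional quotient $\cong\mbS^2$, and its slice at a level $j$ strictly between the $J$-values of $p'_\pm$ is a \emph{circle}, which the blowdown collapses to the single point $\pi_1(\Sigma_1)\cap\hat{M}_{1,j}$. This collapse is precisely the mechanism that makes the topology work: the reduced sub-blowup-set at $p'_+$ at level $j$ is a disk in $\hat{M}_{1,j}$ centered at that point, so after deleting the point and pulling back it becomes an \emph{annulus} in $\hat{M}_j$ whose inner boundary is exactly the boundary circle of the disk $\hat{S}_{0,j}$; disk plus circle plus annulus is again a disk. (This is the same observation the paper makes, phrased as ``an annulus which is perfectly filled in'' by the previously constructed disk.) If one instead glues two disks along an arc, or along a point of their boundaries as your ``arc identified with arc'' picture suggests, the union is not a disk and the argument fails. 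Once the interface is described correctly and the induction hypothesis is restated so that it applies at $p'_\pm$ (or the induction is reorganized as in the paper), the rest of your outline --- continuity and piecewise smoothness of $R_j$, the vanishing of $R_j$ at the endpoints coming from the outermost balls --- goes through.
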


\begin{proof}
We proceed by induction on the number of blowups.

{\em Base case:} Suppose that $S$ is the blowup set of a \textit{single} blowup of size $r$ at a fixed point of the $\mbS^1$-action with weights $m$ and $-n$ where $m,n\in\Z_{>0}$. 
Then $S$ is the image of an $\mbS^1$-equivariantly embedded ball. This
embedding induces local coordinates $(z,w)$ on $S$ which can be extended to some $U\subset M$ with $S\subset U$, such that the $\mbS^1$-action is given by $\lambda\cdot (z,w) = (\lambda^m z, \lambda^{-n} w)$ for $\lambda\in\mbS^1$ with momentum map $J(z,w) = \frac{1}{2}(m|z|^2-n|w|^2)$. 
Without loss of generality, we may assume the coordinates $(z,w)$ to be centered at this ball, meaning $S = \{(z,w)\in\C^2 \mid |z|^2+|w|^2 < r^2\}$.

Choosing $H(z,w):=\frac{1}{2}|w|^2$ yields a toric system $(J,H)$ on $U$. Take some level set $J^{-1}(j)$.
For the remainder of the proof we assume without loss of generality that $j_p=0$.
We will now show that $\hat{S}_j := (J^{-1}(j)\cap S)/\mbS^1\subset J^{-1}(j)/\mbS^1$ is diffeomorphic
to a disk when $j\neq 0 = j_p$.

It is sufficient to consider the case $j > 0$ since the case $j < 0$ goes analogously.
By the definition of $J$ and $j$, we have $m|z|^2-n|w|^2=2j>0$ implying $z\neq 0$, since $j\neq 0$.
Using the $\mbS^1$-action we may assume that $z\in\R^{>0}$, which yields coordinates $z\in\R^{>0}$ and $w\in\C$ on $J^{-1}(j)/\mbS^1$.
Solving for $z$ under the assumption that $J(z,w)=j$ yields $z = \sqrt{\frac{2j+n|w|^2}{m}}$. From $|z|^2+|w|^2<r^2$ we obtain eventually
 \[0\leq |w|^2 < R_j \quad \textrm{ where } \quad R_j:=\frac{r^2-\frac{2j}{m}}{\frac{n}{m}+1}.\]
Notice that $R_j$ varies continuously with $j$ and $R_j\to 0$ as $j \to \frac{1}{2} mr^2$. 
Let $\mathcal{A}_{j>0} = \{(z,j)\in\mathcal{A} \mid j>0\}$.
We can now partially define $\rho$ by defining its restriction $\rho|_{\mathcal{A}_{j>0}}\colon \mathcal{A}_{j>0}\to\hat{S}$ 
by 
\[
 \rho(w,j) = \left( \sqrt{\frac{2j+n|w|^2}{m}}, w\right),
\]
which is surjective onto $\hat{S}\cap J^{-1}(\R_{>0})$.
The same technique can be used to define $\rho$ for $j<0$. The same argument works when $j=0$ except that the
choice of coordinates will not be smooth at $z=0$ when $j=0$.
In these coordinates, $z=0$ and $j=0$ correspond to $\pi(p)$.
Since $\rho$ varies smoothly with $j$, we conclude that $\rho|_{\mathcal{A}\setminus\{(0,j_p)\}}$
is a diffeomorphism onto $\hat{S}\setminus\{\pi(p)\}$.
Therefore, the lemma is proved in the case that $S$ is the blowup set of a single blowup.

{\em Inductive step:}  Now suppose that $\tilde{S}=\tilde{S}_{0, \dots, k}$ is a blowup set obtained from taking $k>0$ blowups and has the properties \emph{(1)-(3)} described in the statement of this lemma.

Let $S=S_{0, \dots, k, k+1}$ be a blowup set obtained by taking the same sequence of blowups as for $\tilde{S}$ plus one additional blowup. 
We will prove that for each $j\in J(\tilde{S}_{k+1})$ the image of $\tilde{S}_{k+1}$ in the reduced space at level $j$ is diffeomorphic to a disk in a way that varies smoothly with $j$, except at $(z,j)=(0,j_p)$.
After performing the first $k$ blowups, the $(k+1)^{st}$ blowup is done by removing an $\mbS^1$-equivariantly embedded ball, whose image $S_{k+1}$ has the desired properties \emph{(1)-(3)} as in the statement from the result of the base case.
Let 
\[
 \psi := \psi_{k+1} \circ \ldots \circ \psi_1
\]
be a composition of the maps from Equation~\eqref{eqn:blowupmap}, so that $\psi$ maps $M$ to $M$ blown up $k+1$ times,
and set $\tilde{S}_{k+1}:=\psi^{-1}(S_{k+1})$. Then, following
Equation~\eqref{eqn:blowupset}, the blowup set $S$ is obtained as the union $S=\tilde{S} \cup\ \tilde{S}_{k+1}$.
For any value of $j$ such that $\tilde{S}\cap J^{-1}(j)=\varnothing$ or $\tilde{S}_{k+1}\cap J^{-1}(j)=\varnothing$ there is
nothing to prove, since from the base case (respectively, by the inductive hypothesis) we already know that such a set is a disk which varies smoothly with $j$ (i.e.~it has properties \emph{(1)-(3)} from the statement
of the present lemma).

Now suppose that $J^{-1}(j)$ intersects both $\tilde{S}$ and $\tilde{S}_{k+1}$. Since this sequence of blowups started at a focus-focus point, both weights at a focus-focus point have absolute value 1. Furthermore,
$\mbS^1$-equivariant blowups on points with non-zero weights (which are necessarily of type~\ref{case:interior-blowup}) always produce new fixed points which have one
weight with a strictly higher absolute value.
Thus, we conclude that
the $(k+1)^{\mathrm{st}}$ blowup must have occurred at a point for which one of the weights has absolute value strictly greater than 1, call it $\eta\in\Z$.
The $\Z_\eta$-sphere emanating from that point is the exceptional divisor from one of the earlier blowups, and since the
$\Z_k$-spheres (for $|k|>1$) emanating from a point are always included in the interior of a blowup of that point, the set $S_{k+1}$ includes this exceptional divisor in its interior at the level $J=j$.
Taking the image in the reduced space at level $j$, this implies that the image of $\tilde{S}_{k+1}$ in the reduced space at level $j$
 is an annulus which is perfectly filled in by the image of $\tilde{S}$ in the reduced space (which corresponds to the $\Z_\eta$-sphere),
 and thus it is a disk, as desired.
\end{proof}

\subsubsection{Resizing flaps}

The aim of the following statement is to turn blowups happening around a focus-focus point into blowups happening around an elliptic-elliptic point by transforming the original system into a system which includes a flap. The main difficulty is making sure that the flap is large enough to include the entire set on which the blowups occur.

\begin{definition}\label{def:on-flap}
 Suppose that $(M,\om,F)$ is an integrable system which includes a flap (as described in Definition~\ref{def:flap}) 
 such that there is exactly one elliptic-elliptic singular point $p\in M$ on the flap.
Let $\tau\colon M\to B$ be the singular Lagrangian fibration induced by $F$ as defined in Section~\ref{sec:fibration}. 
 Then an open connected set $S\subset M$ is \emph{on the flap} if
 \begin{enumerate}
	  \item $S$ does not contain any hyperbolic-regular or degenerate singular points, and 
  \item $\tau(S)$ and $p$ are in the same connected component of $B\setminus \overline{\tau (M^{\mathrm{HR}})}$,
  where $M^{\mathrm{HR}}\subset M$ denotes the set of hyperbolic-regular points of $M$.
 \end{enumerate}
\end{definition}
This is illustrated in Figure \ref{fig:trick} on the very right and Figure \ref{fig:hirz-example-hypersemi2}, which show systems for which
blowups have been performed `on the flaps'.

 
\begin{proposition}\label{prop:large-flap}
Let $(M,\om,F=(J,H))$ be a compact integrable system such that $J$ generates an effective $\mbS^1$-action and let $\Gamma$ be the Karshon graph of the Hamiltonian $\mbS^1$-space $(M,\om,J)$. 
Let $p\in M$ be a focus-focus singular point of $(M,\om,F)$ and let $v_p$ be the corresponding isolated vertex in $\Gamma$. 
Suppose that a Karshon graph $\Gamma_k$ is obtained from $\Gamma=:\Gamma_0$ by a sequence of $k> 0$ blowups at $v_p$. Let $S:= S_{0, \dots, k}\subset M$ be a choice of blowup set (as in Definition~\ref{def:blowupset})
for this sequence of blowups, and further suppose that an open neighborhood of $\overline{S}$ contains no degenerate singular points of the integrable system. Then for any open neighborhood $U$ of $\overline{S}$ there exists an integrable system $(M,\om,\widetilde{F}=(J,\widetilde{H}))$ satisfying the following items:
 \begin{enumerate}
  \item $\widetilde{H}$ and $H$ coincide outside of $U$,
  \item $p$ is an elliptic-elliptic singular point for $(M,\om,\widetilde{F})$,
  \item\label{item:on-the-flap} compared to $(M,\om,F)$, the system $(M,\om,\widetilde{F})$ contains a new flap around $p$, and 
  $S$ lies entirely on this new flap as defined in Definition~\ref{def:on-flap}.
 \end{enumerate}
 Furthermore, if $(M,\om,F)$ was a hypersemitoric system, then $(M,\om,\widetilde{F})$ is also a hypersemitoric system.
\end{proposition}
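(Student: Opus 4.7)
The plan is to construct $\widetilde{H}$ by working in the symplectic quotient $\hat{M} = M/\mbS^1$, modifying the reduced Hamiltonian $\hat{H}$ only on a neighborhood of $\hat{S}$, and then lifting back via Lemma~\ref{lem:reduction-extending}. By Lemma~\ref{lem:reduced-blowup-set}, the reduced set $\hat{S}$ is parameterized by $\mathcal{A}(a,b,\{R_j\})$ via a map $\rho$, so that for each $j \in (a,b) \setminus \{j_p\}$ the slice $\hat{S} \cap \hat{M}_j$ corresponds to the disk $\mathbb{D}_{R_j}$, with $R_j \to 0$ as $j \to a$ or $j \to b$. Since $p$ is focus-focus, the reduced function $\hat{H}_{j_p}$ has a non-degenerate saddle at $\pi(p)$, and $\hat{H}_j$ is Morse for $j$ near $j_p$.

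The strategy is to construct a new reduced Hamiltonian $\widehat{H}'\colon \hat{M} \to \R$ agreeing with $\hat{H}$ outside a small neighborhood of $\hat{S}$ and having the following structure on that neighborhood: there exist $a' \le a$ and $b' \ge b$ so that for $j \in (a', b')$ the function $\widehat{H}'_j$ has a non-degenerate minimum at $\pi(p)$ together with a nearby saddle whose saddle-connection encloses a teardrop region containing the disk $\mathbb{D}_{R_j}$, with the teardrop shrinking to a birth-death bifurcation at $j = a'$ and $j = b'$. In $\mbS^1$-equivariant coordinates around the focus-focus orbit in which $H$ is in local normal form, I would first apply the Dullin \& Pelayo bifurcation~\cite{dullin-pelayo} to turn $p$ into an elliptic-elliptic point with a small flap, then enlarge the teardrop using the local parabolic model $f_j(x,y) = x^3 + t(j) x + y^2$ from Proposition~\ref{prop:cuspnormalform}, where $t(j)$ is a smooth function of $j$ with $t(a') = t(b') = 0$ and $t(j) < 0$ on $(a', b')$. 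Since the teardrop size at level $j$ scales like $|t(j)|^{3/2}$, one can force it to exceed $R_j$ by choosing $|t(j)|$ sufficiently large on $(a,b)$. This local modification is then interpolated with $\hat{H}$ on the rest of $\hat{M}$ using a smooth cutoff function supported in $\pi(U)$.

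Lifting $\widehat{H}'$ via Lemma~\ref{lem:reduction-extending} produces an integrable system $(M,\om,(J,\widetilde{H}))$ with $\widetilde{H} = H$ outside $U$. By construction the new singular points introduced are precisely: an elliptic-elliptic point at $p$ (the reduced minimum), families of elliptic-regular and hyperbolic-regular points (from the minima and saddles of the family $\widetilde{H}_j$), and two parabolic orbits at $j = a', b'$. In the base of the singular Lagrangian fibration these singular points organize into a flap as in Definition~\ref{def:flap}, and the image $\tau(S)$ lies in the flap sheet by design: at each level $j \in (a,b)$ the slice $\hat{S} \cap \hat{M}_j \cong \mathbb{D}_{R_j}$ sits inside the teardrop region, which is exactly the slice of the flap sheet. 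Hence $S$ is on the new flap as required by Definition~\ref{def:on-flap}. Only non-degenerate and parabolic singular points are introduced, so if $(M,\om,F)$ was hypersemitoric then so is $(M,\om,\widetilde{F})$.

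The main obstacle is performing the modification smoothly across the non-free orbit $\pi(p) \in \mathrm{sing}(\hat{M})$, where Lemma~\ref{lem:reduction-extending} does not directly apply; this requires carrying out the bifurcation in an $\mbS^1$-equivariant local normal form around $p$ as in~\cite{dullin-pelayo} and then patching it to the reduced-space modification outside a still smaller neighborhood of $p$. A secondary technical point is calibrating the scales so that the enlarged teardrops are compatible with the fact that $R_j \to 0$ at the endpoints of $(a,b)$: the cutoff interpolation from $\widehat{H}'$ back to $\hat{H}$ must happen in a region where both the teardrop and the disk $\mathbb{D}_{R_j}$ have shrunk enough that the interpolation does not destroy the Morse structure of $\widetilde{H}_j$ or introduce spurious degenerate points. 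The continuity of $j \mapsto R_j$ from Lemma~\ref{lem:reduced-blowup-set} is what makes this calibration possible while keeping the support of the modification inside the prescribed neighborhood $U$.
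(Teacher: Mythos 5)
Your overall strategy is the one the paper uses: pass to the reduced spaces, invoke Lemma~\ref{lem:reduced-blowup-set} to see $\hat S$ as a family of disks $\mathbb{D}_{R_j}$ over an interval, create a small flap at $p$ with the Dullin--Pelayo bifurcation, enlarge the teardrop regions of the reduced Hamiltonians until they contain the disks, and lift back with Lemma~\ref{lem:reduction-extending}. Two points in your execution are genuine gaps, however.

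First, you never clear out the singular points of $F$ that may already sit inside $U$. The hypothesis only excludes \emph{degenerate} points near $\overline S$; elliptic-regular and hyperbolic-regular points of $(J,H)$ are allowed there, and they appear as nondegenerate critical points of $\hat H_j$ inside $\hat U_j$. If you enlarge the teardrop without first removing them, the enlarged region can swallow pre-existing critical points, so the new singular set near $p$ is no longer a clean flap and condition~\eqref{item:on-the-flap} (via Definition~\ref{def:on-flap}) can fail. The paper's proof devotes its Step~1 to precomposing each $\hat H_j$ with a diffeomorphism of the disk, identity near $[p]$ and near the boundary, that pushes all such critical points out of (a slightly shrunken) blowup set before the bifurcation is performed. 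You need this step.

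Second, your enlargement mechanism --- making $|t(j)|$ large in the cubic model so the teardrop outgrows $\mathbb{D}_{R_j}$, then interpolating back to $\hat H$ with a cutoff --- is exactly where the construction is most likely to fail, and you flag the danger without resolving it. A convex-combination interpolation between two functions that differ substantially on an annulus will generically create new critical points there, and the larger you make $|t(j)|$ the larger the discrepancy you must bridge near $\partial\mathbb{D}_{R_j}$. The paper sidesteps this: instead of growing the teardrop past the chart, it (a) reparameterizes the $j$-variable with a cutoff $\chi$ and then adds an explicit shift $G$ to kill the critical points created in the intermediate annulus $\{0<\chi<1\}$ (Step~4a, Figure~\ref{fig:removeCritPoints}), and (b) for each fixed $j$ precomposes with a diffeomorphism $\psi_j$ of $\mathbb{D}_{R_j}$, equal to the identity near the boundary, that \emph{shrinks} the sub-disk $\mathbb{D}_{r_j}=\hat S_j$ into the existing teardrop (Step~4b). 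Precomposition with a diffeomorphism cannot create critical points and automatically matches the original function near the boundary, so the lifted system glues into $(M,\om,F)$ outside $U$ with no spurious singularities. Without something playing the role of $G$ and $\psi_j$, your cutoff step does not yield an integrable system with only nondegenerate and parabolic singularities, and the hypersemitoric conclusion is not established.
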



\begin{proof}

First we outline the structure of the proof.
After setting up some notation and arranging the system in a suitable way in \emph{Step 0} and \emph{Step 1}, in \emph{Step 2} we use the technique of Dullin \& Pelayo  to change the function $H$ so that $p$ is no longer a focus-focus point but is instead an elliptic-elliptic point on a small flap.
Then, in \emph{Step 3}, we view the integrable system in the reduced space of the $\mbS^1$-action locally around $p$ as a family of Morse functions on the disk (using Lemma~\ref{lem:reduced-blowup-set}). In \emph{Step 4} we edit these Morse functions in such a way that that when we lift them back up to an integrable system in \emph{Step 5}, the flap has been enlarged to include the entire blowup set. 
An important part of the strategy of the proof is that after creating the small flap in \emph{Step 2}, we only edit the system in regions where the $\mbS^1$-action is free (in particular avoiding a neighborhood of $p$). 
This is because where the $\mbS^1$-action is free, by Lemma~\ref{lem:reduction-type}, there is a straightforward relationship between the behavior at the singular points of the integrable system and the behavior of the critical points of the induced function on the reduced space.

\textbf{Step 0:} \emph{Set up and notation.}
Let $p\in M$ be a focus-focus point, set $j_p:=J(p)$,
let $S=S_{0, \dots, k}\subset M$ be a blowup set at $p$, and let $\overline{S}$ denote the closure of $S$.
Let $U$ be an open neighborhood of $\overline{S}$.
Suppose that the blowups used to produce the set $S$ have sizes $\lambda_1,\ldots,\lambda_k$. Then there exists an $\varepsilon>0$ such that the blowup set $S_\varepsilon$ for a sequence of blowups of sizes $\lambda_1+\varepsilon, \ldots,\lambda_k+\varepsilon$ is still contained in the set $U$. Without loss of generality we may shrink $U$ so that $U=S_\varepsilon$, and note that this means that $U$ is closed under the $\mbS^1$-action.

As in Lemma~\ref{lem:reduction-extending}, the function $H\colon M\to\R$ induces a function $\hat{H}_j\colon \hat{M}_j\to\R$ on each reduced space $\hat{M}_j:=(M/\!\!/\mbS^1)_j$ satisfying $\hat{H}_j\circ\pi_j = H$.  Set $\hat{U}_j = (J^{-1}(j)\cap U)/\mbS^1\subset \hat{M}_j$. 
The only $\Z_k$-spheres (with $k>1$) which can occur in a blowup set at a point $q$ are those which emanate from $q$. Thus, since $U$ is a blowup set around a focus-focus point, there are no $\Z_k$-spheres (with $k>1$) which intersect $U$ and there are no fixed points of the $\mbS^1$-action in $U$, with the exception of the focus-focus point $p$. 
Thus, for $j\neq j_p$ the action of $\mbS^1$ on each point of $J^{-1}(j)\cap U$ is free 
and by assumption $U$ can be chosen to be small enough so that there are no degenerate points of the
integrable system in $U$ so, by Lemma~\ref{lem:reduction-type}, $\hat{U}_j\subset \mathrm{smooth}(\hat{M}_j)$ and the functions $\hat{H}_j|_{\hat{U}_j}$ are Morse.

Recall the projection $\pi\colon M\to \hat{M} = M/\mbS^1$ and let $[p]=\pi(p)\in \hat{M}_{j_p}$ be the image of the focus-focus point $p$ in the reduced space. Then $[p]\notin \mathrm{smooth}(\hat{M}_j)$, but $\hat{U}_{j_p}\setminus \{[p]\}\subset\mathrm{smooth}(\hat{M}_{j_p})$, and $\hat{H}_{j_p}|_{\hat{U}_{j_p}\setminus\{[p]\}}$ is Morse.

\textbf{Step 1:} \emph{We show that we may assume that the Morse functions have no critical points when restricted to $\hat{U}_j$, for $j\neq j_p$, or $\hat{U}_{j_p}\setminus\{[p]\}$, for $j=j_p$.}
Recall that $U$ is a blowup set. Thus, by Lemma~\ref{lem:reduced-blowup-set}, for $j\neq j_p$ the set $\hat{U}_j$ is diffeomorphic to an open disk.
Moreover, $\hat{U}_{j_p}$ is homeomorphic to an open disk by a homeomorphism which is a diffeomorphism on $\hat{U}_{j_p}\setminus \{[p]\}$.
So we may view $\{\hat{H}_j\}_j$ as a one parameter family of functions on a disk, which are Morse except for at the point $[p]$.
Away from $[p]$, this family depends smoothly on the parameter.
If $U$ contained any elliptic-regular or hyperbolic-regular points of $F$ then, for some values of $j$, there will be a non-degenerate critical point of $\hat{H}_j$ inside $\hat{U}_j$. Nevertheless, 
since focus-focus points admit a neighborhood in which they are the only singular point of the integrable
system (see Theorem~\ref{thm:normal-form}),
there is a sufficiently small neighborhood of $[p]$ in $\hat{M}$ such that each intersection
of this neighborhood with $\hat{M}_j$ does not contain any critical points of $\hat{H}_j$, not counting the point $[p]$ at which $\hat{H}_j$ is not smooth.

Let $\hat{S}:=\pi(S)$ and let $\hat{S}_j:=\pi_j(S)$. We will now change each $\hat{H}_j$ to move the critical points of $\hat{H}_j$ (not counting $[p]$) into $\hat{U}_j\setminus \hat{S}_j$ without changing anything in a neighborhood of $[p]$: let $\Phi_j: \hat{U}_j \to \hat{U}_j$ be a smooth diffeomorphism which is the identity in a neighborhood of $[p]$ and in a neighborhood of the boundary, and which moves all critical points close to the boundary. Denote the composition by $\hat{H}'_j:= \hat{H}_j \circ \Phi_j$. Then there exists a set $V$ with $S\subset V\subset U$ which, like $U$, can be assumed to be a blowup set such that, for each $j$, the function $\hat{H}_j'|_{\hat{V}_j}$
is a Morse function with zero critical points for all $j\in J(V)\setminus \{j_p\}$,
where $\hat{V}_j = (J^{-1}(j)\cap V)/\mbS^1$. Moreover, $\hat{H}_j'|_{\hat{V}_{j_p}\setminus \{[p]\}}$ is a Morse function with no critical points.

Now let $H'(q) := \hat{H}_{J(q)}'([q])$ for $q\in M$. 
By Lemma~\ref{lem:reduction-extending}, 
we obtain an integrable
system $F'=(J,H')$ which still has a focus-focus point at $p$ since $H$ and $H'$ 
agree in a neighborhood of $p$.

\textbf{Step 2:} \emph{Creating a flap with Dullin $\&$ Pelayo's \cite{dullin-pelayo} technique.}
In Example \ref{ex:dullin-pelayo}, we outlined Dullin $\&$ Pelayo's \cite{dullin-pelayo} method how to turn a focus-focus point into an elliptic-elliptic point at the `cost' of creating a flap. 

We now apply this technique to the focus-focus point $p$ in the integrable system $F'$ constructed in {\em Step 1} and obtain a new integrable system $F'' = (J,H'')$ in which $p$ is an elliptic-elliptic point. Since $p$ is an elliptic-elliptic point, it is a consequence of the local normal form (Theorem~\ref{thm:normal-form} that the point $[p]$ is either a local maximum or local minimum of $H''$ on the reduced space.

Without loss of generality, assume $[p]$ to be a local minimum. Furthermore, we may assume
that, outside of the set $V$, the system $F''$ coincides with $F'$ and hence with $F$.
This new integrable system $F''$ satisfies all of the desired properties except that the new flap produced around the image of $p$ is not necessarily large enough to contain all of $S$.

\textbf{Step 3:} \emph{A family of Morse functions on the disk.}
Recall $\mathbb{D}_{r} := \{z\in\C\mid \abs{z}^2<r^2\}$ for $r>0$.
Recall the set $V$ constructed in \emph{Step 1}, which is a blowup set,
and let $\hat{V} = \pi (V)\subset \hat{M}$.
Applying Lemma~\ref{lem:reduced-blowup-set} to $V$ and the image of the blowup set $S$ in each reduced space,
we conclude that each are diffeomorphic to a disk except at the focus-focus point where it is only homeomorphic to a disk but not diffeomorphic. So we can parameterize $\hat{V}\subset\hat{M}$ as a disk bundle over an interval, where $\hat{S}$ is a smaller (sub-)disk bundle over a smaller (sub-)interval.
More precisely, there exist 
\begin{itemize}
 \item intervals $]a,b[$ and $]\tilde{a},\tilde{b}[$ with
$]\tilde{a},\tilde{b}[ \ \subset\ ]a,b[\,\,$,
 \item real numbers $R_j>0$ for each $j\in\ ]a,b[$ and $r_j>0$ for each
$j\in\ ]\tilde{a},\tilde{b}[$ such that $j\mapsto R_j$ and $j\mapsto r_j$ are continuous
 and $r_j<R_j$ for all $j\in\ ]\tilde{a},\tilde{b}[\,\,$,
 \item sets $\mathcal{A}$ and $\mathcal{B}$ given by
\[
 \mathcal{A} = \{(z,j)\in\C\times\, ] a,b [\,\,\, \mid z\in\mathbb{D}_{R_j}\},
 \qquad
 \mathcal{B}= \{(z,j)\in\C\times\, ] \tilde{a},\tilde{b} [\,\,\, \mid z\in\mathbb{D}_{r_j}\}\] which thus satisfy $\mathcal{A}\subset\mathcal{B}$ and 
 \item a homeomorphism $\rho\colon \mathcal{A} \to \hat{V}$
 which is a diffeomorphism when restricted to $\mathcal{A}\setminus\{(0,j_p)\}$
      and which satisfies $\rho(z,j)\in J^{-1}(j)$ for all $(z,j)\in\mathcal{A}$ and 
 $\rho(\mathcal{B}) = \hat{S}$.
 \end{itemize}
 


\textbf{Step 4:} \emph{Enlarging the flap.}
Recall from Section~\ref{sec:para-reduction} and in particular Figure~\ref{fig:cerf} that, on the reduced space, a flap will correspond to the connected region below (or above) the level set of a Morse function at the level of an index 1 singular point. This level set containing the index 1 point has the shape of a curve with one self-intersection point, forming a loop which encloses a teardrop shaped region.
Our goal in this step is to edit this function to
expand the values of $j$ for which the teardrop region exists (\emph{Step 4a}),
and increase the size of the
teardrop region (\emph{Step 4b}), 
so that the image of the blowup set $S$
in the reduced space is contained in this teardrop region.
Then the set $S$ will lie on the new flap of the integrable system, as desired.
A sketch of this process on a single reduced space is shown in Figure~\ref{fig:blowupSetFlap}.


\begin{figure}[h]
 \centering
 \includegraphics[width = 300pt]{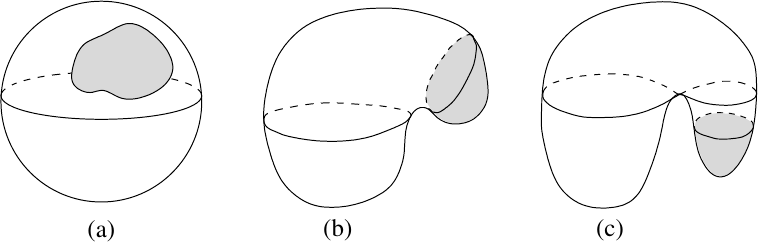}
 \caption{Note that a flap corresponds to a small `upward or downward pointing nose' of the reduced space which, being two dimensional, is a surface of some genus $g$ (with $g=0$ in this figure). From left to right: (a) is the original situation, (b) is obtained by creating
 a flap with the technique of Dullin \& Pelayo, and (c) is obtained by enlarging the flap to
 contain the blowup set as described in \emph{Step 4} of the proof of Proposition~\ref{prop:large-flap}.
 }
 \label{fig:blowupSetFlap}
\end{figure}

In {\em Step 2}, we have produced a flap around $[p]$ using Dullin $\&$ Pelayo's technique outlined in Example \ref{ex:dullin-pelayo}.
Recall $H''$ from {\em Step 2} and define $\hat{H}''$ via $\hat{H}''\circ\pi = H''$. Moreover, recall the map $\rho\colon\mathcal{A}\to\hat{V}\subset \hat{M}$ from {\em Step 3} and define $f\colon \mathcal{A} \to \R$ via $f=\hat{H}''\circ\rho$ and set $f_j:=f(\cdot,j)$.

Note that the function $z\mapsto f_j(z)$ is Morse with two critical points (of index 1 and 0) for all $j$ in an
open interval around $j_p$, {\em not} including $j=j_p$, and that $z\mapsto f_j(z)$ is Morse with no critical points outside of the closure of that interval. At the end points of the interval, $z\mapsto f_j(z)$ is not
Morse since it includes one degenerate point, see Figure~\ref{fig:cerf}.

\textbf{Step 4a:} \emph{Enlarging the flap to exist for a larger interval of $j$-values.}
First, we want to adapt $j\mapsto f_j$ so that the function has precisely two critical points for a larger interval of $j$-values that includes the interval $ ]\tilde{a}, \tilde{b}[\,$, therefore producing a longer flap in the integrable system. 
The idea is to reparameterize the $j$-parameter of $f_j$, except that we
have to be careful not to change any of the $f_j$ in a neighborhood of the boundary of the disk
so that it can still be glued back into the global function.
We will see that this is possible because of the general behavior of this function outside of the teardrop region, see Figure~\ref{fig:cerf}.


Let $\mathbb{D}:=\mathbb{D}_1$ denote the unit disk. Let $\chi\colon\mathbb{D}\to [0,1]$ be a smooth function which is zero in a neighborhood of the boundary $\partial \mathbb{D}$, but is identically one in a neighborhood of the origin of the disk large enough so that for each j, the set $\{R_jz \mid \chi(z)=1 \}\subset\mathbb{D}_{R_j}$ contains the teardrop shaped region.
Let $\Psi\colon  ]a,b[ \, \to \, ]a,b[$ be a smooth bijection 
such that the $j$-values for which $f_j$ has two critical points have been reparameterized to contain the interval $]\tilde{a},\tilde{b}[\,$ and such that $\Psi$ is the identity in a neighborhood of $j_p$. Let $\mu_j\colon \mathbb{D}_{R_j}\to\mathbb{D}_{R_{\Psi(j)}}$ be the scaling map $\mu_j(z) = \left(R_{\Psi(j)}/R_j\right)z$, and let $\tilde{g}_j\colon\mathbb{D}_{R_j}\to\R$ and $\tilde{g}\colon \mathcal{A}\to\R$ be given by
 \[\tilde{g}_j := (1-\chi) f_j + \chi (f_{\Psi(j)}\circ\mu_j), \quad \tilde{g} (\cdot, j ) := \tilde{g}_j(\cdot).\]
Denote by $]\alpha,\beta[$ the interval of $j$-values for which $\tilde{g}_j$ has two critical points.
Notice that $\tilde{g}_j=f_j$ near the boundary of the disk. Moreover, since $\Psi$ is the identity in a neighborhood of $j_p$, we see that $\tilde{g}=f$ in a neighborhood of $(0,j_p)\in \mathcal{A}$ as well. In the intermediate region $\mathcal{R} = \{\chi\neq 1\}\cap \{\chi\neq 0\}$ the function $\tilde{g}_j$ may have additional critical points. 
By adding a smooth perturbation $G_j\colon \mathcal{A} \to \R$ supported in $\{\chi \neq 0\}$ and depending smoothly on $j$, we may eliminate all critical points in $\mathcal{R}$. 
We denote the resulting function by $\tilde{f}_j\colon\mathbb{D}_{R_j}\to\R$, that is \[ \tilde{f}_j:= (1-\chi) f_j + \chi (f_{\Psi(j)}\circ\mu_j+G_j). \]
Finally, we define $\tilde{f}\colon \mathcal{A}\to\R$ by $\tilde{f}(\cdot,j):=\tilde{f}_j(\cdot)$.





Altogether, starting with $f$ we have now produced the function $\tilde{f}$ which satisfies:
  \begin{itemize}
   \item $\tilde{f}_j$ is a Morse function with no critical points for $a<j<\alpha$ or $\beta<j<b$;
   \item $\tilde{f}_j$ is a Morse function with one critical point of index 1 and one critical point of index 0 for $\alpha<j<\beta$ with $j\neq j_p$. By performing an affine transformation of coordinates if necessary, we may, and do, assume that the 
rank zero critical point is the origin for all $j\in \, ]\alpha, \beta [ \, \setminus \{j_p\}$;
   \item $\tilde{f}_\alpha$ and $\tilde{f}_\beta$ are smooth functions with exactly one critical point (which 
   is degenerate);
   \item $\tilde{f}_{j_p}|_{\mathbb{D}_{R_{j_p}}\setminus\{0\}}$ is Morse with one critical point
    of index 1. Moreover, $0\in\mathbb{D}_{R_{j_p}}$ is a local minimum of $z\mapsto f_{j_p}(z)$;
   \item for all $j\in \ ]a,b[$ there is a neighborhood $\mathcal{O}$ of the boundary $\partial\mathbb{D}_{R_j}$  of $\mathbb{D}_{R_j}$ such that
    $\tilde{f}_j|_{\mathcal{O}}$
    is Morse with zero critical points.
  \end{itemize}
Thus, $f$ is of the form discussed in Section~\ref{sec:para-reduction} and has a graph
as shown in Figure~\ref{fig:cerf}.

\textbf{Step 4b:} \emph{Enlarging the flap to contain a larger set for each fixed $j$.}
Now that the teardrop region exists for all desired values of $j$, we want to expand the teardrop
region to contain the blowup set.
Recall from Step 3 that $r_j$ is the radius of the sub-disk corresponding to $\hat{S}$.
For $j\in\   ]a,b[$, let $\psi_j: \mathbb{D}_{R_j} \to \mathbb{D}_{R_j}$ be a diffeomorphism shrinking $\mathbb{D}_{(2r_j+1)/3}$ and acting as the identity outside of $\mathbb{D}_{(r_j+2)/3}$
such that $(z,j)\mapsto \psi_j(z)$ is smooth.
For $\alpha<j<\beta$, the level set of the index 1 point of $\tilde{f}_j\circ\psi_j$ 
defines a teardrop region in $\mathbb{D}_{R_j}$ as in Figure~\ref{fig:cerf}.
Choose $\psi_j$ to shrink $\mathbb{D}_{r_j}$ enough so that $\mathbb{D}_{r_j}$ is contained in this teardrop region.

One of the simplest cases in the work of Cerf~\cite{Cerf}
states that the birth and death of critical points in families of Morse functions
in two dimensions
is generically of the form given in Equation~\eqref{eqn:birth-death} from Section~\ref{sec:para-reduction}.
Thus, we may assume that 
the degenerate points of $\tilde{f}_j\circ\psi_j$ are of the form $(x,y)\mapsto x^3+jx+y^2$ for some local coordinates $(x,y)$ on the disk while retaining all of the other properties of $\tilde{f}_j\circ\psi_j$
listed above.

Now, set \[g(z,j) := \tilde{f}(\psi_j(z),j)\colon \mathcal{A} \to\R\]
and abbreviate $g_j:=g(\cdot,j)$.  
  
Then $g$ has the following properties:
   \begin{itemize}
    \item $g=\tilde{f}=f$ in a neighborhood of $(0,j_p)\in \mathcal{A}$;
    \item $g=\tilde{f}=f$ in a neighborhood of the boundary $\partial \mathcal{A}$;
    \item $g_j$ is a Morse function with no critical points for $j$ with $a<j<\alpha$ or $\beta<j<b$;
    \item $g_j$ is a Morse function with one critical point of index 1 and one critical point of index 0 for $j$ with $\alpha<j<\beta$ with $j\neq j_p$;
    \item all degenerate points can be locally modeled by $(x,y,j)\mapsto x^3 \pm (j-\gamma) x+y^2$;
    \item for each $j$ satisfying $\tilde{a}<j<\tilde{b}$ the function $g_j$ has exactly one critical
     point $q$ of index 1, and $g_j^{-1}(q)\cap \mathbb{D}_{r_j} = \varnothing$ (i.e.~the set $\mathbb{D}_{r_j}$ is in the teardrop region defined by $g_j$).
   \end{itemize}

Thus, $g$ has all of the properties that we desired our integrable system to have
in the reduced space around the point $p\in M$. Most importantly, it defines a teardrop shaped region (which
will lift to a flap of the integrable system) which entirely contains the blowup set.

 \textbf{Step 5:} \emph{Completing the proof.}
Let $g\colon \mathcal{A} \to\R$ be as described in {\em Step 4}, 
let $\rho\colon \mathcal{A} \to \hat{V}$ be as defined in {\em Step 3},
and let $\hat{H}''\colon \hat{M}\to\R$ be as defined in {\em Step 4}.
Define 
$\hat{H}'''\colon \hat{M}\to\R$ by
\[
 \hat{H}'''(q) := \begin{cases} g\circ \rho^{-1}(q), &q\in\hat{V},\\ \hat{H}''(q), & q\notin\hat{V}.\end{cases}
\]
Notice that $\hat{H}''$ and $\hat{H}'''$ are equal in a neighborhood of $[p]\in \hat{M}$
and outside of $\hat{V}$. So in particular $\hat{H}''$ and $\hat{H}'''$ only differ at points of the reduced spaces which correspond to free orbits of the $\mbS^1$-action,
and $H'''$ is smooth at such points since it is the composition of
smooth functions (note the smoothness of $\rho^{-1}$ 
at such points is guaranteed by Lemma~\ref{lem:reduced-blowup-set}). Therefore,
taking $\widetilde{H} :=\hat{H}'''\circ\pi$, the resulting pair $(J,\widetilde{H})$ 
forms an integrable system on $M$ by Lemma~\ref{lem:reduction-extending}.

Since the only degenerate points of $\hat{H}_j'''$ for any $j$ are of the form $x^3+tx\pm y^2$, the degenerate points of $(J,\widetilde{H})$ are parabolic as in Proposition~\ref{prop:cuspnormalform}.
By Lemma~\ref{lem:reduction-type}, all of the non-degenerate singular points of $(J,\widetilde{H})$ are of the desired type for the statement of the proposition we are proving. 
Finally, from the previous step we know that $\mathbb{D}_{r_j}$ lies in the teardrop
region of $g_j$, and therefore $\hat{S}_j = \rho_j(\mathbb{D}_{r_j})$ lies below the value of the index 1 critical point of $\hat{H}_j'''$ (i.e.~in the teardrop shaped region) for all relevant $j$. Thus,
$S$ lies entirely in the new flap and so item~\eqref{item:on-the-flap} of the statement of the proposition is also satisfied.
\end{proof}
   

Proposition~\ref{prop:large-flap} can also be applied to multiple focus-focus points,
producing an integrable system which has one flap corresponding to each focus-focus point in a way
that the new flaps do not interfere with each other:
 
\begin{corollary}\label{cor:simultaneous}
 Let $(M,\om,F=(J,H))$ be an integrable system such that $J$ generates an
 effective $\mbS^1$-action and such that all singular points of $(J,H)$ are non-degenerate,
 and let $\Gamma$ be the Karshon graph of the Hamiltonian $\mbS^1$-space $(M,\om,J)$. 
 Suppose that $p_1,\ldots,p_m$ are focus-focus points of $(M,\om,F)$
 and let $v_1,\ldots,v_m$ be the corresponding isolated vertices of $\Gamma$.
 Suppose that $\Gamma'$ is a Karshon graph which can be obtained from
 $\Gamma$ by performing a finite sequence of blowups at each of
 $v_1,\ldots,v_m$.
 Then, for each $i=1,\ldots,m$ there exists a blowup set (as in Definition~\ref{def:blowupset}) $S^i\subset M$ for the sequence of blowups
 at $v_i$ such that $\overline{S}^1,\ldots,\overline{S}^m$ are disjoint
 and for any open neighborhood $U$ of $\cup_{i=1}^m \overline{S}^i$ there exists
 an integrable system $(M,\om,\widetilde{F}=(J,\widetilde{H}))$ such
 that
 \begin{enumerate}
  \item $\widetilde{H}$ and $H$ coincide outside of $U$,
  \item $p_1,\ldots,p_m$ are each an elliptic-elliptic singular point of $(M,\om,\widetilde{F})$,
  \item $(M,\om,\widetilde{F})$ has $m$ flaps and the images of $S^1,\ldots, S^m$ each lie on a single flap.
 \end{enumerate}
 Moreover, $(M,\om,\widetilde{F})$ is a hypersemitoric system.
\end{corollary}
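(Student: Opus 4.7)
The proof is essentially an iterated application of Proposition~\ref{prop:large-flap}, one focus-focus point at a time, with the main preparation being to show that the blowup sets and their enclosing neighborhoods can be chosen pairwise disjoint so that the successive modifications do not interfere with each other.

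First, I would choose disjoint blowup sets. Since $p_1,\ldots,p_m$ are distinct isolated fixed points of the $\mbS^1$-action, we may pick pairwise disjoint open $\mbS^1$-invariant equivariant Darboux neighborhoods $W_1,\ldots,W_m$ around them, each chosen large enough to admit the first equivariantly embedded ball of the prescribed blowup sequence at $v_i$ (which is possible because the Karshon graph $\Gamma'$ is by assumption obtained from $\Gamma$ by an admissible sequence of blowups, guaranteeing the required equivariant symplectic embeddings exist locally around each $p_i$). Now choose the equivariant symplectic embeddings realizing the blowups at $v_i$ so that the resulting blowup set $S^i$ (as defined in Equation~\eqref{eqn:blowupset}) is contained in $W_i$: the first ball can be placed inside $W_i$ by equivariant translation, and each subsequent ball, embedded at a new fixed point on the previous exceptional divisor, can be chosen to lie in any arbitrarily small neighborhood of that fixed point. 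Thus $\overline{S}^1,\ldots,\overline{S}^m$ are pairwise disjoint, and given any open neighborhood $U$ of $\bigcup_i \overline{S}^i$, after shrinking $U$ we may write $U=U_1\sqcup\cdots\sqcup U_m$ as a disjoint union where each $U_i\subset W_i$ is an open neighborhood of $\overline{S}^i$.

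Next, I would apply Proposition~\ref{prop:large-flap} iteratively. Set $F^{(0)}:=F$, and for $i=1,\ldots,m$ apply Proposition~\ref{prop:large-flap} to $(M,\om,F^{(i-1)})$ at $p_i$ with blowup set $S^i$ and neighborhood $U_i$; this is legitimate because $(M,\om,F^{(i-1)})$ has no degenerate points in $U_i$, a property inherited from the hypothesis on $F=F^{(0)}$ together with the fact that $F^{(i-1)}$ and $F$ agree outside $U_1\cup\cdots\cup U_{i-1}$, which is disjoint from $U_i$. This produces a system $(M,\om,F^{(i)}=(J,H^{(i)}))$ with $H^{(i)}=H^{(i-1)}$ outside $U_i$, such that $p_i$ is an elliptic-elliptic singular point of $F^{(i)}$ on a new flap containing $S^i$. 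Because the $U_i$ are pairwise disjoint, the modification at step $i$ leaves $p_j$ focus-focus for $j>i$ with its local structure and its blowup set $S^j$ unaltered, and it leaves the previously constructed flaps at $p_1,\ldots,p_{i-1}$ intact. Finally set $\widetilde{H}:=H^{(m)}$ and $\widetilde{F}:=(J,\widetilde{H})$.

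The three bullet-point conclusions follow immediately: $\widetilde{H}=H$ outside $U=\bigsqcup U_i$; each $p_i$ is elliptic-elliptic in $\widetilde{F}$; and each $S^i$ lies on its own flap. For the hypersemitoric property, $J$ still generates the original effective $\mbS^1$-action, outside $U$ the system is unchanged and has only non-degenerate singularities by hypothesis, and inside each $U_i$ Proposition~\ref{prop:large-flap} guarantees that the only singularities introduced are non-degenerate (elliptic-elliptic, elliptic-regular, hyperbolic-regular) together with two parabolic degenerate points per flap, so all degenerate singular points of $\widetilde{F}$ are parabolic. The main obstacle of the proof is Step 1, arranging that the blowup sets can be selected with pairwise disjoint closures contained in disjoint equivariant Darboux neighborhoods; once this separation is established, the rest of the argument reduces to a clean, non-interfering iteration of Proposition~\ref{prop:large-flap}.
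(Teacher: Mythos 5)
Your proposal is correct and follows essentially the same route as the paper: choose pairwise disjoint blowup sets with disjoint enclosing neighborhoods, then apply Proposition~\ref{prop:large-flap} iteratively, using the fact that each step only introduces degenerate points inside the previously treated neighborhoods, which are disjoint from the current one. Your extra detail on placing the equivariant balls inside disjoint Darboux neighborhoods is a reasonable elaboration of what the paper asserts more briefly.
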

 
\begin{proof}
 Since $(M,\om,J)$ admits the blowups in question, there must exist blowup sets
 $S^1,\ldots,S^m$ such that $\overline{S}^1,\ldots,\overline{S}^m$ are disjoint
 and thus they admit open neighborhoods $U^i \supset \overline{S}^i$ such that
 $U^1,\ldots,U^m$ are disjoint.
 Let $U$ be any open neighborhood of $\cup_{i=1}^m \overline{S}^i$, and we may assume that
 $U = \cup_{i=1}^m U^i$.
 Using induction, we will now show that we can apply Proposition~\ref{prop:large-flap} 
 to each of $U^1,\ldots,U^m$ in turn.
 
 Since the original system $(M,\om,F)$ has no degenerate points,
 we now apply the proposition in $U^1$. 
 Supposing that the proposition has already been applied
 to $U^1,\ldots,U^{i-1}$, 
 the only degenerate points of the new system will 
 lie in $U^1\cup\ldots\cup U^{i-1}$, which is disjoint from $U^i$.
 Thus, $U^i$ contains no degenerate points of the new system and we may apply Proposition~\ref{prop:large-flap}.
 Therefore, by induction, we conclude that we 
 can apply Proposition~\ref{prop:large-flap} to all of $U^1,\ldots,U^m$.
 At the end of this process, we are left with a hypersemitoric system $(M,\om,\widetilde{F})$
 which has the desired properties.
\end{proof}

\subsection{Proof of Theorem~\ref{thm:extending}}
\label{sec:proof}
Now we are prepared to prove Theorem~\ref{thm:extending}.

\begin{proof}[Proof of Theorem~\ref{thm:extending}]
Let $(M,\om,J)$ be a Hamiltonian $\mbS^1$-space and let $\Gamma:=\Gamma(J)$ be the associated
Karshon graph. 
Suppose that there exists a hypersemitoric system $(\mathcal{M},\Omega,(\mathcal{J},\mathcal{H}))$ where the Karshon graph $\Gamma(\mathcal{J})$ of the underlying Hamiltonian $\mbS^1$-space $(\mathcal{M}, \Omega ,\mathcal{J})$ satisfies $\Gamma(\mathcal{J}) =\Gamma$. Then, by Karshon~\cite{karshon}, there exists an $\mbS^1$-equivariant symplectomorphism $\Phi: (M,\om,J) \to (\mathcal{M},\Omega,\mathcal{J})$. Now define $H:= \mathcal{H} \circ \Phi$, and then $(M, \omega, (J,H))$ is a hypersemitoric system that extends the Hamiltonian $\mbS^1$-space $(M,\om,J)$, thus completing the proof of Theorem~\ref{thm:extending}.
Thus, the remainder of the proof is devoted to finding a hypersemitoric system whose underlying $\mbS^1$-space
has $\Gamma$ as its Karshon graph.

{\bf Case 1: $\Gamma$ has no fat vertex.}
If $\Gamma$ has no fat vertex then all fixed points of $J$ are isolated. By Lemma~\ref{lem:isolated-points-extend}, the Hamiltonian $\mbS^1$-space $(M,\om,J)$ then can be extended to a toric system, which is in particular a hypersemitoric system, thus proving Theorem~\ref{thm:extending} in this case.

{\bf Case 2: $\Gamma$ has at least one fat vertex.}
Due to Theorem~\ref{thm:karshon-minimal}, we know that $\Gamma$ can be obtained from a minimal Karshon graph $\Gamma_\mathrm{min}$ by a finite sequence of $\mbS^1$-equivariant blowups which, according to Lemma~\ref{lem:stages}, can be performed in three separate stages. By Karshon's classification of minimal Hamiltonian $\mbS^1$-spaces (cf.\ Theorem~\ref{thm:karshon-minimal}), $\Gamma_\mathrm{min}$ either corresponds to an $\mbS^1$-space which extends to a toric system (that lives on $\CP^2$ or on a Hirzebruch surface) or corresponds to a ruled surface.

\textbf{Case 2a: $\Gamma_\mathrm{min}$ extends to a toric system.} 
If $\Gamma_\mathrm{min}$ extends to a toric system\\ $(M_\mathrm{min},\om_\mathrm{min},F_\mathrm{min})$ on $\CP^2$ or on a Hirzebruch surface, we will show how to employ Lemma~\ref{lem:stages} to perform the necessary blowups to obtain a hypersemitoric system with the required Karshon graph $\Gamma$. Before we explain the details below, let us briefly outline the steps we will take. First,
we will perform blowups on the components of $\Gamma$ containing the vertices with maximal and minimal $J$-value. Recall that, by assumption, there is at least one fat vertex, i.e.\ at least one fixed surface. 
Next we will use a fixed surface to perform blowups which produce, for each remaining component of the graph $\Gamma$, one isolated fixed point. This point will correspond to a focus-focus point in the associated integrable system. Finally, we perform a sequence of blowups on each of these isolated points to obtain the remaining components of $\Gamma$,
carrying along the integrable system by replacing the focus-focus points by elliptic-elliptic points on flaps
and performing blowups on these elliptic-elliptic points.
For an example of this process, see
Section~\ref{sec:follow-ex} in which we apply this technique to a
specific $\mbS^1$-space.

\textbf{Stage 1:} \emph{Adjusting the connected components of the vertices with maximal and minimal $J$-value.}  As
in the first stage of Lemma~\ref{lem:stages}, we perform blowups on the graph $\Gamma_\mathrm{min}$ to obtain
a new graph $\Gamma'$ such that the connected components of the vertices with maximal and minimal $J$-value of $\Gamma'$ are equal to those of $\Gamma$, with the possible exception of the normalized area labels on the fat vertices. 
Since at most one of the maximal or minimal components is not a fat vertex in this case, this produces at most two non-trivial chains of $\Z_k$-spheres and thus, by Lemma~\ref{lem:toric-extending}, the graph $\Gamma'$ is the Karshon graph of a Hamiltonian $\mbS^1$-space which can be extended to a toric system $(M',\om',F')$. This toric system has at least one
fixed surface since the graph $\Gamma$ has at least one fat vertex.

\textbf{Stage 2:} \emph{Producing the focus-focus points.} 
After achieving the same minimal and maximal components for $\Gamma'$ and $\Gamma$ in the previous stage, as
in Stage 2 of Lemma~\ref{lem:stages}, we now want to adjust the remaining connected components, i.e., the components of $\Gamma\setminus \Gamma'$.
Notice that any minimal Karshon graph with at least one fat vertex has exactly two components, since otherwise the additional component would have to be an isolated fixed point with weights $+1$ and $-1$ which could be removed by a blowdown of type~\ref{case:blowup-surface}. Thus, the set of non-extremal
connected components of $\Gamma$ is exactly the same as the set of connected components
of $\Gamma\setminus\Gamma'$.

Let $l\in\Z$ be the number of connected components of $\Gamma\setminus \Gamma'$.
Each component of $\Gamma$ which does not contain a vertex with the maximal or minimal
possible $J$-value (i.e.~each element of $\Gamma\setminus\Gamma'$) is called an \emph{island}.
The aim of this stage is to perform blowups on the fixed surface(s) of $(M',\om',F')$, that is, to perform wall chops on the vertical walls of the polygon of $(M',\om',F')$, to produce one focus-focus point corresponding to each island\footnote{semitoric blowups will also be used to produce focus-focus points in this way
will also be used in the upcoming Hohloch \& Sabatini \& Sepe \& Symington~\cite{HSSS2}, first announced at Poisson 2014 by Daniele Sepe.}.
In the next stage, we will then perform a sequence of blowups on those focus-focus points to produce the missing components. But first we have to work backwards to determine the $J$-value that these focus-focus points should have.

Note that any minimal model which has at least one fixed surface does not have any islands. Thus, 
by performing a sequence of blowdowns on $\Gamma$, each
island of $\Gamma$ can be reduced to a single isolated vertex and eventually removed.
For each island, perform as many blowdowns as necessary until it is an isolated vertex and denote the resulting graph by $\Gamma''$.


We will now show that, on the other hand, $\Gamma''$ can also be obtained from $\Gamma'$ by performing $\mbS^1$-equivariant blowups on the fixed surface(s) of the Hamiltonian $\mbS^1$-space $(M',\om',J')$ from Stage 1 as described in Case~\ref{case:blowup-surface} from Section~\ref{sec:karshon-blowups}:

Consider first the situation that there is only one fixed surface $\Sigma$ and assume without loss of generality $\Sigma  = J^{-1}(j_\mathrm{min})$. Denote the $J$-values of the isolated points in $\Gamma''$ by $j_\mathrm{min}+\lambda_\ell$ for $\ell = 1,\ldots, l$. Then $\Gamma''$ is obtained from $\Gamma'$ by performing $l$ blowups of sizes $\lambda_1,\ldots, \lambda_l$ on points in the fixed surface $\Sigma$. Note that $\Sigma$ is large enough to admit such blowups because $\Gamma$ can be obtained from $\Gamma_\mathrm{min}$
by a sequence of blowups performed in the order specified in Lemma~\ref{lem:stages}. In case $\Sigma = J^{-1}(j_\mathrm{max})$, denote the $J$-values by $j_\mathrm{max}-\lambda_\ell$ and proceed analogously.
 
Otherwise, $(M',\om',J')$ has two fixed surfaces $\Sigma_\mathrm{min} = J^{-1}(j_\mathrm{min})$ and $\Sigma_\mathrm{max}=J^{-1}(j_\mathrm{max})$.
Since $\Gamma$ can be obtained from $\Gamma_\mathrm{min}$
by a sequence of blowups performed in the order specified in Lemma~\ref{lem:stages}, we conclude
that there exists an $m$
such that blowups of size $\lambda_1,\ldots,\lambda_m$ on $\Sigma_\mathrm{min}$ and blowups of size $\lambda_{m+1},\ldots,\lambda_l$ on $\Sigma_\mathrm{max}$ produce $\Gamma''$ from $\Gamma$. Here the $J$-values of the isolated points in $\Gamma''$ are 
$$j_\mathrm{min}+\lambda_1,\ldots,j_\mathrm{min}+\lambda_m, \ j_\mathrm{max}-\lambda_{m+1},\ldots,j_\mathrm{max}-\lambda_l.$$

Thus, we conclude that in both cases $\Gamma''$ can be obtained from $\Gamma'$ by performing blowups on the fixed surface(s) of $(M',\om',J')$.
Since the integrable system $(M',\om',F')$ is toric, and in particular semitoric, Lemma~\ref{lem:semitoric-blowup} implies that the blowups used to obtain $\Gamma''$ from $\Gamma'$ can be realized by performing wall chops on the marked polygon invariant of $(M',\om',F')$. 
Let $(M'',\om'',F'')$ be a semitoric system associated to the resulting marked polygon (see Remark~\ref{rmk:stblowup-system}), 
and note that $\Gamma''$ is the Karshon graph of the underlying $\mbS^1$-space of $(M'',\om'',F'')$.
Let $p_1,\ldots, p_l\in M''$ be the resulting focus-focus points of this system.
Note that we may, and do, choose to perform the semitoric blowups in such
a manner that each level set of $F''\colon M''\to\R^2$ contains at most
one focus-focus point (so the fibers of $F''$ which contain focus-focus points
are all single-pinched tori).

\textbf{Stage 3:} \emph{Constructing the islands.} Now we follow Stage 3 of Lemma~\ref{lem:stages}. For $\ell=1,\ldots, l$, we will start with the focus-focus point $p_\ell$, corresponding to a vertex $v_\ell$ of $\Gamma''$, and construct the $\ell^{\mathrm{th}}$ island of $\Gamma$ by performing a sequence of blowups.
Since each island is connected in $\Gamma$, 
they can be obtained from the corresponding focus-focus point (i.e.\ an isolated vertex in $\Gamma''$) by a finite sequence of blowups on that point and on the new fixed points produced by this process (these are the inverses of the finitely-many blowdowns discussed
in the previous stage).
This is exactly the setting of Proposition~\ref{prop:large-flap}. 

By Corollary~\ref{cor:simultaneous}, we may now apply the technique of
 Proposition~\ref{prop:large-flap} on each focus-focus
point $p_1,\ldots,p_k$ simultaneously and thus conclude that
there exists a function $\overline{H}''$ such that the
system $(M'',\om'', \overline{F}''=(J'', \overline{H}''))$ is a hypersemitoric system,
and the underlying $\mbS^1$-space is still $(M'',\om'', J'')$ which has
Karshon graph $\Gamma''$.
%
%
%
%

Each $p_\ell$ is now an elliptic-elliptic singular point of $(M'',\om'', \overline{F}'')$. By Proposition~\ref{prop:large-flap}, the flap containing $p_\ell$ is large enough to admit the desired sequence of $\mbS^1$-equivariant blowups (of the desired sizes) by performing toric blowups on the flap.
After performing this sequence of toric blowups on the flaps we are left with a hypersemitoric system of which the
underlying $\mbS^1$-space has the desired Karshon graph $\Gamma$.
This completes the proof in this case.


\textbf{Case 2b: The minimal model is a ruled surface.}
Recall that a ruled surface is a sphere bundle over a surface $\Sigma$ of genus $g$. Consider its graph $\Gamma_\mathrm{min}$. We may assume $g>0$ since the minimal model in the case $g=0$ extends to a toric system which we already treated in {\em Case 2a}.
Let $\Gamma_{\mathrm{min}}$ denote the Karshon graph of this minimal model.
We will follow nearly the same stages as in {\em Case 2a}. 
Keep in mind that this minimal model has two fixed surfaces corresponding to the north and south poles of the sphere (see Definition~\ref{def:ruledMfd}).
Following the description of the effect of a blowup on a Karshon graph from
Lemma~\ref{lem:casesGraphChange}, we see that 
$\mbS^1$-equivariant 
blowups cannot remove fixed
surfaces 
of the $\mbS^1$-action
completely (they can only make them smaller), so $\Gamma$ necessarily also has two fixed fat vertices
corresponding to fixed surfaces of the related $\mbS^1$-space. Thus, there are no edges which connect to the maximal or
minimal vertices of $\Gamma$, so \textit{Stage 1} is trivial in this case, implying that the Karshon graph $\Gamma'$ produced in {\em Stage 1} satisfies $\Gamma' = \Gamma_{\mathrm{min}}$.

For \textit{Stage 2}, as in the previous case (and following Stage 2 of Lemma~\ref{lem:stages}) we perform a series of
blowups of type~\ref{case:blowup-surface} on $\Gamma' = \Gamma_{\mathrm{min}}$ to produce a new Karshon
graph $\Gamma''$. 
By Lemma~\ref{lem:stblowups-hst},
given any $\Gamma''$ produced in that way from a ruled surface, there exists a hypersemitoric system
$(M'',\om'',F'')$ which has $\Gamma''$ as its Karshon graph.

Finally, \textit{Stage 3} works exactly the same as in the previous case, and the proof is complete.
\end{proof}


From the algorithm in the above proof, we automatically have the following
refined version of Theorem~\ref{thm:extending}.

\begin{corollary}\label{cor:refined-extending}
 Let $(M,\om,J)$ be a Hamiltonian $\mbS^1$-space. Then there exists a smooth function $H\colon M\to\R^2$
 such that $(M,\om,F=(J,H))$ is a hypersemitoric system such that:
 \begin{enumerate}
   \item every degenerate orbit of $(M,\om,F)$ lies in a cuspidal torus (see Figure~\ref{fig:cusp-torus}), and every hyperbolic-regular point lies in a double torus (see Figure~\ref{fig:double-torus});
  \item there is at most one focus-focus point in each fiber of $F$;
  \item as long as $k\notin \{0,\pm 1\}$, all $\Z_k$-spheres of $(M,\om,J)$ consist entirely of elliptic-regular points of $(M,\om,F)$;
  \item if $(M,\om,J)$ has less than two fixed surfaces, or if it has two fixed surfaces which
   are both diffeomorphic to spheres, then $(M,\om,F)$ has no singular points of hyperbolic-elliptic type.
  \item $(M,\om,F)$ has no swallowtails (see Section~\ref{sec:parabolic_points}).
 \end{enumerate}
\end{corollary}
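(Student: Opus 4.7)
The plan is to verify each of the five properties of Corollary~\ref{cor:refined-extending} by tracking which part of the three-stage construction in the proof of Theorem~\ref{thm:extending} is responsible for producing the relevant feature, and checking that at every stage there is enough freedom to make compatible choices. The key observation is that since the corollary asserts the existence of \emph{some} extension with these properties (not that every extension has them), it suffices to select the construction carefully.

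First, I would handle properties (1) and (5) together, as they both concern parabolic and hyperbolic-regular points. In the construction, degenerate points appear only through Proposition~\ref{prop:large-flap}, where Step~4b forces the local reduced model $x^3 + (j-\gamma)x + y^2$. By Proposition~\ref{prop:cuspnormalform} these are parabolic, and by the discussion after Proposition~\ref{prop:cuspnormalform} (see Figures~\ref{fig:flap} and~\ref{fig:cerf}) their fibers are cuspidal tori. The hyperbolic-regular points produced by the same proposition lie along the flap's hyperbolic-regular edge, whose fibers are double tori; since $\mbS^1$ acts freely on a neighborhood of the flap (away from the central elliptic-elliptic point), no curled or more exotic fibers of Section~\ref{sec:fibers_isotropy} can occur here. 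Property (5) follows because Proposition~\ref{prop:large-flap} produces exactly one teardrop region per flap, with precisely two parabolic endpoints, never the nested configuration that would be needed for a swallowtail.

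Next, for property (2), I would observe that in Stage~2 of the proof the semitoric blowups creating the focus-focus points can be, and are explicitly chosen to be, performed so that each focus-focus value lies in its own $J$-fiber (this is stated in the paragraph immediately after Stage~2 in the proof of Theorem~\ref{thm:extending}). In Stage~3, each focus-focus point is either left unchanged (when its island is a single vertex) or converted to an elliptic-elliptic point on a flap by Proposition~\ref{prop:large-flap} and then blown up; in either case, the resulting focus-focus points of the final system are in pairwise distinct $F$-fibers. For property (3), $\Z_k$-spheres with $|k|>1$ originate only from Stage~1 (toric extension of the minimal model, where $\Z_k$-spheres are preimages of non-vertical edges of the Delzant polygon and consist of elliptic-regular points) and from Stage~3 (toric blowups on the flap, which again produce edges of elliptic-regular points in the local toric chart). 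The semitoric blowups of Stage~2 produce focus-focus points but no new $\Z_k$-spheres, and the fixed-surface blowups of Case~2b introduce only isolated fixed points, so all $\Z_k$-spheres of the Hamiltonian $\mbS^1$-space consist of elliptic-regular points of the integrable system.

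Finally, for property (4), I would invoke Theorem~\ref{thm:karshon-minimal}: when $(M,\om,J)$ has fewer than two fixed surfaces or has two fixed surfaces both diffeomorphic to spheres, its minimal model falls into cases (1) or (2) of that theorem (i.e., extends to a toric system), so Case~2a of the proof applies. Then the only singularities introduced are elliptic-elliptic, elliptic-regular, focus-focus, and those arising on flaps (hyperbolic-regular, parabolic, elliptic-elliptic, elliptic-regular), none of which are hyperbolic-elliptic. The Morse-function construction on a higher-genus surface from Lemma~\ref{lem:stblowups-hst}, which is the unique source of hyperbolic-elliptic points in this paper, is invoked only in Case~2b with $g > 0$ and therefore not used here. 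I expect the main delicate point in writing this out will be confirming (1) in full, namely ruling out that a double-torus fiber is inadvertently replaced by a more complicated topological type during the flap construction; this should reduce to the freeness of the $\mbS^1$-action on the flap away from its elliptic-elliptic corner, which is guaranteed by the local model on the reduced space used throughout the proof of Proposition~\ref{prop:large-flap}.
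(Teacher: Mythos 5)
Your approach is the same as the paper's: the paper offers no written proof of this corollary beyond the remark that it follows ``from the algorithm in the above proof,'' so tracing each of the five properties back to the stage of the construction responsible for it is exactly what is intended, and your treatment of items (2)--(5) is correct and complete (including the correct citation of the explicit choice at the end of Stage~2 for item (2), and the observation for item (4) that the hypotheses force the minimal model into Case~2a).

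There is, however, one concrete omission in your argument for item (1). You assert that Proposition~\ref{prop:large-flap} is the only source of hyperbolic-regular points, but in Case~2b the system built on the ruled surface via Proposition~\ref{prop:minsystems} and Lemma~\ref{lem:stblowups-hst} already contains hyperbolic-regular points coming from the index-$1$ critical points of the Morse function $f$ on the genus-$g$ base $\Sigma_g$ (these are the horizontal families visible in Figure~\ref{fig:S2T2-image}); they do not lie on any flap, so your justification does not cover them. The conclusion still holds: choosing $f$ with distinct critical values, the connected component of $f^{-1}(c)$ through an index-$1$ critical point is a compact $1$-complex with a single $4$-valent vertex, hence a figure eight, and the corresponding fiber component of $F=(J,H)=(z,f\circ\pi_{\Sigma_g})$ is $\mbS^1\times(\text{figure eight})$, i.e.\ a double torus (the gluing in Lemma~\ref{lem:stblowups-hst} only modifies the system near the elliptic boundary and so does not disturb these fibers). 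You should add this case; with it, item (1) is fully verified. Your closing remark about ruling out curled tori on the flaps via freeness of the $\mbS^1$-action is the right mechanism there, since the blowup set at a focus-focus point meets no $\Z_k$-spheres with $k>1$.
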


\bibliographystyle{alpha}
\bibliography{ref}

{\small
  \noindent
  \\
  Sonja Hohloch\\
  University of Antwerp\\
  Department of Mathematics\\
  Middelheimlaan 1\\
  B-2020 Antwerp, Belgium\\
  {\em E\--mail}: \texttt{sonja.hohloch@uantwerpen.be}
}

 {\small
   \noindent
   \\
   Joseph Palmer\\
   Department of Mathematics,\\
   Seeley Mudd Building,\\
   Amherst College,\\ Amherst, MA, USA, 01002.\\ 
   {\em E-mail:} \texttt{jpalmer@amherst.edu}
 }

\end{document}